\newtheorem{thm}{Theorem}[section]
\newtheorem{lem}[thm]{Lemma}
\newtheorem{prop}[thm]{Proposition}
\newtheorem{cor}[thm]{Corollary}
\newtheorem{assump}[thm]{Assumption}
\theoremstyle{definition}
\newtheorem{defn}[thm]{Definition}
\newtheorem{rmk}[thm]{Remark}
\newtheorem*{ack}{Acknowledgements}
\numberwithin{equation}{section}
\def\C{{\mathbb C}}
\def\Q{{\mathbb Q}}
\def\R{{\mathbb R}}
\def\Z{{\mathbb Z}}
\def\P{{\mathbb P}}
\def\A{{\mathbb A}}
\def\Hom{\mathop{\mathrm{Hom}}\nolimits}
\def\NE{\overline{NE}}
\def\sO{\mathcal{O}}
\DeclareMathOperator{\Pic}{Pic}
\DeclareMathOperator{\rank}{rank}
\DeclareMathOperator{\id}{id}
\DeclareMathOperator{\Spec}{Spec}
\DeclareMathOperator{\Exc}{Exc}
\DeclareMathOperator{\Supp}{Supp}
\DeclareMathOperator{\Tr}{Tr}
\DeclareMathOperator{\ord}{ord}
\DeclareMathOperator{\Diff}{Diff}
\DeclareMathOperator{\chara}{char}
  \def\MR#1{}
\title[]
{Minimal model program for semi-stable threefolds in mixed characteristic}
\author{Teppei Takamatsu, Shou Yoshikawa}
\address{Department of Mathematics, Graduate School of Science, Kyoto University, Kyoto 606-8502, Japan}
\email{teppeitakamatsu.math@gmail.com}
\address{RIKEN Interdisciplinary Theoretical and Mathematical Sciences (iTHEMS), Wako, Saitama 351-0198, Japan}
\email{shou.yoshikawa@riken.jp}
\begin{document}

\begin{abstract}
In this paper, we study the minimal model theory for threefolds in mixed characteristic.
As a generalization of a result of Kawamata, we show that the MMP holds for strictly semi-stable schemes over an excellent Dedekind scheme $V$ of relative dimension two without any assumption on the residue characteristics of $V$.
We also prove that we can run a  $(K_{X/V}+\Delta)$-MMP over $Z$, where $\pi \colon X \to Z$ is a projective birational morphism of $\Q$-factorial quasi-projective $V$-schemes  and  $(X,\Delta)$ is a three-dimensional dlt pair with $\Exc(\pi) \subset \lfloor \Delta \rfloor $.
\end{abstract}

\maketitle

\setcounter{tocdepth}{1}

\section{Introduction}
The \emph{Minimal model program} (MMP, for short), which is a higher-dimensional analog of the classification method of surfaces, is a tool to find a ``simplest'' variety in each birational equivalence class.
In characteristic zero, this program holds for threefolds and  varieties of general type (see \cite{bchm}).
The minimal model theory recently has been studied also in positive characteristic,
and the program is now known to hold for threefolds over a perfect field of characteristic $p>3$ (see \cite{hacon-xu}, \cite{ctx}, \cite{birkar16}, \cite{gnt}, \cite{birkar-waldron}, \cite{hacon-witaszek19b}).

The MMP is also studied for schemes not necessarily defined over a field.
%One of the motivations is to understand properties of reduction of varieties over an integer ring.
%One of the motivation of such a generalization is to study \emph{reduction} of a variety $X$ over a number field. 
Such a generalization of the MMP plays an important role to construct a nice model $\mathcal{W}$ over an integer ring whose generic fiber is isomorphic to a given variety $W$ (see \cite{Matsumoto15} \cite{liedtke-matsumoto}, \cite{CLL}, \cite{chiarellotto-lazda}).
The MMP is known to hold for excellent surfaces (\cite{tanaka18}) and strictly semi-stable schemes over an excellent Dedekind scheme of relative dimension two whose each residue characteristic is neither $2$ nor $3$ (\cite{kawamata94}, \cite{kawamata99}).
In this paper, we study the MMP for threefolds over an excellent Dedekind scheme.

Our first main result is a generalization of the result of Kawamata.
He used the classification of singularities, which depends on residue characteristic, to prove the existence of flips.
We use a completely different approach to prove the existence of flips without assumption on the residue characteristics.

\begin{thm}\label{thm:semi-stable mmp}\textup{(Theorem \ref{thm:semi-stable mmp'})}
Let $V$ be an excellent Dedekind scheme.
Let $X$ be a scheme which is strictly semi-stable over $V$ of relative dimension two.
Let $X \to Z$ be a projective morphism to a quasi-projective scheme $Z$ over $V$.
Then we can run a $K_{X/V}$-MMP over $Z$ 
%preserving Assumption \ref{assump:semi-stable}
which terminates with a minimal model or a Mori fiber space.
Furthermore, this program preserves good conditions (see Assumption \ref{assump:semi-stable}), for example, the output $Y$ of this MMP is Cohen-Macaulay and every irreducible component of each closed fiber of $Y \to V$ is geometrically normal.
\end{thm}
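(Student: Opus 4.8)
The plan is to run the $K_{X/V}$-MMP over $Z$ one step at a time, always staying within the class of $V$-schemes satisfying the good conditions of Assumption~\ref{assump:semi-stable}, and to verify simultaneously that each step can be carried out and that the class is stable under divisorial contractions and flips. For the starting point, note that a strictly semi-stable $X/V$ of relative dimension two is a regular three-dimensional scheme, flat over $V$, whose closed fibres are reduced simple normal crossing divisors with geometrically normal components; hence $K_{X/V}$ is a line bundle, $X$ is $\Q$-factorial, terminal and Cohen--Macaulay, the pair $(X,X_{v})$ is log smooth for every closed point $v\in V$, and the good conditions hold initially. Since $\pi$ is projective and $Z$ is quasi-projective over $V$, there is a $\pi$-ample divisor on $X$, so a $K_{X/V}$-MMP over $Z$ makes sense.

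For one step of the MMP I would first establish the cone and contraction theorems for three-dimensional $\Q$-factorial terminal (more generally dlt) pairs projective over a quasi-projective $V$-scheme, by combining the appropriate vanishing and base-point-freeness input with the two-dimensional theory. Then a $K_{X/V}$-negative extremal ray $R/Z$ is spanned by a rational curve and carries a contraction $\phi_R\colon X\to Y$ over $Z$. If $\dim Y<\dim X$ we stop at a Mori fibre space. If $\phi_R$ is divisorial, $Y$ is again $\Q$-factorial and terminal, and a direct analysis of $Y\to V$ --- using that it is flat and that, over a one-dimensional regular base, Cohen--Macaulayness of the total space is equivalent to Cohen--Macaulayness of all its fibres --- shows the good conditions pass to $Y$. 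The remaining case is a flipping contraction, whose flip must be produced.

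Constructing these flips is the heart of the proof and the point where I depart from Kawamata: I make \emph{no} use of a classification of three-dimensional singularities in mixed characteristic. Instead, by the standard reduction in dimension three --- an auxiliary MMP with scaling together with special termination, the latter resting only on the two-dimensional case --- the existence of an arbitrary $K_{X/V}$-flip is reduced to the existence of \emph{pl-flips}; and this is exactly where strict semi-stability is used, since it guarantees that $(X,X_{v})$ is log smooth, so that the pl-contractions that arise are negative along components of closed fibres. A pl-flip exists once the relevant restricted section algebra is finitely generated, and by adjunction this is governed by the analogous algebra on the divisor $S$ along which the contraction is negative --- a component of a closed fibre, hence a \emph{surface over the residue field $\kappa(v)$}. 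On such a surface the minimal model theory is unconditional in every characteristic and over possibly imperfect fields (Tanaka), and the resulting finite generation must then be transported back to $X$ by a mixed-characteristic extension argument. I expect this last lifting step --- moving finite generation from the fibral surface $S$ up to the arithmetic total space $X$ --- to be the main technical obstacle; the payoff is that everything delicate is routed through surfaces over fields, where residue characteristics $2$ and $3$ cause no difficulty.

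Finally, divisorial contractions decrease the Picard number, while an infinite sequence of $K_{X/V}$-flips over $Z$ is excluded by a Shokurov-type difficulty function for three-dimensional terminal pairs over $V$ (equivalently, via special termination and the two-dimensional case), so the program terminates with a minimal model $Y/Z$, for which $K_{Y/V}$ is nef over $Z$, or with a Mori fibre space. Since the good conditions survive each step, the output $Y$ is in particular Cohen--Macaulay with every irreducible component of each closed fibre of $Y\to V$ geometrically normal, as asserted.
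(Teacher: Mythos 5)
Your architecture matches the paper's at a high level (cone/contraction, reduction to pl-flips, finite generation on the fibral surface via Tanaka, termination by terminal discrepancies), but the step you yourself flag as ``the main technical obstacle'' --- transporting finite generation from the fibral surface $S$ back to the restricted algebra on $X$ --- is precisely the new content of the paper, and no mechanism for it is supplied, so the flips are not actually constructed. In the paper this is a lifting-of-sections statement, $|k(K_{X/V}+S+A+B)|_S=|k(K_{S/V}+A_S+B_S)|$ after localization (Proposition \ref{prop:extension result}), proved by introducing global $T$-regularity together with its adjunction and inversion of adjunction, and by a Kodaira-type vanishing up to alterations in mixed characteristic deduced from Bhatt's theorem (Corollary \ref{cor:zero map}); this is what makes Theorem \ref{thm:pl-filp in ample divisor} and Corollary \ref{cor:existence of pl-flip threefold} work without any hypothesis on residue characteristics. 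Note also that the crucial structural input is not merely that the contraction is negative along a fibral component $S$, but that another fibral component $A$ is \emph{ample} on the flipping ray (available because $X_s\sim 0$); the ample Weil divisor in the boundary is used essentially in the extension argument. Moreover, your log-smoothness remark does not cover flipping rays on which every component of $X_s$ is numerically trivial: that case is not a pl-contraction along a fibral divisor and requires a separate construction (Proposition \ref{prop:existence of flip trivial case}), whose auxiliary MMP over $Z$ is itself powered by the same pl-flips with ample boundary component via Theorem \ref{thm:birational relative mmp'}.

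A second genuine gap is the preservation of Assumption \ref{assump:semi-stable}, which is part of the statement. You argue it only for divisorial contractions, and the observation that over a one-dimensional regular base Cohen--Macaulayness of a flat total space is detected on fibres does not by itself show that the flipped threefold is Cohen--Macaulay, nor that the new closed fibres are $(S_2)$ with geometrically normal components: the flip is produced abstractly as a $\mathrm{Proj}$, not by a classification of singularities, so (as the paper points out in contrast with Kawamata) these properties do not follow from the construction. The paper needs $R^jf_*\sO_X=0$ from the contraction theorem (Proposition \ref{prop:contraction theorem semi-stable}) plus a rational-singularities argument (Propositions \ref{prop:rational singularities 1} and \ref{prop:rational singularities 2}) to get Cohen--Macaulayness of the flip, and a further global $T$-regularity argument (Propositions \ref{prop:assumption under pl-flip} and \ref{prop:assumption under flip}) for geometric normality of the components of the new closed fibres; some version of these arguments must be added for your proof to yield the full statement.
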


\noindent
%Since Kawamata's result is used in several studies of reductions of varieties over an integer ring, Theorem \ref{thm:semi-stable mmp} also has immediate applications to those studies, as follows.
Kawamata's result is used in several studies of reductions of varieties over an integer ring.
Therefore, we also generalize such studies to the case where the residue characteristic is $2$ or $3$, as the following.
\begin{description}
    \item[Good reduction criterion for K3 surfaces (\cite{Matsumoto15}, \cite{liedtke-matsumoto}, \cite{CLL})]
    \textup{ }\\
    Let $K$ be a complete discrete valuation field with perfect residue field of characteristic $p$ and $X$ a K3 surface over $K$. Suppose that $X$ admits potentially semi-stable reduction. 
    If the $G_{K}$-representation $H^{2}_{\textup{\'{e}t}}(X_{\bar{K}}, \Q_{\ell})$ is unramified for some $\ell \neq p$, then $X$ admits good reduction after an unramified extension of $K$.
    \item[Abelian surfaces have potentially combinatorial reduction (\cite{chiarellotto-lazda})]
    \textup{ }\\
    Let $K$ be a complete discrete valuation field with perfect residue field of characteristic $p$ and $X$ an abelian surface over $K$.
    Then $X$ admits potentially combinatorial reduction in the sense of \cite[Definition 10.1]{chiarellotto-lazda}.  %(\cite[Theorem 10.3]{chiarellotto-lazda}).
    Such a model give a compactification of a N\'{e}ron model, and the dual graph of the special fiber can be classified (see Theorem \ref{thm:combinatorial} and Proposition \ref{prop:compactneron}).
    %We note that the dual graphs of the special fibers of such models are classified in \cite[Section 8]{chiarellotto-lazda} under the assumption $p \neq 2$. 
\end{description}

\noindent
Our second main result is a generalization of \cite[Theorem 1.1]{hacon-witaszek19} to the mixed characteristic case.

\begin{thm}\label{thm:birational relative mmp}\textup{(Theorem \ref{thm:birational relative mmp'})}
Let $V$ be an excellent Dedekind scheme.
%Let $X$ be a $\Q$-factorial quasi-projective integral scheme over $V$ of dimension three.
Let $(X,\Delta)$ be a dlt pair, where $X$ is a $\Q$-factorial integral scheme which is flat and quasi-projective over $V$ of relative dimension two.
Assume that there exists a projective birational morphism $\pi \colon X \to Z$ to a normal $\Q$-factorial variety $Z$ with $\Exc(\pi) \subset \lfloor \Delta \rfloor$.
Then we can run a $(K_{X/V}+\Delta)$-MMP over $Z$ which terminates with a minimal model.
\end{thm}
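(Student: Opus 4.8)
The plan is to run the $(K_{X/V}+\Delta)$-MMP over $Z$ directly, step by step, exactly as in the proof of \cite[Theorem 1.1]{hacon-witaszek19} in positive characteristic: existence of each step comes from the relative cone and contraction theorems and the existence of flips established earlier in the paper, while termination comes from special termination combined with the hypothesis $\Exc(\pi)\subseteq\lfloor\Delta\rfloor$.

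For a single step, I would argue as follows. Since $\pi$ is projective and birational, $\overline{NE}(X/Z)$ is spanned by $\pi$-exceptional curves, and by the relative cone theorem its $(K_{X/V}+\Delta)$-negative part is rational polyhedral. Choose a $(K_{X/V}+\Delta)$-negative extremal ray $R$ over $Z$ with contraction $c_R\colon X\to X'$ over $Z$. If $c_R$ is divisorial with exceptional divisor $E$, then $\dim\pi(E)<\dim E$ forces $E\subseteq\Exc(\pi)\subseteq\lfloor\Delta\rfloor$, so $E$ is a component of $\lfloor\Delta\rfloor$; put $X_1:=X'$. If $c_R$ is small, the flipping curves are $\pi$-exceptional, hence lie in $\lfloor\Delta\rfloor$, so $c_R$ is a pl flipping contraction and the flip $X\dashrightarrow X_1$ exists by the results on existence of (pl) flips for three-dimensional $\mathbb{Q}$-factorial dlt pairs over an excellent Dedekind scheme proved earlier. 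In either case $(X_1,\Delta_1)$, with $\Delta_1$ the pushforward of $\Delta$, is again $\mathbb{Q}$-factorial dlt and $\pi_1\colon X_1\to Z$ is projective birational; iterating yields an MMP $X=X_0\dashrightarrow X_1\dashrightarrow\cdots$ over $Z$.

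For termination, first note that each divisorial contraction lowers $\rho(X_i/Z)$ by one, and that $\rho(X_i/Z)=0$ would force $\pi_i$ to be finite birational, hence an isomorphism (as $Z$ is normal), so the program is finished; thus only finitely many divisorial contractions occur. After the last one, any continuation is a sequence of flips, and I would invoke special termination for three-dimensional dlt pairs — which reduces, via adjunction along the components of $\lfloor\Delta_i\rfloor$, to termination of the surface MMP over excellent bases (\cite{tanaka18}) — to see that after finitely many further flips the flipping and flipped loci miss $\lfloor\Delta_i\rfloor$. Since every flipping curve lies in $\Exc(\pi_i)$, and the inclusion $\Exc(\pi_i)\subseteq\lfloor\Delta_i\rfloor$, while it may be violated at intermediate steps, is recovered once the induced surface MMP on $\lfloor\Delta_i\rfloor$ has stabilized so that no component of $\lfloor\Delta_i\rfloor$ can be contracted any more, one concludes that no flipping curve can survive; hence the MMP terminates at some $Y$. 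Finally, $\pi_Y$ being birational prevents $Y$ from being a Mori fiber space over $Z$, so $K_{Y/V}+\Delta_Y$ is $\pi_Y$-nef and $Y$ is a minimal model over $Z$.

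I expect the termination step to be the main obstacle, together with whatever is needed to justify ``existence of flips'' in this generality. Special termination by itself only pushes the flipping locus off $\lfloor\Delta_i\rfloor$; converting this into genuine termination of the whole program over $Z$ requires carefully tracking how $\Exc(\pi_i)$ moves relative to $\lfloor\Delta_i\rfloor$ through divisorial contractions and flips — and the hypothesis $\Exc(\pi)\subseteq\lfloor\Delta\rfloor$ is what makes this possible, since it forbids $\pi$ from having small exceptional behaviour or exceptional divisors outside the boundary at the outset. The deeper input is the construction of pl-flips for three-dimensional dlt pairs over an excellent Dedekind scheme \emph{with no assumption on residue characteristics}; this is where the new methods of the paper replace Kawamata's case analysis of singularities (\cite{kawamata94}, \cite{kawamata99}), and it is the genuinely hard part.
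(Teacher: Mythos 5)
Your overall strategy (cone theorem, contraction, flips, special termination, following \cite[Theorem 1.1]{hacon-witaszek19}) is the same as the paper's, but there is a genuine gap at the step ``$c_R$ is small $\Rightarrow$ pl flipping contraction $\Rightarrow$ the flip exists by the results proved earlier.'' The paper does \emph{not} prove existence of general three-dimensional pl-flips over an excellent Dedekind scheme. What it proves (Theorem \ref{thm:pl-filp in ample divisor} and Corollary \ref{cor:existence of pl-flip threefold}) is existence of flips only when the boundary contains a component $S$ with $-S$ ample and a component $A$ with $A$ ample over the base of the flipping contraction; the ample component $A$ is essential to the method (Serre vanishing starts the descending induction in Proposition \ref{prop:extension result}, and the global $T$-regularity input comes from Lemma \ref{lem:bcm regular and globally $T$-regular surface}, which needs $C=A|_{S^N}$ ample). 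Knowing that the flipping curves lie in $\lfloor\Delta\rfloor$ gives you a negative component at best; it does not give the ample one, so Corollary \ref{cor:existence of pl-flip threefold} does not apply as you have set things up. The same issue appears, less seriously, for the contraction step: Proposition \ref{prop:contraction anti-ample case} also requires a boundary component $S$ with $S\cdot\Sigma<0$, which you do not produce.

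The missing argument — the actual content of the Hacon--Witaszek-style proof — is to exploit the $\Q$-factoriality of $Z$ to produce both components among the $\pi$-exceptional divisors (which lie in $\lfloor\Delta\rfloor$ by hypothesis). First, for $H$ ample on $X$ the divisor $G:=\pi^*\pi_*H-H$ is $\pi$-exceptional, and by the negativity lemma it is effective with $\Supp G=\Exc(\pi)$ and $-G$ ample over $Z$; hence every extremal ray $R$ over $Z$ satisfies $G\cdot R<0$, so some exceptional prime divisor $S\subset\lfloor\Delta\rfloor$ has $S\cdot R<0$ (this feeds both Proposition \ref{prop:contraction anti-ample case} and the $S$ of Corollary \ref{cor:existence of pl-flip threefold}). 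Second, if the contraction $f$ of $R$ is small and one had $E\cdot R\le 0$ for every exceptional prime divisor $E$, then applying the same negativity argument to $\pi^*\pi_*f^*H_W-f^*H_W$ for $H_W$ ample over $Z$ on the flipping base shows that $S$ is covered by curves with class in $R$, so $f$ would contract the divisor $S$, contradicting smallness; hence some exceptional component $A$ has $A\cdot R>0$. Since $\rho(X/W)=1$, $-S$ and $A$ are then ample over $W$ and Corollary \ref{cor:existence of pl-flip threefold} yields the flip. (Your termination discussion, by contrast, is essentially fine but overcomplicated: the inclusion $\Exc(\pi_i)\subset\lfloor\Delta_i\rfloor$ is preserved at every step, so after special termination, Proposition \ref{prop:special termination}, no flipping curve can exist, and divisorial contractions drop $\rho(X_i/Z)$, which is finite.) Finally, your closing remark that the paper constructs pl-flips for arbitrary three-dimensional dlt pairs without residue-characteristic assumptions overstates what is proved; only the ``ample divisor in the boundary'' case is established, which is exactly why the reduction above is needed.
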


\noindent
The existence of dlt modifications and the inversion of adjunction for klt pairs follow from Theorem \ref{thm:birational relative mmp} as a corollary (see Corollary \ref{cor:dlt modification}, \ref{cor:inversion of adjunction}).

One of the key ingredients of the proofs of Theorems \ref{thm:semi-stable mmp} and \ref{thm:birational relative mmp} is to prove the existence of pl-flips with ample divisor in the boundary.
Indeed, all flips appearing in the proof of Theorem \ref{thm:birational relative mmp} are of this type and the existence of necessary flips for Theorem \ref{thm:semi-stable mmp} is reduced by an argument in \cite{hacon-witaszek19} to the existence of flips of this type.
In positive characteristic, \cite[Theorem 1.3]{hacon-witaszek20} proved the existence of pl-flips with ample divisor in the boundary using global $F$-regularity and the vanishing theorem up to Frobenius twist.
We employ the same strategy in mixed characteristic, replacing  global $F$-regularity with \emph{global $T$-regularity} and the Frobenius morphism with alterations.
The global $T$-regularity of a log pair $(X,\Delta)$ over an excellent Dedekind scheme $V$ is defined by the surjectivity of the map
\[
H^0(\omega_{Y/V}(\lceil -\pi^*(K_{X/V}+\Delta) \rceil)) \to H^0(\sO_X)
\]
induced by the Grothendieck trace map for every alteration $\pi \colon Y \to X$.
%An analogous argument works in mixed characteristic, using projective generically finite morphisms, called \emph{alterations}, and the \emph{global $T$-regularity}, defined later, instead of them, respectively.
The vanishing theorem up to alterations is obtained as a corollary of \cite[Theorem 6.28]{bhatt20} (see Corollary \ref{cor:zero map}).
As a consequence, we obtain the following theorem which is an analog of \cite[Theorem 1.3]{hacon-witaszek20}.

\begin{thm}\label{thm:pl-filp in ample divisor'}\textup{(Theorem \ref{thm:pl-filp in ample divisor}, cf.\,\cite[Theorem 1.3]{hacon-witaszek20})}
Let $V$ be the spectrum of a complete discrete valuation ring.
Let $(X,S+A+B)$ be a dlt pair  such that $S$ is an anti-ample $\Q$-Cartier Weil divisor and $A$ is an ample  $\Q$-Cartier Weil divisor.
Let $f \colon X \to Z$ be a $(K_{X/V}+S+A+B)$-flipping contraction with $\rho(X/Z)=1$ to an affine $V$-scheme $Z$.
Assume that $(S^N,(1-\varepsilon)A_S+B_S)$ is globally $T$-regular for all $0<\varepsilon<1$ after localizing at all points of $f(\Exc(f))$.
Further assume that the ring 
\[
R(K_{S^N/V}+A_S+B_S):=\bigoplus_{m \in \Z_{\geq 0}} H^0(\sO_X(m(K_{S^N/V}+A_S+B_S)))
\]
is a finitely generated $\sO_Z$-algebra, where  $B_S:=\mathrm{Diff}_{S^N}(B)$ and $A_S:=A|_{S^N}$.
Then the flip of $f$ exists.
\end{thm}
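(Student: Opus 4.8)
The plan is to follow the strategy of \cite{hacon-witaszek20} for pl-flips with an ample divisor in the boundary, replacing the Frobenius morphism by alterations and global $F$-regularity by global $T$-regularity. After localizing $Z$ at the finitely many points of $f(\Exc(f))$ — which is harmless both for the existence of the flip and for the hypotheses — we may assume $Z=\Spec R$ for a semilocal ring, and it suffices to show that the relative log canonical algebra
\[
\mathcal{R}\ :=\ \bigoplus_{m\geq 0} f_*\sO_X\big(\lfloor m(K_{X/V}+S+A+B)\rfloor\big)
\]
is a finitely generated $\sO_Z$-algebra; then $X^+:=\mathrm{Proj}_Z\,\mathcal{R}$ is the flip by the standard argument. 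Write $D:=K_{X/V}+S+A+B$, so that $-D$ is $f$-ample. Since $\rho(X/Z)=1$ and $-S$ is also $f$-ample, $D$ and $S$ are proportional over $Z$, and by the usual reduction for pl-flips (cf.\ \cite{hacon-xu}) the algebra $\mathcal{R}$ is finitely generated if and only if the restricted algebra $\mathrm{Res}_{S^N}\mathcal{R}$, namely the image of $\mathcal{R}$ in $\bigoplus_{m}H^0\big(S^N,\sO_{S^N}(\lfloor m(K_{S^N/V}+A_S+B_S)\rfloor)\big)$ under restriction to $S$ followed by pullback along $S^N\to S$, is finitely generated; here we have used adjunction $D|_{S^N}=K_{S^N/V}+\Diff_{S^N}(A+B)=K_{S^N/V}+A_S+B_S$.

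Since $R(K_{S^N/V}+A_S+B_S)$ is finitely generated by hypothesis, it now suffices to prove the extension statement that the restriction maps
\[
H^0\big(X,\sO_X(\lfloor mD\rfloor)\big)\ \longrightarrow\ H^0\big(S^N,\sO_{S^N}(\lfloor m(K_{S^N/V}+A_S+B_S)\rfloor)\big)
\]
are surjective for all $m$ in a cofinal set of positive integers (e.g.\ $m$ divisible by a fixed $d$ and $\gg 0$); equivalently, that $\mathrm{Res}_{S^N}\mathcal{R}$ and $R(K_{S^N/V}+A_S+B_S)$ agree up to truncation. Fix such an $m$ and a section $t\in H^0(S^N,\sO_{S^N}(\lfloor m(K_{S^N/V}+A_S+B_S)\rfloor))$. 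Choose $0<\varepsilon<1$ small (depending on $m$, the Cartier indices, and the coefficients); by hypothesis $(S^N,(1-\varepsilon)A_S+B_S)$ is globally $T$-regular, so for every alteration $h\colon T\to S^N$ the Grothendieck trace induces a surjection $H^0\big(T,\omega_{T/V}(\lceil -h^*(K_{S^N/V}+(1-\varepsilon)A_S+B_S)\rceil)\big)\twoheadrightarrow H^0(\sO_{S^N})$; picking a section over $T$ that traces to $1$ and multiplying by the pullback of $t$, the $\sO_{S^N}$-linearity of the trace lets us recover $t$ on $S^N$ from a section of an $\omega_T$-twist. The ample divisor $A$ (hence the ample $A_S$ on $S^N$) and the freedom in $\varepsilon$ are exactly what allows these $\omega$-twists, and the round-ups they involve, to be absorbed for $m$ sufficiently divisible, just as the $(p^e-1)$-denominators are absorbed in positive characteristic.

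To lift $t$ from $S^N$ to $X$ we pass to an alteration $\pi\colon Y\to X$ with an induced alteration $h\colon T\to S^N$ (take $T$ to be the normalization of a component of $\pi^{-1}(S^N)$ dominating $S^N$). After twisting by a sufficiently large multiple $aA$ of the ample divisor $A$, the line bundle on $X$ controlling the obstruction to extending from the $S$-locus becomes $f$-semiample and $f$-big, so by the vanishing theorem up to alterations (Corollary \ref{cor:zero map}, a consequence of \cite[Theorem 6.28]{bhatt20}) we may choose $\pi$ so that this obstruction dies on $Y$; a diagram chase comparing the restriction exact sequences on $X$ and on $Y$ (twisted by $\omega_{Y/V}$) through the Grothendieck trace maps $R\pi_*\omega_{Y/V}(\,\cdot\,)\to\omega_X(\,\cdot\,)$ and their analogues over $S^N$ and $T$ then produces a section of $\sO_X(\lfloor mD\rfloor)$ restricting to $t$. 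This gives the surjectivity claim, hence $\mathrm{Res}_{S^N}\mathcal{R}$ is finitely generated, hence $\mathcal{R}$ is finitely generated and the flip exists.

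The main obstacle is this last step: arranging a single alteration $\pi\colon Y\to X$ that simultaneously (a) witnesses the vanishing of Corollary \ref{cor:zero map} for the relevant $aA$-twisted line bundle on $X$, and (b) restricts over the $S$-locus to an alteration of $S^N$ witnessing global $T$-regularity of $(S^N,(1-\varepsilon)A_S+B_S)$, all while tracking the round-ups $\lceil\cdot\rceil,\lfloor\cdot\rfloor$, the different $\Diff_{S^N}$, and the fact that $K_{X/V}$ is only a relative dualizing complex away from the Gorenstein locus. As in characteristic $p$, one may a priori obtain only that a Veronese subalgebra of $R(K_{S^N/V}+A_S+B_S)$ lies in $\mathrm{Res}_{S^N}\mathcal{R}$, so a further standard argument is needed to descend finite generation from $R(K_{S^N/V}+A_S+B_S)$ to $\mathrm{Res}_{S^N}\mathcal{R}$. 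Checking that the "$\varepsilon$-loss" in $A_S$ is genuinely harmless — that for $m$ sufficiently divisible the divisor $\lceil m(1-\varepsilon)A_S\rceil$ and its companions still dominate the relevant discrepancy and rounding corrections — is what forces the hypothesis to be phrased for all $0<\varepsilon<1$.
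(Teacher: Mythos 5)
There is a genuine gap at the heart of your argument: the extension step. Your plan is to fix $m$, take $t\in H^0(S^N,\sO_{S^N}(\lfloor m(K_{S^N/V}+A_S+B_S)\rfloor))$, twist by a large multiple $aA$ so that ``the line bundle controlling the obstruction'' becomes semiample and $f$-big, kill the obstruction on an alteration via Corollary \ref{cor:zero map}, and then diagram-chase to a section of $\sO_X(\lfloor mD\rfloor)$ restricting to $t$. But the obstruction to lifting $t$ itself (no twist) lives in an $H^1$ of $\omega_{Y/V}(\lceil \pi^*(mD-(K_{X/V}+S+\Delta))\rceil)$ for the boundary $\Delta=A+B$, and $mD-(K_{X/V}+S+A+B)=(m-1)D$ is \emph{anti-ample} over $Z$ (this is a flipping contraction), so no vanishing up to alterations is available. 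Introducing the auxiliary twist $aA$ only lets you lift sections of $\sO_X(\lfloor mD\rfloor+aA)$, and unlike the positive-characteristic situation there is no Frobenius-type rescaling that makes a fixed ample twist negligible: alterations do not multiply the divisor class by $p^e$, so neither the divisibility of $m$ nor the freedom in $\varepsilon$ removes the $aA$. As written, your sketch produces no section of $\sO_X(\lfloor mD\rfloor)$, and your own closing paragraph essentially concedes that this is the unresolved point.

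The paper resolves exactly this difficulty in Proposition \ref{prop:extension result}, by a descending induction on $m$ rather than a single lifting: one constructs $G_m\in |k(K_{X/V}+S+A+B)+mA|$ with $G_m|_S=F+mA_S$, starting at $m\gg 0$ by Serre vanishing, and in the step from $m+1$ to $m$ one \emph{rewrites the boundary} using the divisor already constructed, $L:=k(K_{X/V}+S+A+B)+mA\sim_{\Q}K_{X/V}+S+B+\tfrac{k-1}{k}G_{m+1}+\tfrac{m+1}{k}A$, so that $L-(K_{X/V}+S+B+\tfrac{k-1}{k}G_{m+1})\sim_{\Q}\tfrac{m+1}{k}A$ is ample for \emph{every} $m\ge 0$; then the perturbation lemma (Lemma \ref{lem:pertubation lemma}), applied with the hypothesis that $(S,\tfrac{k-1}{k}A_S+B_S)$ is globally $T$-regular (this is where ``for all $0<\varepsilon<1$'' enters, via $\varepsilon=1/k$, not via any absorption argument), together with Proposition \ref{prop:more inversion} and the vanishing up to alterations, lifts $F+mA_S$ to $G_m$. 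This boundary-recycling induction is the missing mechanism in your proposal. Two smaller points: you never establish that $S$ is normal (the paper gets this from Proposition \ref{prop:inversion of adjunction globally $T$-regular}, and the restriction theorem is stated for normal $S$), and once surjectivity is known in degrees divisible by $k$, the passage to finite generation of the restricted algebra and then to the flip is indeed the standard Shokurov/Corti reduction you cite, exactly as in the paper's proof of Theorem \ref{thm:pl-filp in ample divisor}.
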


\noindent
Theorem \ref{thm:pl-filp in ample divisor'} can be applied for three-dimensional pl-flips with ample divisor in the boundary.
%The existence of three-dimensional pl-flips with ample divisor in the boundary follows from Theorem \ref{thm:pl-filp in ample divisor'},
Indeed, the finite generation of $R(K_{S^N/V}+A_S+B_S)$ is a consequence of the MMP for excellent surfaces, and the global $T$-regularity of $(S^N,(1-\varepsilon)A_S+B_S)$ follows from the inversion of adjunction for global $T$-regularity (see Proposition \ref{prop:inversion of adjunction globally $T$-regular}) and an argument similar to the proof of \cite[Lemma 3.3]{hacon-witaszek19}.

\begin{rmk}
%After finishing this work, Jakub Witaszek taught us that he also write the paper about the MMP in mixed characteristic with Bhargav Bhatt, Linquan Ma, Zsolt Patakfalvi, Karl Schwede, Kevin Tucker, Joe Waldron (see \cite{bmpstww}).
After completing this work, Jakub Witaszek told us that he and Bhargav Bhatt, Linquan Ma, Zsolt Patakfalvi, Karl Schwede, Kevin Tucker, and Joe Waldron are also writing a paper on MMP in mixed characteristics (see \cite{bmpstww}).
In their article, they independently show that the MMP holds for threefolds whose each residue characteristic is greater than $5$.
They also define and study the notion of global $+$-regularity which is very closely related to our global $T$-regularity (see \cite[Lemma 4.7]{bmpstww}).
Furthermore, we can obtain an analog of Theorem \ref{thm:birational relative mmp} and Theorem \ref{thm:pl-filp in ample divisor'} in the case where $V$ is a spectrum of a regular excellent finite-dimensional domain by the argument in \cite{bmpstww}.
Indeed, we can show the results by using \cite[Proposition 3.6]{bmpstww} instead of Theorem \ref{thm:killing local cohomology} and the existence of regular alterations instead of the existence of Cohen-Macaulayfications.

\end{rmk}

%\begin{thm}\label{thm:relative mmp over cdvr}
%Let $V$ be the spectrum of excellent discrete valuation ring.
%Let $X$ be a flat $V$-variety of relative dimension two.
%Let $(X,\Delta)$ be a $\Q$-factorial dlt pair over $V$.
%Let $X \to Z$ be a projective morphism over $V$ to a quasi-projective $V$-variety $Z$.
%Assume that $\lfloor \Delta \rfloor$ contains the closed fiber $X_s$ as sets.
%Assume that \cite[Theorem 1.8]{witaszek20} holds.
%Then we can run a $(K_X+\Delta)$-MMP over $Z$ which terminates with a minimal model or a Mori fiber space.
%\end{thm}

\begin{ack}
The authors wish to express their gratitude to their supervisors Naoki Imai and Shunsuke Takagi for their encouragement, valuable advice, and suggestions.
%The first author is deeply grateful to his advisor Naoki Imai for his deep encouragement and helpful advice.
%The second author wishes to express his gratitude to his supervisor Shunsuke Takagi for his encouragement, valuable advice, and suggestions.
They thank Jakub Witaszek for valuable comments on the resolution in mixed characteristic and informing us of their work on the MMP in mixed characteristic. 
They are also grateful to  Hiromu Tanaka,  Kenta Sato, Kenta Hashizume, Yohsuke Matsuzawa, Ken Sato, Tatsuro Kawakami, Yoshinori Gongyo, Yuya Matsumoto, Tetsushi Ito, Teruhisa Koshikawa, and S\'{a}ndor Kov\'{a}cs for their helpful comments and suggestions. 
This work was supported by the Program for Leading Graduate Schools, MEXT, Japan.
The first author was supported by JSPS KAKENHI Grant number JP19J22795.
The second author was also supported by JSPS KAKENHI Grant number JP20J11886.
Moreover, the authors thank the referee for careful proofreading and many comments.
\end{ack}

\section{Preliminaries}
\subsection{Notations}
In this subsection, we summarize notations used in this paper.
\begin{itemize}
    \item We will freely use the notation and terminology in \cite{kollar-mori} and \cite{kollar13}.
    \item A morphism of schemes is \emph{alteration} if it is projective, surjective, and generically finite.
%    We note that every alteration is surjective.
    \item A Noetherian scheme $X$ is \emph{locally irreducible} if every connected component of $X$ is irreducible.
    \item A scheme $V$ is a \emph{Dedekind scheme} if $V$ is a Noetherian excellent $1$-dimensional regular scheme.
    \item For a Dedekind scheme $V$, we say $X$ is a \emph{variety over} $V$ or a $V$-\emph{variety} if $X$ is an integral scheme that is separated and of finite type over $V$. 
    We say $X$ is a \emph{curve} over $V$ or a $V$-\emph{curve} (respectively a \emph{surface} over $V$ or a $V$-\emph{surface}) if $X$ is a $V$-variety of (absolute) dimension one (respectively two).
    \item Let $V$ be a Dedekind scheme. 
    Let $\alpha \colon X \to V$ be a quasi-projective $V$-variety.
    The \emph{dualizing complex} $\omega^{\bullet}_{X/V}$ is defined by $\alpha^{!}\sO_V$, where $\alpha^!$ is defined as in \cite[Ch. III, Theorem 8.7]{hartshorne-duality}.
    The \emph{canonical sheaf} $\omega_{X/V}$ is defined by $(-d)$-th cohomology $h^{-d}(\omega^{\bullet}_{X/V})$ of the dualizing complex, where $d$ is the integer such that $(-d)$-th cohomology is the lowest non-zero cohomology of $\omega^{\bullet}_{X/V}$, thus there exists a natural map $\omega_{X/V}[d] \to \omega^{\bullet}_{X/V}$.
    We note that if $\alpha$ is flat, then $d$ coincides with the relative dimension of $X$ over $V$.
    If $X$ is normal, there is a Weil divisor $K_{X/V}$, called a \emph{canonical divisor}, such that $\omega_{X/V} \simeq \sO_X(K_{X/V})$.
    Note that $K_{X/V}$ is uniquely determined up to linear equivalence.
    We note that $\omega_{X/V}$ satisfies the condition $(S_2)$ by \cite[Lemma 1.3]{Hartshorne07}.
    If the image of $\alpha$ is a closed point, then we denote the induced morphisms by
    \[
    \xymatrix{
    X \ar[r]^-\beta & \Spec k \ar[r]^-\theta & V. 
    }
    \]
    Since $\theta^! \sO_V[1] \simeq k$, we have $\omega^{\bullet}_{X/V}[1] \simeq \omega^{\bullet}_{X/k}$.
    In particular, we have $\omega_{X/V} \simeq \omega_{X/k}$.
    In this case, we denote $\omega^{\bullet}_{X/V}[1], \omega_{X/V}, K_{X/V}$ by $\omega^{\bullet}_{X}, \omega_X, K_X$, respectively, for simplicity.
    \item Let $V$ be a Dedekind scheme.
    We say that $(X,\Delta)$ is a \emph{log pair} over $V$ if $X$ is a quasi-projective normal $V$-variety and $\Delta$ is an effective $\Q$-Weil divisor on $X$ such that $K_{X/V}+\Delta$ is $\Q$-Cartier.
    We will use the singularities of the MMP defined in \cite[Definition 2.8]{kollar13}.
    Let $S$ be a reduced divisor on $X$ such that $\Delta':=\Delta-S$ and $S$ have no common component.
    Then we denote the \emph{different} of the pair $(X,\Delta)$ by $\mathrm{Diff}_{S^N}(\Delta')$ defined by \cite[Definition 4.2]{kollar13}, where $S^N$ is the normalization of $S$.
    We will freely use the adjunction results in \cite[Section 4]{kollar13}.
    \item Let $V$ be a Dedekind scheme.
    Let $\pi \colon Y \to X$ be a proper morphism of $V$-schemes.
    The \emph{trace map} $R\pi_*\omega^{\bullet}_{Y/V} \to \omega^{\bullet}_{X/V}$ is defined as the following.
    Applying $R\mathscr{H}om(\ \underline{\hspace{4mm}}\ ,\omega^{\bullet}_{X/V})$ to the natural map $\sO_X \to R\pi_*\sO_Y$, we have
    \[
    R\mathscr{H}om(R\pi_*\sO_Y,\omega^{\bullet}_{X/V}) \to R\mathscr{H}om(\sO_X,\omega^{\bullet}_{Y/V}) \simeq \omega^{\bullet}_{X/V}.
    \]
    The left hand side is isomorphic to
    \[
    R\pi_*R\mathscr{H}om(\sO_Y,\omega^{\bullet}_{Y/V}) \simeq R\pi_*\omega^{\bullet}_{Y/V}
    \]
    by the Grothendieck duality, thus we obtain the trace map $R\pi_*\omega^{\bullet}_{Y/V} \to \omega^{\bullet}_{X/V}$.
    If $X$ is of relative dimension $d$ and $\pi$ is an alteration,
    taking the $(-d)$-th cohomology and the composition with the natural map, we obtain the map $\pi_*\omega_{Y/V} \to \omega_{X/V}$ is also called the trace map by abuse of notation.
    Let $D$ be a $\Q$-Cartier $\Q$-Weil divisor on $X$ and
    we further assume that $X$ and $Y$ are normal.
    Then we can extend the map $\pi_*\omega_{Y/V}(\lceil \pi^*D \rceil)|_U \to \omega_{X/V}(\lceil D \rceil)|_U$ on the regular locus $U$ of $X$ to the map $\pi_*\omega_{Y/V}(\lceil \pi^*D \rceil) \to \omega_{X/V}(\lceil D \rceil)$ on $X$ because $\omega_{X/V}(\lceil D \rceil)$ satisfies the condition $(S_2)$.
    \item Let $R$ be a discrete valuation ring and $V=\Spec R$.
    $R$ is of \emph{characteristic} $(0,p)$ if the fractional field $K$ is of characteristic zero and the residue field $k$ is of characteristic $p>0$.
    Let $X$ be a $V$-variety.
    The \emph{closed fiber} of $X$ is $X \times_V \Spec k$ denoted by $X_s$ and the \emph{generic fiber} of $X$ is $X \times_V \Spec K $ denoted by $X_\eta$.
\end{itemize}

\subsection{Negativity lemma and finite generation of the Picard rank}
In this subsection, we remark that the negativity lemma holds for the general setting.
Originally, the negativity lemma follows from the Bertini's theorem and the negativity lemma for surfaces.
However, in general setting, the Bertini type theorem is much harder.
Thus, we use the alternative proof by \cite{bdff}.

\begin{prop}\label{prop:negativity lemma}\textup{(Negativity lemma)}
Let $ \pi \colon Y \to X$ be a projective morphism of Noetherian normal schemes.
Let $D$ be an $\R$-Cartier $\R$-Weil $\pi$-nef divisor on $Y$.
If $\pi_*D \leq 0$, then $D \leq 0$. 
\end{prop}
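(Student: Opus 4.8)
The plan is to reduce to the case where $X$ is affine and then to the statement for surfaces, following the approach of \cite{bdff}. First I would note that the assertion $D \le 0$ is local on $X$, so we may assume $X = \Spec A$ is affine; moreover, since $\pi$ is projective and both schemes are Noetherian and normal, we may replace $Y$ by its restriction over $X$ and assume $\pi$ is a projective morphism of affine-base normal schemes. Writing $D = \sum a_i E_i$ with the $E_i$ the prime divisors in $\Supp D$, it suffices to show each coefficient $a_i \le 0$; by the hypothesis $\pi_* D \le 0$ we already know $a_i \le 0$ for every $E_i$ that is not $\pi$-exceptional (i.e. not contracted by $\pi$), so the real content is to bound the coefficients of the $\pi$-exceptional components. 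Suppose for contradiction that some $\pi$-exceptional component has positive coefficient; let $D = D_+ - D_-$ with $D_+, D_- \ge 0$ having no common components, so that $D_+ \ne 0$ is supported on $\pi$-exceptional divisors and $\pi_* D_+ = 0$.

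Next I would intersect with a sufficiently general complete intersection curve to drop to dimension one on a surface. Concretely, choose a point $x$ in $\pi(\Supp D_+)$ and cut $X$ by general hyperplane sections through a neighborhood of $x$ (using that $X$ is excellent and we are working over the Dedekind base $V$, so that generic members are again normal by a Bertini-type argument for the base, or alternatively by a Lefschetz-type slicing available in the Noetherian normal setting as in \cite{bdff}) to reduce $\dim X$ to $1$; correspondingly $Y$ is replaced by a normal surface $Y'$, the divisor $D$ by its restriction $D'$, which is still $\pi'$-nef with $\pi'_* D' \le 0$, and $D'_+$ is still a nonzero effective divisor contracted by $\pi' \colon Y' \to X'$. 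This reduces the proposition to the case of a projective birational-type contraction from a normal surface, where the classical negativity lemma for surfaces applies: the intersection form on the $\pi'$-exceptional curves is negative definite, so a nonzero effective $\pi'$-exceptional divisor $D'_+$ cannot be $\pi'$-nef unless it meets some component of its own support negatively, contradicting $D' \cdot C \ge 0$ for all $\pi'$-exceptional curves $C$. The key point making the cutting-down legitimate is that $\pi$-nefness, effectivity, and the vanishing of the pushforward of the "positive part" are all preserved under restriction to general complete intersections.

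The main obstacle I expect is precisely the reduction step: in this generality $X$ need not be of finite type over a field, so a naive Bertini argument is unavailable, and one must instead invoke the purely algebraic slicing technique of \cite{bdff}, which produces, for a projective morphism of Noetherian normal schemes, enough curves in $Y$ passing through a prescribed exceptional locus along which the intersection numbers can be computed and along which normality of the total space is preserved (or can be arranged by passing to the normalization, which does not affect the sign of the relevant intersection numbers). Once that machinery is in place, the surface case is standard and the contradiction is immediate. I would therefore structure the proof as: (i) localize and set up the decomposition $D = D_+ - D_-$; (ii) apply the \cite{bdff} curve-selection to reduce to a normal surface over a one-dimensional base; (iii) conclude by negative-definiteness of the exceptional intersection form for surfaces.
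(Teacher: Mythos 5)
Your proposal follows the classical route (localize, split off the positive part, cut down to a surface by general hyperplane sections, conclude by negative definiteness of the exceptional intersection form), but this is precisely the route the paper deliberately avoids, and the step you flag as the ``main obstacle'' is a genuine gap rather than a technicality. In the setting of the proposition, $X$ is only a Noetherian normal scheme: it need not be of finite type over a field, nor even over the Dedekind base $V$ (which does not appear in the statement), and residue fields may be finite. There is no Bertini-type theorem here guaranteeing that general complete-intersection slices are irreducible and normal, that the coefficients of $D$ are computed correctly on the slice, and that $\pi$-nefness and $\pi_*D_+=0$ persist; the paper says explicitly that this is why the classical proof is not used. You attempt to discharge the obstacle by attributing to \cite{bdff} a ``purely algebraic slicing technique'' that produces enough curves through the exceptional locus, but that is a mischaracterization of the reference: \cite[Proposition 2.12]{bdff} is itself an alternative, Bertini-free proof of the negativity statement (an argument with relatively nef classes and nef envelopes), not a curve-selection or slicing machinery that could underwrite your reduction to surfaces. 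The paper's actual proof is a one-step import: it observes that the proof of \cite[Proposition 2.12]{bdff} works verbatim for projective morphisms of Noetherian normal schemes and deduces the negativity lemma from the resulting statement, with no reduction to dimension two at all. As written, your argument therefore rests its central step on a tool that does not exist in the cited form.

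A secondary point: the proposition is stated for an arbitrary projective morphism, not a birational one, so your dichotomy ``non-exceptional components have $a_i\le 0$ by $\pi_*D\le 0$, exceptional components are controlled by a negative-definite exceptional intersection form'' does not directly apply to components contracted by a fiber-type morphism, where no such form is available. Any repair of your approach would have to address this as well, whereas running the \cite{bdff} argument directly sidesteps both issues.
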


\begin{proof}
The proof of \cite[Proposition 2.12]{bdff} also works in our setting.
Thus, we obtain an analogous statement of \cite[Proposition 2.12]{bdff}.
The negativity lemma follows from this statement.
\end{proof}

\begin{prop}\label{prop:finite generation of the Picard rank}
Let $X \rightarrow Y$ be a proper morphism of Noetherian schemes.
Then, the relative Picard number $\rank_{\R} N^{1}(X/Y)$ is finite, where $N^{1} (X/Y)$ is defined by
\[
(\Pic (X) / \textup{numerical equivalence over $Y$}) \otimes_{\Z}\R.
\]
\end{prop}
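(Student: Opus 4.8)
The plan is to reduce, via Chow's lemma and generic flatness, to the classical finiteness of the N\'eron--Severi rank of a projective scheme over a field. First I would reduce to the case $Y=\Spec A$ affine: a line bundle on $X$ is numerically trivial over $Y$ if and only if its restriction to $X\times_Y Y_i$ is numerically trivial over $Y_i$ for every member of a finite affine open cover $\{Y_i\}$ of $Y$, because every integral curve contracted by $X\to Y$ lies in some fibre and hence in some $X\times_Y Y_i$; thus $N^1(X/Y)$ injects into $\bigoplus_i N^1(X\times_Y Y_i/Y_i)$, and we may take $Y=\Spec A$ with $A$ Noetherian. Next, by Chow's lemma choose a projective surjective $Y$-morphism $g\colon X'\to X$ with $X'$ projective over $Y$, so $X'$ embeds into some $\P^N_A$. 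Given an integral curve $C\subseteq X$ contracted by $X\to Y$, take an irreducible component $Z$ of $g^{-1}(C)$ dominating $C$; it is proper over the residue field of the image point of $C$, and slicing $Z$ by general hyperplane sections of $\P^N$ produces (Bertini keeping the slice irreducible and dominating $C$) an integral curve $C'\subseteq X'$ that is finite surjective onto $C$. As $C'$ is again contracted over $Y$, the projection formula $g^*L\cdot C'=\deg(C'/C)\,(L\cdot C)$ shows $g^*$ induces an injection $N^1(X/Y)\hookrightarrow N^1(X'/Y)$. Hence we may assume $X$ is projective over $Y=\Spec A$.

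The intersection pairing embeds $N^1(X/Y)$ into the $\R$-dual of $N_1(X/Y)$, the $\R$-span of the classes, modulo numerical equivalence, of the integral curves contracted by $X\to Y$, so it suffices to show $N_1(X/Y)$ is finite-dimensional; every such curve lies in a fibre. Arguing by Noetherian induction on the reduced closed subsets of $\Spec A$, I would use generic flatness to find, inside any given closed $W$, a dense open $U$ over which $X$ is flat; curves mapping into $W\setminus U$ contribute a finite-dimensional subspace by the inductive hypothesis applied to $X_{W\setminus U}\to W\setminus U$, and after decomposing $U$ into its finitely many irreducible components we reduce to the case $X\to U$ flat projective with $U$ integral Noetherian. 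Fix a relatively very ample $\sO_X(1)$. For each numerical polynomial $Q$, the relative Hilbert scheme $\mathrm{Hilb}^Q_{X/U}$ is projective over $U$, hence Noetherian with finitely many connected components, and on each such component the fibrewise Euler characteristic of the restriction to the universal subscheme of any line bundle on $X$ is locally constant---by flatness of the universal family and the theorem on cohomology and base change---so the class in $N_1(X/U)$ of the universal subscheme is constant there. Since the arithmetic genus of an integral curve of bounded $\sO_X(1)$-degree in a fixed projective space is bounded, this shows that for every $d$ the classes of contracted integral curves of $\sO_X(1)$-degree at most $d$ span a finite-dimensional subspace $V_d$.

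What remains---that the $V_d$ stabilise, equivalently that $\Pic(X)$ modulo numerical equivalence over $Y$ has finite rank for $X$ projective over the Noetherian affine $Y$---is the heart of the matter and the main obstacle; it is the relative form over a Noetherian base of Kleiman's finiteness theorem for the N\'eron--Severi group, and in practice I would invoke it. To re-derive it along the lines above: a contracted curve whose Hilbert-scheme component dominates the flat locus $U$ has, by the constancy just established together with Chevalley's theorem, a numerical class already represented by a $1$-cycle on the generic fibre $X_\eta$, so a final Noetherian induction---on the proper closed subsets of $U$ cut out by the components that do not dominate $U$---reduces the problem to the finiteness of $N^1(X_\eta/\kappa(\eta))$, which is Kleiman's theorem over the field $\kappa(\eta)$. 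The delicate ingredient hidden in that last induction is a uniform bound on the Picard numbers of the special fibres; after a further generic flatness reduction to fibres over a single characteristic $p\ge 0$, this follows from $\rho(X_{\bar u})\le\dim_{\Q_\ell}H^2_{\textup{\'{e}t}}(X_{\bar u},\Q_\ell)$ for a fixed $\ell\ne p$ together with the constancy of such $\ell$-adic Betti numbers in families.
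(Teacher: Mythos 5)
There is a genuine gap, and it sits exactly where you say it does. Your reductions (passing to an affine base, using Chow's lemma to replace $X$ by a scheme projective over $Y$, and dualizing so that it suffices to bound $N_1$) are the comparatively easy dévissage steps; after them you write that the remaining statement is ``the relative form over a Noetherian base of Kleiman's finiteness theorem \ldots and in practice I would invoke it.'' But that remaining statement \emph{is} the proposition being proved (for $X$ projective over a Noetherian affine base), so as written the argument is circular: you have reduced the claim to itself. This is also why the comparison with the paper is stark: the paper's entire proof is the observation that Kleiman's argument in \cite[IV, \S 4]{Kleiman66} goes through verbatim for a proper morphism of Noetherian schemes, i.e.\ it cites precisely the dévissage-plus-finiteness package that you end up needing, rather than re-proving the easy reductions and leaving the core open.

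The attempted re-derivation in your last paragraph does not close the gap. A uniform bound on the Picard numbers $\rho(X_{\bar u})$ of the fibres does not by itself bound $\rank N^1(X/Y)$: the natural map $N^1(X/Y)\to\prod_y N^1(X_y)$ is injective, but finiteness requires that numerical triviality over $Y$ be detected on \emph{finitely many} fibres (or on the generic fibre together with a specialization argument via constancy of intersection numbers in flat families); that injectivity into finitely many fibres is the actual content of Kleiman's argument and is nowhere established in your sketch. Relatedly, the step ``a contracted curve whose Hilbert-scheme component dominates $U$ has a numerical class already represented by a $1$-cycle on $X_\eta$'' does not typecheck: a curve on the generic fibre is not a curve on $X$ contracted over $U$, so it defines no class in $N_1(X/U)$ until one has built exactly the specialization map whose existence is at issue. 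Two smaller points: $\ell$-adic Betti numbers of the fibres of a proper (non-smooth) family are only constructible on the base, not constant, so you would need a further stratification there; and the Bertini slicing used to produce an integral multisection $C'\subset X'$ over a curve $C$ can fail over finite residue fields as stated --- it is easier to take the closure in $g^{-1}(C)$ of a closed point of the generic fibre of an irreducible component dominating $C$, which is an integral curve finite and surjective onto $C$, after which your projection-formula injection $N^1(X/Y)\hookrightarrow N^1(X'/Y)$ is fine.
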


\begin{proof}
The proof of \cite[IV, \S 4]{Kleiman66} also works in our setting.
\end{proof}

\subsection{Base change}
In the proof of the existence of flips, we reduce to the case where $V$ is the spectrum of a complete discrete valuation ring with an infinite residue field.
To do this, we observe properties preserved under the base change via strictly henselization and completion.
We note that $\Q$-factoriality is not preserved under the above base change.

\begin{lem}\label{lem:topologically normal}
Let $V$ be an excellent Dedekind scheme.
Let $(X,\Delta)$ be a dlt pair.
Then every component $D$ of $\lfloor \Delta \rfloor$ is normal up to universal homeomorphism if $D$ is $\Q$-Cartier.
\end{lem}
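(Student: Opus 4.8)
The plan is to reduce to the surface case via adjunction on $D$, after replacing the question by a local one. Since normality up to universal homeomorphism is a local property on $X$ and is preserved under localization, I may assume $X = \operatorname{Spec} R$ with $R$ local; in fact, since the statement concerns only the divisor $D$, I would work étale-locally / after completion of $X$ at a point of $D$, noting that universal homeomorphism descends along such faithfully flat base changes. The point of passing to a complete local base is that $D$ becomes easier to handle and that the dlt hypothesis on $(X,\Delta)$ is preserved under such base change (this is precisely the content of the base-change discussion preceding the lemma). After these reductions I am reduced to showing: if $(X,\Delta)$ is dlt, $D$ a component of $\lfloor \Delta \rfloor$ which is $\Q$-Cartier, then the normalization $\nu \colon D^N \to D$ is a universal homeomorphism.

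The key step is adjunction. By the adjunction results of \cite[Section 4]{kollar13} which the paper allows us to use freely, the different $\mathrm{Diff}_{D^N}(\Delta - D)$ is an effective $\Q$-divisor and $(D^N, \mathrm{Diff}_{D^N}(\Delta - D))$ is again dlt — in particular $D^N$ is normal. The conductor subscheme on $D$ measures the failure of $D$ to be normal; since $D$ is $\Q$-Cartier one has control on the singularities of $D$ itself, and the dlt condition forces $D$ to be \emph{seminormal} (reduced, $S_2$, with seminormal localizations in codimension one), because dlt singularities are in particular Du Bois / seminormal in the appropriate sense. A seminormal scheme whose normalization is finite and generically an isomorphism has normalization map which is a universal homeomorphism if and only if the residue field extensions at the non-normal points are purely inseparable. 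This last point is where the mixed characteristic subtlety enters: over a field of characteristic zero seminormal already implies normal for such $D$, but in mixed or positive residue characteristic the normalization can fail to be an isomorphism while still being a bijection with trivial residue extensions, exactly a universal homeomorphism. So the heart of the argument is: dlt $+$ $\Q$-Cartier $\Rightarrow$ the conductor has the property that $\nu$ is bijective on points and induces purely inseparable (indeed trivial, or at worst p-power) residue field extensions.

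I would extract this from the structure theory of dlt centers: the $\Q$-Cartier hypothesis on $D$ lets one apply adjunction twice — once to get $(D^N, \mathrm{Diff}_{D^N}(\cdot))$ dlt, and then compare $K_{D^N} + \mathrm{Diff}$ with $(K_X + \Delta)|_D$ via the different, and the difference is supported on the conductor with coefficient $1$ unless the pair is plt along that locus. Running the local computation in codimension one on $D$ (i.e. at the generic points of the conductor), one reduces to the two-dimensional local ring case, where the classification of dlt (indeed the explicit local description of the different) shows the conductor is reduced and the normalization induces a universal homeomorphism on it. The main obstacle I anticipate is precisely controlling the residue field extensions along the conductor: one must rule out that $\nu$ separates points or creates a separable residue extension, and this requires the $\Q$-Cartier hypothesis in an essential way (without it $D$ need not even be seminormal). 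I would handle it by the negativity lemma (Proposition \ref{prop:negativity lemma}) applied to a resolution, comparing $K_{D^N}$ pulled back along the resolution with the pullback of the $\Q$-Cartier divisor $D$, forcing the conductor-type discrepancies to vanish or to be of the precise form that yields a universal homeomorphism rather than an isomorphism.
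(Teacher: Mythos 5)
Your codimension-one analysis is the sound part of the proposal and is indeed common to any proof: at codimension-one points of $D$ that are log canonical centres the dlt condition forces the pair to be snc, so $D$ is regular there, while at the remaining codimension-one points the localized pair is plt, so the conductor, which enters $\mathrm{Diff}_{D^N}(\Delta-D)$ with coefficient at least one, cannot appear; hence the normalization $\nu\colon D^N\to D$ is an isomorphism in codimension one. But that is not where the content of the lemma lies, and the two devices you propose for the remaining step do not work. The assertion that the dlt condition makes $D$ ``Du Bois / seminormal'' is a characteristic-zero statement (via Kov\'acs--Koll\'ar type results) with no available analogue in positive or mixed characteristic, and it in fact points in the wrong direction: in the strong (weakly normal) sense you describe, seminormality of $D$ together with the desired conclusion that $\nu$ is a universal homeomorphism would force $\nu$ to be an isomorphism, i.e. $D$ normal --- contradicting the known examples of non-normal plt centres in residue characteristic $2$ (Cascini--Tanaka), which are precisely the reason the lemma claims only ``normal up to universal homeomorphism''. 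So that step is both unjustified and incompatible with what the lemma actually says.

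The second, decisive gap is the passage from ``isomorphism in codimension one'' to ``universal homeomorphism''. A finite birational morphism that is an isomorphism in codimension one can still separate branches, or induce a nontrivial separable residue field extension, over a point of codimension $\ge 2$ of $D$ (glueing two closed points of a normal surface gives such an example), and this failure is invisible to the discrepancy comparison you sketch: applying the negativity lemma to a resolution changes no coefficient of any divisor and cannot detect how many points of $D^N$ lie over a given codimension-two point of $D$, nor what the residue field extension is. Ruling this out, using the $\Q$-Cartier hypothesis on $D$ in an essential way, is exactly the content of the lemma; your proposal defers it to ``the structure theory of dlt centers'' and ``the classification of dlt'' in two-dimensional local rings, which again only governs codimension-one points of $D$. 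The paper does not reprove this step either: its proof consists of reducing to $D=\lfloor\Delta\rfloor$ and invoking the argument of \cite[Lemma 2.1]{hacon-witaszek20}, which is designed precisely to supply this missing universal-injectivity statement. As written, your proposal therefore has a genuine gap at its central point.
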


\begin{proof}
We may assume that $D=\lfloor \Delta \rfloor$.
The assertion follows from the same argument as in the proof of \cite[Lemma 2.1]{hacon-witaszek20}.
\end{proof}

%\begin{lem}\label{lem:topologically normal lc centre}
%Let $V$ be an excellent Dedekind scheme.
%Let $(X,\Delta)$ be a three-dimensional dlt pair.
%Assume that every component of $\lfloor \Delta \rfloor$ is $\Q$-Cartier.
%Then every log canonical center of $(X,\Delta)$ is normal up to universal homeomorphism.
%\end{lem}

%\begin{proof}
%It follows from the same argument as in \cite[Remark 3.9]{hacon-witaszek19}.
%For codimension one log canonical center, it follows from Lemma \ref{lem:topologically normal}.
%For codimension three log canonical center, it is clear.
%Thus, it is enough to show that every codimension two log canonical center is normal up to universal homeomorphism.
%Take a codimension one log canonical center $C$, it is the irreducible component of $S_1 \cap S_2$, where $S_1$ and $S_2$ are components of $\lfloor \Delta \rfloor$.
%Let $\nu \colon S^N_1 \to S_1$ is the normalization and $C'$ is the preimage of $C$.
%$(X,\Delta)$ is simple normal crossing on the generic point of $C$, $C' \to C$ is finite and birational.
%By adjunction, $(S^N,D)$ is also dlt, where $D$ is the different with respect to $(X,\Delta)$.
%Since $C'$ is contained in $\lfloor D \rfloor$, $C'$ is normal.
%Thus, $\nu|_{C'} \colon C' \to C$ is normalization which is universal homeomorphism.
%\end{proof}

\begin{lem}\label{lem:base change}
Let $V$ be the spectrum of an excellent discrete valuation ring.
Let $\iota \colon V' \to V$ be the completion of the strict henselization of $V$.
Let $(X,\Delta)$ be a dlt pair over $V$ such that the dimension of $X$ is less than or equal to three.
Let $S$ be a component of $\lfloor \Delta \rfloor$ such that $S$ is $\Q$-Cartier.
Let $(X',\Delta')$ and $S'$ be the base change of $(X,\Delta)$ and $S$ via $\iota$, respectively.
Then $(X',\Delta')$ is dlt and $S'$ is locally irreducible.
In particular, every irreducible component of $S'$ is $\Q$-Cartier.
\end{lem}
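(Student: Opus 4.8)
The plan is to exploit that $\iota$ is a \emph{regular morphism}. Indeed, $\iota$ factors as the completion $V'\to V^{\mathrm{sh}}$ of the strict henselization $V^{\mathrm{sh}}$ (a regular morphism because the excellent local ring $V^{\mathrm{sh}}$ is a $G$-ring) followed by the ind-\'{e}tale morphism $V^{\mathrm{sh}}\to V$; hence every base change of $\iota$, in particular $\iota_X\colon X'\to X$ and $T\times_V V'\to T$ for any $X$-scheme $T$, is regular. Consequently $X'$ is normal, and by compatibility of the relative dualizing complex with flat base change (using that $\omega_{X/V}$ is $(S_2)$) one gets $\omega_{X'/V'}\cong\iota_X^*\omega_{X/V}$, so that $K_{X'/V'}=\iota_X^*K_{X/V}$ and $K_{X'/V'}+\Delta'=\iota_X^*(K_{X/V}+\Delta)$ is $\Q$-Cartier, with the same boundary coefficients (lying in $[0,1]$) on each connected component.

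To prove that $(X',\Delta')$ is dlt, I would pull back a dlt-realizing log resolution. Since $\dim X\le 3$, there is a log resolution $f\colon Y\to X$ of $(X,\Delta)$ with $\Exc(f)\cup\Supp(f^{-1}_*\Delta)$ snc and $a(E,X,\Delta)>-1$ for every $f$-exceptional prime divisor $E$. Form $f'\colon Y':=Y\times_V V'\to X'$. Then $Y'$ is regular, $f'$ is proper, and $f'$ is an isomorphism over the preimage under $\iota_X$ of the isomorphism locus of $f$, which is dense in $X'$ (it contains every generic point of $X'$ by flatness of $\iota_X$); so $f'$ is birational. Since $\iota_Y$ is regular, the pullback of each prime divisor occurring above is reduced with smooth---hence irreducible---connected components, so $\Exc(f')\cup\Supp((f')^{-1}_*\Delta')$ is snc and $f'$ is a log resolution of $(X',\Delta')$. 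Pulling back the identity $K_{Y/V}=f^*(K_{X/V}+\Delta)+\sum_E a(E,X,\Delta)\,E-f^{-1}_*\Delta$ along the flat morphism $\iota_Y$ shows that every prime component $E'$ of $\iota_Y^*E$ satisfies $a(E',X',\Delta')=a(E,X,\Delta)>-1$; hence $f'$ realizes dlt-ness of $(X',\Delta')$ on each connected component. (Alternatively, one can argue directly with the valuative characterization of dlt, at the cost of a covering-trick comparison of divisors over $X'$ with divisors over $X$.)

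For the assertions about $S'$: since $S$ is a $\Q$-Cartier component of $\lfloor\Delta\rfloor$, Lemma~\ref{lem:topologically normal} gives that the normalization $S^N\to S$ is a universal homeomorphism. Base-changing by $\iota$, the morphism $S^N\times_V V'\to S'$ is still a universal homeomorphism, while $S^N\times_V V'$ is normal---being a regular base change of the normal scheme $S^N$---hence locally irreducible; since a universal homeomorphism is a homeomorphism of underlying topological spaces, $S'$ is locally irreducible. Finally, $S'=\iota_X^*S$ is a reduced $\Q$-Cartier divisor on $X'$ (pullback of the $\Q$-Cartier divisor $S$; reducedness is preserved by regular base change), and local irreducibility forces its irreducible components to coincide with its connected components, hence to be pairwise disjoint; checking $\Q$-Cartierness at a point of $X'$---through which at most one such component passes---shows that every irreducible component of $S'$ is $\Q$-Cartier.

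The main obstacle is the dlt step: it needs a dlt-realizing log resolution to exist (this is exactly where the hypothesis $\dim X\le 3$ enters, via resolution of singularities in mixed characteristic) and it needs a regular base change to take snc divisors to snc divisors---which holds because the connected components of the pullback of a smooth prime divisor under a regular morphism remain smooth, so no new branches appear. The remaining points are formal, given Lemma~\ref{lem:topologically normal} together with the stability of normality, reducedness, and the relative dualizing complex under regular base change.
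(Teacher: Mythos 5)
Your proposal is correct and takes essentially the same route as the paper's own proof: both reduce the dlt statement to the fact that the base change along $\iota$ (formally \'etale followed by completion, i.e.\ a regular morphism) is compatible with the relative canonical sheaf and with log resolutions, which exist in dimension $\leq 3$ by Cossart--Jannsen--Saito and Cossart--Piltant, and both obtain local irreducibility of $S'$ by base-changing the universal homeomorphism $S^N \to S$ from Lemma \ref{lem:topologically normal} and then deduce $\Q$-Cartierness of each irreducible component from local irreducibility. Your argument simply spells out, via pulling back a discrepancy-positive log resolution, what the paper compresses into the sentence that ``being dlt is characterized by using log resolution.''
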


\begin{proof}
We note that $\iota$ is the composition of formally \'{e}tale morphism and completion.
%By \cite[Lemma 2.5]{tanaka18}, relative canonical sheaf is compatible with taking base change.
Via both base changes, being dlt and normality are preserved, we note that a strict henselization of an excellent local ring is also excellent \cite[Crollary 5.6]{greco}.
Indeed, being dlt is characterized by using log resolution, which exists by \cite{cjs} and \cite{cossart-piltant}.
Furthermore, by \cite[Lemma 2.5]{tanaka18}, relative canonical sheaf is compatible with such base changes.
By Lemma \ref{lem:topologically normal}, the normalization $S^N \to S$ is a universal homeomorphism.
By the base change via $\iota$, we have $(S^N)' \to S'$, then $(S^N)'$ is also normal and this map is a universal homeomorphism.
Since normal schemes are locally irreducible and locally irreducible is preserved by homeomorphisms, $S'$ is also locally irreducible.
Since $S$ is $\Q$-Cartier, $S'$ is also $\Q$-Cartier.
Being $\Q$-Cartier is a local property and $S'$ is locally irreducible, so every irreducible component is also $\Q$-Cartier.
\end{proof}

\subsection{Alterations}

\begin{prop}\label{prop:extension of alterations}
Let $V$ be an excellent Dedekind scheme.
Let $X$ be a $V$-variety and $x \in X$ be a point.
Let $f_x \colon U' \to U:=\Spec \sO_{X,x}$ be an alteration from an integral scheme.
Then there exists an alteration $f \colon X' \to X$ of $V$-varieties such that 
we have the following Cartesian diagram:
\[
\xymatrix{
U' \ar[r] \ar[d] \ar@{}[rd]|\Box & U \ar[d] \\
X' \ar[r] & X.
}
\]
\end{prop}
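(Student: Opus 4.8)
The plan is to spread the alteration $f_x$ out over an affine open neighborhood of $x$ in $X$ and then compactify it by taking a scheme-theoretic closure inside a projective space over $X$.

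First I would use that $f_x$ is projective to fix a closed immersion $U' \hookrightarrow \P^n_U = \P^n_{\sO_{X,x}}$ over $U$, cut out by a homogeneous ideal $I$ of $\sO_{X,x}[T_0,\dots,T_n]$, which is finitely generated since $\sO_{X,x}$ is Noetherian. Writing $\sO_{X,x} = \varinjlim_{x\in W}\sO_X(W)$ as the filtered colimit over affine open neighborhoods $W$ of $x$, the standard spreading-out techniques (EGA IV, \S 8) give an affine open $W \ni x$ and a homogeneous ideal $I_W \subseteq \sO_X(W)[T_0,\dots,T_n]$ generating $I$; the corresponding closed subscheme $Z_W \subseteq \P^n_W$ is then projective over $W$ and satisfies $Z_W \times_W U = U'$ compatibly with $f_x$ (here $U \to X$ factors through $W$ because $\sO_{X,x} = \sO_{W,x}$).

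Next I would regard $\P^n_W$ as the open subscheme of $\P^n_X$ lying over $W$ and let $\var{Z}$ be the scheme-theoretic closure of $Z_W$ in $\P^n_X$, so that $\var{Z} \to X$ is projective. Since $Z_W$ is already closed in the open subscheme $\P^n_W$, scheme-theoretic closure commutes with restriction there, giving $\var{Z}\times_X W = Z_W$ and hence, as $U\to X$ factors through $W$, $\var{Z}\times_X U = Z_W\times_W U = U'$, still compatibly with $f_x$. To obtain an integral source, let $\xi'$ be the generic point of $U'$: as $U$ is integral with generic point lying over the generic point $\eta$ of $X$ and $f_x$ is surjective and generically finite, $\xi'$ lies over $\eta$ with $[\kappa(\xi'):K(X)]<\infty$. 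I would then take $X'$ to be the integral closed subscheme $\overline{\{\xi'\}}$ of $\var{Z}$ (with reduced structure); it is projective over $X$, dominant --- hence surjective, being proper --- and generically finite, so $f\colon X'\to X$ is an alteration, and $X'$ is closed in $\P^n_X$, hence separated and of finite type over $V$, i.e.\ a $V$-variety. Finally, $X'\times_X U$ is a closed subscheme of $\var{Z}\times_X U = U'$ whose support contains $\xi'$, so it equals $U'$ because $U'$ is integral; thus the square in the statement is Cartesian, and $U' = X'\times_X U \to U$ is identified with $f_x$ by construction.

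The main thing to watch --- rather than any deep obstacle --- is the transfer of the Cartesian property through the two reductions: the spreading-out step, and the passage to the closure followed by the passage to the irreducible component. Both reduce to two elementary facts: scheme-theoretic closure commutes with restriction to an open subscheme containing the given closed subscheme, and on an integral scheme the only closed subscheme whose support contains the generic point is the whole scheme. The reason the closure has to be taken inside the \emph{relative} projective space $\P^n_X$ (and not over $V$) is that $X$ itself is only assumed to be a $V$-variety, not quasi-projective over $V$.
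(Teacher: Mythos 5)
Your proposal is correct and follows essentially the same route as the paper's proof: embed $U'$ in a projective space over $U$, spread the embedding out over an open neighbourhood of $x$, and take the closure inside $\P^n_X$ to get a projective, generically finite, surjective $X' \to X$ restricting to $f_x$ over $U$. Your version merely supplies the spreading-out and Cartesian-square details more explicitly (and passes through $\overline{\{\xi'\}}$ to ensure integrality, where the paper closes up the integral $U'$ directly), which is a harmless refinement rather than a different argument.
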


\begin{proof}
Since $f_x$ is projective, $U'$ is embedded in some projective space $\P^N_U$.
It extends to some open subset of $X$, thus we may assume that $U$ is an open subset of $X$.
We denote the closure of $U'$ in $\P^N_X$ by $X'$.
Then the natural morphism $X' \to \P^N_X \to X$ is projective.
Since $U'$ is integral and $\dim U'= \dim X$, the morphism $X' \to X$ is also an alteration.
By the construction, this satisfies the desired conditions.
\end{proof}

\begin{prop}\label{prop:system of alterations}
Let $V$ be an excellent Dedekind scheme.
Let $X$ be a $V$-variety.
Let $f_i \colon Y_i \to X$ be alterations of $V$-varieties for $i=1, \ldots r$.
Then there exists an alteration $f \colon Y \to X$ of $V$-varieties which factors through $f_i$ for all $i$.
\end{prop}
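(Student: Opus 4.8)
The plan is to realize $Y$ as the reduced closure of a suitably chosen point inside the $X$-fiber product of the $Y_i$. Write $\eta$ for the generic point of $X$ and $k(\eta)$ for its function field, and set $Z := Y_1 \times_X Y_2 \times_X \cdots \times_X Y_r$. First I would recall that a fiber product of projective morphisms is projective, so $Z \to X$ is projective. Next I would analyze the generic fiber: since each $f_i$ is surjective and generically finite, the scheme-theoretic fiber $(Y_i)_\eta$ over $\eta$ is a nonempty finite $k(\eta)$-scheme, say $\Spec A_i$ with $A_i$ a nonzero finite-dimensional $k(\eta)$-algebra. Hence $Z_\eta = \Spec\bigl(A_1 \otimes_{k(\eta)} \cdots \otimes_{k(\eta)} A_r\bigr)$, and a tensor product of nonzero finite-dimensional vector spaces over a field is nonzero, so $Z_\eta$ is nonempty and zero-dimensional.

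Then I would pick any point $w \in Z_\eta \subset Z$ and let $Y := \overline{\{w\}}$ be the corresponding integral closed subscheme of $Z$ with its reduced structure. By construction $w$ lies over $\eta$, so the composite $f \colon Y \to X$ is dominant and its generic fiber is a closed subscheme of the finite scheme $Z_\eta$, hence $f$ is generically finite. Moreover $Y \hookrightarrow Z$ is a closed immersion and $Z \to X$ is projective, so $f$ is projective, therefore closed; a closed dominant morphism is surjective. Thus $f$ is an alteration, and $Y$ is a $V$-variety, being integral, separated, and of finite type over $V$. Finally, composing $Y \hookrightarrow Z$ with the $i$-th projection $\pr_i \colon Z \to Y_i$ yields morphisms $g_i \colon Y \to Y_i$ with $f_i \circ g_i = f$, which is precisely the required factorization.

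The only genuinely delicate point is the nonemptiness (and finiteness) of $Z_\eta$, i.e.\ that the fiber product really does possess an irreducible component dominating $X$ with finite generic fiber; once that is in hand, everything else is formal, relying only on stability of projectivity under fiber products and on closedness of projective morphisms. One could alternatively proceed by induction on $r$, at each stage replacing $X$ by a dominating component of the previous fiber product and factoring through $Y_r$, but the direct fiber-product argument above seems cleanest and I would present it that way.
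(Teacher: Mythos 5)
Your proof is correct and is essentially the paper's argument: the paper also forms the fiber product over $X$ (reducing to $r=2$ by induction) and takes an irreducible component dominating $X$, which is exactly what your closure $\overline{\{w\}}$ of a point of the nonempty finite generic fiber is. Your version just does all $r$ factors at once and spells out the nonemptiness and generic finiteness that the paper leaves implicit.
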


\begin{proof}
By induction on $i$, we may assume that $i=2$.
We denote the fiber product of $Y_1$ and $Y_2$ over $X$ by $Y':=Y_1 \times_X Y_2$.
Take an irreducible component $Y$ of $Y'$ which dominates $X$, then $f \colon Y \to X$ is an alteration of $V$-varieties and $f$ factors through $f_i$.
\end{proof}

%\begin{prop}
%Let $V$ be an excellent Dedekind scheme.
%Let $X$ be a $V$-variety.
%Let $D$ be a Cartier divisor on $X$.
%Then for every positive integer $n$, there exists an alteration $f \colon Y \to X$ from a $V$-variety such that  $f^*D=nD'$ for some Cartier divisor $D'$ on $Y$.
%\end{prop}

%\begin{proof}
%Combining Proposition \ref{prop:extension of alterations} and Proposition \ref{prop:system of alterations}, we may assume that $D =\mathrm{div}(\phi)$ for some non-zero rational section $0 \neq \phi \in K(X)$.
%Taking the normalization of $X$ in $K(X)[\phi^{1/n}]$, denoted by $f' \colon Y' \to X$, we have $f^*D=n \mathrm{div}(\phi^{1/n})$. 
%\end{proof}

\begin{prop}\label{prop:extension of alterations of closed variety}
Let $V$ be an excellent Dedekind scheme.
Let $X$ be a $V$-variety and $S_1, \ldots, S_r$ be  closed sub-$V$-varieties of $X$.
Let $f_i \colon T_i \to S_i$ be an alteration of $V$-varieties.
Then there exists an alteration $g \colon Y \to X$ of  $V$-varieties and closed sub-$V$-varieties $S_{Y,1}, \ldots, S_{Y,r}$ of $Y$ with $g(S_{Y,i})=S_i$ for all $i$ such that the induced morphism $g_{S,i} \colon S_{Y,i} \to S_i$ factors through $f_i$.
\end{prop}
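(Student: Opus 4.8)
The plan is to reduce the statement, in two easy steps, to the case $r=1$ with $S_1$ a prime divisor, and then to build $Y$ as a complete intersection inside a relative projective space over $X$. First, the reduction to $r=1$ is formal: granting that case, apply it to each index $i$ to get alterations $g_i\colon Y_i\to X$ and closed subvarieties $S_i'\subseteq Y_i$ with $g_i(S_i')=S_i$ and $S_i'\to S_i$ factoring through $f_i$. By Proposition~\ref{prop:system of alterations} there is an alteration $g\colon Y\to X$ factoring as $Y\xrightarrow{h_i}Y_i\xrightarrow{g_i}X$ for every $i$, and each $h_i$ is surjective (its image is closed and contains the generic point of $Y_i$). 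For each $i$ the scheme-theoretic preimage $h_i^{-1}(S_i')$ surjects onto $S_i$, so it has an irreducible component dominating $S_i$; taking that component with its reduced structure as $S_{Y,i}$ and composing $S_{Y,i}\to S_i'\to T_i$ with $f_i$ (using $g=g_i\circ h_i$) gives the required data.

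\emph{Reduction to a prime divisor.} Now fix $r=1$, write $S=S_1$, $f=f_1\colon T=T_1\to S$, and assume $S\subsetneq X$ (if $S=X$ take $Y=T$). Let $\sigma\colon X'=\mathrm{Bl}_SX\to X$ be the blow-up of $S$; it is projective and birational, hence an alteration, and the exceptional divisor $E=\sigma^{-1}(S)$ is an effective Cartier divisor, so each of its components has codimension one in $X'$. As $\sigma|_E\colon E\to S$ is surjective and $S$ is irreducible, choose a component $E_0$ of $E_{\mathrm{red}}$ dominating $S$; then $E_0$ is a prime divisor on $X'$ and $E_0\to S$ is projective and surjective. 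Let $T'$ be a component of $T\times_SE_0$ dominating $E_0$; one checks that $T'\to E_0$ is again an alteration and that $T'\to S$ factors through $f$. Granting the prime-divisor case for $(X',E_0,T')$ and composing the resulting alteration with $\sigma$, then chasing the factorizations, settles the case $r=1$.

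\emph{The prime-divisor case.} Assume in addition that $S$ is a prime divisor, so $\dim T=\dim S=\dim X-1$. Since $T$ is projective over $S$, fix a closed immersion $T\hookrightarrow\P^M_S$ over $S$; as $S$ is closed in $X$, the closed immersion $\P^M_S=\P^M_X\times_XS\hookrightarrow\P^M_X$ realizes $T$ as a closed subvariety of $\P^M_X$, and I write $\mathcal{I}$ for its ideal sheaf. Fix a line bundle $\sO(1)$ on $\P^M_X$ ample over $V$; for $n\gg 0$ the sheaf $\mathcal{I}(n)=\mathcal{I}\otimes\sO(1)^{\otimes n}$ is globally generated, so the base locus of the linear system $|\mathcal{I}(n)|$, and of its restriction to $\P^M_S$, is exactly $T$. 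Choose general $s_1,\dots,s_M\in H^0(\P^M_X,\mathcal{I}(n))$ and set $\bar Y=V(s_1,\dots,s_M)\subseteq\P^M_X$. Then $T\subseteq\bar Y$; every component of $\bar Y$ has dimension $\geq\dim X$ by Krull's height theorem; and for general $s_j$ one has $\dim\bar Y=\dim X$ and $\dim(\bar Y\cap\P^M_S)=\dim S=\dim X-1$. Let $Y$ be the reduced structure on an irreducible component of $\bar Y$ containing $T$. Then $\dim Y=\dim X$, and $Y$ dominates $X$: otherwise $Y$ would lie over a proper closed subset of $X$ containing $S$, hence (as $S$ is a prime divisor) over $S$ itself, forcing $Y\subseteq\P^M_S$ and $\dim Y\leq\dim X-1$, absurd. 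Consequently $Y\to X$ is projective (being the restriction of $\P^M_X\to X$), surjective, and generically finite, i.e.\ an alteration of $V$-varieties; on the closed subvariety $T\subseteq Y$ it restricts to the given $T\to S\hookrightarrow X$, so one takes $S_{Y,1}=T$.

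\emph{Main obstacle.} The content is entirely in the last step: one must guarantee that the complete intersection $\bar Y$ has a component which simultaneously contains $T$ and dominates $X$, i.e.\ that cutting $\P^M_X$ down by general hypersurfaces through $T$ neither stabilizes along $T$ nor produces components lying over $S$. This is precisely why the preliminary blow-up is performed first: it puts us in the borderline situation $\dim T=\dim X-1$, strictly below the dimension $\dim X$ that Krull's theorem forces on every component of $\bar Y$, so $T$ itself cannot be a component; and the generality of the $s_j$, legitimate because $\mathcal{I}(n)$ is globally generated with base locus $T$, is what pins down $\dim\bar Y$ and $\dim(\bar Y\cap\P^M_S)$ and excludes components of $\bar Y$ living over $S$. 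The remaining points — the Bertini-type dimension counts, and the verification that $X'$, $E_0$, $T'$, the refinement $Y$, and the preimages $h_i^{-1}(S_i')$ are genuine $V$-varieties carrying the asserted factorizations — are routine.
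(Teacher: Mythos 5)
Your reduction of the general case to $r=1$ (via Proposition \ref{prop:system of alterations}) matches the paper, but the prime-divisor construction, which you yourself identify as carrying all the content, has a genuine gap: the ``general sections'' step is not available over an excellent Dedekind base. The space $H^0(\P^M_X,\mathcal{I}(n))$ is merely a module over $\Gamma(X,\sO_X)$; there is no base field, the residue fields in play may be finite, and ``a general member'' has no meaning here. The substitute you would need — at each of the $M$ cutting steps, a single section of $\mathcal{I}(n)$ not vanishing identically on any of the finitely many bad components (both in $\P^M_X$ and in $\P^M_S$) — is exactly the kind of avoidance that the usual ``an infinite-dimensional space is not a finite union of proper subspaces over an infinite field'' argument does not give: over finite residue fields a module can very well be a finite union of proper submodules. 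The paper explicitly flags Bertini-type arguments as the hard point in this setting (see the discussion before Proposition \ref{prop:negativity lemma}), so this cannot be filed under ``routine.'' It is repairable — e.g.\ by homogeneous prime avoidance, letting the $M$ hypersurfaces have different degrees, which Krull's height theorem does not mind — but some such argument must actually be given. A smaller symptom of the same arithmetic-base issues: your equalities $\dim T=\dim S=\dim X-1$ can fail (the prime divisor $V(pt-1)\subset\A^1_{\Z_p}$ has dimension $0$); only $\dim S\leq\dim X-1$ is true and needed, so this is harmless, but the bookkeeping should be stated correctly.

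A second gap concerns hypotheses you use but are not given: a $V$-variety here is only integral, separated and of finite type over $V$, so the closed immersion $T\hookrightarrow\P^M_S$ (H-projectivity of the alteration, rather than an embedding into some $\P(\mathcal{E})$) and the existence of an $\sO(1)$ on $\P^M_X$ that is ample enough to make $\mathcal{I}(n)$ globally generated for $n\gg 0$ are not available for general $X$; in effect you prove the statement only when $X$ carries an ample invertible sheaf (enough for the paper's applications, where $X$ is quasi-projective over an affine, but not for the proposition as stated). The paper's own proof avoids both problems and is more elementary: one normalizes $X$ in a finite extension of $K(X)$ chosen so that $K(T)$ appears as a residue field of a point lying over the generic point of $S$, obtaining a finite cover $X'\to X$ with a closed subvariety $S'$ mapping onto $S$ and a dominant rational map $S'\dashrightarrow T$ over $S$; one then blows up an ideal of $\sO_{X'}$ restricting to the indeterminacy ideal on $S'$, and the strict transform of $S'$ gives the required $S_{Y,1}$. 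No projective embeddings, ample sheaves, or general-position choices enter, which is why it works for arbitrary $V$-varieties.
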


\begin{proof}
%By Proposition \ref{prop:system of alterations}, we may assume that $i=1$.
First, we consider the case of $r=1$.
We set $f:=f_1$, $S:=S_1$, and $T:=T_1$.
We denote the generic point of $S$ by $x$, then $\sO_{X,x}$ is a local domain with residue field $K(S)$.
Since $K(S) \subset K(T)$ is a finite extension of fields, it is a finite sequence of simple extensions.
Thus we have a finite extension of domains $\sO_{X,x} \to A$ such that the residue field of some maximal ideal of $A$ coincides with $K(T)$.
It extends to a finite surjective morphism $\pi \colon X' \to X$ such that it factors through $\sO_{X,x} \to A$ after localizing at $x$ by taking the normalization of $X$ in $K(A)$.
In particular, there exists a closed sub-$V$-variety $S'$ in $X'$ such that $\pi(S')=S$ and 
there exists a rational dominant map $S' \dashrightarrow T$ over $S$.
Then there exists a birational projective morphism $S'' \to S'$ with the blow-up ideal $\mathscr{I}_{S'}$ which eliminates the indeterminacy of $S' \dashrightarrow T$.
We take an ideal $\mathscr{I}$ of $\sO_X'$ with $\mathscr{I} \cdot \sO_S'=\mathscr{I}_{S'}$, and we denote the blow-up of $X'$ in the ideal $\mathscr{I}$ by $Y$.
Then $g \colon Y \to X$ satisfies the desired conditions.

Next, we consider the general case.
By the above case, we can find alterations $g_i \colon Y_i \to X$ and closed sub-$V$-schemes $S_{Y_i}$ of $Y_i$ such that $g(S_{Y_i})=S_i$ and $g_{Y_i} \colon S_{Y_i} \to S_i$ factors through $f_i$.
By Proposition \ref{prop:system of alterations}, there exists $g \colon Y \to X$ which factors through $g_i \colon Y_i \to X$.
Since $Y \to Y_i$ is surjective, there exists closed sub-$V$-variety $S_{Y,i}$ of $Y$ with $g(S_{Y,i})=S_{Y_i}$.
Therefore, $g$ and $S_{Y_i}$ satisfy the desired conditions.
\end{proof}

\begin{prop}\label{prop:index one cover1}
Let $V$ be an excellent Dedekind scheme.
Let $X$ be a $V$-variety and $D$ be a Cartier divisor on $X$.
Then for any integer $n$, there exists a finite cover $f \colon Y \to X$ such that $f^*D=nD'$ for some Cartier divisor $D'$ on $Y$.
\end{prop}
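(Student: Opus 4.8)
The plan is to produce a finite cover of $X$ over which the line bundle $\sO_X(D)$ becomes the $n$-th power of a line bundle; granting this, $f^*D$ is linearly equivalent to $n$ times a Cartier divisor, and one more cover promotes this to an equality of divisors. The mechanism for turning a line bundle into an $n$-th power after a finite cover is the $n$-th power endomorphism $\phi_n\colon\mathbb{P}^N_V\to\mathbb{P}^N_V$, $[x_0:\cdots:x_N]\mapsto[x_0^n:\cdots:x_N^n]$, which is finite and surjective and satisfies $\phi_n^*\sO(1)\cong\sO(1)^{\otimes n}$.

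First I would fix an ample line bundle $A$ on $X$; this uses that $X$ is quasi-projective over $V$, the situation occurring in the applications. For $m\gg 0$ both $A^{\otimes m}$ and $\sO_X(D)\otimes A^{\otimes m}$ are globally generated, so, setting $L_1:=\sO_X(D)\otimes A^{\otimes m}$ and $L_2:=A^{\otimes m}$, we get $\sO_X(D)\cong L_1\otimes L_2^{-1}$ with $L_1,L_2$ globally generated. Their complete linear systems give morphisms $j_i\colon X\to\mathbb{P}^{N_i}_V$ with $j_i^*\sO(1)\cong L_i$, which combine into $j\colon X\to\mathbb{P}:=\mathbb{P}^{N_1}_V\times_V\mathbb{P}^{N_2}_V$.

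Then I would form $W:=X\times_{j,\,\mathbb{P},\,\Phi}\mathbb{P}$, where $\Phi\colon\mathbb{P}\to\mathbb{P}$ is the product of the two $n$-th power maps; since $\Phi$ is finite and surjective, its base change $p\colon W\to X$ is finite and surjective, and with $q\colon W\to\mathbb{P}$ the second projection we have $j\circ p=\Phi\circ q$, hence $p^*L_i=p^*j^*\pr_i^*\sO(1)=q^*\Phi^*\pr_i^*\sO(1)=(q^*\pr_i^*\sO(1))^{\otimes n}$ for $i=1,2$, so $p^*\sO_X(D)\cong(M_1\otimes M_2^{-1})^{\otimes n}$ with $M_i:=q^*\pr_i^*\sO(1)$. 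Replace $W$ by an irreducible component $W^\circ$ with $p(W^\circ)=X$, endowed with its reduced structure; then $W^\circ$ is a $V$-variety, $p|_{W^\circ}$ is finite and surjective, and $(p|_{W^\circ})^*\sO_X(D)$ is an $n$-th power, say isomorphic to $\sO_{W^\circ}(nD')$ for a Cartier divisor $D'$, so that $(p|_{W^\circ})^*D\sim nD'$. Finally, write the difference as $\operatorname{div}(h)$ with $h\in K(W^\circ)^\times$ and let $g\colon Y\to W^\circ$ be the normalization of $W^\circ$ in a finite extension of $K(W^\circ)$ containing an $n$-th root $h^{1/n}$ of $h$, which is finite by excellence; on $Y$ one has $\operatorname{div}(h)=n\operatorname{div}(h^{1/n})$, so $f:=(p|_{W^\circ})\circ g\colon Y\to X$ is a finite cover with $f^*D=n\bigl(g^*D'+\operatorname{div}(h^{1/n})\bigr)$, the parenthesized divisor being Cartier.

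The routine steps are extracting from $W$ an irreducible component that is a variety dominating $X$ (immediate since $p$ is finite and surjective and $X$ is irreducible) and the pullback and $n$-th-root computations. The hard part, and the only genuinely load-bearing hypothesis, is the existence of an ample line bundle on $X$, i.e. quasi-projectivity over $V$: without it one would have to kill by a finite cover the obstruction to $\sO_X(D)$ being an $n$-th power (a class in an fppf $H^2$ with $\mu_n$-coefficients), which is much more delicate. If $f^*D\sim nD'$ rather than $f^*D=nD'$ suffices, the final normalization step may be dropped.
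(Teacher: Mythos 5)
Your proof is correct, but it takes a genuinely different route from the paper's. The paper's own argument is a two-line Kummer-cover construction: it reduces, via Proposition \ref{prop:extension of alterations} and shrinking, to the case where $X$ is affine and $D=\mathrm{div}(\phi)$ is principal for some $\phi\in K(X)^{\times}$, and then takes $Y$ to be the normalization of $X$ in $K(X)[\phi^{1/n}]$, on which $f^{*}D=n\,\mathrm{div}(\phi^{1/n})$; no ample line bundle enters, so nothing beyond the paper's notion of a $V$-variety (integral, separated, of finite type over $V$) is used, the price being that the exact divisibility is produced on the locus where $D$ is principal and then carried back by the extension lemma. Your argument is instead a Bloch--Gieseker-type covering trick: write $\sO_X(D)\cong L_1\otimes L_2^{-1}$ with $L_1,L_2$ globally generated after twisting by a power of an ample $A$, pull back along the $n$-th power endomorphism of $\P^{N_1}_V\times_V\P^{N_2}_V$ so that $\sO_X(D)$ becomes an $n$-th power in the Picard group of an irreducible component of the fibre product, and then finish with precisely the paper's mechanism (normalization in a Kummer extension extracting $h^{1/n}$) to upgrade $f^{*}D\sim nD'$ to an equality of divisors. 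What your route buys is a single finite cover of all of $X$ on which the divisibility holds globally, with no shrinking at all; what it costs is the load-bearing hypothesis you yourself flag, namely the existence of an ample line bundle, i.e.\ quasi-projectivity of $X$ over $V$, which is not part of the stated hypotheses (a $V$-variety in this paper need not be quasi-projective), although it does hold at every place the proposition is invoked, since there it is applied to alterations of quasi-projective log pairs and to closed subvarieties thereof. Two small points to tighten: since $X$ is not proper, the complete linear systems of $L_1,L_2$ may be infinite-dimensional, so one should choose finitely many generating sections (harmless, as $X$ is Noetherian); and it is worth observing that the $n$-th power map on projective space remains finite and surjective when a residue characteristic divides $n$, so the construction is unaffected in mixed characteristic.
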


\begin{proof}
By Proposition \ref{prop:extension of alterations}, we may assume that $D =\mathrm{div}(\phi)$ for some non-zero section $0 \neq \phi \in K(X)$ and $X$ is affine by shrinking $X$.
Let $f \colon Y \to X$ be the normalization of $X$ in $K(X)[\phi^{1/n}]$, then $g^*D=n \mathrm{div}(\phi^{1/n})$.
\end{proof}

\begin{prop}\label{prop:index one cover}
Let $V$ be an excellent Dedekind scheme.
Let $X$ be a $V$-variety and $S \subset X$ be a closed $V$-variety.
Let $D$ be a Cartier divisor on $S$.
Then for every positive integer $n$, there exists an alteration $f \colon Y \to X$ from a $V$-variety and closed sub-$V$-variety $S_Y$ of $Y$ with $f(S_Y)=S$ such that  $f_S^*D=nD'$ for some Cartier divisor $D'$ on $S_Y$, where $f_S \colon S_Y \to S$ is the induced morphism.
\end{prop}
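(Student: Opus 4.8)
The plan is to reduce this to the two preceding results: Proposition \ref{prop:index one cover1}, which provides an index-one cover for a Cartier divisor on a single $V$-variety, and Proposition \ref{prop:extension of alterations of closed variety}, which propagates an alteration of a closed subvariety to an alteration of the ambient variety. Accordingly the proof should be essentially a concatenation of these two propositions, with no new geometric input.

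First I would apply Proposition \ref{prop:index one cover1} to the $V$-variety $S$ together with the Cartier divisor $D$ and the integer $n$, obtaining a finite surjective morphism $g \colon T \to S$ with $g^*D = nD'$ for some Cartier divisor $D'$ on $T$. After replacing $T$ by an irreducible component dominating $S$ (such a component is again finite and surjective over $S$, since $T$ is a disjoint union of the normalizations of $S$ in the factor fields of $K(S)[\phi^{1/n}]$), we may assume $T$ is a $V$-variety, so that $g$ is an alteration of $V$-varieties.

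Next I would apply Proposition \ref{prop:extension of alterations of closed variety} with $r = 1$, $S_1 = S$, and $f_1 = g \colon T \to S$. This yields an alteration $f \colon Y \to X$ of $V$-varieties and a closed sub-$V$-variety $S_Y \subset Y$ with $f(S_Y) = S$ such that the induced morphism $f_S \colon S_Y \to S$ factors as $S_Y \xrightarrow{\,h\,} T \xrightarrow{\,g\,} S$. Since $g$ is finite with $T$ irreducible and $\dim T = \dim S$, and since the image of $h$ is a closed subset of $T$ surjecting onto $S$ (because $f(S_Y) = S$), a dimension count forces $h$ to be surjective, hence dominant; therefore Cartier divisors pull back along $h$. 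Setting $D'' := h^*D'$, a Cartier divisor on $S_Y$, we obtain $f_S^*D = h^*g^*D = h^*(nD') = nD''$, which is precisely the assertion.

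I do not expect a genuine obstacle here: the content is already packaged in the two cited propositions. The only point needing a moment's care is checking that the intermediate map $h \colon S_Y \to T$ is dominant, so that the pullback $h^*D'$ is defined as a Cartier divisor; this is the dimension argument above. (Alternatively one can observe directly that $h^*\sO_T(D')$ is a line bundle on $S_Y$ whose $n$-th tensor power is $f_S^*\sO_S(D)$ and read off $D''$ from a rational section, but the dimension count is cleaner and yields the equality of divisors rather than mere linear equivalence.)
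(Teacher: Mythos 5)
Your proposal is correct and is essentially the paper's own proof: apply Proposition \ref{prop:index one cover1} to $S$ to get a finite cover $g\colon T\to S$ with $g^*D=nD'$, then invoke Proposition \ref{prop:extension of alterations of closed variety} to spread this out to an alteration of $X$. The only difference is that you spell out the last step (dominance of $S_Y\to T$ so that $D'$ pulls back to a Cartier divisor with $f_S^*D=nD''$), which the paper leaves implicit; your dimension argument for this is fine.
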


\begin{proof}
By Proposition \ref{prop:index one cover1}, there exists a finite cover $g \colon T \to S$ such that $g^*D=nD'$ for some Cartier divisor $D'$ on $T$.
The assertion follows from Proposition \ref{prop:extension of alterations of closed variety}.
%We take a lift $\phi \in sO_{X,x}$ whose natural image is $\phi_S$, where $x$ is the generic point of $S$.
%Let $f \colon Y \to X$ be the normalization of $X$ in $K(X)[\phi^{1/n}]$.
%We take a closed sub $V$-variety $S_Y$ of $Y$ such that $f(S_Y)=S$ and we denote the generic point of $S_Y$ by $y$, then $\phi^{1/n} \in \sO_{Y,y}$.
%Then the image $\phi_S^{1/n}$ of $\phi^{1/n}$ in $K(S_Y)$ is $n$-th root of the natural image of $\phi_S$ by the following commutative diagram.
%\[
%\xymatrix{
%\sO_{Y,y} \ar[d] \ar[r] & \sO_{X,x} \ar[d] \\
%K(S_Y) \ar[r] & K(S).
%}
%\]
%In particular, $f_S^*D=n \mathrm{div}(\phi^{1/n}_{S_Y})$.
\end{proof}

\begin{defn}\label{defn:semi-stable}\textup{(strictly semi-stable)}
\textup{ }
\begin{enumerate}
\item
Let $V$ be the spectrum of a discrete valuation ring $R$. Let $\varpi$ be a uniformizer of $R$.
A flat $V$-variety $X$ of relative dimension $n$ is called \emph{strictly semi-stable} if the following hold.
\begin{itemize}
    \item 
    The generic fiber $X_{\eta}$ is smooth, where $\eta \in V$ is the generic point.
    \item
    For any closed point $x$ in the special fiber $X_{s}$, there exists an open neighborhood $U$ of $x$ such that $U$ is \'{e}tale over the scheme
    \[
\Spec R[X_{0}, \ldots, X_{n}]/ (X_{0}\cdots X_{m} - \varpi)
    \]
for some $m \leq n$.
\end{itemize}
In this case, we also say that $(X,X_{s})$ is a strictly semi-stable pair.
As in \cite[2.16]{dejong} if $R$ has a perfect residue field, the above definition is equivalent to that $(X,X_{s})$ is a simple normal crossing pair.
\item 
Let $V$ be a Dedekind scheme.
An integral flat quasi-projective $V$-variety $X$ of relative dimension $n$ is called strictly semi-stable if $X_{\sO_{V,s}} \rightarrow \Spec \sO_{V,s}$ is strictly semi-stable for any closed point $s \in V$.
\end{enumerate}
%We note that a strictly semi-stable scheme over an excellent Dedekind scheme of relative dimension $2$ satisfies Assumption \ref{assump:semi-stable}.
\end{defn}

\begin{thm}\label{thm:semistable alteration}\textup{(\cite[Thorem 6.5]{dejong})}
Let $V$ be the spectrum of a complete discrete valuation ring.
Let $X$ be a flat $V$-variety, and let $Z \subset X$ be a proper closed subset.
Then there exists a finite surjective morphism $V' \to V$ and an alteration $\phi \colon X' \to X$ from a $V'$-variety such that the pair $(X',\phi^{-1}(Z)_{\mathrm{red}})$ is a strictly semi-stable pair, %by means of \cite[6.3]{dejong}, 
and in particular, $(X',\phi^{-1}(Z)_{\mathrm{red}})$ is a simple normal crossing pair.
\end{thm}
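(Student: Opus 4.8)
The plan is to follow de Jong's strategy, since Theorem~\ref{thm:semistable alteration} is exactly \cite[Theorem~6.5]{dejong}: in the paper one simply cites it, but for completeness here is how the argument goes. I would argue by induction on the relative dimension $d$ of $X$ over $V$. First reduce to the case that $X$ is projective and integral over $V$ (compactify, adding the boundary to $Z$; then normalize and replace $X$ by an irreducible component dominating $V$), noting that we are always free to enlarge $Z$. The base case $d=0$ is handled by normalizing $X$ and making a finite base change $V'\to V$ that removes ramification, so that $X'$ becomes finite \'etale over $V'$ — which is strictly semi-stable of relative dimension zero, since being \'etale over $V'$ is the same as being \'etale over $\Spec R'[X_0]/(X_0-\varpi')$.

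For the inductive step with $d\ge 1$, the heart of the matter is to produce, after an alteration $X_1\to X$, a projective morphism $f\colon X_1\to Y$ over $V$ with $Y$ of relative dimension $d-1$ whose fibres are curves, and then to arrange, after a further alteration of $X_1$ and of $Y$, that $f$ is a family of semi-stable pointed curves with the marked sections absorbing $Z$. Concretely, embed $X\hookrightarrow\P^N_V$ and project from a suitable general linear centre to get a rational map to $\P^{d-1}_V$ whose generic fibre is a smooth curve; resolve the indeterminacy; and then invoke the properness of the moduli stack $\overline{\mathcal M}_{g,n}$ of stable pointed curves (equivalently, semi-stable reduction for curves over a Dedekind base, applied at the generic point of $Y$ and spread out): there is an alteration $Y'\to Y$ after which the pulled-back family extends to a family of semi-stable curves, with nodal and marked loci supported on a divisor $\Delta_Y$ on $Y$. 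Enlarge $\Delta_Y$ so that it also contains the images of $Z$ and of the non-smooth locus of $f$.

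Now apply the inductive hypothesis to the pair $(Y,\Delta_Y)$: there is a finite $V'\to V$ and an alteration $Y'\to Y$ with $(Y',\Delta_{Y'})$ strictly semi-stable, $\Delta_{Y'}$ the reduced preimage of $\Delta_Y$. Base-change the curve fibration along $Y'\to Y$ and normalize to obtain $X'\to Y'$, a family of nodal curves over a strictly semi-stable base whose nodes lie over the strictly semi-stable boundary. A local computation then shows the total space is strictly semi-stable over $V'$: \'etale-locally the base is $\Spec R[x_0,\dots,x_m]/(x_0\cdots x_m-\varpi)\times\A^{d-1-m}$ with $m\le d-1$, the fibrewise node deforms as $uv=g$ with $g$ vanishing on $\Delta_Y$, and after the usual small blow-ups making the node fall on a single boundary component (so that $g=x_0$ up to a unit, not $g=x_0x_1$) one may eliminate $x_0=uv$ and the local model becomes $\{uv\cdot x_1\cdots x_m=\varpi\}$ up to units, a product of $m+2\le d+1$ coordinates, i.e.\ strictly semi-stable of relative dimension $d$. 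Since $\phi^{-1}(Z)_{\mathrm{red}}$ was arranged to be a union of boundary components, $(X',\phi^{-1}(Z)_{\mathrm{red}})$ is in particular a simple normal crossing pair.

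The main obstacle is exactly the construction and control of the curve fibration in the inductive step: obtaining a generically smooth family of curves over a base of one lower dimension (this needs Bertini-type genericity and a careful choice of projection centre), and then controlling its degeneration locus so that it lies in a divisor on $Y$ — this is where the moduli of stable curves and a further round of alterations is indispensable — followed by the transversality blow-ups ensuring that, over a strictly semi-stable base, the nodes of the curve family sit over single boundary components rather than over intersections. None of these steps is formal, and together they make up the bulk of \cite[\S 5--\S 6]{dejong}.
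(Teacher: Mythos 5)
The paper offers no proof of this statement at all: it is imported verbatim as \cite[Theorem 6.5]{dejong} (with the non-flat case addressed in the subsequent remark via \cite[Theorem 4.1]{dejong}), so the intended justification is exactly the citation. Your outline is a fair summary of de Jong's induction-on-relative-dimension strategy (curve fibrations obtained by projection, stable pointed curves and alterations of the base, then the local analysis of nodes over a strictly semi-stable base), and since you explicitly defer the substantive steps to \cite{dejong}, \S 5--\S 6, your route coincides with the paper's reliance on that reference.
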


\begin{rmk}
If $X$ is not flat over $V$, then $X$ is defined over a field.
Thus, in this case, the last assertion of Theorem \ref{thm:semistable alteration} follows from \cite[Theorem 4.1]{dejong}.
\end{rmk}

\subsection{Adjunction and Bertini type theorem}
For the proof of the existence of flips, we discuss the adjunction of singularities related to local irreducibility.
The reader is referred to \cite[Section 4]{kollar13} for more details. 

\begin{prop}\label{prop:dlt adjunction to plt}
Let $V$ be an excellent Dedekind scheme.
Let $(X,S+A+B)$ be a dlt pair over $V$ such that $S$ and $A$ are locally irreducible Weil divisors and $\lfloor B \rfloor=0$.
Then $(S^N,\mathrm{Diff}_{S^N}(A+B))$ is plt.
\end{prop}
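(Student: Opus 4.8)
The plan is to reduce the statement to the standard adjunction formula of \cite[Section 4]{kollar13} together with a classification of divisorial log canonical centers along $S^N$. First I would recall that since $(X,S+A+B)$ is dlt, in particular it is lc, and by Lemma \ref{lem:topologically normal} the components $S$ and $A$ are normal up to universal homeomorphism (they are $\Q$-Cartier because, being locally irreducible Weil divisors which are components of the dlt boundary, $\Q$-factoriality of $X$ near their generic points is automatic from the dlt hypothesis — or one argues directly as in \cite[Lemma 2.1]{hacon-witaszek20}). By the adjunction results in \cite[Section 4]{kollar13}, writing $K_{S^N/V} + \mathrm{Diff}_{S^N}(A+B) = (K_{X/V} + S + A + B)|_{S^N}$, the pair $(S^N, \mathrm{Diff}_{S^N}(A+B))$ is automatically lc. So the content is to upgrade lc to plt, i.e. to show $(S^N, \mathrm{Diff}_{S^N}(A+B))$ has no non-klt center other than, possibly, divisorial ones — and in fact that $\lfloor \mathrm{Diff}_{S^N}(A+B)\rfloor = 0$ or that each component of it is itself a nice divisor; the precise formulation of plt requires the round-down of the different to be a disjoint union of normal divisors and no log canonical center inside it. Actually the cleanest route: plt is equivalent to being lc with $\lfloor \Delta_{S^N} \rfloor$ having no log canonical centers of $(X,S+A+B)$ meeting it in codimension $\geq 2$; since the only log canonical center of a dlt pair that lies on $S$ and is not $S$ itself comes from intersections $S \cap A$, $S \cap (\text{components of } B)$, and such, and $\lfloor B \rfloor = 0$, the round-down $\lfloor \mathrm{Diff}_{S^N}(A+B) \rfloor$ is supported on the strict transform of $A \cap S$.

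The key steps, in order: (1) Verify the hypotheses of Kollár's adjunction: $S$ reduced, $S$ and $A+B$ share no component (immediate from the $S+A+B$ decomposition), $X$ normal, so $\mathrm{Diff}_{S^N}(A+B)$ is defined and $(S^N, \mathrm{Diff}_{S^N}(A+B))$ is lc by \cite[4.9]{kollar13} (adjunction preserves lc). (2) Identify $\lfloor \mathrm{Diff}_{S^N}(A+B) \rfloor$: since $\lfloor B \rfloor = 0$ and $B$ contributes coefficients $< 1$ away from $S\cap A$, while $A$ (coefficient $1$ in the boundary, $\Q$-Cartier, locally irreducible) contributes exactly the prime divisor $A|_{S^N}$ to the different with coefficient $1$ provided $(X, S+A)$ is snc at the generic point of $A \cap S$ — which holds because $(X,S+A+B)$ is dlt and $S, A, A\cap S$ all lie in the dlt (snc) locus in codimension $\leq 2$. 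So $\lfloor \mathrm{Diff}_{S^N}(A+B)\rfloor = A_{S^N} := A|_{S^N}$, a single prime divisor. (3) Conclude plt: by \cite[Definition 2.8, Proposition 2.?]{kollar13}, a pair is plt iff it is lc and every non-klt center is a divisor; since $(S^N, \mathrm{Diff}_{S^N}(A+B))$ is lc, the only possible non-divisorial non-klt center would be a log canonical center of $(X, S+A+B)$ strictly contained in $S$ and of codimension $\geq 1$ in $S^N$, but such a center must be a stratum of the snc pair $(X, \lfloor S+A+B\rfloor) = (X, S+A)$, hence is a component of $A \cap S$, which is divisorial in $S^N$ — contradiction. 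Hence no non-divisorial non-klt center, so plt.

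The main obstacle I expect is step (2), the precise computation of the round-down of the different and the verification that near the generic point of $A \cap S$ the pair is genuinely snc (so that the different of $A$ is reduced with coefficient exactly $1$, not larger, and no extra components are created). This is where local irreducibility of $S$ and $A$ is used: it guarantees that after the universal-homeomorphism normalization $S^N \to S$, the preimage of $A$ is irreducible, so $A_{S^N}$ is a single prime divisor and $(S^N, \mathrm{Diff}_{S^N}(A+B))$ does not acquire a non-normal $\lfloor \cdot \rfloor$; combined with dlt-ness (snc in codimension $2$) this pins down the coefficient. In the mixed characteristic / non-perfect residue field setting one must be slightly careful that "snc in codimension $2$" and the adjunction formula of \cite{kollar13} apply verbatim, but this is exactly what the conventions in the Notations subsection (dualizing complex relative to $V$, the different via \cite[Definition 4.2]{kollar13}) are set up to allow, so no new difficulty arises beyond bookkeeping.
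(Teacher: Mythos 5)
Your overall skeleton is the same as the paper's: use the dlt hypothesis to say that all log canonical centers of $(X,S+A+B)$ are strata of the snc locus of $\lfloor S+A+B\rfloor=S+A$, transfer non-klt centers of $(S^N,\mathrm{Diff}_{S^N}(A+B))$ to such centers contained in $S$, and conclude that none of them has codimension $\geq 2$ in $S^N$. However, the one inference that actually carries the proof --- ``such a center must be a stratum of $(X,S+A)$, hence is a component of $A\cap S$'' --- is asserted without justification, and your closing paragraph locates the role of local irreducibility in the wrong place. A stratum of $S+A$ strictly contained in $S$ could a priori also be a component of $S_i\cap S_j$ for two components $S_i,S_j$ of $S$, or a triple stratum of codimension $3$ in $X$ (which would give a non-divisorial lc center on $S^N$ and destroy plt-ness); ruling these out is exactly what local irreducibility of $S$ and $A$ is for: distinct irreducible components of a locally irreducible scheme are disjoint, so at most one component of $S$ and one of $A$ pass through any point, hence at most two components of $S+A$, hence no such strata. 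This is precisely the paper's argument (an exceptional divisor over $S^N$ with log discrepancy $0$ would force its center to lie on at least three components of $S+A$). By contrast, your stated uses of local irreducibility --- that the preimage of $A$ in $S^N$ is irreducible, so $\lfloor\mathrm{Diff}_{S^N}(A+B)\rfloor$ is a single prime divisor and ``does not acquire a non-normal round-down'' --- are neither needed for plt-ness (plt places no irreducibility or normality requirement on the round-down) nor correct as stated ($A\cap S$ may well be reducible).

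Two smaller points. The claim that $S$ and $A$ are automatically $\Q$-Cartier ``from the dlt hypothesis'' is unjustified: dlt does not imply $\Q$-factoriality, and Lemma \ref{lem:topologically normal} explicitly assumes $\Q$-Cartierness. Fortunately none of this is needed: the different $\mathrm{Diff}_{S^N}(A+B)$ of \cite[Definition 4.2]{kollar13} only requires $K_{X/V}+S+A+B$ to be $\Q$-Cartier, and the stratum argument above never uses normality of $S$ or $A$ up to universal homeomorphism. Likewise your step (2), the exact identification of $\lfloor\mathrm{Diff}_{S^N}(A+B)\rfloor$ and of its coefficients, can simply be dropped; and in step (3) ``codimension $\geq 1$ in $S^N$'' should read ``codimension $\geq 2$''. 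With the local-irreducibility argument inserted at the right spot, your proof becomes essentially the paper's.
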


\begin{proof}
Let $\pi \colon S^N \to X$ be the composition of the normalization of $S$ and the closed immersion $S \to X$.
Let  $E$ be an exceptional prime divisor over $S^N$ centered at $x \in S^N$. 
If the log discrepancy $a_E(S^N,\mathrm{Diff}_{S^N}(A+B))$ is equal to $0$, then $\pi(x)$ is the generic point of a stratum of $\lfloor S+A \rfloor$.
Since $E$ is exceptional, $\pi(x)$ is contained in at least three components of $S+A$.
Since $S$ and $A$ are locally irreducible, this is the contradiction.
\end{proof}

%\begin{prop}\label{prop:different of plt or dlt pair}
%Let $V$ be an excellent Dedekind scheme.
%Let $(X,\Delta)$ be a log pair over $V$.
%\begin{itemize}
%    \item If $(X,\Delta)$ is dlt, then for every two irreducible components $S_1$, $S_2$ of $\lfloor \Delta \rfloor$ $S_1 \cap S_2$ is pure codimension two in $X$ or empty.
%    \item If $(X,\Delta)$ is plt, then $\lceil \Delta \rceil$ is locally irreducible.
%\end{itemize}
%\end{prop}

%\begin{proof}
%We take two irreducible components $S_1$, $S_2$ of $\lfloor \Delta \rfloor$ such that $S_1$ intersects $S_2$.
%We denote the normalization of $S_1$ by $S_1^N$.
%Shrinking $X$, we may assume that $S_1 \cap S_2$ is irreducible, it is denoted by $C$.
%If the codimension of $C$

%\end{proof}

\begin{lem}\label{lem:bertini for dlt surface}
Let $V$ be the spectrum of a discrete valuation ring with an infinite residue field.
Let $(X,\Delta)$ be a dlt surface over $V$.
Let $H$ be an ample $\Q$-Cartier divisor.
Then there exists an effective divisor $D \sim_{\Q} H$ such that $(X,\Delta+D)$ is also dlt.
\end{lem}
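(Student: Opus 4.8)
The plan is to take $D$ to be a small multiple of a sufficiently general hyperplane section of $X$ in a projective embedding by a large multiple of $H$, with genericity controlled by a Bertini-type argument carried out over the base $V$. First I would reduce to a statement about avoiding finitely many points. Since $\dim X=2$, the non-log-smooth locus of $(X,\Delta)$ is closed of codimension $\geq 2$, hence a finite set of closed points; after enlarging it by the finitely many pairwise intersection points of the components of $\Supp\Delta$ we obtain a finite set $Z$ of closed points with $(X\setminus Z,\Delta|_{X\setminus Z})$ log smooth and at most one component of $\Supp\Delta$ through each point of $X\setminus Z$. (If $X$ is not flat over $V$ it is a surface over one of the infinite fields $\mathrm{Frac}(R)$ or $k$ and the lemma is the classical Bertini theorem over an infinite field; so assume $X$ is flat over $V$, in which case $Z\subseteq X_s$.) Fix a log resolution $g\colon Y\to X$ of $(X,\Delta)$ which is an isomorphism over $X\setminus Z$ with all exceptional discrepancies $>-1$ (available since $\dim X\leq 3$). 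Then it suffices to produce an effective $\Q$-divisor $D\sim_{\Q}H$ whose support is a reduced curve disjoint from $Z$, sharing no component with $\Supp\Delta$, with $\lfloor\Delta+D\rfloor=\lfloor\Delta\rfloor$, and such that $(X\setminus Z,(\Delta+D)|_{X\setminus Z})$ is log smooth: for then $K_{X/V}+\Delta+D$ is $\Q$-Cartier, $g$ is still a log resolution of $(X,\Delta+D)$, and $\operatorname{mult}_E(g^{*}D)=0$ for every $g$-exceptional $E$ (as $\Supp D\cap g(\Exc(g))\subseteq\Supp D\cap Z=\emptyset$), so the $g$-exceptional discrepancies of $(X,\Delta+D)$ equal those of $(X,\Delta)$ and $(X,\Delta+D)$ is dlt with the same $Z$.

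Next I would set up Bertini over $V$. Choose $m\geq 2$ with $mH$ Cartier and very ample, let $X\hookrightarrow\mathbb{P}^{N}_{V}$ be the resulting immersion and $\overline{X}\subseteq\mathbb{P}^{N}_{V}$ its projective closure, and let $\mathbb{P}:=(\mathbb{P}^{N}_{V})^{\vee}\cong\mathbb{P}^{N}_{V}$ parametrise hyperplanes, so each $\mathcal{H}\in\mathbb{P}$ yields $D_{\mathcal{H}}:=\tfrac1m(X\cap\mathcal{H})\sim_{\Q}H$. The conditions ``$X\cap\mathcal{H}$ meets $Z$'' and ``$X\cap\mathcal{H}$ contains a component of $\Supp\Delta$'' cut out finitely many proper linear subschemes of $\mathbb{P}$ (those from the first lying in the closed fibre $\mathbb{P}_s$), and the other requirements on $D_{\mathcal{H}}$ fail only on a closed subset, obtained as the image under the proper projection $\overline{X}\times_{V}\mathbb{P}\to\mathbb{P}$ of a suitable closed incidence subscheme. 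Hence there is a closed $B\subseteq\mathbb{P}$ containing every $\mathcal{H}$ for which $D_{\mathcal{H}}$ fails our requirements, and by the classical Bertini theorems over the infinite fields $\mathrm{Frac}(R)$ and $k$ — applied to the generic and special fibres, which are curves, together with the traces of $\Delta$ and of $Z$ — we may take $B$ with $B_{\eta}\subsetneq\mathbb{P}_{\eta}$ and $B_{s}\subsetneq\mathbb{P}_{s}$, so that $B\subsetneq\mathbb{P}$ is a proper closed subset.

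Then I would extract the divisor. Since $k$ is infinite, $\mathbb{P}(k)=\mathbb{P}_s(k)$ is Zariski-dense in $\mathbb{P}_s$, so there is $\bar s\in\mathbb{P}(k)$ with $\bar s\notin B_s$; lift it, using surjectivity of $\mathbb{P}(R)\to\mathbb{P}(k)$, to a section $s\in\mathbb{P}(R)$. Its image $C$ is a closed subscheme of $\mathbb{P}$ isomorphic to $\Spec R$, and $C\cap B$ is a closed subset of $C$ not containing the closed point of $C$ (which maps to $\bar s\notin B_s$); but the only closed subset of $\Spec R$ avoiding the closed point is $\emptyset$, so $C\cap B=\emptyset$. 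Thus the hyperplane $\mathcal{H}$ corresponding to $s$ lies outside $B$, and $D:=D_{\mathcal{H}}\sim_{\Q}H$ has all the properties required in the first step; hence $(X,\Delta+D)$ is dlt.

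The hard part is the extraction in the third step: having used Bertini to reduce to the fact that the bad locus is a \emph{proper} closed subset of the projective space $\mathbb{P}^{N}_{V}$, one must still produce an honest divisor on $X$ — i.e.\ a $V$-point of $\mathbb{P}\setminus B$ — rather than a divisor over some base change of $V$, and this is exactly where infiniteness of the residue field is used, together with the elementary observation that $\emptyset$ is the only closed subset of $\Spec R$ missing the closed point. The first step is more routine but should be handled with some care, as it depends on resolution of singularities being available in dimension $\leq 3$ and on choosing $Z$ and $g$ so that adjoining $D$ alters neither the log resolution nor the exceptional discrepancies.
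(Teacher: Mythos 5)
Your proposal is correct in substance and shares the paper's skeleton --- reduce to finitely many local conditions, impose them fibrewise by Bertini on the special and generic fibres, then produce a single divisor over $V$ --- but it replaces the paper's gluing step by an elementary argument, so the two are worth comparing. The paper works with the two restrictions $D'|_{X_{s,\mathrm{red}}}$ and $D'|_{X_\eta}$, quotes \cite[Corollary 3.4.14]{fov-bertini} over the infinite fields $k$ and $\mathrm{Frac}(R)$, and then invokes the proof of \cite[Theorem 0]{jannsen-saito} to find one member $D'\sim mH$ whose two restrictions are simultaneously general; dlt-ness is then read off from the fact that $(X,\Delta+D')$ is snc near $\Supp D'$. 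You instead parametrize hyperplanes by the dual projective space $\P$ over $R$, bound the bad locus by a closed $B\subset\P$ with $B_s\subsetneq\P_s$, and extract the divisor by choosing a $k$-point of $\P_s\setminus B_s$ (this is where $k$ infinite enters), lifting it to an $R$-point, and observing that the section meets $B$ in a closed subset of $\Spec R$ missing the closed point, hence not at all. This section-lifting trick is a clean, self-contained substitute for the Jannsen--Saito input, and it makes transparent that only $B_s\subsetneq\P_s$ is really needed, since the closure of the generic-fibre bad locus automatically has special fibre of dimension $\le N-1$. Your step 1 (discrepancy bookkeeping relative to the finite set $Z$, which indeed lies in $X_s$ in the flat case) is an equivalent repackaging of the paper's closing remark.

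Two points should be tightened, though neither breaks the argument. First, the sentence asserting that the remaining requirements ``fail only on a closed subset, obtained as the image of a closed incidence subscheme'' glosses over the one delicate verification: the requirements concern the relative divisor $X\cap\mathcal{H}$ over $R$, and at a point $x\in\Supp D\cap X_s$ they are a priori conditions on the local equation $\ell\in\sO_{X,x}$, not on the point of $\P_s$ through which your section passes; two $R$-points with the same reduction differ by $\varpi\cdot(\cdots)$. What saves you is that for $x\notin Z$ both $X$ and $X_{s,\mathrm{red}}$ are regular at $x$ and $\varpi\in(u)$ for $u$ a local equation of $X_{s,\mathrm{red}}$, so transversality of $\mathcal{H}_s\cap X_{s,\mathrm{red}}$ at $x$ (a condition on the reduction alone) forces $(\ell,u)=\mathfrak m_x$, hence $X\cap\mathcal{H}$ is regular at $x$ and transverse to $X_{s,\mathrm{red}}$, and therefore to any boundary component through $x$ once the horizontal boundary points have been excluded from $\mathcal{H}_s\cap X_{s,\mathrm{red}}$; only with this remark is the decomposition ``bad $\Leftrightarrow$ bad reduction or bad generic fibre'' legitimate (this is exactly the role of $\Sigma_s$ and of the last sentence in the paper's proof). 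Second, ``$(X,\Delta+D)$ is dlt with the same $Z$'' is not literally correct: your $Z$ contains snc points where two components of $\lfloor\Delta\rfloor$ cross, over which blow-ups have discrepancy exactly $-1$; verify dlt-ness instead with respect to the non-snc locus of $(X,\Delta+D)$, which is contained in $Z$ and disjoint from $\Supp D$, where your discrepancy comparison does apply. Finally, the generic-fibre Bertini statements (reducedness of $X_\eta\cap\mathcal{H}_\eta$, needed to keep the coefficients of $\tfrac1m(X\cap\mathcal H)$ below $1$, and avoidance of finitely many closed points) should be quoted in a form valid over arbitrary infinite fields, e.g.\ again \cite{fov-bertini}, since $\mathrm{Frac}(R)$ may be imperfect; only ``bad locus inside a proper closed subset of $\P_\eta$'' is required.
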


\begin{proof}
First, we note that if $X$ is not flat over $V$, then $X$ is a surface over the residue field $k$ of $V$.
Then the assertion is well-known.
Thus, we may assume that $X$ is flat over $V$.
We take a positive integer $m \geq 2$ such that $mH$ is very ample.
If there exists an effective divisor $D' \sim mH$ such that $(X,\Delta+D')$ is dlt, then $\frac{1}{m}D'$ is what we want.
Let $B_1$ be the sum of the components of $\Delta$ contained in the closed fiber $X_s$ and we write $B_2:=\Delta-B_1$.
Then $B_2$ and $X_s$ have no common components.
Let $\Sigma_s$ be the union of $B_2 \cap X_{s,\mathrm{red}}$ and the non-regular locus of $X_{s,\mathrm{red}}$.
We note that $\Sigma_s$ is a finite set as $X_s$ is one-dimensional.
Thus, for a general member $D$ of $|m(H|_{X_{s,\mathrm{red}}})|$, $D$ has no intersection with $\Sigma_s$, the pair $(X_{s,\mathrm{red}},D)$ is a simple normal crossing pair and $D$ has no common components with $B_1$ by \cite[Corollary 3.4.14]{fov-bertini}.
By the same argument, for a general member $D$ of $|mH|_{X_{\eta}}$, $D$ has no intersection with  the non-regular locus of $B_2|_{X_\eta}$.
By the proof of \cite[Theorem 0]{jannsen-saito}, we find an effective divisor $D'$ such that $D' \sim mH$ and $D'|_{X_\eta}$ and $D'|_{X_{s,\mathrm{red}}}$ satisfy the conditions as above.
Thus $(X,\Delta+D')$ is dlt.
Indeed, around the support of $D$, the pair $(X,\Delta+D)$ is a simple normal crossing pair.
\end{proof}

\subsection{Rational singularities}
In this subsection, we study properties of rational singularities.
We will use Proposition \ref{prop:rational singularities 1} and \ref{prop:rational singularities 2} in Section \ref{section:dedekind scheme} to prove the Cohen-Macaulayness of flips.
The reader is referred to \cite{kovacs17} for more details.
\begin{defn}\textup{(\cite{kovacs17})}
Let $X$ be a Noetherian excellent scheme. 
$X$ has \emph{rational singularities} if $X$ is normal and Cohen-Macaulay, and for every birational projective morphism $f \colon Y \to X$ from a normal  Cohen-Macaulay scheme $Y$, the natural morphism $\sO_X \to Rf_*\sO_Y$ is an isomorphism.
\end{defn}

\begin{prop}\label{prop:rational singularities 1}
Let $\pi \colon X \to Z$ be a projective birational morphism from a normal scheme $X$ to an excellent normal Cohen-Macaulay scheme $Z$ admitting a normalized dualizing complex $\omega^{\bullet}_Z$.
If $X$ has rational singularities and $\sO_Z \simeq R\pi_*\sO_X$, then $Z$ has rational singularities.
\end{prop}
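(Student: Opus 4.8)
The plan is to deduce the rationality of $Z$ directly from the definition, using the hypothesis $\sO_Z \simeq R\pi_*\sO_X$ to reduce any question about a resolution of $Z$ to the corresponding question about $X$. So let $g \colon W \to Z$ be an arbitrary birational projective morphism with $W$ Cohen-Macaulay; we must show $\sO_Z \simeq Rg_*\sO_W$. First I would pass to a common resolution: by taking the normalization of the main component of $X \times_Z W$ and then a further birational projective model (using that $X$ and $Z$ are excellent, so resolutions or at least Cohen-Macaulayfications are available, or arguing with a $W' \to W$ and $W' \to X$ dominating both), I get a Cohen-Macaulay scheme $W'$ with projective birational maps $p \colon W' \to X$ and $q \colon W' \to W$, and $g \circ q = \pi \circ p$.

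The key step is then a two-sided comparison. Since $X$ has rational singularities and $p \colon W' \to X$ is birational projective from a Cohen-Macaulay scheme, the definition gives $\sO_X \simeq Rp_*\sO_{W'}$. Applying $R\pi_*$ and using the hypothesis yields
\[
Rg_*(Rq_*\sO_{W'}) \simeq R(\pi \circ p)_*\sO_{W'} \simeq R\pi_*\sO_X \simeq \sO_Z.
\]
On the other hand, there is the natural map $\sO_W \to Rq_*\sO_{W'}$, hence a natural map $Rg_*\sO_W \to Rg_*Rq_*\sO_{W'} \simeq \sO_Z$, and this fits into a commutative triangle with the canonical map $\sO_Z \to Rg_*\sO_W$. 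To conclude I need the composite $\sO_Z \to Rg_*\sO_W \to \sO_Z$ to be an isomorphism, which it is because it agrees with the identity coming from $\sO_Z \simeq R(\pi \circ p)_*\sO_{W'}$; this forces $\sO_Z \to Rg_*\sO_W$ to be a split injection in the derived category.

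For the reverse direction I would invoke a duality/Grothendieck-trace argument, or more cleanly the following: since $Z$ is Cohen-Macaulay and admits a normalized dualizing complex, and $Rg_*\sO_W$ is (by the above) a direct summand of $\sO_Z$ with the other summand $C$ supported on a proper closed subset, one shows $C = 0$ by a standard argument. Concretely, $Rg_*\sO_W$ has cohomology only in nonnegative degrees with $h^0 = \sO_Z$ (as $Z$ is normal and $g$ birational), so the splitting $\sO_Z \oplus C \simeq Rg_*\sO_W$ with $C$ concentrated in degrees $\geq 1$; applying Grothendieck duality on $Z$ and using that $W$ is Cohen-Macaulay (so $g_*\omega^{\bullet}_W$ has the expected amplitude) kills the higher terms. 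Alternatively, and I think most efficiently given what is available, I would cite that over an excellent base one has Cohen-Macaulayfications, reduce $W$ to a single well-chosen model, and run precisely the comparison in \cite{kovacs17}; the point of the proposition is that rationality descends along $\pi$ once $\sO_Z \simeq R\pi_*\sO_X$, and the argument is the $R\pi_*$-transitivity displayed above. The main obstacle is the reverse inequality $Rg_*\sO_W \to \sO_Z$ being an isomorphism rather than merely split — i.e., ruling out extra higher direct images on $W$ — which is exactly where Cohen-Macaulayness of $W$ together with duality on the excellent scheme $Z$ must be used; everything else is formal manipulation of derived pushforwards.
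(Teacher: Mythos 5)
Your overall mechanism---transferring the question along $R\pi_*$ via $R(\pi\circ p)_*\simeq R\pi_*Rp_*$---is the same one the paper uses, but your argument has a genuine gap at the decisive step. For an arbitrary Cohen--Macaulay model $g\colon W\to Z$ your construction only yields that $\sO_Z\to Rg_*\sO_W$ is a \emph{split injection} (the splitting coming from the dominating Cohen--Macaulay model $W'$), and you then assert that Grothendieck duality plus Cohen--Macaulayness of $W$ ``kills the higher terms'' of the complementary summand $C$. That does not follow. Dualizing the splitting gives $Rg_*\omega^{\bullet}_W\simeq \omega^{\bullet}_Z\oplus R\mathscr{H}om(C,\omega^{\bullet}_Z)$, with the trace map vanishing on the second summand; since $W$ and $Z$ are Cohen--Macaulay this identifies $R\mathscr{H}om(C,\omega^{\bullet}_Z)$ with a summand of $Rg_*\omega_W[d]$, and the injectivity of $g_*\omega_W\to\omega_Z$ only removes its degree $-d$ part. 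The remaining degrees are controlled by $R^ig_*\omega_W$ for $i>0$, so concluding $C=0$ would require a Grauert--Riemenschneider-type vanishing, which is unavailable (and false in general) in positive and mixed characteristic. Even in characteristic zero the implication ``split $\Rightarrow$ isomorphism'' is Kov\'acs' splitting criterion, a theorem with real input, not a formal consequence of duality; and your fallback---``run precisely the comparison in \cite{kovacs17}''---is the missing content rather than a proof of it. (Two smaller points: the existence of a Cohen--Macaulay $W'$ dominating both $W$ and $X$ needs a citation to Cohen--Macaulayfication over excellent schemes, and $g_*\sO_W=\sO_Z$ uses that $W$ is reduced, i.e.\ Cohen--Macaulay plus generically reduced, together with normality of $Z$.)

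The paper's proof sidesteps the arbitrary model entirely: by \cite[Lemma 7.4]{kovacs17} and \cite[Theorem 8.6]{kovacs17} it suffices to test rationality on Cohen--Macaulay models $g\colon Y\to Z$ that factor through $\pi$, say $g=\pi\circ f$; then rationality of $X$ gives $Rf_*\sO_Y\simeq\sO_X$, and the hypothesis $R\pi_*\sO_X\simeq\sO_Z$ gives $Rg_*\sO_Y\simeq\sO_Z$ by composition, with no splitting step to upgrade. To repair your write-up you should import that reduction (or an equivalent statement from \cite{kovacs17}) explicitly; as written, the passage from ``direct summand'' to ``isomorphism'' is unproved, and it is exactly where the difficulty of the statement lives.
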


\begin{proof}
We take a projective birational morphism $g \colon Y \to Z$ from a normal Cohen-Macaulay scheme $Y$.
We prove that the map $\sO_Z \to Rg_*\sO_Y$ is an isomorphism.
By \cite[Lemma 7.4]{kovacs17} and \cite[Theorem 8.6]{kovacs17}, we may assume that $g$ factors through $\pi$.
We denote the induced morphism by $f \colon Y \to X$.
We note that $f_*\sO_Y=\sO_X$ and $\pi_*\sO_X=\sO_Z$.
First we assume that $X$ has rational singularities and $\sO_Z \simeq R\pi_*\sO_X$, then we have $\sO_X \simeq Rf_*\sO_Y$.
By the spectral sequence, we have $\sO_Z \simeq R\pi_*\sO_X \simeq Rg_*\sO_Y$, so $Z$ has rational singularities.
\end{proof}

\begin{prop}\label{prop:rational singularities 2}
Let $\pi \colon X \to Z$ be a projective birational morphism from a normal  klt scheme $X$ to an excellent normal Cohen-Macaulay scheme $Z$ admitting a normalized dualizing complex $\omega^{\bullet}_Z$.
If $\dim \Exc (\pi) \leq 1$, $X$ has rational singularities except for finitely many closed points and $Z$ has rational singularities, then $X$ has rational singularities.
\end{prop}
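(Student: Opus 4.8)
The plan is to reduce the statement, exactly as in the proof of Proposition \ref{prop:rational singularities 1}, to showing that $X$ is Cohen--Macaulay and that $\mathcal{O}_X \simeq Rg_*\mathcal{O}_Y$ for a suitable Cohen--Macaulayfication $g \colon Y \to X$; once these are known, \cite{kovacs17} (via the same reasoning as in Proposition \ref{prop:rational singularities 1}) gives that $X$ has rational singularities. Since $\pi$ is projective and $Z$ carries the normalized dualizing complex $\omega^{\bullet}_Z$, the complex $\omega^{\bullet}_X := \pi^{!}\omega^{\bullet}_Z$ is a dualizing complex on $X$, so by the existence of Cohen--Macaulayfications there is a projective birational morphism $g \colon Y \to X$ with $Y$ Cohen--Macaulay which is an isomorphism over the Cohen--Macaulay locus of $X$. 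As rational singularities are Cohen--Macaulay, that locus contains $X$ minus a finite set $\Sigma$ of closed points, so $g$ is an isomorphism away from $\Sigma$. Put $h := \pi \circ g \colon Y \to Z$; this is projective and birational with $Y$ Cohen--Macaulay, so the rationality of $Z$ gives $\mathcal{O}_Z \xrightarrow{\ \sim\ } Rh_*\mathcal{O}_Y$, and applying $R\mathscr{H}om_Z(-,\omega^{\bullet}_Z)$ together with Grothendieck duality yields $Rh_*\omega^{\bullet}_Y \simeq \omega^{\bullet}_Z$. Since $Y$ and $Z$ are Cohen--Macaulay of dimension $d := \dim X$, this reads $h_*\omega_Y = \omega_Z$ and $R^ih_*\omega_Y = 0$ for $i > 0$, where $\omega_Y,\omega_Z$ denote the dualizing sheaves.

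The key step is to transport these two vanishings down along $\pi$. Because $\dim \Exc(\pi) \le 1$, every fiber of $\pi$ has dimension $\le 1$, hence $R^i\pi_*\mathcal{F} = 0$ for $i \ge 2$ and every coherent $\mathcal{F}$; and because $g$ is an isomorphism away from the finite set $\Sigma$ of closed points, the sheaves $R^ig_*\mathcal{O}_Y$ and $R^ig_*\omega_Y$ for $i > 0$ are supported on $\Sigma$. I will use repeatedly that, for a coherent sheaf $\mathcal{G}$ on $X$ supported at finitely many closed points, one has $R^j\pi_*\mathcal{G} = 0$ for $j > 0$ and $\pi_*\mathcal{G} = 0$ if and only if $\mathcal{G} = 0$, since the scheme-theoretic support of $\mathcal{G}$ is finite over $Z$. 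Now consider the truncation triangle $\mathcal{O}_X \to Rg_*\mathcal{O}_Y \to Q \xrightarrow{+1}$; as $X$ is normal and $Y$ integral we have $g_*\mathcal{O}_Y = \mathcal{O}_X$, so $Q \simeq \tau_{\ge 1}Rg_*\mathcal{O}_Y$ has cohomology supported on $\Sigma$ in degrees $\ge 1$. Applying $R\pi_*$ and using $R\pi_*Rg_*\mathcal{O}_Y = Rh_*\mathcal{O}_Y = \mathcal{O}_Z$ and $R^i\pi_*\mathcal{O}_X = 0$ for $i \ge 2$, the long exact sequence of cohomology sheaves forces $R\pi_*Q = 0$; since $Q$ is supported at finitely many closed points this gives $Q = 0$, i.e.\ $R^ig_*\mathcal{O}_Y = 0$ for $i > 0$ and $Rg_*\mathcal{O}_Y \simeq \mathcal{O}_X$. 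The identical argument applied to $g_*\omega_Y \to Rg_*\omega_Y \to \tau_{\ge 1}Rg_*\omega_Y \xrightarrow{+1}$, now with $R\pi_*Rg_*\omega_Y = Rh_*\omega_Y = \omega_Z$ and $R^i\pi_*(g_*\omega_Y) = 0$ for $i \ge 2$, yields $R^ig_*\omega_Y = 0$ for $i > 0$.

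To conclude, Grothendieck duality for $g$, together with $g^{!}\omega^{\bullet}_X = \omega^{\bullet}_Y = \omega_Y[d]$ ($Y$ being Cohen--Macaulay) and the vanishing just obtained, gives
\[
\omega^{\bullet}_X = R\mathscr{H}om_X(\mathcal{O}_X, \omega^{\bullet}_X) \simeq R\mathscr{H}om_X(Rg_*\mathcal{O}_Y, \omega^{\bullet}_X) \simeq Rg_*\omega^{\bullet}_Y = Rg_*(\omega_Y[d]) = (g_*\omega_Y)[d],
\]
so $\omega^{\bullet}_X$ is a coherent sheaf placed in a single degree and $X$ is Cohen--Macaulay; combined with $\mathcal{O}_X \simeq Rg_*\mathcal{O}_Y$, this shows as above that $X$ has rational singularities. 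I expect the genuinely delicate point to be the Cohen--Macaulayness of $X$: it amounts to a Grauert--Riemenschneider-type vanishing $R^ig_*\omega_Y = 0$, which is not available directly in mixed characteristic, and the only route I see is to carry the corresponding vanishing for $h \colon Y \to Z$ (a consequence of the rationality of $Z$) across $\pi$ — which works precisely because the obstruction is concentrated at finitely many closed points, $\pi$ has at most one-dimensional fibers (so $R^{\ge 2}\pi_* = 0$), and $\pi_*$ is faithful on point-supported sheaves; the remaining work is careful bookkeeping with the dualizing complexes and with the normalizations $g_*\mathcal{O}_Y = \mathcal{O}_X$ and $\mathcal{O}_X \simeq Rg_*\mathcal{O}_Y$.
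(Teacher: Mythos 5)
Your argument is correct in substance and uses the same transport mechanism as the paper's proof: push the obstruction down along $\pi$ using that fibers of $\pi$ are at most one-dimensional (so $R^{\geq 2}\pi_*$ vanishes on coherent sheaves), that the obstruction sheaves are supported at finitely many closed points (so $\pi_*$ kills nothing and has no higher direct images on them), the rationality of $Z$ applied to the composite with $\pi$, and Grothendieck duality to convert $R^{>0}(\text{pushforward of }\omega)=0$ into Cohen--Macaulayness; your triangle bookkeeping is just the paper's Leray spectral sequence in different clothing, and the two vanishing transports you carry out are both correct. Where you genuinely differ is in which morphism you test: the paper takes an \emph{arbitrary} projective birational $f \colon Y \to X$ with $Y$ Cohen--Macaulay and proves $R^{>0}f_*\sO_Y = 0$ and $R^{>0}f_*\omega_Y = 0$ directly (using rationality of $X$ off finitely many points for the first, \cite[Theorem 1.4]{kovacs17} for the second, and the klt hypothesis only to identify $f_*\omega_Y \simeq \omega_X$), so the definition is verified for every test morphism at once. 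You instead fix one Cohen--Macaulayfication $g$ that is an isomorphism over the CM locus; this makes the point-supportedness of $R^{>0}g_*\sO_Y$ and $R^{>0}g_*\omega_Y$ trivial and lets you dispense with both \cite[Theorem 1.4]{kovacs17} and the klt hypothesis (concentration of $\omega^{\bullet}_X$ in a single degree already gives CM), at the price of needing the refined form of Kawasaki's Macaulayfication theorem (centers inside the non-CM locus) and, crucially, a way to pass from ``the single model $g$ computes $\sO_X$'' to ``every CM model does.''

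That last passage is the one step that, as written, would not go through. The ``same reasoning as in Proposition \ref{prop:rational singularities 1}'' is a factorization trick: there an arbitrary test morphism over $Z$ is replaced by one factoring through $\pi$, and the point is that $X$ is \emph{already known} to have rational singularities, so the induced morphism to $X$ computes $\sO_X$. In your situation, factoring an arbitrary $f' \colon Y' \to X$ through your chosen $g$ as $f' = g \circ \phi$ leaves you needing $R\phi_*\sO_{Y'} \simeq \sO_Y$, i.e.\ rationality of the Macaulayfication $Y$, which you do not know. What you actually need is the independence statement of \cite{kovacs17}: for a normal Cohen--Macaulay scheme with a dualizing complex, if $\sO_X \simeq Rg_*\sO_Y$ holds for one proper birational morphism from a CM scheme, then it holds for all of them (rationality can be tested on a single Macaulayfication). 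That result is available in \cite{kovacs17}, so the issue is one of citing the right theorem rather than a mathematical obstruction, but the final reduction should be rephrased accordingly; with that repair your proof is complete.
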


\begin{proof}
We take a projective birational morphism $f \colon Y \to X$ from a normal Cohen-Macaulay scheme $Y$.
We put $g:\pi \circ f \colon Y \to Z$.
First, we prove that 
\[
\pi_*R^jf_*\sO_Y =0
\]
for all $j>0$.
We consider the spectral sequence
\[
E_2^{i,j}=R^i\pi_*R^jf_*\sO_Y \Longrightarrow E^{i+j}_{\infty}=R^{i+j}g_*\sO_Y.
\]
Since $\dim \Exc(\pi) \leq 1$, we have $E_2^{i,j}=0$ for $i>1$,
Therefore, we have $\pi_*R^if_*\sO_Y=E_2^{0,j}\simeq E_{\infty}^{0,j}$ for all $j$, and $E_{\infty}^{i,j}=0$ if $i \neq 0,1$.
Thus, we have 
\[
R^jg_*\sO_Y=E^j_{\infty}\simeq E_{\infty}^{0,j}=\pi_*R^jf_*\sO_Y
\]
for $j>1$.
Since $Z$ has rational singularities, it is zero.
Thus, it is enough to show that $E_{\infty}^{0,1}=0$.
Since $E^{i,j}_{\infty}=0$ if $i+j=1$ and $i \neq 0,1$, we obtain the exact sequence
\[
0 \to E_{\infty}^{0,1} \to E^{1}_{\infty} \to E_{\infty}^{1,0} \to 0.
\]
Since $Z$ has rational singularities, $E^1_{\infty}=R^1g_*\sO_Y=0$.
Therefore, we have $E_{\infty}^{0,1}=\pi_*R^1f_*\sO_Y=0$.

Since $X$ has rational singularities except for finite closed points, the support of $R^if_*\sO_Y$ is isolated, so we have $R^if_*\sO_Y=0$ for every $i>0$.
Thus, it is enough to show that $X$ is Cohen-Macaulay.
To do this, we prove the natural map $Rf_*\omega_Y \to \omega_X$ is an isomorphism.
Since $X$ has rational singularities except for finite closed points, $R^if_*\omega_Y$ has the isolated support for $i>0$ by \cite[Theorem 1.4]{kovacs17}.
Since $X$ is klt, we have $f_*\omega_Y \simeq \omega_X$.
Thus, by replacing the structure sheaves into canonical sheaves in the above argument, we have the natural isomorphism $Rf_*\omega_Y \simeq \omega_X$.
By the Grothendieck duality 
\[
Rf_* R\mathscr{H}om(\sO_Y,\omega^{\bullet}_{Y}) \simeq R\mathscr{H}om(Rf_*\sO_Y, \omega^{\bullet}_{X}),
\]
 we have
\[
Rf_*\omega^{\bullet}_Y \simeq \omega^{\bullet}_X.
\]
Since $Y$ is Cohen-Macaulay and $f$ is birational, $\omega^{\bullet}_Y$ is locally isomorphic to the shift of $\omega_Y$ over $X$, so $\omega^{\bullet}_X$ is locally isomorphic to the shift of $\omega_X$.
Thus $X$ is Cohen-Macaulay.
\end{proof}

\section{Existence of pl-flip with ample divisor in the boundary}\label{section:pl-flip}
In this section, we prove the existence of pl-flips with ample divisor in the boundary (cf.\,Theorem \ref{thm:pl-filp in ample divisor} and Corollary \ref{cor:existence of pl-flip threefold}).
In the first subsection, we study the vanishing theorem up to alterations.
Next, we introduce the notion of global $T$-regularity and study properties of it, for example, the adjunction and the inversion of adjunction.
Combining such arguments, we obtain the existence of flips in the special setting (cf.\,Theorem \ref{thm:pl-filp in ample divisor}).

In this section, we basically work over a scheme $V$, satisfying the following properties.

\begin{assump}\label{assump:cdvr}
$V$ is the spectrum of a complete discrete valuation ring of characteristic $(0,p)$.
\end{assump}

\begin{rmk}
Let $V$ be a scheme satisfying Assumption \ref{assump:cdvr}.
Let $X$ be a $V$-variety.
Then it is possible that $X$ is a variety over a field, and in such a case, $X=X_s$ and $X_\eta = \emptyset$, or $X=X_\eta$ and $X_s=\emptyset$. 
\end{rmk}

\subsection{Kodaira type vanishing up to alterations}

Bhatt \cite{bhatt20} proved the vanishing of local cohomology up to finite covers in mixed characteristic.
Using this theorem, we obtain the Kodaira type vanishing up to alterations for semiample and big divisors (Corollary \ref{cor:zero map}).
This theorem plays an essential role to prove the existence of flips.

\begin{thm}\label{thm:killing local cohomology}\textup{(\cite[Theorem 6.28]{bhatt20}, cf.\,\cite[Proposition 5.5.3]{bhatt10})}
Let $V$ be a scheme satisfying Assumption \ref{assump:cdvr}.
Let $f \colon X \to Z$ be a projective surjective morphism of $V$-varieties to an affine scheme $Z$.
Let $L$ be a semiample and $f$-big line bundle on $X$.
Let $z \in Z$ be a closed point with residue characteristic $p >0$.
Then there exists a finite surjective morphism $\pi \colon Y \to X$ from a $V$-variety such that
\[
 R\Gamma_zR\Gamma(X_{p},L^{-1}) \to R\Gamma_zR\Gamma(Y_{p},\pi^*L^{-1}),
\]
is zero on $h^i$ for all $i<\dim (X_p)$, where $X_p$ and $Y_p$ are closed subscheme of $X$ and $Y$ defined by $p=0$.
\end{thm}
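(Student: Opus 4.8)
The plan is to deduce the statement directly from \cite[Theorem 6.28]{bhatt20}, which is exactly the mixed-characteristic incarnation of ``Kodaira vanishing up to finite covers'' that we want; the only work on our side is to put the hypotheses into the form required there. (Note also that when $X$ happens to be a variety over the residue field, so that $p=0$ already on $X$ and hence $X_p=X$, the assertion is the equal-characteristic statement \cite[Proposition 5.5.3]{bhatt10} referred to in the theorem, and one may wish to dispatch that case separately.)

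If \cite[Theorem 6.28]{bhatt20} is formulated with the base being the spectrum of a complete Noetherian local ring, I would first reduce to that situation. Since $R\Gamma_z$ computes cohomology with support in $f^{-1}(z)$, the complex $R\Gamma_z R\Gamma(X_p, L^{-1})$ --- and likewise $R\Gamma_z R\Gamma(Y_p, \pi^{*}L^{-1})$ for any finite cover $Y\to X$ --- is unchanged when $Z$ is replaced first by an affine open $D(g)\ni z$ and then by $\Spec \widehat{\sO}_{Z,z}$. Conversely, a finite surjective morphism onto $X\times_Z \Spec \widehat{\sO}_{Z,z}$ can be spread out to a finite surjective morphism over some affine open $D(g)\ni z$ and then extended to a finite surjective morphism onto $X$ by taking the normalization of $X$ in the corresponding finite extension of $K(X)$, exactly as in the proof of Proposition \ref{prop:index one cover1}; the extended cover still satisfies the required vanishing because the local cohomology at $z$ only sees a neighborhood of $z$. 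Hence it suffices to treat the case where $Z$ is the spectrum of a complete local ring with closed point $z$ of residue characteristic $p$.

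In that case $f\colon X\to Z$ is projective, and the hypothesis that $L$ is semiample and $f$-big says precisely that for $m\gg 0$ the line bundle $L^{m}$ is globally generated over $Z$ and the resulting $Z$-morphism $X\to \P^{N}_{Z}$ is generically finite onto its image. This is the positivity input demanded by \cite[Theorem 6.28]{bhatt20}, so that theorem applies and produces the finite surjective $\pi\colon Y\to X$ for which $h^{i}\bigl(R\Gamma_z R\Gamma(X_p, L^{-1})\bigr)\to h^{i}\bigl(R\Gamma_z R\Gamma(Y_p, \pi^{*}L^{-1})\bigr)$ vanishes for all $i<\dim X_p$.

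I expect essentially all of the difficulty to be internal to \cite[Theorem 6.28]{bhatt20} --- the perfectoid and almost-mathematics input underlying \cite{bhatt20}'s Cohen--Macaulayness of absolute integral closures in mixed characteristic --- which I would treat as a black box. On our side the only point that requires a little care is the descent of the finite cover through the completion $\widehat{\sO}_{Z,z}$ (handled, after spreading out, by the normalization-in-a-field-extension construction above, paying attention to the components of $X\times_Z\Spec\widehat{\sO}_{Z,z}$ lying over the generic point of $X$). If \cite[Theorem 6.28]{bhatt20} is instead already stated over an arbitrary excellent affine base with a marked closed point of residue characteristic $p$, the reduction of the second paragraph is unnecessary and the theorem becomes an immediate citation.
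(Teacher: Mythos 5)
Your proposal is essentially the paper's own proof: the paper disposes of the theorem in two lines, citing \cite[Theorem 6.28]{bhatt20} verbatim when $X$ is flat over $V$ and invoking the argument of \cite[Proposition 5.5.3]{bhatt10} in the non-flat (equal characteristic $p$) case, so no reduction to a complete local base is needed and your argument reduces to the same direct citation. The only caveat is that, had your conditional reduction actually been required, the step descending a finite cover of $X \times_Z \Spec \widehat{\sO}_{Z,z}$ by ``spreading out'' and normalizing $X$ in a finite extension of $K(X)$ would not work as written (the completion is not a filtered colimit of coordinate rings of Zariski opens, and the cover's total ring of fractions need not be finite over $K(X)$), but since Bhatt's theorem applies in the stated generality this does not affect the correctness of your proof.
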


\begin{proof}
If $X$ is flat over $V$, it is \cite[Theorem 6.28]{bhatt20}.
Otherwise, it follows from a similar argument to the argument in the proof of \cite[Proposition 5.5.3]{bhatt10}.
\end{proof}

\begin{rmk}
If $X$ is flat over $V$, the relative dimension of $X$ over $V$ coincides with $\dim (X_p)$ if the closed fiber is non-empty. 
\end{rmk}

\begin{prop}\label{prop:image is contained in varpi}
Let $V$ be a scheme satisfying Assumption \ref{assump:cdvr}.
Let $f \colon X \to Z$ be a projective surjective morphism from a flat $V$-variety $X$ to an affine flat $V$-variety $Z$.
We assume that the closed fiber of $Z \to V$ is non-empty.
Let $L$ be a semiample and $f$-big line bundle on $X$.
Then, for every positive integer $m$, there exists a finite surjective morphism $\pi \colon Y \to X$ from a $V$-variety such that the image of the following map
\[
\Tr^i_{\pi} \colon R^{i-d}\Gamma(\omega^{\bullet}_{Y/V}(\pi^*L)) \to R^{i-d}\Gamma(\omega^{\bullet}_{X/V}(L)),
\]
is contained in $\varpi^m R^{i-d}\Gamma(\omega^{\bullet}_{X/V}(L))$ for all $i>0$,
where $d$ is the relative dimension of $X$ over $Z$ and $\varpi$ is a uniformizer of $V$.
\end{prop}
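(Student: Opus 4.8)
The plan is to deduce this from Theorem \ref{thm:killing local cohomology} by passing from local cohomology to ordinary cohomology via Grothendieck duality, and then to iterate the procedure to gain one extra factor of $\varpi$ each time. First I would reduce to the case $Z$ affine with a single closed point $z$ of residue characteristic $p>0$ lying over the closed point of $V$; since $X$ and $Z$ are flat over $V$ and the closed fiber of $Z$ is nonempty, such a point exists. The key identity is the local duality / Grothendieck duality comparison between $R\Gamma_z R\Gamma(X_p, L^{-1})$ and the sections $R\Gamma(\omega^\bullet_{X_p/k}(L))$ (up to the appropriate shift and Matlis dual over the local ring at $z$), together with the short exact sequence $0 \to \omega^\bullet_{X/V}(L) \xrightarrow{\varpi} \omega^\bullet_{X/V}(L) \to \omega^\bullet_{X_p/V}(L) \to 0$ relating the mod-$p$ (equivalently mod-$\varpi$, up to a power) dualizing complex to the integral one. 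Taking cohomology of this sequence and chasing the resulting long exact sequence, the statement "the trace map is zero on $h^i$ for $i<\dim X_p$ after a finite cover" translates, via duality, into "the trace map $\Tr^i_\pi$ on $R^{i-d}\Gamma(\omega^\bullet_{Y/V}(\pi^*L)) \to R^{i-d}\Gamma(\omega^\bullet_{X/V}(L))$ has image killed modulo $\varpi$ (after quotienting by the image of multiplication by $\varpi$), i.e. lands in $\varpi \cdot R^{i-d}\Gamma(\omega^\bullet_{X/V}(L))$" for $i>0$.

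Next I would run the bootstrap. Suppose by induction we have a finite surjective $\pi_m \colon Y_m \to X$ with $\operatorname{im}(\Tr^i_{\pi_m}) \subseteq \varpi^m R^{i-d}\Gamma(\omega^\bullet_{X/V}(L))$ for all $i>0$. Apply the $m=1$ case to $Y_m \to Z$ with the line bundle $\pi_m^* L$ (still semiample and $f$-big) to get a finite surjective $\rho \colon Y_{m+1} \to Y_m$ with $\operatorname{im}(\Tr^i_\rho) \subseteq \varpi \cdot R^{i-d}\Gamma(\omega^\bullet_{Y_m/V}(\pi_m^*L))$. Since the trace map is functorial under composition of finite (more generally alteration) morphisms — this is the compatibility of Grothendieck traces built into the construction recalled in the Notations subsection — we have $\Tr^i_{\pi_m \circ \rho} = \Tr^i_{\pi_m} \circ \Tr^i_\rho$, so its image lies in $\Tr^i_{\pi_m}\big(\varpi \cdot R^{i-d}\Gamma(\omega^\bullet_{Y_m/V}(\pi_m^*L))\big) = \varpi \cdot \operatorname{im}(\Tr^i_{\pi_m}) \subseteq \varpi^{m+1} R^{i-d}\Gamma(\omega^\bullet_{X/V}(L))$. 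Setting $\pi := \pi_m \circ \rho$ (which is finite and surjective as a composition of such) completes the induction and hence the proof.

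The main obstacle I expect is the careful bookkeeping in the duality translation of the first paragraph: matching the shift conventions so that "zero on $h^i$ for $i < \dim X_p$" on the local-cohomology side corresponds exactly to "$i > 0$" on the $R^{i-d}\Gamma(\omega^\bullet)$ side, and verifying that the connecting maps in the long exact sequence coming from $0 \to \omega^\bullet_{X/V}(L) \xrightarrow{\varpi} \omega^\bullet_{X/V}(L) \to \omega^\bullet_{X_p/V}(L) \to 0$ are compatible with the trace maps on both $X$ and $Y$. One must also be slightly careful that $\dim X_p$ equals the relative dimension of $X$ over $V$ (as noted in the remark following Theorem \ref{thm:killing local cohomology}, this holds since the closed fiber is nonempty), and that cohomology of the dualizing complex twisted by $L$ behaves well enough that Theorem \ref{thm:killing local cohomology}'s hypothesis — $L$ semiample and $f$-big on $X$, hence on any finite cover — is preserved under each step of the bootstrap. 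Once these compatibilities are in place, the iteration argument is formal.
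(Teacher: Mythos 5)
Your proposal is correct and follows essentially the same route as the paper's proof: translate Theorem \ref{thm:killing local cohomology} through local duality and Grothendieck duality to kill the trace map on the mod-$p$ fiber, feed this into the long exact sequence coming from multiplication on $\omega^{\bullet}_{X/V}(L)$, and iterate by composing finite covers using functoriality of the trace. The only cosmetic difference is that the paper multiplies by $p$ (whose quotient really is $\omega^{\bullet}_{X_p/V}(L)$, whereas the cokernel of $\cdot\varpi$ is the fiber over $\varpi$, not $X_p$) and gains a factor of $p$ per step, concluding with $p^m \in (\varpi^m)$, while you phrase each step as gaining a factor of $\varpi$ directly.
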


\begin{proof}
We take a closed point $z \in Z_p$. %it is enough to show that the assertion holds at $z$ for some finite cover $\pi$ from a $V$-variety, because any finitely many finite covers from $V$-varieties are factored by some finite cover from a $V$-variety.
We set $A:=\sO_{Z_p,z}$.
Let $E$ be the injective hull of the residue field of $A$.
By Theorem \ref{thm:killing local cohomology}, there exists a finite surjective morphism $\pi \colon Y \to X$ from a $V$-variety such that
\[
h^iR\Gamma_zR\Gamma(X_p,L^{-1}) \to h^iR\Gamma_zR\Gamma(Y_p,\pi^*L^{-1})
\]
is zero for all $i<d$.
It is enough to show that $\pi$ satisfied the desired condition around $z$, because any finitely many finite covers from $V$-varieties are factored by some finite cover from a $V$-variety.
We take base changes via $\Spec{\sO_{Z,z}} \to Z$ and we use the same notations by abuse of notations.
We may replace $Z$ with $\Spec \sO_{Z,z}$.
By the local duality and the Grothendieck duality, we have
\[
R\Hom_A(R\Gamma_zR\Gamma(X_p,L^{-1}),E)) \simeq R\Gamma\omega^{\bullet}_{X_p/V}(L\widehat{)}[1],
\]
where the right hand side is the completion of the $1$-shift of $ R\Gamma\omega^{\bullet}_{X_p/V}(L)$.
By the same argument for $Y_p$, we have the following diagram;
\[
\xymatrix{
R\Hom_A(R\Gamma_zR\Gamma(Y_p,\pi^*L^{-1}),E) \ar[r] \ar[d]_{\simeq} & R\Hom_A(R\Gamma_zR\Gamma(X_p,L^{-1}),E) \ar[d]^{\simeq} \\
R\Gamma\omega^{\bullet}_{Y_p/V}(\pi^*L\widehat{)}[1] \ar[r] & R\Gamma\omega^{\bullet}_{X_p/V}(L\widehat{)}[1].
}
\]
Taking the $(i-d)$-th cohomology, we obtain that the trace map
\[
\Tr^i_{\pi_p} \colon R^{i+1-d}\Gamma(\omega^{\bullet}_{Y_p/V}(\pi^*L)) \to R^{i+1-d}\Gamma(\omega^{\bullet}_{X_p/V}(L)),
\]
is zero for all $i>0$.
Next, we consider the exact sequence
\[
0 \to \sO_X \to \sO_X \to \sO_{X_p} \to 0,
\]
and applying $R\Gamma R\mathscr{H}om(-, \omega^{\bullet}_{X/V}(L))$ and taking $(i-d)$-th cohomology, we have the exact sequence
\[
R^{i-d}\Gamma(\omega^{\bullet}_{X/V}(L)) \to R^{i-d}\Gamma(\omega^{\bullet}_{X/V}(L)) \to  R^{i+1-d}\Gamma(\omega^{\bullet}_{X_p/V}(L)).
\]
Thus, we have the commutative diagram of exact sequences
\[
\xymatrix{
R^{i-d}\Gamma(\omega^{\bullet}_{X/V}(L)) \ar[r]^{\cdot p} & R^{i-d}\Gamma(\omega^{\bullet}_{X/V}(L)) \ar[r] & R^{i+1-d}\Gamma(\omega^{\bullet}_{X_p/V}(L)) \\
R^{i-d}\Gamma(\omega^{\bullet}_{Y/V}(\pi^*L)) \ar[r]^{\cdot p} \ar[u] & R^{i-d}\Gamma(\omega^{\bullet}_{Y/V}(\pi^*L)) \ar[r] \ar[u] & R^{i+1-d}\Gamma(\omega^{\bullet}_{Y_p/V}(\pi^*L)) \ar[u]_{0\mathrm{-map}}.
}
\]
Therefore,  $\mathrm{Im}(\Tr^i_\pi)$ is contained in $pR^{i-d}\Gamma(\omega^{\bullet}_{X/V}(L))$.
By the same argument for $Y$, there exists a finite surjective morphism $g \colon W \to Y$ such that $\mathrm{Im}(\Tr^i_g)$ is contained in $pR^{i-d}\Gamma(\omega^{\bullet}_{Y/V}(\pi^*L))$, thus we have
\[
\mathrm{Im}(\Tr^i_{\pi \circ g}) \subset p^2 R^{i-d}\Gamma(\omega^{\bullet}_{X/V}(L)). 
\]
Repeating such a process, we obtain a finite surjective morphism of $V$-varieties such that the image of the trace map is contained in $p^mR^{i-d}\Gamma(\omega^{\bullet}_{X/V}(L))$.
Since $p$ is a multiple of $\varpi$, we have the assertion. 
\end{proof}

\begin{cor}\label{cor:zero map}\textup{(cf.\,\cite[Theorem 5.5]{bst})}
Let $V$ be a scheme satisfying Assumption \ref{assump:cdvr}.
Let $f \colon X \to Z$ be a projective surjective morphism from a normal flat $V$-variety $X$ to an affine flat $V$-variety $Z$.
Let $D$ be a semiample $\pi$-big Cartier divisor on $X$.
Then there exists an alteration $\pi \colon Y \to X$ such that the trace map
\[
\Tr^i_\pi \colon R^{i-d}\Gamma(\omega^{\bullet}_{Y/V}( \pi^*D )) \to R^{i-d}\Gamma(\omega^{\bullet}_{X/V}( D ))
\]
is zero for all $i> 0$, where $d$ is the relative dimension of $X$ over $V$.
Furthermore, if  $X$ is Cohen-Macaulay, there exists an alteration $\pi \colon Y \to X$ such that the trace map
\[
\Tr^i_\pi \colon H^i(\omega_{Y/V}( \pi^*D )) \to H^i(\omega_{X/V}( D ))
\]
is zero for all $i>0$.
\end{cor}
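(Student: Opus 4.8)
The plan is to combine the mixed‑characteristic vanishing of Proposition \ref{prop:image is contained in varpi} with resolution of singularities in characteristic zero. Proposition \ref{prop:image is contained in varpi} only pushes the image of the trace into $\varpi^m$ times the cohomology, so the key point will be to first make the relevant cohomology of the source $\varpi$‑power torsion by a characteristic‑zero argument, after which composing covers upgrades ``contained in $\varpi^m$'' to ``zero''. Throughout, $d$ denotes the relative dimension of $X$ over $V$, and I set $M_i:=R^{i-d}\Gamma(\omega^{\bullet}_{X/V}(D))$; since $\omega^{\bullet}_{X/V}(D)$ is a bounded complex with coherent cohomology and $Z$ is affine, each $M_i$ is a finitely generated $\sO_Z(Z)$‑module and $M_i=0$ for all $i$ outside a finite set, say $M_i=0$ for $i>i_0$. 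If the closed fibres of $X\to V$ (equivalently of $Z\to V$, as $f$ is surjective) are empty, then $X$ and $Z$ are varieties over $K$ and Step 1 below, applied with $\pi=\pi'$, already gives the result; so I may assume the closed fibres are non‑empty, in which case Proposition \ref{prop:image is contained in varpi} applies.

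\textbf{Step 1 (generic fibre).} Following the argument of Proposition \ref{prop:extension of alterations}, I would choose a projective birational morphism $\pi'\colon X'\to X$ from a $V$‑variety such that $\pi'_\eta\colon X'_\eta\to X_\eta$ is a resolution of singularities (this makes sense because $\Spec K\hookrightarrow V$ is an open immersion, so $X_\eta$ is open in $X$). Since this immersion is open, formation of the dualizing complex, of the Grothendieck trace, and of $R\Gamma$ is compatible with restriction to the generic fibre, i.e.\ with $-\otimes_R K$; hence $\Tr^i_{\pi'}\otimes_R K$ is the $(i-d)$‑th cohomology of the trace of $\pi'_\eta$, whose source is $R^{i-d}\Gamma(X'_\eta,\omega^{\bullet}_{X'_\eta/K}(\pi'^*D_\eta))\cong H^i(X'_\eta,\omega_{X'_\eta}(\pi'^*D_\eta))$, using that $X'_\eta$ is smooth over $K$. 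As $\pi'^*D_\eta$ is semiample and big relative to the affine scheme $Z_\eta$, this group vanishes for $i>0$ by Kawamata-Viehweg vanishing. Therefore $\mathrm{Im}(\Tr^i_{\pi'})\otimes_R K=0$ for all $i>0$; being a finitely generated $\sO_Z(Z)$‑module, $\mathrm{Im}(\Tr^i_{\pi'})$ is then killed by a fixed power $\varpi^N$ for all $1\le i\le i_0$, hence for all $i>0$.

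\textbf{Step 2 (special fibre) and conclusion.} I would then apply Proposition \ref{prop:image is contained in varpi} to $f\circ\pi'\colon X'\to Z$ and the semiample, $(f\circ\pi')$‑big line bundle $\sO_{X'}(\pi'^*D)$, with $m=N$: this yields a finite surjective morphism $g\colon Y\to X'$ of $V$‑varieties with $\mathrm{Im}(\Tr^i_g)\subseteq\varpi^N R^{i-d}\Gamma(\omega^{\bullet}_{X'/V}(\pi'^*D))$ for all $i>0$. Set $\pi:=\pi'\circ g$, an alteration. By functoriality of the Grothendieck trace, $\Tr^i_\pi=\Tr^i_{\pi'}\circ\Tr^i_g$, and since $\Tr^i_{\pi'}$ is $\sO_Z(Z)$‑linear,
\[
\mathrm{Im}(\Tr^i_\pi)=\Tr^i_{\pi'}\bigl(\mathrm{Im}(\Tr^i_g)\bigr)\subseteq\varpi^N\,\mathrm{Im}(\Tr^i_{\pi'})=0
\]
for every $i>0$, which is the first assertion. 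If in addition $X$ is Cohen-Macaulay, then $\omega^{\bullet}_{X/V}\simeq\omega_{X/V}[d]$, so $R^{i-d}\Gamma(\omega^{\bullet}_{X/V}(D))=H^i(\omega_{X/V}(D))$; moreover, by the construction of the trace maps the natural morphism $\omega_{Y/V}[d]\to\omega^{\bullet}_{Y/V}$ makes $H^i(\omega_{Y/V}(\pi^*D))\to H^i(\omega_{X/V}(D))$ factor through $R^{i-d}\Gamma(\omega^{\bullet}_{Y/V}(\pi^*D))$, so this map vanishes for the same $\pi$, giving the second assertion.

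The main obstacle is Step 1: passing from ``image $\subseteq\varpi^m M_i$'' to ``image $=0$'' forces one to introduce an auxiliary alteration whose generic fibre is a resolution, together with the (routine but essential) verification that dualizing complexes and trace maps are compatible with restriction to the generic fibre, so that the characteristic‑zero vanishing (resolution plus Kawamata-Viehweg) may be invoked there. Once $\mathrm{Im}(\Tr^i_{\pi'})$ has been made $\varpi$‑power torsion, the passage to $0$ via Proposition \ref{prop:image is contained in varpi} is purely formal.
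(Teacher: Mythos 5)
Your argument is correct, and its skeleton is the same as the paper's: a characteristic-zero Kawamata--Viehweg vanishing statement on a resolved generic fibre forces $\varpi$-power torsion, and Proposition \ref{prop:image is contained in varpi} then upgrades ``torsion'' to ``zero''; the Cohen--Macaulay refinement via the factorization through $R^{i-d}\Gamma(\omega^{\bullet}_{Y/V}(\pi^*D))$ is also exactly the paper's. The one genuine difference is the reduction step. The paper begins by replacing $X$ itself with a regular alteration using Theorem \ref{thm:semistable alteration} (de Jong); then Kawamata--Viehweg applied to the (now smooth) generic fibre shows that the \emph{target} modules $R^{i-d}\Gamma(\omega^{\bullet}_{X/V}(D))$ are themselves $\varpi$-torsion, so the finite cover from Proposition \ref{prop:image is contained in varpi} immediately has zero image. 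You instead keep $X$, resolve only the generic fibre by a projective birational $\pi'\colon X'\to X$ (characteristic-zero resolution plus the closure trick of Proposition \ref{prop:extension of alterations}), deduce that only $\mathrm{Im}(\Tr^i_{\pi'})$ is $\varpi$-power torsion, and then compose with the finite cover of $X'$ supplied by Proposition \ref{prop:image is contained in varpi}, using functoriality of the trace. Your route buys a slightly weaker input (Hironaka instead of de Jong's alteration theorem, and no need to control the target module itself) at the cost of the extra bookkeeping with images and the composition $\Tr^i_\pi=\Tr^i_{\pi'}\circ\Tr^i_g$; the paper's route is marginally shorter because after the regularization the relevant module is literally annihilated by $\varpi^m$. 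Both versions handle the empty-closed-fibre case and the flatness of the auxiliary schemes over $V$ correctly, so I see no gap.
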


\begin{proof}
We may assume that  $X$ is regular by taking an alteration by Theorem \ref{thm:semistable alteration}.
Then the generic fiber $X_\eta$ is a variety over a field of characteristic zero and $D_\eta$ is semiample and big over $Z$, thus we have 
\[
R^{i-d}\Gamma(\omega^{\bullet}_{X/V}(D))_\eta=0
\]
by Kawamata-Viehweg vanishing.
Therefore, we obtain the case where the closed fiber of $X$ is empty.
Thus, we assume that the closed fiber of $X$ is non-empty.
Furthermore, $R^{i-d}\Gamma(\omega^{\bullet}_{X/V}( D ))$ is a $\varpi$-torsion finite $\sO_Z$-module.
Thus, for some positive integer $m$, we have 
\[
\varpi^m R^{i-d}\Gamma(\omega^{\bullet}_{X/V}( D))=0
\]
By Proposition \ref{prop:image is contained in varpi}, 
there exists a finite surjective morphism $\pi \colon Y \to X$ from a $V$-variety such that the image of the following map
\[
\Tr^i_{\pi} \colon R^{i-d}\Gamma(\omega^{\bullet}_{Y/V}(\pi^*D)) \to R^{i-d}\Gamma(\omega^{\bullet}_{X/V}(D)),
\]
is contained in $\varpi^m R^{i-d}\Gamma(\omega^{\bullet}_{X/V}(D))=0$ for all $i>0$.
Thus, we obtain the first assertion.

Next, we assume that $X$ is Cohen-Macaulay.
By the first assertion, there exists an alteration $\pi \colon Y \to X$ such that the trace map
\[
\Tr^i_\pi \colon R^{i-d}\Gamma(\omega^{\bullet}_{Y/V}( \pi^*D )) \to R^{i-d}\Gamma(\omega^{\bullet}_{X/V}( D ))
\]
is zero for all $i> 0$.
Since $X$ is Cohen-Macaulay, we have $R^{i-d}\Gamma(\omega^{\bullet}_{X/V}(D))\simeq H^i(\omega_{X/V}(D))$.
Thus, the trace map
\[
H^i(\omega_{Y/V}(\pi^*D)) \to R\Gamma^{i-d}(\omega^{\bullet}_{Y/V}(\pi^*D)) \to H^i(\omega_{X/V}(D))
\]
is zero for all $i>0$.
\end{proof}

\begin{rmk}\label{rmk:zero map}
In positive characteristic, an analogous statement of Corollary \ref{cor:zero map} holds and
we can take $Y$ in Corollary \ref{cor:zero map} as a finite cover by \cite[Theorem 5.5]{bst}.
\end{rmk}

\subsection{Global $T$-regularity}
In positive characteristic, global $F$-regularity is important in the proof of the existence of flips (see \cite{hacon-xu}, \cite{hacon-witaszek19}, \cite{hacon-witaszek20}).
In mixed characteristic, we use  global $T$-regularity instead of it.

\begin{defn}
Let $\pi \colon Y \to X$ be an alteration of normal schemes.
\begin{itemize}
    \item For a prime divisor $E$ on $X$, a prime divisor $E_Y$ is called a \emph{strict transform of $E$} if $\pi(E_Y)=E$.
    \item For an $\R$-Weil divisor $D=\sum_i a_i E_i$ , where $E_i$ is a prime divisor, an $\R$-Weil divisor $D_Y$ is called a \emph{strict transform of} $D$ if $D_Y$ is denoted by $D_Y=\sum_i a_i E_{Y,i}$ such that each $E_{Y,i}$ is a strict transform of $E_i$.
\end{itemize}
\end{defn}

\begin{rmk}
\textup{}
\begin{itemize}
    \item Since $\pi$ is an alteration, $\pi|_{E_Y}$ is also an alteration.
    \item If $D$ is $\Q$-Weil or $\Z$-Weil, then so is $D_{Y}$
    \item If $D$ is a locally irreducible reduced divisor, then so is $D_Y$.
    \item If $\pi$ is birational, then $D_Y$ is the strict transform of $D$ in the usual sense
\end{itemize}
\end{rmk}

\begin{defn}\textup{(Globally $T$-regular, Purely globally $T$-regular)}
Let $V$ be an excellent Dedekind scheme.
Let $(X,\Delta)$ be a log pair, or a localization of a log pair over $V$.
\begin{itemize}
    \item Let $L$ be a $\Q$-Cartier $\Q$-Weil divisor on $X$. Then the submodule $T^0(X,\Delta;L)$ of $H^0(\sO_X(\lceil L \rceil))$ is defined by
    \[
    \bigcap_{\pi \colon Y \to X} \mathrm{Im}(H^0(\omega_{Y/V}(\lceil \pi^*(L-K_{X/V}-\Delta) \rceil))  \to H^0(\sO_X(\lceil L \rceil))),
    \]
    where the map is a composition of  a trace map and the natural injection and $\pi$ runs over all alterations from a normal $V$-variety.
    \item The pair $(X,\Delta)$ is \emph{globally $T$-regular} (globally Trace-regular) if for every alteration $\pi \colon Y \to X$ from a normal $V$-variety, the trace map
    \begin{equation}\label{equation:globally T-regulality}
     H^0(\omega_{Y/V}(\lceil -\pi^*(K_{X/V}+\Delta) \rceil)) \to H^0(\sO_X) 
    \end{equation}
    is surjective, that is, $T^0(X,\Delta):=T^0(X,\Delta;0)=H^0(\sO_X)$.
    \item We set $\lfloor \Delta \rfloor=S$.
    Then $(X,\Delta)$ is \emph{purely globally $T$-regular} if for every alteration $ \pi \colon Y \to X$ from a normal $V$-variety and every strict transform $S_Y$ of $S$ via $\pi$, the trace map
    \begin{equation}\label{equation:purely globally T-regulality}
        H^0(\omega_{Y/V}(S_Y+\lceil -\pi^*(K_{X/V}+\Delta) \rceil))  \to H^0(\sO_X) 
    \end{equation}
    is surjective. 
     We note that the map in (\ref{equation:purely globally T-regulality}) is well-defined.
    Indeed, it is enough to show that the map
    \[
    \pi_*\omega_{Y/V}(S_Y+\lceil -\pi^*(K_{X/V}+\Delta) \rceil)  \to \sO_X
    \]
    is well-defined on the regular locus of $X$.
    On the regular locus of $X$, $S_Y \leq \pi^*S$, so the left hand side is contained in
    \[
    \omega_Y(\lceil -\pi^*(K_{X/V}+\Delta') \rceil),
    \]
    where $\Delta':=\Delta-S$.
    \item If $X$ is affine, we say that $(X,\Delta)$ is \emph{$T$-regular} (resp. \emph{purely $T$-regular}) if it is globally $T$-regular (resp. purely globally $T$-regular).
    \item Let $f \colon X \to Z$ be a quasi-projective morphism of $V$-varieties.
    $(X,\Delta)$ is globally $T$-regular (resp. purely globally $T$-regular) over $z \in Z$ if $(X,\Delta)$ is globally $T$-regular (resp. purely globally $T$-regular) after localizing at $z$.
\end{itemize}
\end{defn}

\begin{prop}\label{prop:corresponding to finite cover in positive chara}\textup{(\cite[Theorem 6.7]{bst})}
Let $X$ be a quasi-projective variety over an $F$-finite field.
Let $(X,\Delta)$ be a log pair and $L$ a $\Q$-Cartier $\Q$-Weil divisor on $X$.
Then
\[
T^0(X,\Delta;L)=\bigcap_{\pi} \mathrm{Im}(H^0(\omega_Y(\lceil \pi^*(L-K_X-\Delta) \rceil))  \to H^0(\sO_X(\lceil L \rceil))),
\]
where $\pi$ runs over all finite covers from a normal $V$-variety.
\end{prop}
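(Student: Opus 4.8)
The plan is to reduce the statement to the characteristic-$p$ comparison of alterations with finite covers carried out by Blickle--Schwede--Tucker. Before invoking it I would isolate the purely formal inclusion. Every finite cover $\pi\colon Y\to X$ from a normal variety is in particular an alteration from a normal $V$-variety (here $X$, and hence every $Y$ that appears, is a variety over a field, so that $\omega_{Y/V}\simeq\omega_Y$ and $K_{X/V}=K_X$ by the conventions of Section~2 for the relative dualizing sheaf over a field). Consequently the intersection defining $T^0(X,\Delta;L)$ runs over a larger index set than the one on the right-hand side, which gives $T^0(X,\Delta;L)\subseteq \bigcap_{\pi\ \mathrm{finite}}\mathrm{Im}(\cdots)$ for free. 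The content of the Proposition is therefore the reverse inclusion: for an arbitrary alteration $\pi\colon Y\to X$ from a normal variety, the trace image $\mathrm{Im}\big(H^0(\omega_Y(\lceil \pi^*(L-K_X-\Delta)\rceil))\to H^0(\sO_X(\lceil L\rceil))\big)$ already contains $\bigcap_{\pi\ \mathrm{finite}}\mathrm{Im}(\cdots)$.

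For this I would appeal directly to \cite[Theorem~6.7]{bst} together with the stabilization results preceding it. The idea there is that the trace images form a directed system that decreases under refinement of the alteration, that this system stabilizes, and that its stable value is already realized by a single finite cover, so that every alteration refining that cover realizes the same image. Concretely, given $\pi\colon Y\to X$ one takes a Stein-type factorization $Y\to Y'\to X$ with $Y'\to X$ finite and $Y\to Y'$ proper and birational between normal varieties, and compares the image through $Y$ with the image through an appropriate finite cover dominating $Y'$, using that the trace along the birational part is an isomorphism in codimension one and that $\omega_X(\lceil L\rceil)$ is $(S_2)$. In the write-up I would verify only that the hypotheses of \cite[Theorem~6.7]{bst} hold in our situation -- $X$ normal and quasi-projective over the $F$-finite field $k$, $\Delta$ an effective $\Q$-divisor with $K_X+\Delta$ $\Q$-Cartier, and $L$ a $\Q$-Cartier $\Q$-Weil divisor -- and spell out the dictionary identifying the invariant defined there with the module $T^0(X,\Delta;L)$ used here.

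The step I expect to be the genuine obstacle -- the reason this is a theorem and not mere bookkeeping -- is controlling the birational part $Y\to Y'$ of the Stein factorization: a priori the trace $H^0(\omega_Y(\lceil\cdots\rceil))\to H^0(\omega_{Y'}(\lceil\cdots\rceil))$ need not be surjective, so one cannot simply replace an alteration by its finite Stein factor, and it is precisely the resolution of this difficulty that constitutes the substance of \cite{bst}. I would cite that argument rather than reproduce it, since in the present paper Proposition~\ref{prop:corresponding to finite cover in positive chara} serves only to anchor the analogy with the globally $F$-regular case and is not on the critical path of the mixed-characteristic proofs.
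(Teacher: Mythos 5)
Your proposal is correct and matches the paper's treatment: the paper gives no independent argument for this statement, but simply cites \cite[Theorem 6.7]{bst} (as the subsequent remark explains, $T^0$ is defined there via finite covers and the equivalence with the alteration definition is proved in positive characteristic), and your formal inclusion via the larger index set of alterations plus the citation of \cite{bst} for the reverse containment is exactly this route.
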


\begin{rmk}
In \cite{bst}, $T^0$ is defined as in Proposition \ref{prop:corresponding to finite cover in positive chara}, and they proved the equivalence of the definitions in positive characteristic.
However, in characteristic zero, it does not hold in general.
Indeed, let $X$ be a normal non-klt variety over a field of characteristic zero.
By Proposition \ref{prop:klt and plt by globally $T$-regular}, $X$ is not globally $T$-regular, so $T^0$ is not $H^0(\sO_X)$.
On the other hand, since every finite cover of normal variety splits, the right hand side in Proposition \ref{prop:corresponding to finite cover in positive chara} is $H^0(\sO_X)$.
\end{rmk}

\begin{prop}\label{prop:localization globally $T$-regular}
Let $V$ be a scheme satisfying Assumption \ref{assump:cdvr}.
Let $f \colon X \to Z$ be a projective surjective morphism of $V$-varieties.
Let $(X,\Delta)$ be log pair over $V$.
If $(X,\Delta)$ is globally $T$-regular, then so is $(X',\Delta')$, where $(X',\Delta')$ is one of the following.
\begin{itemize}
    \item A restriction of $(X,\Delta)$ over  on some  open subset $X'$ of $X$, or
    \item A localization of $(X,\Delta)$ at some point of $Z$.
\end{itemize}
Furthermore, the same assertion holds for purely globally $T$-regular log pairs.
\end{prop}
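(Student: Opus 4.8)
The plan is to handle both cases uniformly. In each case there is a flat morphism $j \colon X' \to X$ — an open immersion in the first bullet, and the localization $X' = X \times_Z \Spec \sO_{Z,z}$ in the second — and we set $\Delta' := \Delta|_{X'}$, so that $K_{X'/V} + \Delta'$ is again $\Q$-Cartier and $\omega^{\bullet}_{X'/V}$ is the pullback $j^* \omega^{\bullet}_{X/V}$ (this is the meaning of $K_{X'/V}$ for a localization, and it agrees with the dualizing complex since $j$ is flat). I would fix an arbitrary alteration $\pi' \colon Y' \to X'$ from a normal $V$-variety (and, for the purely globally $T$-regular assertion, an arbitrary strict transform $S_{Y'}$ of $S' := \lfloor \Delta' \rfloor$), and prove that the image of the associated trace map on global sections, $H^0(\omega_{Y'/V}(\cdots)) \to H^0(\sO_{X'})$, is all of $H^0(\sO_{X'})$. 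The observation that makes this easy is that this image is an \emph{ideal} of the ring $R' := H^0(X', \sO_{X'})$, because the trace map $\pi'_* \omega_{Y'/V}(\cdots) \to \sO_{X'}$ is $\sO_{X'}$-linear; hence it suffices to produce a unit of $R'$ in the image, and the unit $1$ will come from the restriction ring homomorphism $R := H^0(X, \sO_X) \to R'$.

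First I would extend $\pi'$ to an alteration of $X$. In the open case, embed $Y'$ as a closed subscheme of $\P^N_{X'} \subset \P^N_X$, take its closure $\overline{Y'}$ (with reduced structure) in $\P^N_X$ as in the proof of Proposition \ref{prop:extension of alterations}, and normalize; the result $\tilde\pi \colon \tilde Y \to X$ is an alteration from a normal $V$-variety, and $\tilde Y \times_X X'$ is the normalization of the open subscheme $\overline{Y'} \cap \P^N_{X'} = Y'$ of $\overline{Y'}$, hence equals $Y'$ since $Y'$ is normal. In the localization case, $\pi'$ arises by base change from an alteration $\tilde Y \to X$ of $V$-varieties, obtained by spreading $\pi'$ out over some $f^{-1}(U)$ with $U \ni z$ open in $Z$ and extending over $X$ as above, with $\tilde Y$ normal after shrinking $U$. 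For the purely globally $T$-regular assertion I would additionally extend $S_{Y'}$ to a strict transform $S_{\tilde Y}$ of $S := \lfloor \Delta \rfloor$ with $S_{\tilde Y}|_{Y'} = S_{Y'}$: take closures in $\tilde Y$ of the components of $S_{Y'}$ — which lie over the components of $S$ meeting $X'$ — and choose arbitrary strict transforms under $\tilde\pi$ of the remaining (finitely many) components of $S$, which are automatically disjoint from $Y'$.

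Next I would use the compatibility of the trace map with the flat base change $j$: since the trace is constructed from Grothendieck duality and its extension across the non-normal locus is performed over the regular locus (which pulls back correctly under $j$), one obtains $j^* \Tr_{\tilde\pi} = \Tr_{\pi'}$; combined with the identification $\omega_{\tilde Y/V}(S_{\tilde Y} + \lceil -\tilde\pi^*(K_{X/V}+\Delta) \rceil)|_{Y'} \simeq \omega_{Y'/V}(S_{Y'} + \lceil -\pi'^*(K_{X'/V}+\Delta') \rceil)$ (rounding of $\Q$-divisors commutes with restriction to $Y'$), this produces a commutative square whose horizontal arrows are the trace maps for $(\tilde\pi, S_{\tilde Y})$ and for $(\pi', S_{Y'})$ into $H^0(\sO_X)$ and $H^0(\sO_{X'})$ respectively, and whose right vertical arrow is $R \to R'$; the analogous square without the $S$-terms handles the globally $T$-regular assertion. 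By the (purely) global $T$-regularity of $(X, \Delta)$ the top arrow is surjective, so the image of the bottom arrow contains the image of $R \to R'$, which contains $1$; being an ideal of $R'$ it therefore equals $R'$. Since $\pi'$ (and $S_{Y'}$) was arbitrary, $(X', \Delta')$ is (purely) globally $T$-regular.

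I expect the only genuinely technical point to be the compatibility of the trace map with the flat base change $j$ — in particular of its twisted form $\pi_* \omega_{Y/V}(\lceil \pi^*D \rceil) \to \omega_{X/V}(\lceil D \rceil)$, which the paper defines by extending from the regular locus using the $(S_2)$ property — together with the bookkeeping for extending alterations and strict transforms from $X'$ to $X$; the conceptual content is merely the remark that the image of a trace map on global sections is an ideal, so that exhibiting one unit in it suffices.
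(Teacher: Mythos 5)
Your proposal is correct and follows essentially the same route as the paper's (very terse) proof: extend an arbitrary alteration of $X'$ to an alteration of $X$ as in Proposition \ref{prop:extension of alterations}, observe that surjectivity of the trace map is equivalent to $1$ lying in its image (which is an ideal of $H^0(\sO_{X'})$ by $\sO_{X'}$-linearity), and use that the trace map is unchanged under restriction or localization. Your write-up simply makes explicit the points the paper leaves implicit — spreading out in the localization case, extending strict transforms for the purely globally $T$-regular assertion, and compatibility of the trace with the flat map $j$ — all of which are handled correctly.
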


\begin{proof}
We can extend every alteration of $X'$ to one of $X$ by Proposition \ref{prop:extension of alterations}, and the surjectivity is equivalent to the property that image of the trace map contains $1 \in H^0(\sO_X)$.
It does not change after localization or restriction.
\end{proof}

\begin{prop}\label{prop:klt and plt by globally $T$-regular}
Let $V$ be an excellent Dedekind scheme.
Let $(X,\Delta)$ be a log pair over $V$.
If $(X,\Delta)$ is globally $T$-regular, then it is klt, and in particular, $\lfloor \Delta \rfloor=0$.
If  $(X,\Delta)$ is purely globally $T$-regular, then it is plt, and in particular, $\lfloor \Delta \rfloor$ is reduced.
\end{prop}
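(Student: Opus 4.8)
The plan is to run the classical argument that deduces klt-type singularities from a splitting/surjectivity property. Assuming $(X,\Delta)$ is not klt (resp.\ not plt), I would extract a divisor $E$ over $X$ of nonpositive log discrepancy, realize it on a suitable \emph{birational} model $f\colon Y\to X$, and show that this one alteration $f$ already has a non-surjective trace map, contradicting (pure) global $T$-regularity.

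For the klt statement, suppose $(X,\Delta)$ is not klt. Then there is a projective birational morphism $f\colon Y\to X$ from a normal $V$-variety and a prime divisor $E$ on $Y$ with $a(E;X,\Delta)\le -1$: if a component of $\Delta$ has coefficient $\ge 1$, take $f=\id_X$ and $E$ that component; otherwise take $f$ a log resolution (which exists for our schemes by \cite{cjs} and \cite{cossart-piltant}), on which, by the usual characterization of klt on one log resolution, some exceptional $E$ has $a(E;X,\Delta)\le -1$. Fixing compatible canonical divisors so that $K_{Y/V}-f^*(K_{X/V}+\Delta)=\sum_{E'}a(E';X,\Delta)\,E'$, I would rewrite
\[
\omega_{Y/V}\bigl(\lceil -f^*(K_{X/V}+\Delta)\rceil\bigr)=\sO_Y(N),\qquad N:=K_{Y/V}-\lfloor f^*(K_{X/V}+\Delta)\rfloor.
\]
Then $N=\{f^*(K_{X/V}+\Delta)\}+\sum_{E'}a(E';X,\Delta)\,E'$, so the coefficient of $E$ in the integral divisor $N$ is an integer lying in $[\,a(E;X,\Delta),\,a(E;X,\Delta)+1\,)$, hence $\le -1$; thus $N$ is not effective and the constant function $1$ is not a section of $\sO_Y(N)$.

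The point I then need is that the relevant map $H^0(\sO_Y(N))\to H^0(\sO_X)$ sends nothing to $1$. Since $f$ is birational and $X$ is normal, $f$ is an isomorphism away from a closed subset of codimension $\ge 2$ in $X$, and by construction the trace map is the unique extension to $X$ (possible since $\omega_{X/V}(\lceil\cdot\rceil)$ is $S_2$) of the identity on that locus; hence any section of $\sO_Y(N)$ mapping to $1\in H^0(\sO_X)$ must be the constant function $1\in K(Y)=K(X)$, which is not a section of $\sO_Y(N)$. So the trace map attached to $f$ is not surjective, and $(X,\Delta)$ is not globally $T$-regular. The assertion $\lfloor\Delta\rfloor=0$ drops out of the same bookkeeping, since the image of every trace map already lies in $H^0(\sO_X(-\lfloor\Delta\rfloor))\subseteq H^0(\sO_X)$.

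For the purely globally $T$-regular case I would argue in the same way, with two changes. First, if a coefficient of $\Delta$ exceeds $1$ then $(X,\Delta)$ is not even log canonical, and iterated blow-ups inside that component produce an exceptional $E$ with $a(E;X,\Delta)\le -1$; otherwise non-plt-ness yields such an exceptional $E$ on a log resolution as before. Second, I would replace $\sO_Y(N)$ by $\sO_Y(M)$ with $M:=N+S_Y$, where $S_Y=f_*^{-1}\lfloor\Delta\rfloor$ is the (unique) strict transform; since $E$ is exceptional it is not a component of $S_Y$, so the coefficient of $E$ in $M$ equals that in $N$, namely an integer $\le -1$, and $M$ is again not effective, whence the trace map in \eqref{equation:purely globally T-regulality} is not surjective. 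Finally, plt forces the coefficients of $\Delta$ to be $\le 1$, so $\lfloor\Delta\rfloor$ is reduced. I expect the only genuine obstacle to be the identification, for birational $f$, of the trace map with the natural inclusion of these rank-one $S_2$ sheaves into the constant sheaf — equivalently, that a preimage of the section $1$ must be the constant function $1$; the rest is the discrepancy computation above.
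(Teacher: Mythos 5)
Your proposal is correct and follows essentially the same route as the paper: for a birational model the trace map is just the natural inclusion of rank-one $S_2$ subsheaves of the constant sheaf $K(X)$, so surjectivity (hitting $1$) is equivalent to effectivity of $K_{Y/V}-\lfloor f^*(K_{X/V}+\Delta)\rfloor$ (plus $S_Y$ in the pure case), which on a log resolution is exactly the klt (resp.\ plt) condition. You phrase it as a contrapositive with the discrepancy bookkeeping spelled out, while the paper states it directly after localizing to the affine case, but the content is the same.
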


\begin{proof}
By Proposition \ref{prop:localization globally $T$-regular}, we may assume that $X$ is affine.
We take a birational projective morphism $\pi \colon Y \to X$ from a normal $V$-variety.
If $(X,\Delta)$ is globally $T$-regular, then  we have
$\pi_*\omega_{Y/V}(\lceil -\pi^*(K_{X/V}+\Delta) \rceil)=\sO_X$, as trace map is injective in the birational case.
Thus, $(X,\Delta)$ is klt.
If $(X,\Delta)$ is purely globally $T$-regular, we denote $S:=\lfloor \Delta \rfloor$, then we have $\omega_{Y/V}(S_Y+\lceil -\pi^*(K_{X/V}+\Delta) \rceil)=\sO_X$, where $S_Y$ is the strict transform of $S$.
Thus, $(X,\Delta)$ is plt.
\end{proof}

\begin{prop}\label{prop:purely globally $T$-regular to globally $T$-regular}
Let $V$ be an excellent Dedekind scheme.
Let $(X,S+\Delta)$ be a log pair with $\lfloor S+\Delta \rfloor=S$.
Assume that $(X,S+\Delta)$ is purely globally $T$-regular and $S$ is $\Q$-Cartier.
Then $(X,(1-\varepsilon)S+\Delta)$ is globally $T$-regular for all $0< \varepsilon <1$ with $\varepsilon \in \Q$.
\end{prop}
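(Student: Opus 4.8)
The plan is to deduce global $T$-regularity of $(X,(1-\varepsilon)S+\Delta)$ from the pure global $T$-regularity of $(X,S+\Delta)$ by refining an arbitrary alteration so that the pullback of $S$ becomes highly divisible along its strict transform. Fix $0<\varepsilon<1$ with $\varepsilon\in\Q$ (the case $S=0$ is trivial, since then $\lfloor S+\Delta\rfloor=0$ and the two notions literally coincide). Let $\pi_0\colon Y_0\to X$ be an arbitrary alteration from a normal $V$-variety; I must show that $1\in H^0(\sO_X)$ lies in the image of $H^0(\omega_{Y_0/V}(\lceil -\pi_0^*(K_{X/V}+(1-\varepsilon)S+\Delta)\rceil))\to H^0(\sO_X)$. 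Since $S$ is $\Q$-Cartier and $K_{X/V}+S+\Delta$ is $\Q$-Cartier, the divisor $K_{X/V}+(1-\varepsilon)S+\Delta=(K_{X/V}+S+\Delta)-\varepsilon S$ is $\Q$-Cartier, hence so is its pullback to any alteration $Y_0$; by the $(S_2)$-extension of trace maps recorded in the Preliminaries, the trace map of $\pi_0\circ\rho$ then factors through that of $\pi_0$ for every alteration $\rho\colon Y\to Y_0$. So it suffices to verify the claim after replacing $\pi_0$ by any alteration $\pi\colon Y\to X$ from a normal $V$-variety that factors through $\pi_0$.

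Next I would build such a $\pi$ with $\mathrm{mult}_E(\pi^*S)\geq 1/\varepsilon$ for every prime divisor $E$ on $Y$ with $\pi(E)$ a component of $S$. Choose $m\in\Q$, rather $m\in\Z_{>0}$, with $mS$ Cartier, and set $N:=\lceil m/\varepsilon\rceil$. Applying Proposition \ref{prop:index one cover1} to $mS$ and $N$ gives a finite cover $g\colon X'\to X$ from a normal $V$-variety with $g^*(mS)=ND'$ for an effective Cartier divisor $D'$; as $\Supp D'=g^{-1}(\Supp S)$, we get $g^*S=\tfrac{N}{m}D'$ with $\mathrm{mult}_{E'}D'\geq 1$ for every prime divisor $E'$ of $X'$ lying over a component of $S$. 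Then, by Proposition \ref{prop:system of alterations} and passing to a normalization, take $\pi\colon Y\to X$ from a normal $V$-variety factoring through both $g$ and $\pi_0$, say $\pi=g\circ h$ with $h\colon Y\to X'$. Then $\pi^*S=\tfrac{N}{m}h^*D'$, and if $E$ is a prime divisor of $Y$ with $\pi(E)$ a component of $S$, then $h(E)\subseteq\Supp D'$, so $E\subseteq\Supp(h^*D')$, hence $\mathrm{mult}_E(\pi^*S)\geq N/m\geq 1/\varepsilon$.

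With this $\pi$, let $S_Y=\sum_i E_{Y,i}$ be the strict transform of $S=\sum_i S_i$ obtained by choosing one prime divisor $E_{Y,i}$ over each $S_i$, and write $-\pi^*(K_{X/V}+(1-\varepsilon)S+\Delta)=-\pi^*(K_{X/V}+S+\Delta)+\varepsilon\pi^*S$. Comparing multiplicities along each prime divisor of $Y$: along $E_{Y,i}$ one has $\mathrm{mult}(\varepsilon\pi^*S)\geq 1$, so the round-up of the left side dominates $1$ plus the round-up of $-\pi^*(K_{X/V}+S+\Delta)$, while along every other prime divisor $\varepsilon\pi^*S\geq 0$ suffices; hence $S_Y+\lceil -\pi^*(K_{X/V}+S+\Delta)\rceil\leq\lceil -\pi^*(K_{X/V}+(1-\varepsilon)S+\Delta)\rceil$. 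Therefore $\omega_{Y/V}(S_Y+\lceil -\pi^*(K_{X/V}+S+\Delta)\rceil)$ is a subsheaf of $\omega_{Y/V}(\lceil -\pi^*(K_{X/V}+(1-\varepsilon)S+\Delta)\rceil)$, compatibly with the trace maps to $\sO_X$ (which agree on the regular locus of $X$ and hence everywhere by $(S_2)$). Since $(X,S+\Delta)$ is purely globally $T$-regular, $1$ lies in the image of $H^0(\omega_{Y/V}(S_Y+\lceil -\pi^*(K_{X/V}+S+\Delta)\rceil))\to H^0(\sO_X)$, a fortiori in the image of the trace map for $\pi$ attached to $(X,(1-\varepsilon)S+\Delta)$, and therefore, by the first paragraph, in the image for $\pi_0$. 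Since $\pi_0$ was arbitrary, $(X,(1-\varepsilon)S+\Delta)$ is globally $T$-regular.

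The crux is the divisibility construction: unlike the Frobenius morphism in positive characteristic, a general alteration does not inflate the multiplicities of $\pi^*S$, so the needed inequality $\mathrm{mult}_E(\pi^*S)\geq 1/\varepsilon$ must be engineered by hand through an index-one-type cover, and this is precisely where the $\Q$-Cartier hypothesis on $S$ is indispensable — it is also what makes $K_{X/V}+(1-\varepsilon)S+\Delta$ $\Q$-Cartier, which the reduction step requires. Everything else is routine bookkeeping with round-ups and with the mutual compatibility of the various trace maps into $H^0(\sO_X)$.
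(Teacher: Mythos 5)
Your proposal is correct and follows essentially the same route as the paper: both use Proposition \ref{prop:index one cover1} to refine an arbitrary alteration so that $\pi^*S$ dominates a (sufficiently large multiple of a) strict transform $S_Y$ — the paper's condition $\pi^*S \geq mS_Y$ with $m\varepsilon>1$ is your multiplicity bound $\geq 1/\varepsilon$ — and then compare round-ups so that the purely-globally-$T$-regular trace sheaf sits inside the one for $(X,(1-\varepsilon)S+\Delta)$, concluding by the cofinality/compatibility of trace maps. Your write-up merely makes explicit the compatibility step that the paper leaves implicit.
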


\begin{proof}
We fix a $0<\varepsilon<1$ and a positive integer $m$ with $m\varepsilon>1$.
We note that for every alteration $\mu \colon W \to X$, we can find an alteration $\pi \colon Y \to X$ and a strict transform $S_Y$ such that $\pi^*S \geq mS_Y$ and $\pi$ factors through $\mu \colon W \to X$ by applying Proposition \ref{prop:index one cover1} to $W$.
Indeed, we take a positive integer $r$ such that $r\mu^*S$ is Cartier.
By Proposition \ref{prop:index one cover1}, we take a finite cover $\pi' \colon Y \to W$ such that $r\pi^*S=mrD$ for some Cartier divisor $D$ on $Y$, where $\pi=\pi'\circ \mu$.
We take a strict transform $S_Y$ of $S$ with $S_Y \leq D$.
Then we have $mS_Y \leq mD=\pi^*S$.
We take an alteration $\pi \colon Y \to X$ from a normal $V$-variety and a strict transform $S_Y$ of $S$ such that $\pi^*S \geq mS_Y$ and $\pi^*(K_X+\Delta)$ is Cartier, the existence follows from Proposition \ref{prop:index one cover1}.
It is enough to show that the trace map
\[
H^0(\omega_{Y/V}(\lceil -\pi^*(K_{X/V}+(1-\varepsilon)S+D) \rceil )) \to H^0(\sO_X)
\]
is surjective.
Since $\pi^*(1-\varepsilon)S \leq \pi^*S-\varepsilon mS_Y \leq \pi^*S-S_Y$,
the left hand side is larger than the left hand side of the map (\ref{equation:purely globally T-regulality}) .
Thus, if $(X,S+\Delta)$ is purely globally $T$-regular, then $(X,(1-\varepsilon)S_Y+\Delta)$ is globally $T$-regular.
\end{proof}

\begin{prop}\label{prop:first properties of glovally splinter}
Let $V$ be a scheme satisfying Assumption \ref{assump:cdvr}.
Let $f \colon X \to Z$ be a projective surjective morphism from a normal $V$-variety $X$ to an affine $V$-variety $Z$.
Let $(X,\Delta)$ be a globally $T$-regular log pair over $V$.
Then the following conditions hold.
\begin{enumerate}
    \item For every alteration $\pi \colon Y \to X$ from a normal $V$-variety, the trace map
    \[
    \pi_*\omega_{Y/V}(\lceil -\pi^*(K_{X/V}+\Delta) \rceil) \to \sO_X
    \]
    is a splitting surjection.
    \item For every finite surjective morphism $\pi \colon Y \to X$ from a normal $V$-variety, the canonical map
    \[
    \sO_X \to \pi_*\sO_Y(\lfloor \pi^*\Delta \rfloor)
    \]
    splits.
    %\item If $X$ is a variety over an $F$-finite field of positive characteristic and $X$ is affine, then $(X,\Delta)$ is strongly $F$-regular.
    %\item $\sO_X(D)$ is maximal Cohen-Macaulay for all $\Q$-Cartier Weil divisor $D$ on $X$.
    %\item For semiample and $f$-big $\Q$-Cartier Weil divisor $D$ on $X$, the equation
    %\[
   % H^i(\omega_X(D))=0
   % \]
   % holds for all $i>0$.
\end{enumerate}
\end{prop}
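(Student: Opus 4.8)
The plan is to read off (1) directly from the definition of global $T$-regularity, and then to deduce (2) from (1) by Grothendieck duality for the finite morphism, exactly as one does for globally $F$-regular pairs in positive characteristic.

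For (1), I would first invoke Proposition~\ref{prop:klt and plt by globally $T$-regular} to see that $(X,\Delta)$ is klt, so $\lfloor\Delta\rfloor=0$ and the trace map in question is a genuine map $\phi_\pi\colon\pi_*\omega_{Y/V}(\lceil-\pi^*(K_{X/V}+\Delta)\rceil)\to\sO_X$. Under the canonical identification $\Hom_{\sO_X}(\sO_X,\mathscr{F})\cong H^0(X,\mathscr{F})$, composition with $\phi_\pi$ is nothing but $H^0(\phi_\pi)$, and this is surjective by the very definition of global $T$-regularity. Hence $1\in H^0(\sO_X)$ lifts to an $\sO_X$-linear map $s\colon\sO_X\to\pi_*\omega_{Y/V}(\lceil-\pi^*(K_{X/V}+\Delta)\rceil)$ with $\phi_\pi\circ s=\id_{\sO_X}$; since a morphism of sheaves admitting a section is surjective, $\phi_\pi$ is a splitting surjection. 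This is (1).

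For (2), note that a finite surjective morphism $\pi\colon Y\to X$ from a normal $V$-variety is in particular an alteration, so (1) supplies a section $s$ of $\phi_\pi$. Because $K_{X/V}$ is an integral Weil divisor we have $\lceil-\pi^*(K_{X/V}+\Delta)\rceil=-\pi^*K_{X/V}-\lfloor\pi^*\Delta\rfloor$, so with $R:=K_{Y/V}-\pi^*K_{X/V}$ the source of $\phi_\pi$ is $\pi_*\sO_Y(R-\lfloor\pi^*\Delta\rfloor)$. I would then introduce the trace map $\psi\colon\pi_*\sO_Y(R)\to\sO_X$ obtained from $\pi_*\omega_{Y/V}\to\omega_{X/V}$ by the reflexive twist with $\sO_X(-K_{X/V})$ (equivalently, the extension to $X$ of the field trace $K(Y)\to K(X)$), which restricts to $\phi_\pi$ on the subsheaf $\pi_*\sO_Y(R-\lfloor\pi^*\Delta\rfloor)\hookrightarrow\pi_*\sO_Y(R)$, and build a retraction of the canonical map $u\colon\sO_X\to\pi_*\sO_Y(\lfloor\pi^*\Delta\rfloor)$ by ``pairing with $s$'': using the push-forward of the reflexive multiplication $\sO_Y(\lfloor\pi^*\Delta\rfloor)\otimes\sO_Y(R-\lfloor\pi^*\Delta\rfloor)\to\sO_Y(R)$, set
\[
\rho\colon\pi_*\sO_Y(\lfloor\pi^*\Delta\rfloor)\to\sO_X,\qquad \rho(t):=\psi(t\cdot s).
\]
Then $\rho\circ u=\id_{\sO_X}$: indeed $u(1)\cdot s=s$ and $\psi(s)=\phi_\pi(s)=1$, and $\sO_X$-linearity takes care of the rest. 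Equivalently, Grothendieck duality for the finite morphism $\pi$ gives a natural isomorphism $\Hom_{\sO_X}(\pi_*\sO_Y(\lfloor\pi^*\Delta\rfloor),\sO_X)\cong\pi_*\sO_Y(R-\lfloor\pi^*\Delta\rfloor)$ identifying precomposition with $u$ with $\phi_\pi$, so a section of $\phi_\pi$ is the same as a retraction of $u$.

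The main obstacle is not conceptual but bookkeeping: one must pull back the Weil divisors $K_{X/V}$ and $\Delta$ (which are not assumed $\Q$-Cartier) along the finite morphism $\pi$, make sense of the reflexive products $\sO_Y(\bullet)\otimes\sO_Y(\bullet)\to\sO_Y(\bullet)$, and check the compatibility of $\phi_\pi$ and $\psi$ with the inclusion $\pi_*\sO_Y(R-\lfloor\pi^*\Delta\rfloor)\hookrightarrow\pi_*\sO_Y(R)$. Each of these is verified on the regular locus of $X$---whose complement has codimension at least two---and extended using that $\omega_{X/V}$ and $\sO_X$ satisfy $(S_2)$ together with the construction of trace maps recalled in the preliminaries; I do not expect any genuine difficulty there.
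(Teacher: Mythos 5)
Your proposal is correct and follows essentially the same route as the paper: for (1) the surjectivity on $H^0$ from the definition of global $T$-regularity produces a section sending $1$ to a preimage $\alpha$, giving the splitting, and for (2) the paper likewise dualizes the splitting surjection of (1) (applying $\mathscr{H}om(-,\sO_X)$, i.e.\ duality for the finite morphism) to obtain the splitting of $\sO_X \to \pi_*\sO_Y(\lfloor \pi^*\Delta\rfloor)$. Your explicit ``pairing with $s$'' retraction is just an unwinding of that duality step, and the reflexivity/$(S_2)$ bookkeeping you flag is exactly what the paper leaves implicit.
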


\begin{proof}
By the equation (\ref{equation:globally T-regulality}), there exists a global section 
\[
\alpha \in H^0(\omega_{Y/V}(\lceil -\pi^*(K_{X/V}+\Delta) \rceil))
\]
mapped to $1 \in H^0(\sO_X)$.
The section $\alpha$ defines a morphism 
\[
\sO_X \to \pi_*\omega_{Y/V}(\lceil -\pi^*(K_{X/V}+\Delta) \rceil)
\]
mapping $1$ to $\alpha$, thus it gives a splitting and we obtain $(1)$.
Applying $\mathscr{H}om(\ \underline{\hspace{4mm}}\ ,\sO_X)$ for the trace map in $(1)$, we obtain the canonical map in $(2)$ when $\pi$ is finite surjective, thus we have $(2)$.
\end{proof}

\begin{lem}\label{lem:splitting}
Let $V$ be a scheme satisfying Assumption \ref{assump:cdvr}.
Let $(X,\Delta)$ be a globally $T$-regular log pair over $V$.
Let $D$ be a Weil $\Q$-Cartier divisor on $X$.
Let $f \colon Y \to X$ be a finite surjective morphism from a normal $V$-variety such that $f^*D$ is Cartier.
Then $\sO_X(D) \to f_*\sO_Y(f^*D)$ splits.
\end{lem}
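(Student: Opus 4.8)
The plan is to reduce everything to the case $D=0$, which is already handled by Proposition~\ref{prop:first properties of glovally splinter}, and then to propagate the resulting splitting across the big open locus on which $D$ is Cartier, using that all the sheaves involved are reflexive on the normal scheme $X$.

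First I would record that $\sO_X\to f_*\sO_Y$ splits as $\sO_X$-modules. Applying the defining surjectivity of global $T$-regularity to the finite alteration $f$ itself produces a section $\alpha\in H^0(\omega_{Y/V}(\lceil -f^*(K_{X/V}+\Delta)\rceil))$ whose trace is $1\in H^0(\sO_X)$; equivalently, by Grothendieck duality for the finite morphism $f$, this is exactly the statement that the canonical map $\sO_X\to f_*\sO_Y(\lfloor f^*\Delta\rfloor)$ splits, which is Proposition~\ref{prop:first properties of glovally splinter}(2). Since $\lfloor f^*\Delta\rfloor\geq 0$, the canonical map $\sO_X\to f_*\sO_Y(\lfloor f^*\Delta\rfloor)$ factors through the inclusion $f_*\sO_Y\hookrightarrow f_*\sO_Y(\lfloor f^*\Delta\rfloor)$, so composing a retraction of it with that inclusion gives a retraction $\rho\colon f_*\sO_Y\to\sO_X$ of $\sO_X\to f_*\sO_Y$.

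Next I would set up the reflexivity bookkeeping. Both $\sO_X(D)$ and $f_*\sO_Y(f^*D)$ are reflexive $\sO_X$-modules: the first is reflexive of rank one since $X$ is normal, and the second because $\sO_Y(f^*D)$ is invertible on the normal scheme $Y$ (so $S_2$ and torsion-free) and $f_*$ of such a sheaf along a finite morphism is again $S_2$ and torsion-free over $X$. Let $U\subseteq X$ be the open locus on which $D$ is Cartier; as $X$ is normal, $U$ contains the regular locus and hence $\operatorname{codim}_X(X\setminus U)\geq 2$. Over $U$ the natural map $\sO_X(D)|_U\to f_*\sO_Y(f^*D)|_U$ is, by the projection formula, nothing but the split injection $\sO_U\to f_*\sO_{f^{-1}(U)}$ tensored with the line bundle $\sO_X(D)|_U$, so $\rho|_U\otimes\operatorname{id}$ is a retraction of it over $U$. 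Finally, since $\mathscr{H}om_{\sO_X}(f_*\sO_Y(f^*D),\sO_X(D))$ is reflexive (its target is), global sections over $X$ and over $U$ agree, so this retraction over $U$ extends to a map $\Phi\colon f_*\sO_Y(f^*D)\to\sO_X(D)$; the composite of the natural map with $\Phi$ is an endomorphism of the reflexive sheaf $\sO_X(D)$ that is the identity on $U$, hence the identity, because $\operatorname{Hom}_{\sO_X}(\sO_X(D),\sO_X(D))=H^0(X,\sO_X)=H^0(U,\sO_U)$. This gives the desired splitting.

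The only genuinely substantive input is the reduction in the first step, i.e.\ Proposition~\ref{prop:first properties of glovally splinter}; the hard part of the present lemma is really just making sure that the natural map between the reflexive sheaves $\sO_X(D)$ and $f_*\sO_Y(f^*D)$ is literally identified, over $U$, with the projection-formula twist of $\sO_X\to f_*\sO_Y$, after which the extension across the codimension-$\geq 2$ complement is formal.
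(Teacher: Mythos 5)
Your proof is correct and takes essentially the same route as the paper: split $\sO_X\to f_*\sO_Y$ using Proposition \ref{prop:first properties of glovally splinter}(2), identify the natural map with its twist by the line bundle $\sO_X(D)$ over a big open locus where $D$ is Cartier, and extend the retraction across the codimension-$\geq 2$ complement by reflexivity. The only difference is that you spell out a step the paper leaves implicit, namely deducing the splitting of $\sO_X\to f_*\sO_Y$ from that of $\sO_X\to f_*\sO_Y(\lfloor f^*\Delta\rfloor)$ by factoring through the inclusion, which is a welcome clarification.
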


\begin{proof}
By Proposition \ref{prop:first properties of glovally splinter} $(2)$, $\sO_X \to f_*\sO_Y$ splits.
Thus, $\sO_X(D) \to f_*\sO_Y(f^*D)$ splits on the regular locus of $X$.
Since $\sO_X(D)$ and $f_*\sO_X(f^*(D))$ are reflexive, the splitting extends to the splitting on $X$.
\end{proof}

\begin{prop}\label{prop:vanishing for T-regular}
Let $V$ be a scheme satisfying Assumption \ref{assump:cdvr}.
Let $f \colon X \to Z$ be a projective surjective morphism from a normal $V$-variety $X$ to an affine $V$-variety $Z$.
Let $(X,\Delta)$ be a globally $T$-regular log pair over $V$.
Let $D$ be a Weil $\Q$-Cartier divisor.
Then the following hold.
\begin{enumerate}
    \item If $D$ is semiample, then $(R^if_*(\sO_X(D)))_z=0$ for all $z \in Z_s$ and $i>0$, where $Z_s$ is a closed fiber.
    \item $\sO_X(D)$ is maximal Cohen-Macaulay.
    \item If $D$ is semiample and $f$-big, then  $H^i(\omega_{X/V}(D))=0$  for all $i>0$.
\end{enumerate}
\end{prop}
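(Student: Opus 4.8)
The plan is to prove the three assertions together; in each the engine is the interplay between the splitting provided by global $T$-regularity (Proposition~\ref{prop:first properties of glovally splinter}, Lemma~\ref{lem:splitting}) and Bhatt's local-cohomology-killing theorem (Theorem~\ref{thm:killing local cohomology}, Corollary~\ref{cor:zero map}). First I would dispose of the equicharacteristic cases: if $X$ is a variety over the fraction field of $V$ the statements are classical (Kawamata--Viehweg vanishing, klt implies rational singularities, and a globally $T$-regular variety over a field of characteristic zero is of Fano type), and if $X$ is a variety over the residue field then by Proposition~\ref{prop:corresponding to finite cover in positive chara} global $T$-regularity coincides with global $F$-regularity, where the assertions are known; so I assume $X$ is flat over $V$ with non-empty closed fibre $X_p$. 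Throughout I use Proposition~\ref{prop:localization globally $T$-regular}: global $T$-regularity is stable under restricting to an open of $X$ and under localizing the base. Hence for $(1)$ I may replace $Z$ by $\Spec\sO_{Z,z}$, so that $R^if_*\sO_X(D)=H^i(X,\sO_X(D))$, and for $(2)$, which is local on $X$, I may assume $X$ is affine and verify the Cohen--Macaulay property at each closed point $x$ of $X_p$.

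I would prove $(2)$ first, beginning with $D=0$. To show $X$ is Cohen--Macaulay it suffices to kill the local cohomology $H^i_x(\sO_{X_p,x})$ for all $i<\dim\sO_{X_p,x}$ and all closed $x\in X_p$, since then $X_p$, and hence $X$ (being flat over the discrete valuation ring $V$), is Cohen--Macaulay. Apply Theorem~\ref{thm:killing local cohomology} with $Z=X$, $f=\id$ and $L=\sO_X$ (semiample, and trivially $f$-big) to produce a finite cover $\pi\colon Y\to X$ for which $R\Gamma_xR\Gamma(X_p,\sO_{X_p})\to R\Gamma_xR\Gamma(Y_p,\sO_{Y_p})$ is zero on $h^i$ for $i<\dim X_p$; on the other hand $\sO_X\to\pi_*\sO_Y$ splits by Proposition~\ref{prop:first properties of glovally splinter}$(2)$, and this splitting restricts modulo $p$ (the covers involved are flat over $V$), so the displayed map is a split injection, hence zero, which gives the vanishing. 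For a general Weil $\Q$-Cartier $D$, choose (Proposition~\ref{prop:index one cover1}) a finite $\mu\colon X'\to X$ with $\mu^*D$ Cartier, so that $\sO_X(D)$ is a split subsheaf of $\mu_*\sO_{X'}(\mu^*D)$ by Lemma~\ref{lem:splitting}; running Theorem~\ref{thm:killing local cohomology} on $f=\mu$ with $L=\sO_{X'}$ yields a finite $\pi\colon Y\to X'$ killing $h^i$ for $i<\dim$ of the local cohomology of the structure sheaves at the points over $x$, and since $\mu^*D$ and $(\mu\circ\pi)^*D$ are Cartier the corresponding twisted local cohomology is also killed; as $\sO_X(D)\hookrightarrow(\mu\circ\pi)_*\sO_Y((\mu\circ\pi)^*D)$ is again split (Lemma~\ref{lem:splitting}) and factors through the previous maps, we conclude $H^i_x(\sO_{X,x}(D))=0$ for $i<\dim$, i.e.\ $\sO_X(D)$ is maximal Cohen--Macaulay.

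For $(1)$, $Z$ is now affine and local and I want $H^i(X,\sO_X(D))=0$ for $i>0$ with $D$ semiample. Since $X$ and $\sO_X(D)$ are Cohen--Macaulay by $(2)$, Grothendieck and local duality over $Z$ show that this vanishing is equivalent to $H^j(X,\omega_{X/V}(-D))=0$ for $j<d$, where $d$ is the relative dimension. When $D$ is moreover $f$-big this is the vanishing furnished by Corollary~\ref{cor:zero map}: there is an alteration $\pi\colon Y\to X$ on which the relevant trace map vanishes on positive cohomology, while global $T$-regularity, in the twisted form obtained from Proposition~\ref{prop:first properties of glovally splinter} by multiplying by a section (legitimate because $D$ is semiample), makes the trace surjective, forcing the vanishing. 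For general semiample $D$ I would factor $f$ through the semiample contraction $\phi\colon X\to W$ over $Z$, check $R^q\phi_*\sO_X(D)=0$ for $q>0$ by the same mechanism (here $D$ is $\phi$-numerically trivial), and check that $(W,\Delta_W)$ inherits global $T$-regularity with $D$ descending to an $f_W$-semiample, $f_W$-big divisor, so that the Leray spectral sequence reduces me to the $f$-big case on $W$. Part $(3)$ is parallel: $X$ is Cohen--Macaulay by $(2)$, so the second half of Corollary~\ref{cor:zero map} gives an alteration with $\Tr^i_\pi\colon H^i(\omega_{Y/V}(\pi^*D))\to H^i(\omega_{X/V}(D))$ zero for $i>0$ (after a finite cover making $D$ Cartier and using Lemma~\ref{lem:splitting} to split off $\omega_{X/V}(D)$, with a correction of the canonical divisor by the same cover), and combining with the surjectivity of this trace coming from global $T$-regularity yields $H^i(\omega_{X/V}(D))=0$ for $i>0$.

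The hard part will be getting the two inputs to match on the nose. I expect the delicate points to be: (a) upgrading Proposition~\ref{prop:first properties of glovally splinter} to the twisted surjectivity statement for semiample, and in $(3)$ merely $\Q$-Cartier, divisors, with careful bookkeeping of round-ups and of the fact that a globally $T$-regular pair is klt, so $\lceil\Delta\rceil$ need not vanish; (b) in $(2)$, handling an arbitrary Weil divisor, where Theorem~\ref{thm:killing local cohomology} cannot be applied directly because $\sO_X(D)$ is not a line bundle, forcing the detour through an index-one-type finite cover and the verification that the relevant splittings compose; and, above all, (c) in $(1)$, dealing with semiample divisors that are not $f$-big, which requires descending both the vanishing $R^q\phi_*\sO_X(D)=0$ and global $T$-regularity itself along the semiample contraction $\phi$. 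I anticipate (c) to be the main obstacle.
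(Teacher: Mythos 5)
Your treatment of $(2)$ is essentially the paper's argument (Bhatt's theorem at a closed point of $X_p$, an index-one-type finite cover to reduce to Cartier $D$, the splitting of Lemma \ref{lem:splitting} making the killed map split injective, and the klt generic fibre handled by characteristic-zero theory), and that part is fine. The genuine gaps are in $(1)$ and $(3)$, precisely at the points you flag but do not close. For $(3)$ (and your big case of $(1)$) your mechanism is ``the trace $H^i(\omega_{Y/V}(\pi^*D))\to H^i(\omega_{X/V}(D))$ is zero for the alteration of Corollary \ref{cor:zero map} and surjective by global $T$-regularity, hence the target vanishes''. But global $T$-regularity only gives surjectivity of an $H^0$-level trace for the specific twist $\lceil -\pi^*(K_{X/V}+\Delta)\rceil$; it does not give surjectivity of the derived trace on higher cohomology for an arbitrary twist $D$ along an arbitrary alteration. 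The natural way to produce such surjectivity is to dualize a splitting of $\sO_X(-D)\to R\pi_*\sO_Y(-\pi^*D)$, and the paper only has this for \emph{finite} covers (Lemma \ref{lem:splitting}; Proposition \ref{prop:first properties of glovally splinter}$(2)$ is likewise stated only for finite $\pi$), not for the alterations that Corollary \ref{cor:zero map} outputs. Your reduction of $(1)$ to a statement about $\omega_{X/V}(-D)$ is also problematic: over a base $Z$ of positive dimension, vanishing of $H^{i}(X,\sO_X(D))$ for $i>0$ is not equivalent by local duality to vanishing of $H^{j}(X,\omega_{X/V}(-D))$ for $j<d$ (there is a spectral sequence $H^p_z(H^q(X,\sO_X(D)))$ in the way), and in any case the dual statement concerns $-D$, which is anti-semiample, so Corollary \ref{cor:zero map} does not apply to it. Finally, your proposed handling of semiample but non-$f$-big $D$ in $(1)$ — descending along the semiample contraction $\phi$ — requires both $R^{>0}\phi_*\sO_X(D)=0$ and the descent of global $T$-regularity to the target, neither of which is available in the paper; the first is essentially a vanishing statement of the same difficulty as the one being proved, and Proposition \ref{prop:crepant globally $T$-regular} only covers crepant birational descent.

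The paper's route avoids all of this and never uses trace surjectivity in this proof. For $(1)$ it invokes \cite[Theorem 6.28]{bhatt20} in the form ``for $D$ semiample there is a finite cover $\pi$ with $\pi^*D$ Cartier killing $H^i(\sO_{X_p}(D))\to H^i(\sO_{Y_p}(\pi^*D))$ for $i>0$'' (no bigness needed; this form is not the one recorded in Theorem \ref{thm:killing local cohomology}, which is why your workaround struggled); Lemma \ref{lem:splitting} makes this map split injective, so $H^i(\sO_{X_p}(D))=0$, and then multiplication by $p$ is surjective on $H^i(\sO_X(D))$, which vanishes near $Z_s$ by Nakayama. For $(3)$ it kills $R\Gamma_zR\Gamma(\sO_{X_p}(-D))$ by a finite cover, gets vanishing from the same splitting, converts to $H^i(\mathscr{H}om(\sO_{X_p}(-D),\omega_{X_p/V}))=0$ by the duality argument of Proposition \ref{prop:image is contained in varpi}, and again concludes by multiplication by $p$ and Nakayama, with Kawamata--Viehweg on the generic fibre. (Your side remark that in equal characteristic $p$ global $T$-regularity ``coincides'' with global $F$-regularity via Proposition \ref{prop:corresponding to finite cover in positive chara} is also not what that proposition says; the paper only proves the implication from globally $F$-regular to globally $T$-regular in Proposition \ref{prop:gfr vs gs}.) So as written your proposal for $(1)$ and $(3)$ does not go through without supplying the missing twisted/higher trace surjectivity or, better, switching to the finite-cover splitting argument the paper uses.
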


\begin{proof}
By \cite[Theorem 6.28]{bhatt20}, there exists a finite surjective morphism $ \pi \colon Y \to X$ such that $\pi^*D$ is Cartier and
\[
H^i(\sO_{X_p}(D)) \to H^i(\sO_{Y_p}(\pi^*D))
\]
is zero for all $i>0$.
Indeed, the assertion is reduced to the case where $D$ is Cartier.
By Lemma \ref{lem:splitting}, we have $H^i(\sO_{X_p}(D))=0$.
Thus, we have the map
\[
\xymatrix{
H^i(\sO_X(D)) \ar[r]^{\cdot p} & H^i(\sO_X(D))
}
\]
is surjective for all $i>0$.
Thus, By Nakayama's Lemma, $H^i(\sO_X(D))=0$ at all points of $Z_s$.

Next, we consider $(2)$.
Since the assertion $(2)$ is a local question, we may assume that $f$ is the identity map.
By \cite[Theorem 6.28]{bhatt20}, for a closed point $x \in X_s$, there exists a finite surjective morphism $\pi \colon Y \to X$ such that $\pi^*D$ is Cartier and the map
\[
R^i\Gamma_xR\Gamma(\sO_{X_p}(-D)) \to R^i\Gamma_xR\Gamma(\sO_{Y_p}(-\pi^*D))
\]
is zero for all $i < \dim X_p$.
By Lemma \ref{lem:splitting}, we have $R^i\Gamma_xR\Gamma(\sO_{X_p}(-D))=0$.
Therefore, $\sO_X(D)$ is maximal Cohen-Macaulay around $X_s$.
Since $(X,\Delta)$ is klt, $\sO_X(D)$ is maximal Cohen-Macaulay on the generic fiber by \cite[Corollary 5.25]{kollar-mori}, thus we obtain the assertion $(2)$.

Finally, we consider $(3)$.
By \cite[Theorem 6.28]{bhatt20}, for a closed point $z \in Z$, there exists a finite surjective morphism $\pi \colon Y \to X$ such that $\pi^*D$ is Cartier and the map
\[
R^i\Gamma_zR\Gamma(\sO_{X_p}(-D)) \to R^i\Gamma_zR\Gamma(\sO_{Y_p}(-\pi^*D))
\]
is zero for all $i < \dim X_p$.
By Lemma \ref{lem:splitting}, we have $R^i\Gamma_zR\Gamma(\sO_{X_p}(-D))=0$.
By the argument of the proof of Proposition \ref{prop:image is contained in varpi}, we have
\[
H^i(\mathscr{H}om(\sO_{X_p}(-D),\omega_{X_p/V}))=0
\]
for all $i>0$.
Thus, we have that
\[
\xymatrix{
H^i(\omega_{X/V}(D)) \ar[r]^{\cdot p} & H^i(\omega_{X/V}(D))
}
\]
is surjective for all $i>0$.
Thus, $H^i(\omega_{X/V}(D))=0$ around $Z_s$.
By Kawamata-Viehweg vanishing for the generic fiber $X_\eta$, the vanishing holds on $Z_\eta$.
Thus we obtain the assertion $(3)$.
\end{proof}

\begin{prop}\label{prop:crepant globally $T$-regular}
Let $V$ be an excellent Dedekind scheme.
Let $g \colon Y \to X$ be a projective birational morphism of normal $V$-varieties.
Let $(X,\Delta)$ and $(Y,\Gamma)$ be log pairs such that $g^*(K_{X/V}+\Delta)=K_{Y/V}+\Gamma$.
Then $(X,\Delta)$ is globally $T$-regular if and only if $(Y,\Gamma)$ is globally $T$-regular.
The same assertion holds for purely globally $T$-regular case if $\lfloor \Gamma \rfloor$ is the strict transform of $\lfloor \Delta \rfloor$.
\end{prop}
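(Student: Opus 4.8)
The plan is to show that the two pairs have the same module of trace-stable global sections, namely $T^0(X,\Delta)=T^0(Y,\Gamma)$ as submodules of $H^0(\sO_X)=H^0(\sO_Y)$; since a log pair $(X,\Delta)$ is globally $T$-regular exactly when $T^0(X,\Delta)=H^0(\sO_X)$, this suffices, and the purely globally $T$-regular case will be handled by the same argument with an extra boundary term carried along. The identification $H^0(\sO_X)=H^0(\sO_Y)$ is immediate from $g_*\sO_Y=\sO_X$, which holds because $g$ is projective birational and $X$ is normal; note moreover that $g$ is then an isomorphism over every codimension-one point of $X$ and $g_*\lfloor\Gamma\rfloor=\lfloor\Delta\rfloor$.

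First I would record the comparison for alterations that factor through $g$. If $\rho\colon W\to Y$ is an alteration from a normal $V$-variety, then $g\circ\rho\colon W\to X$ is again one, and crepancy gives $(g\rho)^*(K_{X/V}+\Delta)=\rho^*g^*(K_{X/V}+\Delta)=\rho^*(K_{Y/V}+\Gamma)$ as $\Q$-Cartier divisors; hence the sheaves $\omega_{W/V}(\lceil-(g\rho)^*(K_{X/V}+\Delta)\rceil)$ and $\omega_{W/V}(\lceil-\rho^*(K_{Y/V}+\Gamma)\rceil)$ are \emph{literally} equal. By functoriality of the Grothendieck trace, $\Tr_{g\rho}=\Tr_g\circ g_*(\Tr_\rho)$; and since $\omega_{Y/V}(\lceil-g^*(K_{X/V}+\Delta)\rceil)=\sO_Y(-\lfloor\Gamma\rfloor)$ and $\omega_{X/V}(\lceil-(K_{X/V}+\Delta)\rceil)=\sO_X(-\lfloor\Delta\rfloor)$, the map $\Tr_g$ between these two sheaves is, because $g$ is an isomorphism in codimension one and both sheaves are $(S_2)$, just the inclusion $\{f:\mathrm{div}_Y f\ge\lfloor\Gamma\rfloor\}\subseteq\{f:\mathrm{div}_X f\ge\lfloor\Delta\rfloor\}$ inside the common function field (using $g_*\lfloor\Gamma\rfloor=\lfloor\Delta\rfloor$). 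Composing with the natural inclusions into $H^0(\sO_X)=H^0(\sO_Y)$, the trace map of $(X,\Delta)$ attached to $g\rho$ and that of $(Y,\Gamma)$ attached to $\rho$ have the same image. Since $\rho$ was an arbitrary alteration of $Y$, this already gives $T^0(X,\Delta)\subseteq T^0(Y,\Gamma)$.

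For the reverse inclusion I would use that alterations through $g$ are cofinal. Given an arbitrary alteration $\pi\colon W\to X$ from a normal $V$-variety, Proposition \ref{prop:system of alterations} produces an alteration factoring through both $\pi$ and $g$; after normalizing, we obtain a normal $V$-variety $W'$ with maps $q\colon W'\to W$ and $\rho'\colon W'\to Y$ such that $\pi q=g\rho'$. The key point is that $(\pi q)^*(K_{X/V}+\Delta)=q^*\bigl(\pi^*(K_{X/V}+\Delta)\bigr)$, so the rounding is always applied after pulling back the $\Q$-Cartier divisor $\pi^*(K_{X/V}+\Delta)$; hence the trace $\Tr_q$ maps $H^0\bigl(\omega_{W'/V}(\lceil-(\pi q)^*(K_{X/V}+\Delta)\rceil)\bigr)$ into $H^0\bigl(\omega_{W/V}(\lceil-\pi^*(K_{X/V}+\Delta)\rceil)\bigr)$ (the target being $(S_2)$, as in the notational conventions) and $\Tr_\pi\circ q_*(\Tr_q)=\Tr_{\pi q}$, so that $\mathrm{Im}(\Tr_{\pi q})\subseteq\mathrm{Im}(\Tr_\pi)$. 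Applying the previous paragraph to $W'\xrightarrow{\rho'}Y\xrightarrow{g}X$ gives $T^0(Y,\Gamma)\subseteq\mathrm{Im}(\Tr_{\pi q})\subseteq\mathrm{Im}(\Tr_\pi)$; as $\pi$ was arbitrary, $T^0(Y,\Gamma)\subseteq T^0(X,\Delta)$. This settles the globally $T$-regular case.

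The purely globally $T$-regular case runs verbatim, carrying a strict transform $S_W$ of $S:=\lfloor\Delta\rfloor$ alongside: the hypothesis that $\lfloor\Gamma\rfloor$ is the strict transform of $S$ means that strict transforms of $\lfloor\Gamma\rfloor$ under $W\to Y$ coincide with strict transforms of $S$ under the composite $W\to X$, and conversely, so the indexing of the relevant trace maps matches on both sides; one adds the term $S_W$ to each canonical sheaf and invokes the well-definedness of the resulting twisted trace exactly as in the definition of pure global $T$-regularity. I expect the only genuine work to be the bookkeeping in these trace compatibilities --- the identity $\Tr_{g\rho}=\Tr_g\circ g_*(\Tr_\rho)$ together with its twisted and $S_W$-decorated versions, and the identification of $\Tr_g$ with the natural inclusion of the sheaves of rational forms --- while the geometric inputs (cofinality of alterations through $g$, and $g_*\sO_Y=\sO_X$) are routine and already available.
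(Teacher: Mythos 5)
Your proof is correct and takes essentially the same route as the paper: crepancy makes the source modules literally agree, the trace of $g\circ\rho$ factors as the trace of $\rho$ followed by the trace of the birational morphism $g$, which is identified with the natural map on global sections, and cofinality of alterations factoring through $Y$ gives the remaining direction. The only differences are minor: the paper first reduces to the klt (resp.\ plt) case, where the intermediate sheaf is exactly $\sO_Y$ and the $g$-trace becomes an isomorphism $H^0(\sO_Y)\simeq H^0(\sO_X)$, while you keep general boundaries, prove the slightly stronger equality $T^0(X,\Delta)=T^0(Y,\Gamma)$, and spell out the cofinality and image-monotonicity step that the paper leaves implicit.
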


\begin{proof}
First, we consider the globally $T$-regular case.
We note that $(X,\Delta)$ is klt if and only if $(Y,\Gamma)$ is klt.
By Proposition \ref{prop:klt and plt by globally $T$-regular}, we may assume that $(X,\Delta)$ and $(Y,\Gamma)$ are klt, and in particular, $\lfloor \Gamma \rfloor =0$.
Thus, the trace map coincides with the following isomorphism
\[
H^0(\omega_{Y/V}(\lceil -g^*(K_{X/V}+\Delta) \rceil)=H^0(\sO_Y) \simeq H^0(\sO_X). 
\]
Take an alteration $\pi \colon W \to Y$.
Then the composition of trace maps
\[
H^0(\omega_{W/V}(\lceil -\pi^*(K_{Y/V}+\Gamma) \rceil)) \to H^0(\sO_Y) \simeq H^0(\sO_X)
\]
is the trace map with respect to $g \circ \pi$.
Then the surjectivities of two trace maps are equivalent to each other.

Next, we consider the purely globally $T$-regular case.
We denote $\lfloor \Delta \rfloor$ and $\lfloor \Gamma \rfloor$ by $S$ and $T$, respectively.
Then the corresponding trace map coincides with natural isomorphic $H^0(\sO_Y) \to H^0(\sO_X)$.
Indeed, $\lceil K_{Y/V}+T-g^*(K_{X/V}+\Delta) \rceil=\lceil T-\Gamma \rceil =0$.
Thus, by the same argument as above, we obtain the equivalence.
\end{proof}

\begin{lem}\label{lem:good alteration}
Let $V$ be a scheme satisfying Assumption \ref{assump:cdvr}.
Let $(X,S+B)$ be a log pair over $V$ such that $S$ is a reduced divisor  and $S$ and $B$ have no common components.
Let $\pi \colon X' \to X$ be an alteration of normal $V$-varieties and $S'$ be a strict transform of $S$ on $X'$.
Then there exists an alteration $f \colon Y \to X$ from a normal $V$-variety and a strict transform $S_Y$ of $S$ such that the following hold.
\begin{itemize}
    \item $f$ factors through $\pi$ and $S_Y$ is a strict transform of $S'$,
    \item $S_Y$ is locally irreducible,
    \item $(Y,S_Y)$ is a simple normal crossing pair, 
    \item $f^*(K_{X/V}+S+B)$ and $f_S^*(K_{S^N/V}+B_S)$ are Cartier, and
    \item The following diagram commutes.
    \[
    \xymatrix{
    f_*\omega_{Y/V}(S_Y-f^*(K_{X/V}+S+B)) \ar[r] \ar[d] & \sO_X \ar[d] \\
    j_*f_{S,*}\omega_{S_Y/V}(-f_S^*(K_{S^N/V}+B_S)) \ar[r] & j_*\sO_{S^N},
    }
    \]
\end{itemize}
where $B_S$ is the different of $(X,S+B)$, $f_S \colon S_Y \to S^N$ is the induced morphism, $j \colon S^N \to X$ is the composition of the normalization and the inclusion, the left vertical map follows from the adjunction formula and horizontal maps are induced by trace maps.
\end{lem}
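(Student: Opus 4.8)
The plan is to obtain $f$ by composing three things — a finite cover that makes the relevant log canonical classes Cartier, de Jong's semistable alteration of Theorem \ref{thm:semistable alteration} that provides regularity and normal crossings, and a few extra blow-ups that separate the components of the strict transform of $S$ — and then to verify the commutativity of the last diagram by restricting to the regular locus of $X$ and propagating. First I would arrange the Cartier conditions: choose $m>0$ with $m(K_{X/V}+S+B)$ Cartier and, by Proposition \ref{prop:index one cover1}, a finite cover of $X$ on which its pullback is $m$ times a Cartier divisor, so that $K_{X/V}+S+B$ pulls back to an integral divisor; using $(K_{X/V}+S+B)|_{S^N}=K_{S^N/V}+B_S$, choose $m'>0$ with $m'(K_{S^N/V}+B_S)$ Cartier on $S^N$ and, again by Proposition \ref{prop:index one cover1}, a finite cover $T\to S^N$ on which $K_{S^N/V}+B_S$ becomes integral, then feed the alteration $T\to S^N\to S$ of the closed subvariety $S\subset X$ into Proposition \ref{prop:extension of alterations of closed variety} to get an alteration of $X$ carrying a closed subvariety over $S$ that factors through $T$. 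By Proposition \ref{prop:system of alterations} there is a single alteration $X''\to X$ dominating $\pi$ and all of the above; let $S''$ be a strict transform of $S'$ on $X''$.

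Next I would apply Theorem \ref{thm:semistable alteration} (and the remark after it when $X$ lies over a field) to the pair $(X'',Z)$, with $Z$ the union of $S''$ and any extra divisors one wishes to put into normal crossing position, and then blow up the pairwise intersections of the strict transforms of the components of $S$ (after, if needed, first blowing up their higher intersections) to obtain an alteration $f\colon Y\to X$ dominating $\pi$, with the strict transform $S_Y$ of $S'$, such that $Y$ is regular, $(Y,S_Y)$ is simple normal crossing, and the components of $S_Y$ are pairwise disjoint; in particular $S_Y$ is regular, hence normal and locally irreducible. Each component of $S_Y$ is a normal variety dominating a component $E_i$ of $S$, so its map to $E_i$ factors through the normalization $E_i^N$; gluing over the disjoint components, $f|_{S_Y}$ factors as $S_Y\xrightarrow{f_S}S^N\xrightarrow{j}X$ with $f_S$ an alteration. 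Since $Y$ and $S_Y$ are regular and the pullbacks of $K_{X/V}+S+B$ and of $K_{S^N/V}+B_S$ are integral by construction, $f^*(K_{X/V}+S+B)$ and $f_S^*(K_{S^N/V}+B_S)$ are Cartier, and $f_S^*(K_{S^N/V}+B_S)=f^*(K_{X/V}+S+B)|_{S_Y}$ because $j\circ f_S=f|_{S_Y}$. This yields every asserted property except the commutativity of the diagram.

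For the diagram, the left vertical arrow is restriction through the adjunction isomorphism $\omega_{Y/V}(S_Y)|_{S_Y}\simeq\omega_{S_Y/V}$, which is available since $S_Y$ is a Cartier divisor on the regular scheme $Y$, and the right vertical arrow is the canonical map $\sO_X\to j_*\sO_{S^N}$. Over the regular locus $U\subset X$ all the divisors in play and their pullbacks are Cartier, and the square is obtained by applying $f_*$ to the residue/adjunction short exact sequence
\[
0\to\omega_{Y/V}(-f^*(K_{X/V}+S+B))\to\omega_{Y/V}(S_Y-f^*(K_{X/V}+S+B))\to\omega_{S_Y/V}(-f_S^*(K_{S^N/V}+B_S))\to 0
\]
on $f^{-1}(U)$; its commutativity is precisely the compatibility of the Grothendieck trace map with adjunction and with the definition of the different, which in this regular (and, after one more blow-up, simple normal crossing) situation is a local computation, cf.\ \cite[Section 4]{kollar13}. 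To pass from $U$ to $X$: the difference of the two compositions $f_*\omega_{Y/V}(S_Y-f^*(K_{X/V}+S+B))\to j_*\sO_{S^N}$ has image a coherent subsheaf of $j_*\sO_{S^N}$ that vanishes over $U$; but $\sO_{S^N}$ satisfies $(S_2)$ and $S\setminus U$ has codimension $\geq 1$ in $S$, so every associated prime of $j_*\sO_{S^N}$ is a generic point of a component of $S$, hence lies in $U$, and therefore the image is $0$ and the diagram commutes.

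The hard part will be the compatibility of the Grothendieck trace map with adjunction used in the third paragraph: this is where regularity of $Y$ and $S_Y$, Cartierness of the pulled-back log canonical divisors, and the precise definition of the different $B_S$ are all needed, and the safest route is to reduce it, via a further blow-up, to the explicit simple normal crossing model where the trace and residue maps are computable. Everything else is bookkeeping with the alteration statements of Section \ref{section:pl-flip} and de Jong's theorem.
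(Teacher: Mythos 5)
Your proposal is correct and takes essentially the same route as the paper's proof: de Jong's alteration (Theorem \ref{thm:semistable alteration}) followed by blow-ups of the strata to make $S_Y$ locally irreducible, the index-one-cover propositions (Propositions \ref{prop:index one cover1}, \ref{prop:extension of alterations of closed variety}, \ref{prop:index one cover}) to make the pulled-back log canonical divisors Cartier, and a reduction of the commutativity of the square to the generic points of $S$ (where everything is simple normal crossing), your associated-primes argument for $j_*\sO_{S^N}$ being the same mechanism as the paper's appeal to torsion-freeness of the relevant $\mathscr{H}om$ sheaf over $\sO_S$. The only deviations are cosmetic: you take the finite covers before rather than after the de Jong step (if anything cleaner, since Cartierness is preserved under further pullback), and you leave the trace--residue compatibility in the snc model as a local computation, which is exactly the level of detail at which the paper itself treats that step.
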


\begin{proof}
By the definition of different, we have $(K_{X/V}+S+B)|_{S^N} \sim_{\Q} K_{S^N/V}+B_S$.
By Theorem \ref{thm:semistable alteration}, there exists an alteration $f \colon Y \to X$ and a strict transform $S_Y$ of $S$ such that  $(Y,S_Y)$ is a simple normal crossing pair and $f$ factors through $\pi$.
By taking blowing up along the stratum of $S_Y$, we may assume that $S_Y$ is locally irreducible, thus $S_Y$ is regular.
In particular, $S_Y \to S$ factors through the normalization $S^N \to S$ denoted by $f_S \colon S_Y \to S^N$.
By Proposition \ref{prop:index one cover}, we may assume that $f^*(K_{X/V}+S+B)$ and $f^*(K_{S^N/V}+B_S)$ is Cartier and 
\[
f^*(K_{X/V}+S+B)|_{S_Y} \sim f_S^*((K_{X/V}+S+B)|_{S^N}) \sim f_S^*(K_{S^N/V}+B_S).
\]
Thus, we define the morphism $\omega_{Y/V}(S_Y-f^*(K_{X/V}+S+B)) \to \omega_{S_Y/V}(-f_S^*(K_{S^N/V}+B_S))$ induced by the adjunction formula $\omega_{Y/V}(S_Y)|_{S_Y} \simeq \omega_{S_Y/V}$.
Then it is enough to show that the diagram
\[
\xymatrix{
f_*\omega_{Y/V}(S_Y-f^*(K_{X/V}+S+B)) \ar[r] \ar[d] & \sO_X \ar[d] \\
j_*f_{S,*}\omega_{S_Y/V}(-f_S^*(K_{S^N/V}+B_S)) \ar[r] & j_*\sO_{S^N},
}
\]
commutes.
Since $\mathscr{H}om(f_*\omega_{Y/V}(S_Y-f^*(K_{X/V}+S+B)),j_*\sO_{S^N})$ is torsion-free as $\sO_S$-module, it is enough to show that the above diagram commutes at every generic point of $S$.
Thus, we may assume that $(X,S)$ is a simple normal crossing pair and $B=0$.
By the construction of the residue map $\omega_{X/V}(S) \to \omega_{S/V}$ and the trace map, we have the commutative diagram
\[
\xymatrix{
f_*\omega_{Y/V}(S_Y) \ar[r] \ar[d] & \omega_{X/V}(S) \ar[d] \\
f_{S,*} \omega_{S_Y/V}  \ar[r] & \omega_{S/V}.
}
\]
Applying $\otimes \sO_X(-(K_{X/V}+S))$, we have the assertion.
\end{proof}

\begin{prop}\label{prop:adjunction}\textup{(Adjunction for global $T$-regularity)}
Let $V$ be a scheme satisfying Assumption \ref{assump:cdvr}.
Let $f \colon X \to Z$ be a projective surjective morphism from a normal $V$-variety $X$ to an affine $V$-variety $Z$ and $(X,S+B)$ a purely globally $T$-regular  pair with $\lfloor S+B \rfloor =S$.
Then $(S^N,B_S)$ is globally $T$-regular, where $B_S=\mathrm{Diff}_{S^N}(B)$ is the different of $(X,S+B)$.
\end{prop}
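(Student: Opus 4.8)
The plan is to deduce this from the commutative square of trace maps produced by Lemma \ref{lem:good alteration}, which is engineered precisely to interchange the trace map on an alteration of $X$ with the trace map on the normalized boundary. We may assume that $S$ is irreducible, so that $S^N$ is a normal $V$-variety. Note that $(X,S+B)$ is plt by Proposition \ref{prop:klt and plt by globally $T$-regular}; in particular $S$ is reduced and shares no component with $B$, so that Lemma \ref{lem:good alteration} will apply, $K_{S^N/V}+B_S$ is $\Q$-Cartier since $(K_{X/V}+S+B)|_{S^N}\sim_\Q K_{S^N/V}+B_S$, and $(S^N,B_S)$ is klt by adjunction, so $\lfloor B_S\rfloor=0$, which is what makes the trace maps below land in $H^0(\sO_{S^N})$. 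Now fix an arbitrary alteration $g\colon W\to S^N$ from a normal $V$-variety; the goal is to show that $1\in H^0(\sO_{S^N})$ lies in the image of the trace map $\Tr_g\colon H^0(\omega_{W/V}(\lceil -g^*(K_{S^N/V}+B_S)\rceil))\to H^0(\sO_{S^N})$.

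First I would realize $g$ as the restriction of an alteration of the ambient space. The composite $W\to S^N\to S\hookrightarrow X$ is an alteration onto the closed sub-$V$-variety $S$, so Proposition \ref{prop:extension of alterations of closed variety} (after replacing its output by the normalization) produces an alteration $\pi\colon X'\to X$ of normal $V$-varieties together with a prime divisor $S'$ on $X'$ with $\pi(S')=S$; since $\dim S'=\dim X'-1$, the divisor $S'$ is a strict transform of $S$, and by construction $S'\to S$ factors through $W$. Because $S$ is reduced its generic point is regular, so the lift $S'\to S^N$ of $S'\to S$ is the unique one, and hence factors through $g$. Feeding $(X,S+B)$, $\pi$ and $S'$ into Lemma \ref{lem:good alteration} gives an alteration $f\colon Y\to X$ factoring through $\pi$, a strict transform $S_Y$ of $S$ which is a strict transform of $S'$, with $(Y,S_Y)$ a simple normal crossing pair, with $f^*(K_{X/V}+S+B)$ and $f_S^*(K_{S^N/V}+B_S)$ Cartier (where $f_S\colon S_Y\to S^N$ is the induced morphism), together with the commutative square of trace maps of that lemma; the key point is that $f_S$ then factors as $S_Y\xrightarrow{h}W\xrightarrow{g}S^N$.

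Finally I would chase the square. Since $f^*(K_{X/V}+S+B)$ is Cartier, its top-left term $H^0(\omega_{Y/V}(S_Y-f^*(K_{X/V}+S+B)))$ is exactly the group in the definition of pure global $T$-regularity of $(X,S+B)$ for the alteration $f$ and the strict transform $S_Y$, so the top arrow hits $1\in H^0(\sO_X)$. Pushing a preimage down the left edge gives some $\beta\in H^0(\omega_{S_Y/V}(-f_S^*(K_{S^N/V}+B_S)))$, and since the right edge $\sO_X\to j_*\sO_{S^N}$ sends $1$ to $1$, commutativity forces the bottom arrow $\Tr_{f_S}$ to send $\beta$ to $1$. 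Now $f_S^*(K_{S^N/V}+B_S)=h^*g^*(K_{S^N/V}+B_S)$ is Cartier while $g^*(K_{S^N/V}+B_S)$ is $\Q$-Cartier on $W$, so the trace map for $h$ is well defined and lands in $H^0(\omega_{W/V}(\lceil -g^*(K_{S^N/V}+B_S)\rceil))$; by functoriality of the Grothendieck trace one has $\Tr_{f_S}=\Tr_g\circ\Tr_h$, whence $1=\Tr_g(\Tr_h(\beta))\in\mathrm{Im}(\Tr_g)$, which is what was needed. The main obstacle is bookkeeping rather than geometry: one must keep every relevant pullback divisor Cartier so that no rounding operation obstructs the factorization $\Tr_{f_S}=\Tr_g\circ\Tr_h$ — which is exactly why Lemma \ref{lem:good alteration} is stated with the Cartierness of $f^*(K_{X/V}+S+B)$ and $f_S^*(K_{S^N/V}+B_S)$ and the local irreducibility of $S_Y$ built in — together with the preliminary step, via Proposition \ref{prop:extension of alterations of closed variety}, that makes the given alteration $g$ of $S^N$ extend to $X$ at all.
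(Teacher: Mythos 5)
Your proposal is correct and follows essentially the same route as the paper: given an alteration $g$ of $S^N$, the paper likewise combines Proposition \ref{prop:extension of alterations of closed variety} with Lemma \ref{lem:good alteration} to produce $f\colon Y\to X$ with $f_S\colon S_Y\to S^N$ factoring through $g$, and then chases the commutative square of trace maps, using pure global $T$-regularity to see the top arrow hits $1$. Your extra bookkeeping (Cartierness of the pullbacks, $\Tr_{f_S}=\Tr_g\circ\Tr_h$) just makes explicit what the paper leaves implicit in the final sentence of its proof.
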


\begin{proof}
We take an alteration $g \colon T \to S^N$.
Combining Proposition \ref{prop:extension of alterations of closed variety} and Lemma \ref{lem:good alteration}, there exists an alteration $f \colon Y \to X$ and a strict transform $S_Y$ of $S$ as in Lemma \ref{lem:good alteration} and $f_S \colon S_Y \to S^N$ factors through $g$.
Then we have the commutative diagram
\[
\xymatrix{
H^0(\omega_{Y/V}(S_Y-f^*(K_{X/V}+S+B))) \ar@{->>}[r] \ar[d] & H^0(\sO_X) \ar[d] \\
H^0(\omega_{S_Y/V}(-f_S^*(K_{S^N/V}+B_S))) \ar[r] & H^0(\sO_{S^N}).
}
\]
Thus, the image of the bottom map contains $1 \in H^0(\sO_{S^N})$, and in particular, it is surjective.
\end{proof}

\begin{prop}\label{prop:inversion of adjunction globally $T$-regular}\textup{(Inversion of Adjunction for global $T$-regularity)}
Let $V$ be a scheme satisfying Assumption \ref{assump:cdvr}.
Let $f \colon X \to Z$ be a projective surjective morphism from a normal $V$-variety $X$ to an affine $V$-variety $Z$ and $(X,S+\Delta)$  a log pair such that $\lfloor S+\Delta \rfloor=S$ is reduced and $S$ has no common component with $\Delta$.
Let $X \to Z' \to Z$ be the Stein factorization and we denote the induced morphisms by $f' \colon X \to Z'$ and $\phi \colon Z' \to Z$.
Let $z \in Z$ be a point with $\phi^{-1}(z) \subset f'(S)$.
We assume that $-(K_{X/V}+S+\Delta)$ is semiample and $f$-big.
If $(S^N,D)$ is globally $T$-regular over $z$, then $(X,S+\Delta)$ is purely globally $T$-regular over $z$ and $S$ is normal, where $S^N$ is the normalization of $S$ and $D=\mathrm{Diff}_{S^N}(\Delta)$ is the different of the pair $(X,S+\Delta)$.

\end{prop}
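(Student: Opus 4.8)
The plan is to verify the surjectivity (\ref{equation:purely globally T-regulality}) for $(X,S+\Delta)$ one alteration at a time, after localizing $Z$ at $z$. Since trace maps compose, the image in $H^0(\sO_X)$ of the map (\ref{equation:purely globally T-regulality}) only shrinks when $\pi$ is replaced by a finer alteration, and it is an $\sO_X$-submodule (hence an ideal) of $H^0(\sO_X)$; so it suffices, for a given alteration $\pi_0$, to find a refinement $\pi$ whose image contains $1$. Using Proposition \ref{prop:extension of alterations} and Lemma \ref{lem:good alteration} we may therefore take $\pi=h\colon Y\to X$ to be a \emph{good} alteration as produced by Lemma \ref{lem:good alteration}: $Y$ is regular, $S_Y$ is locally irreducible, $(Y,S_Y)$ is a simple normal crossing pair, $h^*(K_{X/V}+S+\Delta)$ and $h_S^*(K_{S^N/V}+D)$ are Cartier, and the residue square of Lemma \ref{lem:good alteration} commutes. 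Then $\lceil -h^*(K_{X/V}+S+\Delta)\rceil=-h^*(K_{X/V}+S+\Delta)$, and it is enough to produce, after localizing at $z$, a section $\beta\in H^0(\omega_{Y/V}(S_Y-h^*(K_{X/V}+S+\Delta)))$ whose image in $H^0(\sO_X)$ is a unit.

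The key mechanism is the adjunction sequence
\[
0\to\omega_{Y/V}(-h^*(K_{X/V}+S+\Delta))\to\omega_{Y/V}(S_Y-h^*(K_{X/V}+S+\Delta))\to\omega_{S_Y/V}(-h_S^*(K_{S^N/V}+D))\to 0,
\]
whose right-hand term is precisely the object computing the trace map for the alteration $S_Y\to S^N$ in the definition of global $T$-regularity of $(S^N,D)$. The obstruction to lifting a section of that term lies in $H^1(\omega_{Y/V}(-h^*(K_{X/V}+S+\Delta)))$, and this is where the hypothesis that $-(K_{X/V}+S+\Delta)$ is semiample and $f$-big is used: $-h^*(K_{X/V}+S+\Delta)$ is a semiample, $f$-big, Cartier divisor on the regular (hence Cohen--Macaulay) scheme $Y$, so Corollary \ref{cor:zero map} yields an alteration $W\to Y$ for which the trace map on $H^1(\omega_{(-)/V}(-h^*(K_{X/V}+S+\Delta)))$ down to $Y$ is zero (when $Y$ has characteristic-zero function field this $H^1$ vanishes already by Kawamata--Viehweg, and in equal characteristic $p$ one uses Remark \ref{rmk:zero map}). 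Applying Lemma \ref{lem:good alteration} once more, pick a good alteration $h_2\colon Y_2\to X$ factoring through $W$, with strict transform $S_{Y_2}$ of $S_Y$; the twisted trace maps $Y_2\to Y$ then make the adjunction sequence on $Y_2$ map to the one on $Y$ (by the compatibility of trace with residue maps established in Lemma \ref{lem:good alteration}), and the induced trace $H^1(\omega_{Y_2/V}(-h_2^*(K_{X/V}+S+\Delta)))\to H^1(\omega_{Y/V}(-h^*(K_{X/V}+S+\Delta)))$ is zero because it factors through $W$.

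Now localize $Z$ at $z$. Since $(S^N,D)$ is globally $T$-regular over $z$, the trace for the alteration $S_{Y_2}\to S^N$ is surjective; choose $\alpha_2$ mapping to $1\in H^0(\sO_{S^N})$ and let $\bar\alpha$ be its image under the trace $H^0(\omega_{S_{Y_2}/V}(-h_{2,S}^*(K_{S^N/V}+D)))\to H^0(\omega_{S_Y/V}(-h_S^*(K_{S^N/V}+D)))$, which still maps to $1\in H^0(\sO_{S^N})$ by compatibility of traces. By naturality of the connecting homomorphism of the adjunction sequence together with the vanishing of the $H^1$-trace above, the obstruction $\delta(\bar\alpha)\in H^1(\omega_{Y/V}(-h^*(K_{X/V}+S+\Delta)))$ vanishes, so $\bar\alpha$ lifts to some $\beta\in H^0(\omega_{Y/V}(S_Y-h^*(K_{X/V}+S+\Delta)))$; by the commuting residue square of Lemma \ref{lem:good alteration} its image $u\in H^0(\sO_X)$ restricts to $1\in H^0(\sO_{S^N})$. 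Via the Stein factorization $\phi\colon Z'\to Z$ the ring $H^0(\sO_X)=H^0(\sO_{Z'})$ is semilocal with maximal ideals indexed by $\phi^{-1}(z)$; as $\phi^{-1}(z)\subset f'(S)$, each such maximal ideal is the image of a point of $S$, so $1-u$, vanishing along $S$, lies in the Jacobson radical and $u$ is a unit. Hence the image of (\ref{equation:purely globally T-regulality}) for $h$ contains a unit, so equals $H^0(\sO_X)$, and $(X,S+\Delta)$ is purely globally $T$-regular over $z$.

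Finally, $(X,S+\Delta)$ being purely globally $T$-regular over $z$ is plt there by Proposition \ref{prop:klt and plt by globally $T$-regular}; the normality of $S$ near $z$ then follows from this together with Lemma \ref{lem:topologically normal} and the splitting of the trace constructed above, by a standard argument identifying $\sO_S$ with $\sO_{S^N}$ (as in the positive- and zero-characteristic cases). I expect the main obstacle to be exactly the coordination of alterations carried out above: proving that the adjunction short exact sequences are natural for the twisted trace maps, and arranging one refinement $Y_2$ that is simultaneously good in the sense of Lemma \ref{lem:good alteration} and kills the $H^1$-obstruction down to $Y$. The conceptual reason this succeeds is that one never needs the obstruction class itself to vanish on the fine alteration, only its image on the coarse one --- which is exactly what Corollary \ref{cor:zero map} provides.
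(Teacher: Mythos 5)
Your proof of the pure global $T$-regularity over $z$ is correct and is essentially the paper's own argument: a good alteration from Lemma \ref{lem:good alteration}, the adjunction exact sequence, killing the $H^1$-obstruction via Corollary \ref{cor:zero map} (and Remark \ref{rmk:zero map}) on a further good alteration, surjectivity of the trace for $S_{Y_2}\to S^N$ coming from global $T$-regularity of $(S^N,D)$, and the observation that, after localizing at $z$, any element of $H^0(\sO_X)=H^0(\sO_{Z'})$ restricting to $1$ on $S^N$ is a unit because $\phi^{-1}(z)\subset f'(S)$ (the paper phrases this as: $H^0(\sO_X(-S))$ lies in every maximal ideal of $H^0(\sO_{Z'})$).

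The gap is in the last step, the normality of $S$. Pure global $T$-regularity gives plt by Proposition \ref{prop:klt and plt by globally $T$-regular}, and Lemma \ref{lem:topologically normal} then only yields that the normalization $\nu\colon S^N\to S$ is a universal homeomorphism; in mixed or positive characteristic, plt does \emph{not} imply that the boundary is normal, so the ``standard argument identifying $\sO_S$ with $\sO_{S^N}$'' you appeal to is not available from what you have proved --- establishing this identification is exactly the nontrivial content of the statement. Moreover, your construction only lifts the single section $1\in H^0(\sO_{S^N})$, and knowing that the ideal $I_g$ equals $H^0(\sO_X)$ does not imply that $H^0(\sO_X)\to H^0(\sO_{S^N})$ is surjective. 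What is needed --- and what the paper does --- is to run the same lifting argument for an \emph{arbitrary} $\alpha\in H^0(\sO_{S^N})$ (note that the auxiliary alteration $W$ killing the $H^1$-trace is chosen independently of $\alpha$, so this costs nothing), which shows that the composite $\psi\colon H^0(\sO_X)\to H^0(\sO_{S^N})$ is surjective; then, since global $T$-regularity of $(S^N,D)$ is preserved under restriction and localization (Proposition \ref{prop:localization globally $T$-regular}), one repeats this on an affine open cover of $X$ to get surjectivity of $\sO_S\to\nu_*\sO_{S^N}$, which is also injective because $S$ is reduced; hence $S=S^N$ is normal. With this modification your argument coincides with the paper's proof.
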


\begin{proof}
We denote $H:=-(K_{X/V}+S+\Delta)$ and $H_S:=-(K_{S^N/V}+D)$.
We take base changes via $\Spec{\sO_{Z,z}} \to Z$ and we use the same notations by abuse of notations.
%We may replace $Z$ with $\Spec \sO_{Z,z}$.
By the assumption $\phi^{-1}(z) \subset f'(S)$, the ideal $H^0(\sO_X(-S))$ of $H^0(\sO_X)=H^0(\sO_{Z'})$ is contained in all maximal ideals of $H^0(\sO_X)$.
We take an alteration $g \colon Y \to X$ and a strict transform $S_Y$ of $S$ as in Lemma \ref{lem:good alteration}.
In order to show that $(X,S+\Delta)$ is purely globally $T$-regular, it is enough to show that the image $I_g \subset H^0(\sO_X)$ of the map
\[
H^0(\omega_{Y/V}(S_Y+g^*H)) \to H^0(\sO_X)
\]
is $H^0(\sO_X)$.
Since $H^0(\sO_X(-S))$ is contained in all maximal ideals, it is enough to show $\psi(I_g) =H^0(\sO_{S^N})$, where $\psi \colon H^0(\sO_X) \to H^0(\sO_{S^N})$.
We take an element $\alpha \in H^0(\sO_{S^N})$.
Since $g^*H$ is semiample and big over $Z$, there exists an alteration $h \colon W \to Y$ and a strict transform $S_W$ of $S$ as in Lemma \ref{lem:good alteration} and the trace map
\[
H^1(\omega_{W/V}(h^*g^*H)) \to H^1(\omega_{Y/V}(g^*H))
\]
is zero by Corollary \ref{cor:zero map} and Remark \ref{rmk:zero map}.
Since $(S^N,D)$ is globally $T$-regular, there exists a section 
\[
\alpha_W \in H^0(\omega_{S_W/V}(h_S^*g_S^*H_S))
\]
mapped to $\alpha \in H^0(\sO_{S^N})$ by the trace map.
Let $\alpha_Y \in H^0(\omega_{S_Y/V}(g_S^*H_S))$ be the image of $\alpha_W$ via the trace map.
By the exact sequence
\[
0 \to \omega_{Y/V}(g^*H) \to \omega_{Y/V}(S_Y+g^*H) \to \omega_{S_Y/V}(g_S^*H_S) \to 0,
\]
we have the exact sequence
\[
H^0(\omega_{Y/V}(S_Y+g^*H)) \to H^0(\omega_{S_Y/V}(g_S^*H_S)) \to H^1(\omega_{Y/V}(g^*H)).
\]
Thus we have the following commutative diagram
\[
\xymatrix{
H^0(\omega_{S_W/V}(h_S^*g_S^*H_S))) \ar[r] \ar[d] & H^1(\omega_{W/V}(h^*g^*H)) \ar[d]^{0-\text{map}} \\
H^0(\omega_{S_Y/V}(g_S^*H_S)) \ar[r] & H^1(\omega_{Y/V}(g^*H))
}
\]
by Lemma \ref{lem:good alteration}.
Thus the image of $\alpha_Y$ via the connection map is zero, so  $\alpha_Y$ extends to a section $\gamma \in H^0(\omega_{Y/V}(S_Y+ g^*H )) $ and its image is $\alpha$ in $H^0(\sO_{S^N})$.
Thus we have $\alpha \in \psi(I_g)$, and the equation $\psi(I_g)=H^0(\sO_{S^N})$ holds.

Next, we prove the normality of $S$.
We take an open affine covering $\{U_i\}$ of $X$.
By Proposition \ref{prop:localization globally $T$-regular}, each $(U_i|_{S^N},D|_{U_i|_{S^N}})$ is globally $T$-regular. 
This is a local problem, we may assume that $X$ is affine by Proposition \ref{prop:localization globally $T$-regular}.
By the above argument, we have $\psi(\sO_X)=\sO_{S^N}$, in particular, $S$ is normal.
\end{proof}

\begin{cor}\label{cor:normality of globally $T$-regular}
Let $V$ be a scheme satisfying Assumption \ref{assump:cdvr}.
Let $X$ be a normal affine $V$-variety and $(X,S+\Delta)$ be a log pair such that $\lfloor S+\Delta \rfloor=S$ is reduced.
Then $(X,S+\Delta)$ is purely $T$-regular if and only if $(S^N,\mathrm{Diff}_{S^N}(\Delta))$ is $T$-regular.
Furthermore, in both cases, $S$ is normal, and in particular, $S$ is locally irreducible.
\end{cor}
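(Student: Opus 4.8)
The plan is to read this off from the adjunction (Proposition~\ref{prop:adjunction}) and the inversion of adjunction (Proposition~\ref{prop:inversion of adjunction globally $T$-regular}) for global $T$-regularity, using the affineness of $X$ to supply the extra hypotheses of the latter. As usual we may take $S$ and $\Delta$ to have no common components, so $\mathrm{Diff}_{S^N}(\Delta)$ makes sense. For the ``only if'' direction, assume $(X,S+\Delta)$ is purely $T$-regular and apply Proposition~\ref{prop:adjunction} with $Z=X$ and $f=\id_X$: this is a projective surjective morphism to an affine $V$-scheme, so its hypotheses are met, and we conclude that $(S^N,\mathrm{Diff}_{S^N}(\Delta))$ is globally $T$-regular. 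Since $S$, hence $S^N$, is affine, this is exactly the assertion that $(S^N,\mathrm{Diff}_{S^N}(\Delta))$ is $T$-regular.

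For the ``if'' direction, observe that purely global $T$-regularity of $(X,S+\Delta)$ is the surjectivity, for every alteration $\pi\colon Y\to X$ from a normal $V$-variety and every strict transform $S_Y$ of $S$, of the map of coherent sheaves $\pi_*\omega_{Y/V}(S_Y+\lceil -\pi^*(K_{X/V}+S+\Delta)\rceil)\to\sO_X$ on the affine scheme $X$; such surjectivity can be checked after localizing at each point of $X$, and by Proposition~\ref{prop:extension of alterations} the alterations of $\Spec\sO_{X,z}$ are the localizations of alterations of $X$, while by Proposition~\ref{prop:localization globally $T$-regular} the $T$-regularity of $(S^N,\mathrm{Diff}_{S^N}(\Delta))$ passes to localizations. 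Hence we may assume $X=\Spec\sO_{X,z}$; at a point $z\notin S$ there is nothing to check since $S=0$ locally, so assume $z\in S$. On the local scheme $X$ the $\Q$-Cartier divisor $-(K_{X/V}+S+\Delta)$ is $\Q$-linearly trivial, in particular semiample, and taking $f=\id_X\colon X\to Z:=X$ it is trivially $f$-big because the fibers of $f$ are points; the Stein factorization of $f$ is the identity, so $\phi^{-1}(z)=\{z\}\subset S=f'(S)$. Thus Proposition~\ref{prop:inversion of adjunction globally $T$-regular} applies with $D=\mathrm{Diff}_{S^N}(\Delta)$, and it yields that $(X,S+\Delta)$ is purely globally $T$-regular over $z$ and that $S$ is normal at $z$. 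Running this over all $z\in S$ gives the ``if'' direction together with the normality of $S$; combining the two directions, $S$ is normal in both cases, and normal schemes are locally irreducible.

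The only step that goes beyond quoting the two adjunction propositions is producing the ``semiample and $f$-big'' hypothesis needed in Proposition~\ref{prop:inversion of adjunction globally $T$-regular}, and I expect this to be the (only, and mild) obstacle. It is dealt with exactly as above: the statement is local on the affine $X$, and on a local ring every $\Q$-Cartier divisor is $\Q$-trivial while the identity contraction makes bigness automatic, so after the reduction the hypothesis is vacuous. One should also check en route that $(S^N,\mathrm{Diff}_{S^N}(\Delta))$ is indeed a log pair (resp.\ a finite disjoint union of such, if $S$ is reducible), which is immediate from $K_{S^N/V}+\mathrm{Diff}_{S^N}(\Delta)\simeq(K_{X/V}+S+\Delta)|_{S^N}$ being $\Q$-Cartier.
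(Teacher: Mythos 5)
Your route is the paper's: both directions are read off from Proposition \ref{prop:adjunction} and Proposition \ref{prop:inversion of adjunction globally $T$-regular} applied with $f=\id$, and your ``only if'' direction and the normality statement are handled exactly as in the paper. One remark before the main point: the localization detour you introduce to secure the ``semiample and $f$-big'' hypothesis is unnecessary and slightly off-scope. Proposition \ref{prop:inversion of adjunction globally $T$-regular} is stated for $V$-varieties, so it does not literally apply to $\Spec \sO_{X,z}$; but nothing needs to be done, since for the affine variety $X$ itself and $f=\id$ the hypothesis is automatic (every line bundle on an affine scheme is globally generated, so any $\Q$-Cartier divisor is semiample, and every divisor is big over a birational morphism), and the conclusion of that proposition is already formulated ``over $z$'', i.e.\ after localizing at the chosen point. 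This is exactly how the paper's one-line proof proceeds.

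The genuine flaw is the claim that ``at a point $z\notin S$ there is nothing to check since $S=0$ locally''. At such a point, pure global $T$-regularity of $(X,S+\Delta)$ localized at $z$ is precisely the assertion that $(X,\Delta)$ is $T$-regular at $z$ (the strict transform $S_Y$ vanishes there, and $\lceil -\pi^*(K_{X/V}+S+\Delta)\rceil=\lceil -\pi^*(K_{X/V}+\Delta)\rceil$ near $z$); by Proposition \ref{prop:klt and plt by globally $T$-regular} this already forces $(X,\Delta)$ to be klt at $z$, and none of it follows from the hypothesis on $(S^N,\mathrm{Diff}_{S^N}(\Delta))$, which carries no information about $X$ away from $S$. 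What your argument (and the paper's) actually establishes is the equivalence localized at the points of $S$: for every $z\in S$, $T$-regularity of $(S^N,\mathrm{Diff}_{S^N}(\Delta))$ over $z$ yields pure global $T$-regularity of $(X,S+\Delta)$ over $z$, together with normality of $S$. That localized form is how the corollary is invoked later (for instance in the proof of Corollary \ref{cor:T-regularity of simple normal crossing pair} it is used ``at $x$'' for $x\in S$); read as a global statement on all of the affine $X$, the ``if'' direction would additionally require $T$-regularity of $(X,\Delta)$ at the points of $X\setminus S$, which your vacuity claim silently assumes rather than proves.
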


\begin{proof}
It follows from Proposition \ref{prop:adjunction} and Proposition \ref{prop:inversion of adjunction globally $T$-regular} for the case $f=\id$.
\end{proof}

\begin{cor}\label{cor:T-regularity of simple normal crossing pair}
Let $V$ be a scheme satisfying Assumption \ref{assump:cdvr}.
Let $(X,\Delta)$ be a simple normal crossing pair with $\lfloor \Delta \rfloor=0$, where $X$ is an affine $V$-variety.
Then $(X,\Delta)$ is globally $T$-regular.
\end{cor}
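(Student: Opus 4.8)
The plan is to induct on $n := \dim X$. If $n = 0$ then $X = \Spec L$ for a field $L$, so $\Delta = 0$, and for any alteration $\pi \colon \Spec L' \to X$ the map (\ref{equation:globally T-regulality}) becomes, after the canonical identification $\omega_{X/V} \simeq \sO_X$, a trace map $\Hom_L(L',L) \to L$ of the form $\phi \mapsto \phi(1)$; since $1$ extends to an $L$-basis of $L'$ this is surjective, so $(X,0)$ is globally $T$-regular.

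Now suppose $n \geq 1$ and write $\Delta = \sum_i a_i D_i$ with $0 \le a_i < 1$. First I would choose a regular prime divisor $S$ on $X$ such that $(X, S + \Delta)$ is again a simple normal crossing pair: when $\Delta \neq 0$ one may simply take $S = D_1$, and when $\Delta = 0$ one takes $S$ to be a general hyperplane section of $X$ under some closed immersion $X \hookrightarrow \A^N_V$, which is a regular prime divisor by a Bertini-type argument in the spirit of the proof of Lemma \ref{lem:bertini for dlt surface}. Set $\Delta_1 := \Delta - a_1 S$, an effective $\Q$-divisor with coefficients in $[0,1)$ supported away from $S$ (with $\Delta_1 = 0$ and $a_1 = 0$ when $\Delta = 0$); then $\lfloor S + \Delta_1 \rfloor = S$.

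Next I would run the following chain of implications. Since $X$ is regular, $S$ is normal with $S = S^N$, and by adjunction for simple normal crossing pairs $\mathrm{Diff}_{S^N}(\Delta_1) = \Delta_1|_S$; hence $(S, \Delta_1|_S)$ is a simple normal crossing pair with $\lfloor \Delta_1|_S \rfloor = 0$ on the affine $V$-variety $S$, and $\dim S = n - 1$, so $(S, \Delta_1|_S)$ is $T$-regular by the inductive hypothesis. By Corollary \ref{cor:normality of globally $T$-regular} this shows that $(X, S + \Delta_1)$ is purely $T$-regular, and since $S$ is Cartier, Proposition \ref{prop:purely globally $T$-regular to globally $T$-regular} then gives that $(X, (1-\varepsilon)S + \Delta_1)$ is $T$-regular for every rational $0 < \varepsilon < 1$. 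Finally, choosing $0 < \varepsilon \le 1 - a_1$ we have $\Delta \le (1-\varepsilon)S + \Delta_1$ and both divisors have zero round-down, so for every alteration $\pi \colon Y \to X$ the natural inclusion $\omega_{Y/V}(\lceil -\pi^*(K_{X/V} + (1-\varepsilon)S + \Delta_1) \rceil) \hookrightarrow \omega_{Y/V}(\lceil -\pi^*(K_{X/V} + \Delta) \rceil)$ is compatible with the trace maps to $\sO_X$; surjectivity of the trace for $(X, (1-\varepsilon)S + \Delta_1)$ therefore forces it for $(X, \Delta)$, and $(X,\Delta)$ is globally $T$-regular.

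The step I expect to be the main obstacle is the construction of $S$ when $\Delta = 0$, together with the verification that $(X, S + \Delta)$ remains a simple normal crossing pair --- including transversality with the closed fibers --- without any assumption on the residue characteristics of $V$; this is exactly the point at which a Bertini-type result over $V$ is needed, and it may be convenient to first reduce to the case of an infinite residue field. Everything downstream of that is a formal consequence of the adjunction and inversion-of-adjunction statements for global $T$-regularity established earlier in this section.
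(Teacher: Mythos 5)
Your reduction on the boundary is essentially the paper's: bump one component of the boundary to coefficient one, use the inversion of adjunction (Corollary \ref{cor:normality of globally $T$-regular}) together with Proposition \ref{prop:purely globally $T$-regular to globally $T$-regular}, and induct on dimension via the snc adjunction. The crucial difference is that the paper argues \emph{pointwise} (``it is enough to show that the trace map is surjective at each closed point $x$''), and this is not cosmetic. Your argument applies Corollary \ref{cor:normality of globally $T$-regular} in its global form on all of the affine $X$, but the inversion-of-adjunction statement it rests on, Proposition \ref{prop:inversion of adjunction globally $T$-regular}, only yields pure (global) $T$-regularity after localizing at points $z$ with $\phi^{-1}(z)\subset f'(S)$ --- for $f=\id$, at points of $S$. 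Indeed the global form cannot hold in the generality of that corollary: pure global $T$-regularity forces $(X,S+\Delta)$ to be plt everywhere (Proposition \ref{prop:klt and plt by globally $T$-regular}), which a hypothesis on $S^N$ alone cannot control at points off $S$. So at a closed point $x\notin S\cup\Supp(\Delta_1)$, what you need is exactly local $T$-regularity of $(X,0)$ at the regular point $x$; your dimension induction gives nothing there (the dimension has not dropped), and nothing else in your proposal supplies it. This off-boundary case is precisely where the paper invokes \cite[Theorem 1.2]{bhatt18}: for an alteration $\pi\colon Y\to X$ of the regular scheme $X$, the map $\sO_X\to R\pi_*\sO_Y$ splits, so by Grothendieck duality $\pi_*\omega_{Y/V}\to\omega_{X/V}$ is surjective, and since $X$ is Gorenstein this is the required surjectivity of the trace at such points. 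Your proof has no substitute for this input, so the gap is genuine, and it is not confined to the case $\Delta=0$: even when $\Delta\neq0$, points of $X$ away from $\Supp(\Delta)$ are uncovered.

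The secondary issue you flag yourself --- producing, when $\Delta=0$, a hyperplane section $S$ with $(X,S)$ snc over a complete DVR with possibly finite residue field, plus a descent of global $T$-regularity along the passage to an infinite residue field --- is real but becomes moot in the paper's approach: Bhatt's theorem handles all points off the boundary directly, so no auxiliary divisor and no Bertini statement are needed. If you wish to salvage your route, you would have to make the whole argument local at each closed point $x$ (choosing an auxiliary regular divisor through $x$ when $x\notin\Supp(\Delta)$ and applying the localized inversion of adjunction at $x\in S$), and then still justify the Bertini-type existence over $V$ and the base-change step; as written, the proposal does not do this.
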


\begin{proof}
We prove Corollary \ref{cor:T-regularity of simple normal crossing pair} by the induction on $d:=\dim X$.
We take an alteration $\pi \colon Y \to X$ from a normal $V$-variety $Y$.
It is enough to show that the trace map is surjective at each closed point $x \in X$. 
First, we consider the case where $x$ is not contained in $\Supp(\Delta)$.
By \cite[Theorem 1.2]{bhatt18}, 
\[
\sO_X \to R\pi_*\sO_Y
\]
splits.
By the Grothendieck duality, the map $\pi_*\omega_{Y/V} \to \omega_{X/V}$ is surjective.
Since $X$ is Gorenstein, the trace map
\[
\pi_{*}
\omega_{Y/V}(-\pi^*K_{X/V}) \to \sO_X
\]
is also surjective.
Next, we assume that $x$ is contained in a component $S$ of $\Supp(\Delta)$.
We take a positive rational number $a$ with $\ord_S(\Delta+aS)=1$.
By Proposition \ref{prop:purely globally $T$-regular to globally $T$-regular}, it is enough to show that $(X,\Delta+aS)$ is purely $T$-regular at $x$.
By Corollary \ref{cor:normality of globally $T$-regular}, it is enough to show that $(S,(\Delta-(1-a)S)|_S)$ is $T$-regular at $x$.
Since this pair is a simple normal crossing pair, this pair is $T$-regular by the induction hypothesis on $d$.
In conclusion, $(X,\Delta)$ is $T$-regular.
\end{proof}

\begin{rmk}
By the proof of \cite[Theorem 6.21]{ma-schwede18}, if a log pair $(X,\Delta)$ is BCM-regular at $x$ after completion, then $(X,\Delta)$ is $T$-regular at $x$.
Thus, Corollary \ref{cor:T-regularity of simple normal crossing pair} also follows from \cite[Theorem 4.1]{mstww}.
\end{rmk}

\begin{prop}\label{prop:more inversion}
Let $V$ be a scheme satisfying Assumption \ref{assump:cdvr}.
Let $f \colon X \to Z$ be a projective surjective morphism from a normal $V$-variety $X$ to an affine $V$-variety $Z$ and $(X,S+\Delta)$ a log pair such that $\lfloor S+\Delta \rfloor=S$ is a reduced divisor and $S$ has no common component with $\Delta$.
Let $L$ be a $\Q$-Cartier Weil divisor such that $L-(K_{X/V}+S+\Delta)$ is semiample and $f$-big and $L$ is Cartier at all codimension two points of $X$ contained in $S$.
Let $g \colon Y \to X$ be an alteration from a normal $V$-variety $Y$ and $S_Y$ a strict transform of $S$.
Then we have 
\[
T^0(S^N, D ;L|_S) \subset I_g|_{S^N},
\]
where $D:=\mathrm{Diff}_{S^N}(\Delta)$ is the different, $I_g$ is the image of the trace map
\[
I_g:=\mathrm{Im}(H^0(\omega_{Y/V}(S_Y+\lceil g^*(L-(K_{X/V}+S+\Delta)) \rceil)) \to H^0(\sO_X(L)))
\]
and the right hand side is the image of $I_g$ via the natural map 
\[
H^0(\sO_X(L)) \to H^0(\sO_{S^N}(L|_{S^N})).
\]

\end{prop}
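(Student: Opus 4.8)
The plan is to run the proof of Proposition \ref{prop:inversion of adjunction globally $T$-regular} again, twisting every sheaf by the Weil divisor $L$ and following a single section of $\sO_{S^N}(L|_{S^N})$ rather than all of $H^0(\sO_{S^N})$. Set $H:=L-(K_{X/V}+S+\Delta)$; this is semiample and $f$-big by hypothesis, and by adjunction $H|_{S^N}=L|_{S^N}-(K_{S^N/V}+D)=:H_S$, where $L|_{S^N}$ is a well-defined (integral) Weil divisor because $L$ is Cartier at the codimension-one points of $S$. First I would reduce to a convenient alteration: if $g'\colon Y'\to X$ factors through $g$, then the trace map for $g'$ factors through that for $g$ (compare the reflexive sheaves involved in codimension one), so $I_{g'}\subseteq I_g$ and it suffices to prove $T^0(S^N,D;L|_{S^N})\subseteq I_{g'}|_{S^N}$ for one refinement $g'$ of $g$. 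Using Lemma \ref{lem:good alteration} together with Proposition \ref{prop:index one cover} (to also trivialize $L$ and $L|_{S^N}$) we may therefore assume that $S_Y$ is locally irreducible, $(Y,S_Y)$ is simple normal crossing, the divisors $g^*H$, $g^*L$, $g_S^*H_S$, $g_S^*(L|_{S^N})$ are Cartier, and, tensoring the commutative square of Lemma \ref{lem:good alteration} with $\sO_X(L)$, the diagram
\[
\xymatrix{
H^0(\omega_{Y/V}(S_Y+g^*H)) \ar[r] \ar[d] & H^0(\sO_X(L)) \ar[d] \\
H^0(\omega_{S_Y/V}(g_S^*H_S)) \ar[r] & H^0(\sO_{S^N}(L|_{S^N}))
}
\]
commutes.

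Now fix $\beta\in T^0(S^N,D;L|_{S^N})$. Since $g^*H$ is semiample and big over $Z$, Corollary \ref{cor:zero map} (together with Remark \ref{rmk:zero map} when $X$ lies over a field) produces an alteration of $Y$ along which the trace map on $H^1(\omega_{(-)/V}(g^*H))$ vanishes; refining it once more by Lemma \ref{lem:good alteration} and Proposition \ref{prop:index one cover}, I obtain $h\colon W\to Y$ with a strict transform $S_W$ such that $(W,S_W)$ is simple normal crossing with $S_W$ regular, the relevant pullbacks of $H,L,H_S,L|_{S^N}$ are Cartier, the trace map $H^1(\omega_{W/V}(h^*g^*H))\to H^1(\omega_{Y/V}(g^*H))$ is zero, and the analogue of the square above holds for $(W,S_W)$. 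As $\beta\in T^0(S^N,D;L|_{S^N})$ and $S_W\to S^N$ is an alteration from a normal $V$-variety, there is a section $\beta_W\in H^0(\omega_{S_W/V}(h_S^*g_S^*H_S))$ with trace equal to $\beta$; let $\beta_Y\in H^0(\omega_{S_Y/V}(g_S^*H_S))$ be its image under the trace map for $S_W\to S_Y$, which still restricts to $\beta$ on $S^N$.

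From the adjunction sequence $0\to\omega_{Y/V}(g^*H)\to\omega_{Y/V}(S_Y+g^*H)\to\omega_{S_Y/V}(g_S^*H_S)\to 0$ and its analogue on $W$ one gets connecting maps $\delta$ and $\delta_W$, and the compatibility of trace maps with these connecting maps (as built into Lemma \ref{lem:good alteration} and used in Proposition \ref{prop:inversion of adjunction globally $T$-regular}) yields a commutative square
\[
\xymatrix{
H^0(\omega_{S_W/V}(h_S^*g_S^*H_S)) \ar[r]^{\delta_W} \ar[d] & H^1(\omega_{W/V}(h^*g^*H)) \ar[d]^{0} \\
H^0(\omega_{S_Y/V}(g_S^*H_S)) \ar[r]^{\delta} & H^1(\omega_{Y/V}(g^*H)).
}
\]
Chasing $\beta_W$ around it gives $\delta(\beta_Y)=0$, so $\beta_Y$ lifts to some $\gamma\in H^0(\omega_{Y/V}(S_Y+g^*H))$; by the first commutative square, the image of $\gamma$ in $H^0(\sO_X(L))$ then lies in $I_g$ and restricts to $\beta$ on $S^N$. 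Hence $\beta\in I_g|_{S^N}$, which is the claim.

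The homological heart — killing the $H^1$-obstruction via Corollary \ref{cor:zero map} and lifting along $S^N$ using $\beta\in T^0$ — is formally identical to Proposition \ref{prop:inversion of adjunction globally $T$-regular}, so I expect the real work to be the bookkeeping around $L$: checking that $L|_{S^N}$ is a genuine Weil divisor that pulls back sensibly given only that $L$ is Cartier at the codimension-two points of $X$ lying on $S$, that the residue and trace maps of Lemma \ref{lem:good alteration} remain compatible after tensoring with $\sO_X(L)$ (so that the two displayed squares are legitimate), and that the ceilings $\lceil\cdot\rceil$ in the definitions of $I_g$ and of $T^0$ can be cleared by index-one covers without enlarging $I_{g'}$ past $I_g$.
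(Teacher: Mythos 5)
Your proposal is correct and follows essentially the same route as the paper: the paper's proof also sets $H:=L-(K_{X/V}+S+\Delta)$, notes $\mathrm{Diff}_{S^N}(\Delta-L)=D-L|_{S^N}$ using the Cartier hypothesis at codimension-two points, reduces to an alteration as in Lemma \ref{lem:good alteration}, kills the $H^1$-obstruction exactly as in Proposition \ref{prop:inversion of adjunction globally $T$-regular}, and lifts a given element of $T^0(S^N,D;L|_{S^N})$ through the $L$-twisted commutative diagram. Your extra remarks (that refining $g$ only shrinks $I_g$, and that the Lemma \ref{lem:good alteration} square stays commutative after tensoring with $\sO_X(L)$) are precisely the bookkeeping the paper leaves implicit.
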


\begin{proof}
We denote $H:=L-(K_{X/V}+S+\Delta)$ and $H_S:=L|_{S^N}-(K_{S^N/V}+D)$.
We note that $\mathrm{Diff}_{S^N}(\Delta-L)=D-L|_{S^N}$ since $L$ is Cartier at all codimension two points of $X$ contained in $S$.
We may assume that $(Y,S_Y)$ is as in Lemma \ref{lem:good alteration}.
By the proof of Proposition \ref{prop:inversion of adjunction globally $T$-regular}, we can construct an alteration $h \colon W \to Y$ and a strict transform  $S_W$ of $S$ as in Lemma \ref{lem:good alteration} such that the trace map on the first cohomology is zero.
We obtain the commutative diagram
\[
\xymatrix{
H^0(\omega_{W/V}(S_W+h^*g^*H)) \ar[r] \ar[d]& H^0(\omega_{S_W/V}(h_S^*g_S^*H_S)) \ar[r] \ar[d] & H^1(\omega_{W/V}(h^*g^*H))) \ar[d]^{0-\text{map}} \\
H^0(\omega_{Y/V}(S_Y+g^*H)) \ar[r] \ar[d] & H^0(\omega_{S_Y/V}(g_S^*H_S)) \ar[r] \ar[d] & H^1(\omega_{Y/V}(g^*H)) \\
H^0(\sO_X(L)) \ar[r]  & H^0(\sO_{S^N}(L|_{S^N})). &
}
\]
Thus the proof is same as the proof of Proposition \ref{prop:inversion of adjunction globally $T$-regular}.
\end{proof}

\begin{prop}\label{prop:gfr vs gs}
Let $k$ be an $F$-finite field of positive characteristic.
Let $f \colon X \to Z$ be a projective surjective morphism from a normal $k$-variety $X$ to an affine $k$-variety $Z$.
Let $(X,B)$ be a log pair.
If $(X,B)$ is globally $F$-regular, then it is globally $T$-regular.
%Let $\pi \colon Y \to X$ be an alteration from a normal variety $Y$.
%If the trace map
%\[
%H^0(\omega_Y(\lceil -\pi^*(K_X+B) \rceil) \to H^0(\sO_X)
%\]
%is non-zero, then it is surjective.
%In particular, if we further assume that the generic fiber of $(X,B) \to Z$ is  globally $T$-regular, then  $(X,B)$ is  globally $T$-regular.
\end{prop}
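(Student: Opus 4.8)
The plan is to reduce the problem to \emph{finite} covers, where global $F$-regularity is exactly tailored, and then invoke Proposition~\ref{prop:corresponding to finite cover in positive chara} to pass from finite covers to arbitrary alterations. Note that $X$ is quasi-projective over the $F$-finite field $k$ (since $Z$ is affine of finite type over $k$ and $f$ is projective), so that proposition applies verbatim.

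Concretely, I would proceed in the following steps. \emph{Step 1.} Since $(X,B)$ is globally $F$-regular, for every finite surjective morphism $\pi\colon Y\to X$ from a normal variety the natural map
\[
\sO_X \longrightarrow \pi_*\sO_Y(\lfloor \pi^*B\rfloor)
\]
splits; this is the relative (over the affine base $Z$), boundary version of the fact that strong $F$-regularity implies the splinter property, and it is a standard consequence of the defining splittings $\sO_X\to F^e_*\sO_X(\lceil(p^e-1)B\rceil+D)$ (cf.\ \cite{hacon-xu}, \cite{bst}). \emph{Step 2.} Applying $\mathscr{H}om(-,\omega_{X/k})$ to this splitting, checking the resulting map on the regular locus of $X$ and extending by reflexivity exactly as in Lemma~\ref{lem:splitting} and Proposition~\ref{prop:first properties of glovally splinter}, we obtain that for every finite surjective $\pi\colon Y\to X$ from a normal variety the trace map
\[
H^0\bigl(\omega_{Y/k}(\lceil -\pi^*(K_{X/k}+B)\rceil)\bigr)\longrightarrow H^0(\sO_X)
\]
is surjective; equivalently, the intersection over all \emph{finite} covers of the images of these trace maps is all of $H^0(\sO_X)$. \emph{Step 3.} By Proposition~\ref{prop:corresponding to finite cover in positive chara} with $L=0$, this finite-cover intersection coincides with $T^0(X,B;0)$, the intersection of the images of the trace maps over all alterations from normal varieties; hence $T^0(X,B;0)=H^0(\sO_X)$. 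Since each of these images is a submodule of $H^0(\sO_X)$ whose intersection is $H^0(\sO_X)$, each individual image equals $H^0(\sO_X)$, i.e.\ the trace map is surjective for \emph{every} alteration $\pi\colon Y\to X$, so $(X,B)$ is globally $T$-regular.

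The only genuinely nontrivial input is Proposition~\ref{prop:corresponding to finite cover in positive chara} (= \cite[Theorem~6.7]{bst}): this is precisely where, in positive characteristic, the reduction from arbitrary alterations to finite covers is available (through the Frobenius killing the relevant cohomology), and it is the step that has no mixed-characteristic analogue at this level of generality. Everything else is a formal unwinding of the definitions; the one point that requires a little care is the bookkeeping of the round-up $\lceil -\pi^*(K_{X/k}+B)\rceil$ when dualizing the splitting of Step~1, which is handled by the same reflexivity argument already used in Lemma~\ref{lem:splitting}.
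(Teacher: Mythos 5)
Your Step 3 is fine: granting surjectivity of the trace map for all \emph{finite} covers, Proposition~\ref{prop:corresponding to finite cover in positive chara} with $L=0$ does give $T^0(X,B;0)=H^0(\sO_X)$ and hence surjectivity for every alteration; this reduction is also how the paper uses \cite[Theorem 6.7]{bst}. The genuine gap is Step 1 (and hence Step 2). The claim that global $F$-regularity yields a splitting of $\sO_X \to \pi_*\sO_Y(\lfloor \pi^*B\rfloor)$ for \emph{every} finite cover is not a formal consequence of the defining splittings, and it is not in \cite{hacon-xu} or \cite{bst} in this relative setting. The classical argument that strong $F$-regularity implies the splinter property is local: one first produces an $R$-linear map $\phi_0\colon S\to R$ with $\phi_0(1)=c\neq 0$ and then uses a splitting along $\mathrm{div}(c)$. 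Here $X$ is only projective over the affine base $Z$, so the analogous first step requires a \emph{global} section of $\mathscr{H}om_X(\pi_*\sO_Y(\lfloor\pi^*B\rfloor),\sO_X)$ (equivalently, a global section of a twisted dualizing-type sheaf on $Y$) whose evaluation at $1$ is nonzero; local splittings exist by local strong $F$-regularity but do not glue, and the Frobenius splittings of $(X,B)$ give maps out of Frobenius pushforwards, not out of $\pi_*\sO_Y$. Producing that global section is precisely the content of the proposition, not a preliminary. (There is also a secondary bookkeeping issue in Step 2: $B$ and $K_X$ need not be individually $\Q$-Cartier, so dualizing the splitting and untwisting by $K_X$ to reach $H^0(\omega_{Y}(\lceil-\pi^*(K_X+B)\rceil))\to H^0(\sO_X)$ needs more care than the reflexivity argument of Lemma~\ref{lem:splitting}, though for finite $\pi$ this can be repaired with cycle-theoretic pullbacks.)

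For comparison, the paper's proof confronts exactly this global-sections problem. After perturbing $B$ so that the Cartier index of $K_X+B$ is prime to $p$, it invokes \cite[Lemma 6.14]{bmpstww} together with Proposition~\ref{prop:corresponding to finite cover in positive chara} only for the \emph{generic fiber} of $f$ (a projective variety over the $F$-finite field $K(Z)$), which yields merely that the trace map of a given alteration $\pi\colon Y\to X$ is \emph{nonzero}, say $\beta=\mathrm{Tr}(\alpha)\neq 0$. It then upgrades $\beta$ to $1$ using global $F$-regularity: a splitting of $\sO_X\to F^e_*\sO_X(\lfloor(p^e-1)B\rfloor+\mathrm{div}(\beta))$ furnishes $\gamma\in H^0(\sO_X(\Gamma))$ with $\Gamma=(1-p^e)(K_X+B)$ such that $\gamma\beta\mapsto 1$, and since $F^e_X\circ\pi=\pi\circ F^e_Y$ factors through $\pi$, the element $\alpha\gamma$ exhibits $1$ in the image of the trace map for $\pi$ itself. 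If you want to follow your route, you would have to prove your Step 1 in the relative setting, which amounts to essentially this argument (or to a relative version of \cite[Lemma 6.14]{bmpstww}); as written, citing it as standard leaves the core of the proposition unproved.
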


\begin{proof}
Let $H$ be an ample Cartier divisor on $X$ such that $\sO_X(K_X+B+H)$ is globally generated.
We take an effective divisor $B'$ which is linearly equivalent to $K_X+B+H$.
Then for divisible enough $e$, $(X,B+\frac{1}{p^e-1}B')$ is globally $F$-regular, the Cartier index of
\[
(p^e-1)(K_X+B+\frac{1}{p^e-1}B') \sim p^e(K_X+B)+H
\]
is prime to $p$, and $\lceil -\pi^*(K_X+B+\frac{1}{p^e-1}B') \rceil=\lceil -\pi^*(K_X+B) \rceil$.
Thus, we may assume that the Cartier index of $K_X+B$ is prime to $p$.
By \cite[Lemma 6.14]{bmpstww} and Proposition \ref{prop:corresponding to finite cover in positive chara}, the generic fiber of $f$ is globally $T$-regular.
In particular, for a given alteration $\pi \colon Y \to X$, the trace map
\[
H^0(\omega_Y(\lceil -\pi^*(K_X+B) \rceil) \to H^0(\sO_X)
\]
is non-zero.
We take an element $\alpha \in H^0(\omega_Y(\lceil -\pi^*(K_X+B) \rceil))$ such that $\beta:=\Tr(\alpha) \in H^{0}(\sO_{X})$ is non-zero.
Since $(X,B)$ is globally $F$-regular, there exists a positive integer $e$ such that $(p^e-1)(K_X+B)$ is Cartier and the natural map
\[
\sO_X \to F^e_*\sO_X(\lfloor (p^e-1)B \rfloor + \mathrm{div}(\beta))
\]
splits.
Let $\Gamma:=(1-p^e)(K_X+B)$.
We take a section $\gamma \in H^0(\sO_X(\Gamma))$ corresponding to the splitting.
Then the trace map
\[
H^0(\sO_X(\Gamma) \to H^0(\sO_X)
\]
maps $\gamma \beta$ to $1$.
We regard $\alpha \gamma$ as an element of
\[
H^0(\omega_Y(\lceil -p^e\pi^*(K_X+B) \rceil)) \simeq H^0(\pi_*\omega_Y(\lceil -\pi^*(K_X+B) \rceil)\otimes \sO_X(\Gamma)).
\]
Then the composition of morphisms
\[
\xymatrix{
H^0(\omega_Y(\lceil -p^e\pi^*(K_X+B) \rceil)) \ar[r] & H^0(\sO_X( \Gamma )) \ar[r] & H^0(\sO_X)
}
\]
maps $\alpha \gamma$  to $1$, where the first map is induced by
\[
\mathrm{Tr} \otimes \sO_X(\Gamma) \colon \pi_*\omega_Y(\lceil -\pi^*(K_X+B) \rceil)\otimes \sO_X(\Gamma) \to \sO_X(\Gamma).
\]
Thus, the map
\[
H^0(\omega_Y(\lceil -\pi^*(K_X+B) \rceil) \to H^0(\sO_X)
\]
is surjective.
\end{proof}

\subsection{Restriction theorem}
The goal of this subsection is to prove the existence of three-dimensional pl-flips with ample divisor in the boundary (Corollary \ref{cor:existence of pl-flip threefold}).
First, we prove the existence by assuming the global $T$-regularity of the boundary even in the higher-dimensional case (Theorem \ref{thm:pl-filp in ample divisor}).
It follows from the restriction theorem (Proposition \ref{prop:extension result}) and Shokurov's reduction to pl-flips.
Next, we show this condition in the three-dimensional case.

\begin{lem}\label{lem:pertubation lemma}\textup{(cf.\,\cite[Lemma 3.2]{hacon-witaszek20})}
Let $V$ be a scheme satisfying Assumption \ref{assump:cdvr}.
Let $f \colon X \to Z$ be a projective birational morphism from a normal $V$-variety to an affine $V$-variety.
Let $(X,B)$ be a log pair which is globally $T$-regular over a point $z$ of $Z$.
Let $L$ be a Weil $\Q$-Cartier divisor on $X$ and $\Gamma$ an effective $\Q$-Cartier $\Q$-Weil divisor on $X$.
If $g \in H^0(X,\sO_X(L))$ corresponds to a divisor $G \in |L|$ such that $G \geq \Gamma$,
then $g$ is contained in $T^0(X,B+\Gamma;L)$ after localizing at $z$.
\end{lem}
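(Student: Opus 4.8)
The plan is to reduce to a situation where we can directly exploit the definition of $T^0$ via trace maps on alterations. The key point is that if $G \in |L|$ satisfies $G \geq \Gamma$, then as $\Q$-divisors we have $L - \Gamma \geq 0$ is linearly equivalent to the effective divisor $G - \Gamma$, so $\lceil \pi^*(L - K_{X/V} - B - \Gamma) \rceil \geq \lceil \pi^*(L - K_{X/V} - B) \rceil - \pi^*\Gamma$ in a way that lets us relate the relevant trace maps. First I would fix an alteration $\pi \colon Y \to X$ from a normal $V$-variety and recall that, by definition, it suffices to show that $g$ lies in the image of
\[
H^0(\omega_{Y/V}(\lceil \pi^*(L - K_{X/V} - B - \Gamma) \rceil)) \to H^0(\sO_X(\lceil L \rceil)).
\]
Since $G \geq \Gamma$, we may write $g = g_0 \cdot s_\Gamma$ where $s_\Gamma$ is the canonical section cutting out $\Gamma$ (after clearing denominators, or working with the $\Q$-Cartier structure), and $g_0$ corresponds to the effective divisor $G - \Gamma \in |L - \Gamma|$.

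The main step is then to use global $T$-regularity of $(X,B)$ over $z$. After localizing at $z$, global $T$-regularity gives us that for the alteration $\pi$ (or a further refinement of it, using Proposition~\ref{prop:system of alterations} to make $\pi^*(L-K_{X/V}-B)$ and $\pi^*\Gamma$ well-behaved), the trace map
\[
H^0(\omega_{Y/V}(\lceil -\pi^*(K_{X/V}+B) \rceil)) \to H^0(\sO_X)
\]
is surjective; pick $\alpha$ mapping to $1$. Multiplying $\alpha$ by (the pullback of) $g_0 \in H^0(\sO_X(L-\Gamma))$ and by $s_\Gamma$, and using that $\mathrm{div}(\pi^*g_0) \geq 0$ together with $\mathrm{div}(\pi^* s_\Gamma) \geq \pi^*\Gamma$, one checks that the resulting section lies in
\[
H^0\!\left(\omega_{Y/V}\!\left(\lceil -\pi^*(K_{X/V}+B) \rceil + \pi^* (L - \Gamma) + \pi^*\Gamma \right)\right) \subseteq H^0\!\left(\omega_{Y/V}(\lceil \pi^*(L - K_{X/V} - B - \Gamma)\rceil)\right),
\]
where the last inclusion uses $\lceil a \rceil + b \leq \lceil a + b \rceil$ for integral $b$ and effectivity of $\mathrm{div}(\pi^*g_0)$. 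Its image under the (twisted) trace map is $1 \cdot g_0 \cdot s_\Gamma = g$ in $H^0(\sO_X(\lceil L \rceil))$ — here I use that the trace map is $\sO_X$-linear, so twisting by $\sO_X(L)$ and multiplying by global sections commutes with it.

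The point that requires the most care — the main obstacle — is the bookkeeping of round-up operators and $\Q$-Cartier structures: $L$, $B$, and $\Gamma$ are only $\Q$-divisors, so ``multiplying by the section cutting out $\Gamma$'' must be made precise by passing to an alteration on which $\pi^*\Gamma$ and $\pi^*(K_{X/V}+B)$ become Cartier (via Proposition~\ref{prop:index one cover1} and Proposition~\ref{prop:system of alterations}), and then verifying that the divisor inequality $\lceil \pi^*(L - K_{X/V} - B) \rceil + \lfloor \pi^*\Gamma \rfloor \geq \text{(something)} \geq \lceil \pi^*(L - K_{X/V} - B - \Gamma)\rceil + \mathrm{div}(\pi^* g_0)$ holds, so that the constructed section genuinely lands in the correct sheaf. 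Once the alteration is chosen so that these pullbacks are Cartier and $\pi$ dominates an arbitrary given alteration, the argument is the same as in \cite[Lemma 3.2]{hacon-witaszek20}, with global $F$-regularity replaced by global $T$-regularity and Frobenius replaced by the trace on a single alteration. Since the given alteration was arbitrary, taking the intersection over all alterations shows $g \in T^0(X, B+\Gamma; L)$ after localizing at $z$.
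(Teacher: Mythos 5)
Your proposal follows essentially the same route as the paper: after localizing at $z$ and restricting to a cofinal family of alterations on which the relevant pullbacks are Cartier, you lift $1 \in H^0(\sO_X)$ through the trace map using global $T$-regularity of $(X,B)$, multiply by the pullback of $g$, and use $G \geq \Gamma$ to conclude that the product lies in $H^0(\omega_{Y/V}(\lceil \pi^*(L-K_{X/V}-B-\Gamma)\rceil))$ and maps to $g$ by $\sO_X$-linearity of the trace; the paper packages exactly this as a commutative diagram whose vertical maps are multiplication by $g$ together with the inclusion $\omega_{Y/V}(h^*(L-K_{X/V}-B-G)) \subset \omega_{Y/V}(h^*(L-K_{X/V}-B-\Gamma))$. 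One slip in the write-up: your displayed containment goes the wrong way, since $\omega_{Y/V}(\lceil -\pi^*(K_{X/V}+B)\rceil + \pi^*(L-\Gamma)+\pi^*\Gamma)$ contains, rather than is contained in, $\omega_{Y/V}(\lceil \pi^*(L-K_{X/V}-B-\Gamma)\rceil)$; what you actually need, and what the inequality $\mathrm{div}(\pi^*g)+\pi^*L=\pi^*G \geq \pi^*\Gamma$ gives, is that the product section lies in $\omega_{Y/V}(\lceil -\pi^*(K_{X/V}+B)\rceil + \pi^*(L-G))$, which sits inside the target sheaf precisely because $G \geq \Gamma$. Relatedly, the factorization $g = g_0\cdot s_\Gamma$ is unnecessary (and awkward, as $\Gamma$ is only a $\Q$-divisor); multiplying directly by $g$, as the paper does, avoids it.
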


\begin{proof}
We take base changes via $\Spec{\sO_{Z,z}} \to Z$ and we use the same notations by abuse of notations.
%We may replace $Z$ with $\Spec\sO_{Z,z}$.
It is enough to consider alterations $h \colon Y \to X$ from a normal $V$-variety such that $h^*(K_{X/V}+B)$, $h^*L$, $h^*G$ and $h^*\Gamma$ are Cartier.
We consider the commutative diagram
\[
\xymatrix{
H^0(\omega_{Y/V}(h^*(L-K_{X/V}-B-\Gamma)))  \ar[r] & H^0(\sO_X(L)) \\
H^0(\omega_{Y/V}(h^*(L-K_{X/V}-B-G))) \ar[r] \ar@{^{(}->}[u] & H^0(\sO_X(L-G)) \ar@{^{(}->}[u] \\
H^0(\omega_Y(-h^*(K_{X/V}+B))) \ar@{->>}[r] \ar[u]^{\simeq}_{\cdot g} & H^0(\sO_X) \ar[u]^{\simeq}_{\cdot g},
}
\]
where the horizontal maps are trace maps and the surjectivity of the bottom map  follows from the global $T$-regularity of $(X,B)$.
Thus, we have a section of $H^0(\omega_{Y/V}(h^*(L-K_{X/V}-B-\Gamma))) $ mapped to $g$.
\end{proof}

\begin{prop}\label{prop:extension result}\textup{(cf.\,\cite[Proposition 3.1]{hacon-witaszek20})}
Let $V$ be a scheme satisfying Assumption \ref{assump:cdvr}.
Let $f \colon X \to Z$ be a projective morphism from a normal $V$-variety $X$ to an affine $V$-variety $Z$.
Let $(X,S+A+B)$ be a dlt pair such that $S$ and $A$ are  $\Q$-Cartier Weil divisors.
Assume that  $A$ is ample  and $\lfloor B \rfloor=0$.
Let $z \in Z$ be a point.
If $S$ is normal and $(S,(1-\varepsilon)A_S+B_S)$ is globally $T$-regular over $z$ for all $0<\varepsilon<1$,
then for every $k \geq 1$ such that $k(K_{X/V}+S+A+B)$ is Cartier, we have
\[
|k(K_{X/V}+S+A+B)|_S=|k(K_{S/V}+A_S+B_S)|
\]
after localization at $z$, where $B_S:=\mathrm{Diff}_{S}(B)$ and $A_S:=A|_S$.
\end{prop}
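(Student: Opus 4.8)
The plan is to prove the two inclusions of linear systems separately. The inclusion $|k(K_{X/V}+S+A+B)|_S \subseteq |k(K_{S/V}+A_S+B_S)|$ is pure adjunction: since $(X,S+A+B)$ is dlt with $\lfloor S+A+B\rfloor = S$ and $S$ is normal, adjunction along $S$ gives $(K_{X/V}+S+A+B)|_S \sim_{\Q} K_{S/V}+A_S+B_S$ with $B_S=\mathrm{Diff}_S(B)$, and since $k(K_{X/V}+S+A+B)$ is Cartier we have a canonical identification $\sO_X(k(K_{X/V}+S+A+B))|_S \cong \sO_S(k(K_{S/V}+A_S+B_S))$, so restriction of sections lands in the right place. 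All the content is the reverse inclusion, which is the mixed-characteristic analogue of the proof of \cite[Proposition 3.1]{hacon-witaszek20}: global $T$-regularity plays the role of global $F$-regularity, Corollary \ref{cor:zero map} replaces Frobenius-twisted Kodaira vanishing, Lemma \ref{lem:pertubation lemma} replaces the Frobenius perturbation, and the restriction theorem Proposition \ref{prop:more inversion} replaces the $S^0$-restriction statement.

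\textbf{The reverse inclusion.} We may localize at $z$ (Proposition \ref{prop:localization globally $T$-regular}). Write $D:=K_{X/V}+S+A+B$ and $L:=kD$, and fix $G_S\in |k(K_{S/V}+A_S+B_S)|=|L|_S|$ with corresponding section $s\in H^0(S,\sO_S(L|_S))$; we must show $s$ lies in the image of the restriction map $H^0(X,\sO_X(L))\to H^0(S,\sO_S(L|_S))$. First, since $(S,(1-\varepsilon)A_S+B_S)$ is globally $T$-regular over $z$ for all $0<\varepsilon<1$ and $G_S\geq 0$, Lemma \ref{lem:pertubation lemma} applied on $S$ (with zero boundary increment) gives $s\in T^0(S,(1-\varepsilon)A_S+B_S;L|_S)$ for every such $\varepsilon$. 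Then we want to invoke Proposition \ref{prop:more inversion} with this $L$ and with a boundary $\Delta$ on $X$ satisfying $\lfloor S+\Delta\rfloor=S$, no common component of $S$ and $\Delta$, $\mathrm{Diff}_S(\Delta)=(1-\varepsilon)A_S+B_S$, $L$ Cartier at the codimension-two points of $X$ lying in $S$ (automatic, $L$ being Cartier), and $L-(K_{X/V}+S+\Delta)$ semiample and $f$-big; the proposition then yields $T^0(S,(1-\varepsilon)A_S+B_S;L|_S)\subseteq \mathrm{Im}(H^0(X,\sO_X(L))\to H^0(S,\sO_S(L|_S)))$, which combined with the previous step finishes the argument.

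\textbf{Constructing $\Delta$; the main obstacle.} For $k=1$ this is immediate: take $\Delta=(1-\varepsilon)A+B$, so that $\lfloor S+\Delta\rfloor=S$ (using $\lfloor B\rfloor=0$ and that, by dlt-ness, $A$ is reduced with no component in common with $S$), $\mathrm{Diff}_S((1-\varepsilon)A+B)=(1-\varepsilon)A_S+B_S$, and $L-(K_{X/V}+S+\Delta)=\varepsilon A$ is ample, hence semiample and $f$-big. The real difficulty is $k\geq 2$: there the naive choice $\Delta=(1-\varepsilon)A+B$ gives $L-(K_{X/V}+S+\Delta)=(k-1)D+\varepsilon A$, which is not semiample in general (indeed in the intended flipping-contraction application $D$ is $f$-anti-ample), so it cannot be fed into Proposition \ref{prop:more inversion}. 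To get around this one reduces to $D$ Cartier by passing to a Veronese (all admissible $k$ then become all positive integers) and argues by induction on $k$, using the ampleness of $A$ to produce, from the already-proved lower-degree surjectivity, an effective correction divisor $\Theta$ on $X$ which is generically disjoint from $S$ and absorbs the excess $(k-1)D$ up to an arbitrarily small ample perturbation; one then applies Proposition \ref{prop:more inversion} with $\Delta=(1-\varepsilon)A+B+\Theta'$ for a suitable small scaling $\Theta'$ of $\Theta$, chosen so that $\lfloor S+\Delta\rfloor=S$ and $\mathrm{Diff}_S(\Delta)=(1-\varepsilon)A_S+B_S$ (the genericity of $\Theta'$ with respect to $S$ is what keeps the different unchanged, and Lemma \ref{lem:pertubation lemma} with $\Gamma=\Theta'|_S\leq G_S$ keeps $s$ inside the relevant $T^0$), while $L-(K_{X/V}+S+\Delta)$ becomes a small ample divisor. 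The delicate point, and the place where the ampleness of $A$ is genuinely used, is precisely the extraction of these auxiliary divisors and the bookkeeping of the coefficients making $\Delta$ a genuine boundary with the prescribed different.
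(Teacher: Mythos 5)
Your reduction to the reverse inclusion, your choice of tools (Lemma \ref{lem:pertubation lemma} combined with Proposition \ref{prop:more inversion}), and your treatment of $k=1$ all agree with the paper. But the whole content of the proposition is the case $k\geq 2$, and there your argument has a genuine gap at exactly the point you flag as ``delicate''. The correction divisor $\Theta'$ is asked to satisfy two incompatible conditions: it should be generic with respect to $S$ so that $\mathrm{Diff}_{S}((1-\varepsilon)A+B+\Theta')=(1-\varepsilon)A_S+B_S$, which forces $\Theta'|_S=0$ (impossible once $\Theta'$ absorbs the class of $(k-1)(K_{X/V}+S+A+B)$, whose restriction to $S$ is in general nontrivial), and simultaneously $\Theta'|_S\leq G_S$ so that Lemma \ref{lem:pertubation lemma} applies --- a condition that a general member will never satisfy. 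Moreover, induction on $k$ cannot supply $\Theta$ in the first place: the induction hypothesis only says that members of $|(k-1)(K_{S/V}+A_S+B_S)|$ extend to $X$; that linear system may be empty, and even when it is not, an extension of one of its members has no reason to restrict to a divisor dominated by the given $G_S$. So the mechanism you propose for building the boundary $\Delta$ does not close, and ``passing to a Veronese'' does not change the Cartier index of $K_{X/V}+S+A+B$ on $X$, so it does not reduce to $k=1$ either.

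The paper resolves this tension by a different induction: fix $k$ and run a \emph{descending} induction on an ample twist $m$, not an induction on $k$. Given $F\in|k(K_{S/V}+A_S+B_S)|$, one shows there exist $G_m\in|k(K_{X/V}+S+A+B)+mA|$ with $G_m|_S=F+mA_S$; the base case $m\gg 0$ is Serre vanishing (first use of the ampleness of $A$). For the step from $m+1$ to $m$, write $L:=k(K_{X/V}+S+A+B)+mA\sim_{\Q}K_{X/V}+S+B+\frac{k-1}{k}G_{m+1}+\frac{m+1}{k}A$, so the excess over $K_{X/V}+S+B+\frac{k-1}{k}G_{m+1}$ is the ample divisor $\frac{m+1}{k}A$. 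The auxiliary boundary is the previously constructed $G_{m+1}$ itself, scaled by $\frac{k-1}{k}$; since $G_{m+1}|_S=F+(m+1)A_S$ by construction, the domination $F+mA_S\geq\frac{k-1}{k}(F+mA_S)=\frac{k-1}{k}G_{m+1}|_S-\frac{k-1}{k}A_S$ required by Lemma \ref{lem:pertubation lemma} (applied with the globally $T$-regular pair $(S,\frac{k-1}{k}A_S+B_S)$, i.e.\ $\varepsilon=1/k$) is automatic, and Proposition \ref{prop:more inversion} then lifts $F+mA_S$ to the desired $G_m$. Taking $m=0$ gives the statement. If you replace your induction on $k$ and the generic $\Theta'$ by this descending induction in the ample direction, the rest of your write-up goes through essentially unchanged.
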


\begin{proof}
We take base changes via $\Spec{\sO_{Z,z}} \to Z$ and we use the same notations by abuse of notations.
%We may replace $Z$ with $\Spec\sO_{Z,z}$.
%First, we note that $X$ is globally $T$-regular by Proposition \ref{prop:inversion of adjunction globally $T$-regular}, thus all divisorial sheaf on $X$ is maximal Cohen-Macaulay by Proposition \ref{prop:first properties of glovally splinter} $(3)$.
We note that since $(X,S+A+B)$ is dlt and $S$ and $A$ are $\Q$-Cartier, $(X,S+A+B)$ is a simple normal crossing pair around $S \cap A$, and in particular, $A$ is Cartier at all codimension two points on $X$ contained in $S$.
We take $F \in |k(K_{S/V}+A_{S}+B_{S})|$. 
By the descending induction, we prove that there exists divisors $G_m \in |k(K_{X/V}+S+A+B)+mA|$ such that $G_m|_S=F+mA_S$.
First, since $A$ is ample, such $G_m$ exists for large enough $m$ by Serre vanishing.
We assume that such $G_{m+1}$ exists.
We set
\begin{eqnarray*}
L &:=& k(K_{X/V}+S+A+B)+mA \\
&=& K_{X/V}+S+B+(k-1)(K_{X/V}+S+A+B)+(m+1)A \\
&\sim_{\Q}& K_{X/V}+S+B+\frac{k-1}{k}G_{m+1}+\frac{m+1}{k}A.
\end{eqnarray*}
We write $H:=L-(K_{X/V}+S+B+\frac{k-1}{k}G_{m+1})$, then it is ample.
Since $(S,\frac{k-1}{k}A_S+B_S)$ is globally $T$-regular and 
\[
F+mA_S \geq \frac{k-1}{k}(F+mA_S)=\frac{k-1}{k}G_{m+1}|_S-\frac{k-1}{k}A_S,
\]
a section $g \in H^0(L|_S)$ corresponding to $F+mA_S$ is contained in 
\[
T^0(S,B_S+\frac{k-1}{k}G_{m+1}|_S; L|_S)
\]
by Lemma \ref{lem:pertubation lemma}.
By Proposition \ref{prop:more inversion}, $g$ is contained in the image of $H^0(\sO_X(L))$.
%By proposition \ref{cor:zero map}, there exists an alteration $h \colon Y \to X$ and a prime divisor $S_Y$ on $Y$ such that $(Y,S_Y)$ is simple normal crossing, $h^*H$ is Cartier and the map
%\[
%H^1(\omega_Y(h^*H)) \to H^1(\omega_X(\lceil H \rceil))
%\]
%is zero.
%Since $\omega_X(\lceil H \rceil)$ is maximal Cohen-Macaulay, we have the commutative diagram of exact sequences
%\[
%\xymatrix{
%0 \ar[r] & \omega_X(\lceil H \rceil) \ar[r] & \omega_X(S+\lceil H \rceil) \ar[r]  & \omega_S(\lceil H \rceil|_S) \ar[r]  & 0 \\
%0 \ar[r] & \omega_Y(h^*H) \ar[r] \ar[u] & \omega_Y(S_Y+h^*H) \ar[r] \ar[u] & \omega_{S_Y}(h^*H_S) \ar[r] \ar[u] & 0,
%}
%\]
%where $H_S:=L|_S-(K_S+B_S+\frac{k-1}{k}G_{m+1})$.
%Furthermore, we obtain the commutative diagram of exact sequences
%\[
%\xymatrix{
%H^0(\omega_X(S+\lceil H \rceil)) \ar[r]  & H^0(\omega_S(\lceil H \rceil|_S)) \ar[r]  & H^1(\omega_X(\lceil H \rceil))   \\
%H^0(\omega_Y(S_Y+h^*H)) \ar[r] \ar[u] & H^0(\omega_{S_Y}(h^*H_S)) \ar[r] \ar[u] & H^1(\omega_Y(h^*H)) \ar[u]_{0-\mathrm{map}}.
%}
%\]
%We note that the upper middle map is decomposable as follows;
%\begin{eqnarray*}
%H^0(\omega_Y(h^*H_S))&=& H^0(\omega_Y(h^*(L|_S-(K_S+B_S+\frac{k-1}{k}G_{m+1})))) \\
%&\to& H^0(\sO_S(L|_S)) =H^0(\omega_S(H|_S)) \\
%&\to& H^0(\omega_S(\lceil H \rceil|_S)).
%\end{eqnarray*}
%In particular, $g$ is contained in the image of the above map.
%Thus, $g$ extends to the global section of $\omega_X(S+\lceil H \rceil)$.
%Because of $K_X+S+\lceil H \rceil \leq L$, we take a divisor $G_m \in |L|$ such that $G_m|_S=F+mA_S$.
\end{proof}

The following theorem is the existence of pl-flips with ample divisor in the boundary in the special setting.
The proof is an analog of the proof of \cite[Theorem 1.3]{hacon-witaszek20}.

\begin{thm}\label{thm:pl-filp in ample divisor}\textup{(cf.\,\cite[Theorem 1.3]{hacon-witaszek20})}
Let $V$ be a scheme satisfying Assumption \ref{assump:cdvr}.
Let $(X,S+A+B)$ be a dlt pair such that $S$ is an anti-ample $\Q$-Cartier Weil divisor and $A$ is an ample $\Q$-Cartier Weil divisor.
Let $f \colon X \to Z$ be a $(K_{X/V}+S+A+B)$-flipping contraction with $\rho(X/Z)=1$ to an affine $V$-variety.
Furthermore, we assume that $(S^N,(1-\varepsilon)A_S+B_S)$ is globally $T$-regular for all $0<\varepsilon<1$  over all points of $f(\Exc(f))$ and $R(K_{S^N/V}+A_S+B_S)$ is finitely generated, where  $B_S:=\mathrm{Diff}_{S^N}(B)$ and $A_S:=A|_S$.
Then the flip of $f$ exists.
\end{thm}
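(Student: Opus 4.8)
\emph{The plan.} The flip of $f$ exists precisely when
\[
R(X/Z,K_{X/V}+S+A+B)=\bigoplus_{m\ge 0}f_*\sO_X(\lfloor m(K_{X/V}+S+A+B)\rfloor)
\]
is a finitely generated $\sO_Z$-algebra, in which case the flip is $\mathrm{Proj}_Z$ of it; so the plan is to prove this finite generation. Writing $D:=K_{X/V}+S+A+B$ and using that $Z$ is affine, this is equivalent to finite generation of $\bigoplus_m H^0(X,\sO_X(\lfloor mD\rfloor))$ over $H^0(\sO_Z)$, which is local on $Z$; and since $f$ is an isomorphism over $Z\setminus f(\Exc(f))$, it suffices to treat the localization at an arbitrary point $z\in f(\Exc(f))$. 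First I would record that $\Exc(f)\subset\Supp S$, because every curve $C$ contracted by $f$ satisfies $S\cdot C<0$ (as $-S$ is $f$-ample) and hence lies in $\Supp S$; in particular $z\in f(S)$, and since $Z$ is normal with $f^{-1}(z)$ connected and contained in $\Supp S$, the hypothesis $\phi^{-1}(z)\subset f'(S)$ of Proposition~\ref{prop:inversion of adjunction globally $T$-regular} holds for the Stein factorization $X\to Z'\to Z$ (where $Z'=Z$).

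\emph{Normality of $S$.} Next I would upgrade Lemma~\ref{lem:topologically normal} to honest normality of $S$ near $z$ by inversion of adjunction. Fix a rational $0<\varepsilon<1$ and set $\Delta:=(1-\varepsilon)A+B$, so $\lfloor S+\Delta\rfloor=S$ and $S$, $\Delta$ have no common component. Then
\[
-(K_{X/V}+S+\Delta)=-(K_{X/V}+S+A+B)+\varepsilon A
\]
is a sum of two $f$-ample $\Q$-Cartier divisors, hence $f$-ample; since $Z$ is affine and $f$ is birational, it is semiample and $f$-big. As $(X,S+A+B)$ is dlt with $S$ and $A$ both $\Q$-Cartier, the pair is simple normal crossing near $S\cap A$, so $A$ is Cartier at every codimension-two point of $X$ lying on $S$; hence $\mathrm{Diff}_{S^N}(\Delta)=(1-\varepsilon)A_S+B_S$, which is globally $T$-regular over $z$ by assumption. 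Proposition~\ref{prop:inversion of adjunction globally $T$-regular} then gives that, after localizing at $z$, $S$ is normal (so $S=S^N$ there) and $(X,S+\Delta)$ is purely globally $T$-regular over $z$.

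\emph{Finite generation of the restricted algebra.} With $S$ normal at $z$ and $(S,(1-\varepsilon)A_S+B_S)$ globally $T$-regular over $z$ for every $0<\varepsilon<1$, the restriction theorem (Proposition~\ref{prop:extension result}) yields $|kD|_S=|k(K_{S/V}+A_S+B_S)|$ after localizing at $z$, for every $k\ge 1$ with $kD$ Cartier. Hence the restricted algebra
\[
\mathfrak{R}:=\bigoplus_{m\ge 0}\mathrm{Im}\bigl(H^0(X,\sO_X(\lfloor mD\rfloor))\to H^0(S,\sO_S(\lfloor mD|_S\rfloor))\bigr)
\]
agrees, on the Veronese subalgebra indexed by multiples of the $\Q$-Cartier index of $D$, with $R(S,K_{S/V}+A_S+B_S)=R(S^N,K_{S^N/V}+A_S+B_S)$, i.e.\ with the localization at $z$ of the ring $R(K_{S^N/V}+A_S+B_S)$, which is finitely generated by hypothesis. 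Since a $\Z_{\ge 0}$-graded ring is finitely generated exactly when one of its Veronese subalgebras is, $\mathfrak{R}$ is finitely generated.

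\emph{Descent to $X$, and the main obstacle.} It remains to deduce finite generation of $\bigoplus_m H^0(X,\sO_X(\lfloor mD\rfloor))$ from that of $\mathfrak{R}$. For this I would invoke the classical reduction of pl-flips to finite generation of the restricted algebra: since $\rho(X/Z)=1$ and both $-D$ and $-S$ are $f$-ample, $D$ and $S$ are numerically proportional over $Z$, and one reconstructs the full algebra from $\mathfrak{R}$ using the exact sequences $0\to\sO_X(\lfloor mD\rfloor-S)\to\sO_X(\lfloor mD\rfloor)\to\sO_S(\lfloor mD|_S\rfloor)\to 0$ together with the fact that $S$ meets $\Exc(f)$ (cf.\ the proof of \cite[Theorem~1.3]{hacon-witaszek20}, and \cite{hacon-xu}). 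The main obstacle I anticipate lies in this last step: one must check that the Shokurov-type argument goes through verbatim over an excellent discrete valuation ring rather than a field---that is, that the only vanishing and base-point-freeness inputs it uses over the affine base are those already supplied by Proposition~\ref{prop:vanishing for T-regular}, the semiampleness of $-D$, and Proposition~\ref{prop:inversion of adjunction globally $T$-regular}. Granting this, $R(X/Z,K_{X/V}+S+A+B)$ is finitely generated and the flip of $f$ exists.
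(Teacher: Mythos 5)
Your proposal is correct and follows essentially the same route as the paper: localize at a point $z\in f(\Exc(f))$, get normality of $S$ and pure global $T$-regularity from Proposition~\ref{prop:inversion of adjunction globally $T$-regular}, identify the restricted algebra with $R(K_{S/V}+A_S+B_S)$ via Proposition~\ref{prop:extension result}, and conclude by Shokurov's reduction of pl-flips to finite generation of the restricted algebra. The last step you flag as the ``main obstacle'' is exactly what the paper settles by citing \cite[Lemma 2.3.6]{corti}, whose argument is purely algebraic (using $\rho(X/Z)=1$ so that $S$ and $K_{X/V}+S+A+B$ are proportional over $Z$) and needs no vanishing input over the base, so it applies verbatim over the affine $V$-scheme $Z$.
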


\begin{proof}
Take a point $z \in Z$ contained in $f(\Exc(f))$.
Since $A$ is ample, we have $z \in f(A)$.
We take base changes via $\Spec{\sO_{Z,z}} \to Z$ and we use the same notations by abuse of notations.
%We may replace $Z$ with $\Spec\sO_{Z,z}$
We may assume that $\lfloor B \rfloor=0$.
By Proposition \ref{prop:inversion of adjunction globally $T$-regular}, the scheme $S$ is normal.
By Proposition \ref{prop:extension result}, the restriction algebra
\[
R_S(k(K_{X/V}+S+A+B)):=\mathrm{Im}(R(k(K_{X/V}+S+A+B)) \to R(k(K_{S/V}+A_S+B_S)))
\]
coincides with $R(k(K_{S/V}+A_S+B_S))$ for some positive integer $k$, and in particular, $R_S(K_{X/V}+S+A+B)$ is finitely generated.
By Shokurov's reduction to finite generation (see  \cite[Lemma 2.3.6]{corti}), the flip of $f$ exists.
\end{proof}

\begin{rmk}
If $(S^N,(1-\varepsilon)A_S+B_S)$ is globally $F$-regular over $Z$, then it is globally $T$-regular over $Z$ by Proposition \ref{prop:gfr vs gs}.
Thus, applying Theorem \ref{thm:pl-filp in ample divisor} for the case $X=X_s$, we obtain \cite[Theorem 1.3]{hacon-witaszek20}.
\end{rmk}

In order to use Theorem \ref{thm:pl-filp in ample divisor} for threefolds, we will show the pure global $T$-regularity of $(S^N,A_S+B_S)$.

\begin{lem}\label{lem:bcm regular and globally $T$-regular surface}\textup{(cf.\,\cite[Lemma 3.3]{hacon-witaszek19})}
Let $V$ be a scheme satisfying Assumption \ref{assump:cdvr}.
We assume that the residue field of $V$ is infinite.
Let $f \colon S \to T$ be a projective birational morphism from a normal $V$-surface $S$ to an affine $V$-surface $T$.
Let $(S,C+B)$ be a plt pair with $\lfloor C+B \rfloor=C$.
Assume that $-(K_{S/V}+C+B)$  and $C$ are ample.
Further assume that $f$ has connected fibers.
Then $(S,C+B)$ is purely globally $T$-regular over all points of $f(\Exc(f))$.
\end{lem}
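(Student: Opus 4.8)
The plan is to strip off the boundary divisor $C$ by inversion of adjunction for global $T$-regularity, reducing to a statement about the curve $C^{N}$ that is then handled by Corollary \ref{cor:T-regularity of simple normal crossing pair}. Fix $z\in f(\Exc(f))$; since purely global $T$-regularity over $z$ depends only on the localization of the pair at $z$ (Proposition \ref{prop:localization globally $T$-regular}), I may localize $T$ at $z$. I will then apply Proposition \ref{prop:inversion of adjunction globally $T$-regular} to $f\colon S\to T$ and the pair $(S,C+B)$, with $C$ as the reduced part of the boundary and $B$ as the ``$\Delta$'' there (these share no component because $(S,C+B)$ is plt); the conclusion of that proposition is precisely that $(S,C+B)$ is purely globally $T$-regular over $z$. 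Since $-(K_{S/V}+C+B)$ is ample, hence semiample and $f$-big, only two hypotheses remain to be checked. The first is $\phi^{-1}(z)\subset f'(C)$ for the Stein factorization $S\to Z'\to Z$ with $f'\colon S\to Z'$ and $\phi\colon Z'\to Z$: because $f$ has connected fibers, $\phi$ is finite with connected fibers, so $\phi^{-1}(z)$ is a single point and this condition amounts to $z\in f(C)$; and indeed, $C$ being ample on $S$ is $f$-ample, so if $\gamma\subset f^{-1}(z)$ is a curve contracted by $f$ (one exists as $z\in f(\Exc(f))$) then $C\cdot\gamma>0$ gives $C\cap\gamma\neq\emptyset$ and hence $z\in f(C)$.

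The second hypothesis is that $(C^{N},\mathrm{Diff}_{C^{N}}(B))$ is globally $T$-regular over $z$. I will first show that no prime component of $C$ is contracted by $f$. Being ample, $C$ is $\Q$-Cartier, so Lemma \ref{lem:topologically normal} makes $C$ locally irreducible; thus its prime components are pairwise disjoint (hence each is $\Q$-Cartier) and $C\cdot C_0=C_0^2$ for every component $C_0$. If $C_0$ were contracted by $f$, then $C_0^2<0$ would contradict the $f$-ampleness of $C$, whereas $C_0^2\ge 0$ would exhibit $C_0$ as an $f$-nef divisor with $f_*C_0=0$, so the negativity lemma (Proposition \ref{prop:negativity lemma}, applied after replacing $T$ by its normalization, through which $f$ factors since $S$ is normal) would force the absurdity $C_0\le 0$. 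Hence $f|_{C}\colon C\to T$ is finite, so $C^{N}\times_{T}\Spec\sO_{T,z}$ is finite over the affine scheme $\Spec\sO_{T,z}$ and therefore affine. Since $C^{N}$ is a regular one-dimensional scheme and, by adjunction for plt pairs (\cite[Section~4]{kollar13}), $(C^{N},\mathrm{Diff}_{C^{N}}(B))$ is klt with $\lfloor\mathrm{Diff}_{C^{N}}(B)\rfloor=0$, after localizing at $z$ this becomes a simple normal crossing pair with zero round-down whose underlying scheme is a finite disjoint union of affine integral $V$-curves. Corollary \ref{cor:T-regularity of simple normal crossing pair}, applied to each connected component, then shows it is globally $T$-regular, which is exactly the required statement.

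Granting the two points above, Proposition \ref{prop:inversion of adjunction globally $T$-regular} yields that $(S,C+B)$ is purely globally $T$-regular over $z$, and as $z\in f(\Exc(f))$ was arbitrary the lemma follows. The step I expect to demand the most care is the second one, and within it the verification that $C^{N}$ remains affine after localizing at $z$: this is precisely where the ampleness of $C$, the local irreducibility of the plt boundary, and the negativity lemma come into play; once one is on an affine regular curve, all of the cohomological input is already packaged into Corollary \ref{cor:T-regularity of simple normal crossing pair}.
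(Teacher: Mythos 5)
Your proof is correct and follows the same overall strategy as the paper: reduce, via the inversion of adjunction for global $T$-regularity (Proposition \ref{prop:inversion of adjunction globally $T$-regular}), to the global $T$-regularity of the boundary curve, which is then supplied by Corollary \ref{cor:T-regularity of simple normal crossing pair} once one knows that curve is affine. The differences are in the intermediate bookkeeping. The paper first passes to the Stein factorization, shrinks $T$ around the point $t\in f(\Exc(f))$, and invokes the connectedness theorem \cite[Theorem 5.2]{tanaka18} to show that $C$ is \emph{irreducible} there; ampleness then gives in one line that $C$ is not exceptional, hence affine, and the SNC corollary applies to the honest log pair $(C,\mathrm{Diff}_C(B))$. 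You avoid the connectedness theorem entirely: you keep $C$ possibly reducible, use local irreducibility (Lemma \ref{lem:topologically normal}) plus the negativity lemma (Proposition \ref{prop:negativity lemma}, over the normalization of $T$) to rule out contracted components, and apply the SNC corollary componentwise to the disjoint union $C^N$. This is a legitimate simplification, with two caveats: (i) the paper's definition of (global) $T$-regularity is stated for integral $X$, so your componentwise reading of the hypothesis of Proposition \ref{prop:inversion of adjunction globally $T$-regular} for a disconnected $C^N$ is an extension of the letter of the definitions — harmless, since the proof of that proposition manifestly works component by component, but worth saying; and (ii) the paper's irreducibility statement is not idle, as its proof is cited again later (Proposition \ref{prop:assumption under pl-flip}), so your argument would not substitute for that use. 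On the plus side, your explicit verification of the hypothesis $\phi^{-1}(z)\subset f'(C)$ (via $f$-ampleness of $C$ against a contracted curve over $z$) makes precise a step the paper leaves implicit; note only that the existence of such a contracted curve over every point of $f(\Exc(f))$ is clear once one has replaced $T$ by its Stein factorization, a reduction both you and the paper rely on tacitly.
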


\begin{proof}
Since $f$ has connected fibers, the Stein factorization of $f$ induces a homeomorphism $\phi \colon T' \to T$.
Thus we have $\phi^{-1}(f(\Exc(f))=f'(\Exc(f'))$, so we may assume that $T$ is normal and $f_*\sO_X=\sO_T$ by replacing $T$ into $T'$, where $f' \colon X \to T'$ is the induced morphism.
We take a point $t \in f(\Exc(f))$.
First, we prove that $C$ is irreducible after shrinking $T$ around $t$.
By shrinking $T$ around $t$, the image of every irreducible component of $C$ contains $t$.
Since $S$ is surface and $(S,C+B)$ is plt, then $C$ is locally irreducible.
By \cite[Theorem 5.2]{tanaka18}, the intersection $C \cap f^{-1}(t)$ is connected, and in particular, the scheme $C$ is irreducible.
Furthermore, as $C$ is ample, $C$ is not an exceptional divisor of $f$.
By adjunction, $(C,\mathrm{Diff}_C(B+C))$ is normal klt one-dimensional pair, in particular, it is simple normal crossing.
Since $C$ is not exceptional and $T$ is affine, $C$ is also affine.
Therefore, $(C,\mathrm{Diff}_C(B+C))$ is $T$-regular by Corollary \ref{cor:T-regularity of simple normal crossing pair}.
Since $-(K_{S/V}+C+B)$ is ample, $(S,B+C)$ is purely globally $T$-regular by Proposition \ref{prop:inversion of adjunction globally $T$-regular}.
\end{proof}

\begin{lem}\label{lem:pl-flip good condition}
Let $V$ be a scheme satisfying Assumption \ref{assump:cdvr}.
Assume that the residue field of $V$ is infinite.
Let $f \colon X \to Z$ be a small projective birational morphism from a  normal $V$-variety $X$ of dimension three to an affine $V$-variety $Z$.
Let $(X,S+A+B)$ be a dlt  pair such that $-(K_{X/V}+S+A+B)$ is ample, $S$ and $A$ are locally irreducible $\Q$-Cartier Weil divisors and $\lfloor B \rfloor=0$.
Assume that $-S$  and $A$ are ample.
Then $(S^N,\mathrm{Diff}_{S^N}(A+B))$ is purely globally $T$-regular over all points of $f(\Exc(f))$.
In particular, $S$ is normal over a neighborhood of $f(\Exc(f))$.
\end{lem}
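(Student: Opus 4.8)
First I would restrict the pair to $S^N$, invoke the two–dimensional statement Lemma \ref{lem:bcm regular and globally $T$-regular surface}, and then read off the normality of $S$ from the inversion of adjunction in Proposition \ref{prop:inversion of adjunction globally $T$-regular}.

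Fix a point $z\in f(\Exc(f))$ and localize at $z$; we may assume $Z$ is normal, so that $f^{-1}(z)$ is connected. Since $-S$ is ample, hence $f$-ample, and $S$ is a prime divisor, any curve $C$ contracted by $f$ satisfies $-S\cdot C>0$, i.e.\ $S\cdot C<0$, and therefore $C\subseteq\Supp(S)=S$. Because over a normal base a proper birational morphism whose fibre over a point is a single point is a local isomorphism there, the fibre $f^{-1}(z)$ is not a point, so it is a purely one–dimensional curve and, by the previous sentence, $f^{-1}(z)\subseteq S$. In particular $z\in f(\Supp S)$; by local irreducibility of $S$ we may replace $S$ by its (irreducible) connected component meeting $f^{-1}(z)$, the other components being disjoint from $f^{-1}(z)$ and hence missing $z$.

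Since $f$ is small, $f|_S\colon S\to f(S)$ is birational and $f(S)$ is an affine surface; let $f_S\colon S^N\to W$ be the Stein factorization of $S^N\to f(S)$, a projective birational morphism of normal affine $V$-surfaces with connected fibres. Put $A_S:=A|_{S^N}$ and $B_S:=\mathrm{Diff}_{S^N}(B)$. As $(X,S+A+B)$ is dlt with $S,A$ $\Q$-Cartier it is simple normal crossing near $S\cap A$, so $A$ is Cartier there, $\mathrm{Diff}_{S^N}(A+B)=A_S+B_S$, and $\lfloor A_S+B_S\rfloor=A_S$. By Proposition \ref{prop:dlt adjunction to plt} the pair $(S^N,A_S+B_S)$ is plt; by adjunction $-(K_{S^N/V}+A_S+B_S)$ is the restriction of the ample divisor $-(K_{X/V}+S+A+B)$, hence ample, and $A_S$ is the restriction of the ample divisor $A$, hence ample. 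Thus Lemma \ref{lem:bcm regular and globally $T$-regular surface} applies to $f_S$ and $(S^N,A_S+B_S)$, yielding that $(S^N,A_S+B_S)$ is purely globally $T$-regular over every point of $f_S(\Exc(f_S))$. Now the preimage in $S^N$ of the curve $f^{-1}(z)\subseteq S$ is a one–dimensional closed set contracted by $f_S$, hence contained in $\Exc(f_S)$, and it surjects onto the (finite) set of points of $W$ over $z$; so every such point lies in $f_S(\Exc(f_S))$. Since purely global $T$-regularity over $z$ can be tested after localizing at those finitely many points of $W$, we conclude that $(S^N,\mathrm{Diff}_{S^N}(A+B))$ is purely globally $T$-regular over $z$. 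As $z\in f(\Exc(f))$ was arbitrary, this proves the first assertion.

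For the last assertion, fix $0<\varepsilon<1$ and apply Proposition \ref{prop:inversion of adjunction globally $T$-regular} to the dlt pair $(X,S+(1-\varepsilon)A+B)$, whose floor is $S$, for which $z\in f(S)$ (so the hypothesis $\phi^{-1}(z)\subset f'(S)$ holds, $Z$ being normal), and for which $-(K_{X/V}+S+(1-\varepsilon)A+B)=-(K_{X/V}+S+A+B)+\varepsilon A$ is ample, hence semiample and $f$-big. Its different along $S^N$ is $(1-\varepsilon)A_S+B_S$, which is globally $T$-regular over $z$ by Proposition \ref{prop:purely globally $T$-regular to globally $T$-regular} (using that $(S^N,A_S+B_S)$ is purely globally $T$-regular over $z$ and $A_S$ is $\Q$-Cartier). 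Proposition \ref{prop:inversion of adjunction globally $T$-regular} then gives that $S$ is normal over $z$; running over all $z\in f(\Exc(f))$ and using that the non-normal locus of $S$ is closed, $S$ is normal over a neighbourhood of $f(\Exc(f))$. The main obstacle is the exceptional–locus bookkeeping in the third paragraph — guaranteeing that Lemma \ref{lem:bcm regular and globally $T$-regular surface}, which only controls $(S^N,A_S+B_S)$ over $f_S(\Exc(f_S))$, in fact controls it over all of $f(\Exc(f))$ — and the crucial input there is the anti-ampleness of $S$, which forces the whole fibre $f^{-1}(z)$ into $S$; a minor point is the identity $\mathrm{Diff}_{S^N}(A+B)=A_S+B_S$ with $\lfloor A_S+B_S\rfloor=A_S$ together with the transfer of plt-ness and ampleness under adjunction.
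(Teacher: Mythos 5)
Your overall route is the paper's: adjunction plus Proposition \ref{prop:dlt adjunction to plt} to get a plt surface pair, Lemma \ref{lem:bcm regular and globally $T$-regular surface} for the pure global $T$-regularity, and Proposition \ref{prop:inversion of adjunction globally $T$-regular} for normality of $S$; your explicit $\varepsilon$-perturbation through Proposition \ref{prop:purely globally $T$-regular to globally $T$-regular} in the last paragraph is exactly what the paper's terse final sentence leaves implicit (the unperturbed pair has nonzero floor, so it cannot itself be globally $T$-regular), so that part is a welcome elaboration rather than a deviation. The organizational difference is that you pass to the Stein factorization $W$ of $S^N\to f(S)$ before applying the surface lemma, whereas the paper verifies the connected-fibers hypothesis for $S^N\to f(S)$ directly.

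There is, however, one step where your substitute bookkeeping is not justified as written, and it is precisely the point where the paper invokes Lemma \ref{lem:topologically normal}, which you never use. You claim that the preimage $\nu^{-1}(f^{-1}(z))$ in $S^N$ is a one-dimensional closed set ``contracted by $f_S$, hence contained in $\Exc(f_S)$'', and that consequently every point of $W$ over $z$ lies in $f_S(\Exc(f_S))$. For a general finite surjective $\nu\colon S^N\to S$ with $S$ non-normal, the preimage of a curve need not be purely one-dimensional: it can acquire isolated points (e.g.\ when $\nu$ identifies two points and the curve passes through only one of their preimages), and an isolated point of $\nu^{-1}(f^{-1}(z))$ could map to a point $w$ of $W$ over $z$ with $f_S$ an isomorphism near $w$; over such a $w$ Lemma \ref{lem:bcm regular and globally $T$-regular surface} gives you nothing, and pure global $T$-regularity there is not automatic. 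What rules this out is Lemma \ref{lem:topologically normal}: since $(X,S+A+B)$ is dlt and $S$ is $\Q$-Cartier, $S^N\to S$ is a universal homeomorphism, so $\nu^{-1}(f^{-1}(z))$ is homeomorphic to the connected curve $f^{-1}(z)$, hence has no isolated points and maps to a single point of $W$, which then does lie in $f_S(\Exc(f_S))$. (This is the same ingredient the paper uses to get connected fibers of $f|_{S^N}$.) A smaller point of the same kind: ``we may assume $Z$ is normal'' is not among the hypotheses and needs the same finite/semi-local localization argument you later carry out for $W$ (replace $Z$ by the Stein factorization of $f$ and check the unit-ideal condition at the finitely many points over $z$); once you say this, and insert Lemma \ref{lem:topologically normal} as above, your proof is complete and agrees in substance with the paper's.
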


\begin{proof}
Since $K_{X/V}+S+B$ is $\Q$-Cartier, $\mathrm{Diff}_{S^N}(A+B)=D+A|_{S^N}$, where $D:=\mathrm{Diff}_{S^N}(B)$.
We note that $A$ is Cartier on the codimension two points of $X$ contained in $S$.
Since $f$ is small, $f|_{S^N} \colon S^N \to T $ is birational, where $T:=f(S)$.
Since $-S$ is ample, all exceptional curves of $f$ are contained in $S$, thus $f|_S \colon S \to T$ has connected fibers.
Since $S^N \to S$ is a universal homeomorphism by Lemma \ref{lem:topologically normal}, $f|_{S^N}$ also has connected fibers.
By Proposition \ref{prop:dlt adjunction to plt}, the pair $(S^N,D+A|_{S^N})$ is plt.
By Lemma \ref{lem:bcm regular and globally $T$-regular surface}, $(S^N,D+A|_{S^N})$ is purely globally $T$-regular over all points of $f(\Exc(f))$.
In particular, $S$ is normal over a neighborhood of $f(\Exc(f))$ by Proposition \ref{prop:inversion of adjunction globally $T$-regular}.
\end{proof}

\begin{cor}\label{cor:existence of pl-flip threefold}\textup{(cf.\,\cite[Proposition 3.4]{hacon-witaszek19})}
%Let $V$ be the spectrum of a divisorial valuation ring of characteristic $(0,p)$ with  $F$-finite residue field.
Let $V$ be an excellent Dedekind scheme.
Let $(X,S+A+B)$ be a three-dimensional dlt pair over $V$.
Let $f \colon X \to Z$ be a $(K_{X/V}+S+A+B)$-flipping contraction with $\rho(X/Z)=1$.
Assume that $S$ and $A$ are locally irreducible Weil divisors such that $-S$ and $A$ are ample $\Q$-Cartier.
Then the flip of $f$ exists.
\end{cor}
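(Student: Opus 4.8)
The plan is to reduce to Theorem~\ref{thm:pl-filp in ample divisor}, whose geometric input is exactly what Lemma~\ref{lem:pl-flip good condition} produces. Since the existence of a flip is local on $Z$, I would first shrink $Z$ to an affine scheme, and, arguing as in the proof of Theorem~\ref{thm:pl-filp in ample divisor}, reduce to the case $\lfloor B\rfloor=0$. If $X$ is not flat over $V$, then $X$ is a threefold over a field and the assertion is \cite{bchm} in characteristic zero and \cite[Theorem~1.3]{hacon-witaszek20} in positive characteristic; so I may assume $X$ is flat over $V$. Then, because $f$ is a flipping contraction and hence small, its exceptional locus --- a curve --- cannot dominate $V$ (otherwise $f$ would contract a divisor on the generic fibre), so $f(\Exc(f))$ lies over finitely many closed points of $V$.

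The key step is the reduction to the case $V=\Spec R$ with $R$ a complete discrete valuation ring with infinite residue field. I would replace $V$ by the localization at a closed point below $f(\Exc(f))$, then by the completion of its strict henselization, and pass to a connected component of the base change. By Lemma~\ref{lem:base change} the base change of $(X,S+A+B)$ stays dlt, the divisors $S$ and $A$ stay locally irreducible and hence $\Q$-Cartier, and flat base change preserves the ampleness of $-S$, $A$, and $-(K_{X/V}+S+A+B)$, while the new base is the spectrum of a complete discrete valuation ring with separably closed --- in particular infinite --- residue field. This reduction is harmless because the flip of $f$ exists if and only if $\bigoplus_{m\ge 0}f_*\sO_X(\lfloor m(K_{X/V}+S+A+B)\rfloor)$ is a finitely generated $\sO_Z$-algebra, finite generation of an algebra descends along the faithfully flat morphism induced on the base, and this algebra pulls back to the corresponding algebra for the new contraction by flat base change. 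One should also note that $\Q$-factoriality and the equality $\rho=1$ need not survive the base change; after shrinking $Z$ around the image point this causes no harm, as one may decompose the exceptional locus into extremal pieces and treat each separately, the finite generation being insensitive to this.

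Now assume $V=\Spec R$ as above. All hypotheses of Lemma~\ref{lem:pl-flip good condition} hold --- $f$ is small projective birational, $-(K_{X/V}+S+A+B)$, $-S$ and $A$ are ample ($f$-ampleness suffices since $Z$ is affine), $S$ and $A$ are locally irreducible $\Q$-Cartier Weil divisors, $\lfloor B\rfloor=0$, and $\dim X=3$ --- so $(S^N,\mathrm{Diff}_{S^N}(A+B))=(S^N,A_S+B_S)$, with $A_S:=A|_{S^N}$ and $B_S:=\mathrm{Diff}_{S^N}(B)$, is purely globally $T$-regular over every point of $f(\Exc(f))$ and $S$ is normal there. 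Since $\lfloor A_S+B_S\rfloor=A_S$ by adjunction and $A_S$ is $\Q$-Cartier, Proposition~\ref{prop:purely globally $T$-regular to globally $T$-regular} gives that $(S^N,(1-\varepsilon)A_S+B_S)$ is globally $T$-regular over those points for every $0<\varepsilon<1$. Finally, $S^N$ is an excellent surface, the induced morphism $f|_{S^N}\colon S^N\to f(S)$ is projective and birational onto the affine scheme $f(S)$, and $-(K_{S^N/V}+A_S+B_S)=-(K_{X/V}+S+A+B)|_{S^N}$ is ample, so the minimal model program for excellent surfaces \cite{tanaka18} shows that $R(K_{S^N/V}+A_S+B_S)$ is a finitely generated $\sO_Z$-algebra. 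All hypotheses of Theorem~\ref{thm:pl-filp in ample divisor} are then satisfied, so the flip of $f$ exists.

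The step I expect to be the main obstacle is the base-change reduction of the second paragraph --- packaging what Lemma~\ref{lem:base change} preserves, dealing with the loss of $\Q$-factoriality and of $\rho=1$, and isolating the descent of finite generation of the flipping algebra; once $V$ is the spectrum of a complete discrete valuation ring, Lemma~\ref{lem:pl-flip good condition} carries essentially all of the remaining weight.
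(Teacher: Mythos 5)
Your ingredients and endgame are the same as the paper's (Lemma \ref{lem:pl-flip good condition}, Proposition \ref{prop:purely globally $T$-regular to globally $T$-regular}, Tanaka's surface MMP, and Theorem \ref{thm:pl-filp in ample divisor}), but the reduction to a complete discrete valuation ring is organized differently, and the step you yourself flagged is where the argument as written breaks. You descend finite generation of the \emph{full} algebra $\bigoplus_m f_*\sO_X(\lfloor m(K_{X/V}+S+A+B)\rfloor)$, so after base change you must produce that finite generation upstairs, and your only tool for this is Theorem \ref{thm:pl-filp in ample divisor}, whose proof runs through Shokurov's reduction \cite[Lemma 2.3.6]{corti} and therefore genuinely uses $\rho(X'/Z')=1$ together with $\Q$-factoriality in the pl-configuration --- exactly the hypotheses that Lemma \ref{lem:base change} warns may be lost under the base change. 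The sentence ``decompose the exceptional locus into extremal pieces and treat each separately'' is not an argument: upstairs $X'$ need not be $\Q$-factorial, you have no supply of extremal contractions or flips for the pieces, and finite generation of the full algebra over $Z'$ does not follow formally from statements about pieces. The paper avoids this by reversing the order: Shokurov's reduction is applied \emph{before} the base change, downstairs, where $\rho(X/Z)=1$ and $S\cdot\Sigma<0$ hold, so that the statement to be proved becomes finite generation of the restricted algebra $R_S(k(K_{X/V}+S+A+B))$; this statement is stable under the faithfully flat base change, and upstairs it follows from Proposition \ref{prop:extension result} (which needs no Picard-rank or $\Q$-factoriality hypothesis) together with finite generation of $R(K_{S/V}+A_S+B_S)$. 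Your proof is repaired by the same reordering, or equivalently by descending the restricted rather than the full algebra; as written it has a genuine gap at this point.

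Two smaller points. For the finite generation of $R(K_{S^N/V}+A_S+B_S)$ you cannot quote \cite{tanaka18} directly: $(S^N,A_S+B_S)$ is only plt, since $A_S$ occurs with coefficient one; the paper first replaces $A_S$ by $(1-\varepsilon)A_S+\varepsilon A'$ with $A'\sim_{\Q}A_S$ effective and without common components with $A_S$, obtaining a klt pair with $\Q$-linearly equivalent log canonical divisor, and only then cites \cite[Theorem 1.1, Theorem 4.2, Corollary 4.11]{tanaka18}. Also, your separate treatment of the non-flat case is unnecessary, and the citation of \cite[Theorem 1.3]{hacon-witaszek20} there is not self-contained, since that theorem is itself conditional on global $F$-regularity and finite generation hypotheses; the framework of Section \ref{section:pl-flip} allows $X=X_s$ or $X=X_\eta$, so Lemma \ref{lem:pl-flip good condition} and Theorem \ref{thm:pl-filp in ample divisor} cover a non-flat $X$ uniformly, which is how the paper proceeds. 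Your observation that $f(\Exc(f))$ lies over closed points of $V$ is correct (the closure in $X$ of a contracted curve of the generic fibre would be a two-dimensional subset of $\Exc(f)$, contradicting smallness), though the paper never needs this claim.
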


\begin{proof}
We may assume that $V$ is the spectrum of an excellent discrete valuation ring.
We may assume $\lfloor B \rfloor=0$.
Since the existence of flip is a local problem on $Z$, we may assume that $Z$ is affine.
By Shokurov's reduction to pl-flip (see  \cite[Lemma 2.3.6]{corti}), it is enough to show that
$R_S(k(K_X+S+A+B))$ is finitely generated.
This statement can be reduced to the case where $V$ is complete and the residue field is infinite taking a strict henselization and completion.
By Lemma \ref{lem:base change}, the assumption is preserved except for the condition that the relative Picard rank is one.
By Lemma \ref{lem:pl-flip good condition}, $S$ is normal and $(S,A_S+B_S)$ is purely globally $T$-regular over all points of $f(\Exc(f))$, where $B_S=\mathrm{Diff}_{S}(B)$ and $A_S=A|_{S}$.
%By the proof of Theorem \ref{thm:pl-filp in ample divisor}, it is enough to show that $R(K_S+A_S+B_S)$ is finitely generated.
By Theorem \ref{thm:pl-filp in ample divisor}, it is enough to show that $R(K_{S/V}+A_S+B_S)$ is finitely generated.
We take an effective divisor $A'$ on $S$ with $A_S \sim_{\Q} A'$ such that $A_S$ and $A'$ have no common component.
Since $(S,A_S+B_S)$ is plt, $(S,(1-\varepsilon)A_S+B_S+\varepsilon A')$ is klt for small enough positive rational number $\varepsilon$.
By \cite[Theorem 1.1, Theorem 4.2, Corollary 4.11]{tanaka18},
the canonical ring
$R(K_{S/V}+(1-\varepsilon)A_S+B_S+\varepsilon A')$ is finitely generated.
Since 
\[
K_{S/V}+(1-\varepsilon)A_S+B_S+\varepsilon A'\sim_{\Q} K_{S/V}+A_S+B_S,
\]
$R(K_{S/V}+A_S+B_S)$ is also finitely generated.
\end{proof}

\begin{rmk}
The existence of necessary flips for \cite[Theorem 6]{kollar20} follows from Corollary \ref{cor:existence of pl-flip threefold} over an excellent Dedekind scheme.
\end{rmk}

\section{Proof of Theorem \ref{thm:birational relative mmp} and its applications}
The goal of this section is to prove Theorem \ref{thm:birational relative mmp} and it's applications.
Proposition \ref{prop:existence of flip trivial case} is one of the applications and it will be used to prove Theorem \ref{thm:semi-stable mmp}.
To prove these theorems, we establish the cone theorem for pseudo-effective pairs (Proposition \ref{prop:cone theorem}), by following the method given by \cite{keel} \cite{tanaka18a}.
We also prove the cone theorem for more general settings (Proposition \ref{prop:special fiber cone thm}) by using the method given by \cite{kawamata94}.
If every relative curve is contained in the special fiber, then the cone theorem is easily reduced to the case of surfaces, but in the relative setting, relative curves contained in the generic fiber may exist.  
Therefore, we should treat such cases carefully.
%In the relative setting, we should treat relative curves which is contained in the generic fiber (see the remark ).
%We note that there may be relative curves which is contained in the generic fiber in 

\begin{prop}\label{prop:contraction anti-ample case}
Let $V$ be an excellent Dedekind scheme.
Let $(X,S+B)$ be a three-dimensional dlt  pair over $V$ such that $S$ is a $\Q$-Cartier Weil divisor.
Let $\rho \colon X \to U$ be a projective morphism over $V$.
Let $\Sigma$ be a $(K_{X/V}+S+B)$-negative extremal ray contracted by $\rho$.
Let $L$ be a $\rho$-nef Cartier divisor on $X$ with $L^{\perp}=\R[\Sigma]$.
Assume that $S$ is a prime divisor and $S \cdot \Sigma < 0$.
Then $L$ is semiample over $U$.
\end{prop}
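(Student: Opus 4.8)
The plan is to show that the locus where $L$ fails to be big over $U$ is contained in $S$, to prove that $L$ restricted to the normalization $S^{N}$ of $S$ is semiample over $U$ by the minimal model program for excellent surfaces, and then to conclude by a Keel-type argument. Since semiampleness over $U$ is local on $U$, I would first reduce to the case that $U$ is affine, and, if convenient, reduce further as in the earlier parts of Section~\ref{section:pl-flip} (using Lemma~\ref{lem:base change} together with faithfully flat descent of semiampleness along the base change of $V$) to the case that $V$ is the spectrum of a complete discrete valuation ring with infinite residue field, or that $X$ is a variety over a field.

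Any curve $C$ contracted by $\rho$ has $[C]\neq 0$ in $N_{1}(X/U)$, because $\rho$ is projective and so admits a $\rho$-ample divisor; if moreover $L\cdot C=0$, then $[C]\in\R_{\ge 0}[\Sigma]$ by the hypothesis $L^{\perp}=\R[\Sigma]$, hence $S\cdot C<0$ since $S\cdot\Sigma<0$, and therefore $C\subset\Supp S$ as $S$ is a prime divisor. Consequently the exceptional locus $\mathbb{E}(L/U)$---the union of the positive-dimensional integral subschemes $W\subset X$ on which $L|_{W}$ is not big over $U$---is contained in $\Supp S$, so $\dim\mathbb{E}(L/U)\le 2$. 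Since $S=\lfloor S+B\rfloor$ is $\Q$-Cartier and prime, Lemma~\ref{lem:topologically normal} gives that $S^{N}\to S$ is a universal homeomorphism, so $L|_{\Supp S}$ is semiample over $U$ if and only if $L|_{S^{N}}$ is.

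For the semiampleness of $L|_{S^{N}}$ over $U$: by dlt adjunction $(K_{X/V}+S+B)|_{S^{N}}=K_{S^{N}/V}+B_{S}$ with $B_{S}=\mathrm{Diff}_{S^{N}}(B)$, and $(S^{N},B_{S})$ is a dlt pair on an excellent normal surface. Set $M:=L|_{S^{N}}$, which is nef over $U$. If $C'\subset S^{N}$ is a curve contracted over $U$ with $M\cdot C'=0$, its image $\bar{C}\subset S$ is a curve contracted by $\rho$ with $L\cdot\bar{C}=0$, so $[\bar{C}]\in\R_{\ge 0}[\Sigma]\setminus\{0\}$ by the previous paragraph, whence $(K_{X/V}+S+B)\cdot\bar{C}<0$ and therefore $(K_{S^{N}/V}+B_{S})\cdot C'<0$ by adjunction. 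I would then run the $(K_{S^{N}/V}+B_{S})$-MMP over $U$ contracting only $M$-trivial extremal rays, using the cone and contraction theorems for excellent surfaces from \cite{tanaka18}: each such ray is $(K_{S^{N}/V}+B_{S})$-negative by the computation just made---carried out on the intermediate surfaces, using the negativity lemma so that the relevant discrepancy divisors are effective and do not meet the strict transforms of the contracted curves---while $M$ descends along every contraction to a divisor that is still nef over $U$ and has strictly fewer trivial curve classes, and the relative Picard number drops. After finitely many steps one reaches a morphism $g\colon S^{N}\to S'$ over $U$ with $M=g^{*}M'$ and $M'$ ample over $U$, so $M=L|_{S^{N}}$ is semiample over $U$; hence so are $L|_{\Supp S}$ and its restriction $L|_{\mathbb{E}(L/U)_{\mathrm{red}}}$.

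Finally $L$ is nef over $U$ and semiample over $U$ on $\mathbb{E}(L/U)$, so $L$ is semiample over $U$ by a Keel-type argument following \cite{keel} and \cite{tanaka18a}: over the closed fibres this is Keel's theorem, while over the characteristic-zero generic fibre $X_{\eta_{U}}$ (when $V$ has a characteristic-zero point), where Keel's theorem fails, one uses instead the classical contraction theorem---the face of $\NE(X_{\eta_{U}}/U_{\eta_{U}})$ cut out by $L_{\eta_{U}}$ is either trivial, so $L_{\eta_{U}}$ is already ample over $U_{\eta_{U}}$, or is $(K_{X_{\eta_{U}}}+S_{\eta_{U}}+B_{\eta_{U}})$-negative and thus contractible, so $L_{\eta_{U}}$ is pulled back from an ample divisor---and one glues the two morphisms over $U$. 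I expect this synthesis over the Dedekind base to be the main obstacle: assembling the positive-characteristic (Keel) and characteristic-zero contractions into a single semiample model over $U$ is exactly where the techniques of \cite{keel} and \cite{tanaka18a} must be adapted to the mixed setting.
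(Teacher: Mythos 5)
Your overall route---showing that every contracted curve with $L\cdot C=0$ lies in $S$, so that $\mathbb{E}(L)\subset\Supp S$, passing to $S^{N}$ via Lemma \ref{lem:topologically normal}, proving semiampleness of $L|_{S^{N}}$ by excellent-surface techniques, and then concluding by a Keel-type statement---is the same strategy the paper follows (it adapts \cite[Proposition 4.4]{hacon-witaszek20}). However, there is a genuine gap precisely at the step you yourself flag as ``the main obstacle'': in mixed characteristic one cannot obtain semiampleness of $L$ over $U$ by gluing Keel's theorem on the positive-characteristic fibres with the classical contraction on the characteristic-zero fibre. Relative semiampleness is not a fibrewise condition, and the implication ``$L$ nef, $L|_{\mathbb{E}(L)}$ semiample, and $L$ semiample on the characteristic-zero locus $\Rightarrow$ $L$ semiample'' is exactly the content of Witaszek's mixed-characteristic Keel theorem, \cite[Theorem 1.2]{witaszek20}, which is what the paper invokes at this point. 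That is a substantial theorem proved by methods quite different from Keel's Frobenius arguments, not something recoverable by the synthesis you sketch; without citing or reproving it, your argument does not close.

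Two further points in the same vein. First, the asserted equivalence ``$L|_{\Supp S}$ is semiample over $U$ iff $L|_{S^{N}}$ is'' is also not formal in mixed characteristic: Keel's descent of semiampleness along the finite universal homeomorphism $S^{N}\to S$ uses Frobenius, and over a mixed-characteristic base this descent is again part of the results of \cite{witaszek20}. Second, the surface step can be done more directly than your $M$-trivial MMP with its delicate descent and termination bookkeeping: after replacing $L$ by a multiple, $L-(K_{X/V}+S+B)$ is ample over $U$ (Kleiman's criterion together with the cone theorem, using $L^{\perp}=\R[\Sigma]$ and the $(K_{X/V}+S+B)$-negativity of $\Sigma$), hence $L|_{S^{N}}-(K_{S^{N}/V}+B_{S})$ is ample over $U$, and semiampleness of $L|_{S^{N}}$ follows from the base point free/abundance theorems for excellent surfaces (\cite{tanaka18}, \cite{tanaka20-imperfect}), exactly as the paper does in the analogous Proposition \ref{prop:contraction trivial case}.
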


\begin{proof}
It follows from a similar argument to the argument in the proof of [HW, Proposition 4.4] by replacing [HW, Lemma 2.1] with Lemma \ref{lem:topologically normal} and using \cite[Theorem 1.2]{witaszek20}.
\end{proof}

\begin{prop}
\label{prop:cone theorem}
Let $V$ be an excellent Dedekind scheme.
Let $\pi \colon X \rightarrow U$ be a projective $V$-morphism from a normal $\Q$-factorial quasi-projective $V$-threefold $X$ to a quasi-projective  $V$-variety $U$.
Let $B$ be an effective $\R$-Weil divisor on $X$ satisfying the following.
\begin{itemize}
    \item %$0 \leq B \leq 1$,
    all coefficients $c$ of $B$ satisfy
    $0 \leq c \leq 1$, and
    \item $K_{X/V} + B$ is pseudo-effective.
\end{itemize}
Let $A$ be a $\pi$-ample $\R$-Cartier $\R$-divisor on $X$.
Then there exist finitely many $\pi$-relative curves $C_{1}, \ldots, C_{r}$ on $X$ such that
\[
\NE (X/U) = \NE (X/U)_{K_{X/V}+B+A \geq 0} + \sum_{i=1}^{i=r} \R_{\geq 0} [C_{i}].
\]
\end{prop}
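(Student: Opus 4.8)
The plan is to follow the Keel--Tanaka strategy (cf.\,\cite{keel}, \cite{tanaka18a}) for a relative cone theorem in the non-klt, pseudo-effective setting. First I would reduce to a convenient local situation. Since $\NE(X/U)$ and the statement are compatible with passing to an affine open cover of $U$, I may assume $U$ is affine; and since $A$ is $\pi$-ample, after perturbing $A$ within its $\R$-linear equivalence class (shrinking $A$ slightly and absorbing the difference into $B$ using that $K_{X/V}+B$ is pseudo-effective, hence $K_{X/V}+B+\varepsilon A$ is still big over $U$ for the relevant range) I may assume $A$ is a $\Q$-divisor with $(X,B+A)$ still having coefficients in $[0,1]$ and with $K_{X/V}+B+A$ big over $U$. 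The key point is that $\NE(X/U)_{K_{X/V}+B+A\geq 0}$ is a closed subcone, so it suffices to produce, for each ray $R$ of $\NE(X/U)$ with $(K_{X/V}+B+A)\cdot R<0$, a $\pi$-relative rational curve $C_R$ generating it, and to show only finitely many such rays occur.

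The heart of the argument is the following. For a $\pi$-relative curve class $\alpha$ with $(K_{X/V}+B+A)\cdot\alpha<0$, I would run a $(K_{X/V}+B+A)$-MMP over $U$ that contracts the corresponding extremal ray. Because $A$ is ample, any such extremal contraction is, up to a small modification, a pl-type contraction with ample divisor in the boundary, so its flips and divisorial contractions exist by Corollary \ref{cor:existence of pl-flip threefold} (the three-dimensional pl-flip with ample divisor in the boundary) together with Proposition \ref{prop:contraction anti-ample case} for the divisorial/fiber-type case. More precisely, I would show that each $(K_{X/V}+B+A)$-negative extremal ray $R$ admits a contraction $\mathrm{cont}_R\colon X\to Y$ over $U$: one produces a nef Cartier (or $\Q$-Cartier) supporting divisor $L_R$ with $L_R^{\perp}=R$ using the rationality/base-point-free techniques of \cite{keel}, \cite{tanaka18a} adapted to mixed characteristic (this is where one replaces Frobenius by the vanishing-up-to-alterations results, Corollary \ref{cor:zero map}), and then shows $L_R$ is semiample over $U$, invoking Proposition \ref{prop:contraction anti-ample case} when $R$ meets a component $S$ of $\lfloor B\rfloor$ negatively. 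The contracted ray is then spanned by a $\pi$-relative curve.

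To get finiteness and the precise cone description, I would argue as follows. The bigness of $K_{X/V}+B+A$ over $U$ means $\NE(X/U)_{K_{X/V}+B+A<0}$ is ``small'': by boundedness of the Picard number (Proposition \ref{prop:finite generation of the Picard rank}) together with a length-of-extremal-rays type bound, or more robustly by the Keel--Tanaka inductive contraction argument (contract one extremal ray at a time; each contraction drops the relative Picard number by one, so the process terminates), only finitely many $(K_{X/V}+B+A)$-negative extremal rays $R_1,\dots,R_r$ exist, each spanned by a $\pi$-relative curve $C_i$. Cone-theoretic bookkeeping then gives
\[
\NE(X/U)=\NE(X/U)_{K_{X/V}+B+A\geq 0}+\sum_{i=1}^{r}\R_{\geq 0}[C_i].
\]
The main obstacle I anticipate is precisely the construction of the supporting divisors $L_R$ and the proof of their semiampleness over $U$ without Frobenius: one must substitute the Kodaira-type vanishing up to alterations (Corollary \ref{cor:zero map}) and the global $T$-regularity machinery of Section \ref{section:pl-flip} for the $F$-singularity arguments in \cite{keel}, \cite{tanaka18a}, and handle carefully the mixed-characteristic phenomenon that $\pi$-relative curves may lie in the generic fiber (characteristic zero, where Mori's bend-and-break is unavailable in the naive form) as opposed to the special fiber; this case distinction, together with ensuring termination of the auxiliary MMP used to realize each contraction, is the technically delicate part.
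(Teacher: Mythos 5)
There is a genuine gap: your argument is circular relative to the logical structure of the paper, and it defers exactly the points that need proof. In this paper the cone theorem is the \emph{input} for running the MMP (it is what makes the proofs of Theorem \ref{thm:birational relative mmp'} and Proposition \ref{prop:existence of flip trivial case} work), whereas you try to deduce it from MMP machinery. Concretely: (a) to contract a $(K_{X/V}+B+A)$-negative extremal ray $R$ you need a nef supporting divisor $L_R$ with $L_R^{\perp}=R$, and the existence of such a divisor (equivalently, local rational polyhedrality of the cone along its negative part) is precisely the content of the cone theorem; you defer it to ``rationality/base-point-free techniques of Keel and Tanaka adapted to mixed characteristic'' without an argument, so nothing is proved. (b) The finiteness of the negative extremal rays is not established: a length-of-extremal-rays bound via bend-and-break is unavailable here (as you yourself note for curves in the generic fiber), and the alternative ``each contraction drops the relative Picard number by one, so the process terminates'' only bounds the length of a chain of contractions on successively modified varieties; it says nothing about the number of $(K_{X/V}+B+A)$-negative extremal rays of the fixed $X$. (c) The claim that every such contraction is ``up to a small modification, a pl-type contraction with ample divisor in the boundary'' is unsubstantiated: the pair $(X,B+A)$ need not be dlt, $\lfloor B\rfloor$ may be empty, and Proposition \ref{prop:contraction anti-ample case} and Corollary \ref{cor:existence of pl-flip threefold} require a reduced component of the boundary meeting the ray negatively, which is not available in the generality of the statement.

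The paper's proof goes the other way and needs none of this machinery: it follows the reduction-to-surfaces method of \cite[Theorem 7.6]{tanaka18a} (Keel's trick). Since $K_{X/V}+B$ is pseudo-effective and $A$ is $\pi$-ample, $K_{X/V}+B+A$ is big over $U$, so one may write $K_{X/V}+B+A \sim_{\R,U} E+A'$ with $E\geq 0$ and $A'$ ample over $U$. Any $\pi$-relative curve $C$ with $(K_{X/V}+B+A)\cdot C<0$ then satisfies $E\cdot C<0$, hence lies in one of the finitely many prime divisors of $\Supp E$. Passing to the normalizations of these surfaces and using adjunction to compare the restriction of $K_{X/V}+B+A$ with a log canonical divisor on the surface, the statement reduces to the cone theorem for excellent surfaces \cite[Theorem 2.14]{tanaka18}, which produces the finitely many generating curves $C_1,\dots,C_r$ at once. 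No extremal contractions, flips, or vanishing up to alterations enter at this stage; if you want to salvage your approach you would first have to prove the rationality/finiteness statements you are assuming, which amounts to proving the proposition by the paper's route anyway.
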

\begin{proof}
The assertion is proved by the same method as in \cite[Theorem 7.6]{tanaka18a}.
Here, we use \cite[Theorem 2.14]{tanaka18} after the reduction to the case of surfaces.
\end{proof}

In the proof of Proposition \ref{prop:existence of flip trivial case}, we run an MMP with scaling.
In order to do this, we prepare the following corollary.

\begin{cor}\label{cor:mmp with scaling}
Let $V$ be an excellent Dedekind scheme.
Let $(X,\Delta)$ be a dlt $\Q$-factorial pair over $V$ satisfying that $\lfloor \Delta \rfloor = X_{s}$ as sets.
Let $\pi \colon X \rightarrow U$ be a projective birational morphism over $V$ from $X$ to a quasi-projective $V$-variety $U$.
Let $H$ be a $\Q$-Cartier $\Q$-Weil divisor such that $K_{X/V} + \Delta + H$ is $\pi$-nef.
We put 
\[
\lambda_{H} := \inf\{ \lambda \in \R_{\geq 0} \mid K_{X/V} + \Delta + \lambda H \textup{ is } \pi \textup{-nef} \}.
\]
Then there exists a $(K_{X/V}+ \Delta)$-negative extremal ray $R \subset \NE(X/U)$ satisfying that 
\[
(K_{X/V} + \Delta + \lambda_{H} H) \cdot R = 0.
\]
\end{cor}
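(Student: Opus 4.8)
The plan is to derive this from the cone theorem, Proposition~\ref{prop:cone theorem}, by the usual argument producing the extremal ray that computes a scaling number (cf.\ the ``MMP with scaling'' lemma in \cite{bchm} and \cite{tanaka18a}). First I would dispose of the degenerate case: we may assume $K_{X/V}+\Delta$ is not $\pi$-nef, since otherwise $\lambda_{H}=0$ and there is no $(K_{X/V}+\Delta)$-negative extremal ray in $\NE(X/U)$ at all. Then, as the $\pi$-nef cone is closed, $\lambda_{H}$ is a positive real number and the infimum is attained, so that $D:=K_{X/V}+\Delta+\lambda_{H}H$ is $\pi$-nef while $K_{X/V}+\Delta+(\lambda_{H}-\varepsilon)H$ is not $\pi$-nef for any $\varepsilon\in(0,\lambda_{H}]$.

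The next step is a reformulation of $\lambda_{H}$ in terms of curves. If $R\subset\NE(X/U)$ is a $(K_{X/V}+\Delta)$-negative extremal ray with generator $C_{R}$, then $H\cdot C_{R}>0$: otherwise $(K_{X/V}+\Delta+H)\cdot C_{R}<0$, contradicting that $K_{X/V}+\Delta+H$ is $\pi$-nef. Moreover, for $\lambda\in[0,1]$ the divisor $K_{X/V}+\Delta+\lambda H=(1-\lambda)(K_{X/V}+\Delta)+\lambda(K_{X/V}+\Delta+H)$ is $\pi$-nef if and only if it pairs nonnegatively with every such $C_{R}$, since on a $(K_{X/V}+\Delta)$-nonnegative class both summands on the right are nonnegative. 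Hence $\lambda_{H}=\sup_{R}\,(-(K_{X/V}+\Delta)\cdot C_{R})/(H\cdot C_{R})$, the supremum ranging over all $(K_{X/V}+\Delta)$-negative extremal rays of $\NE(X/U)$; this set is nonempty by our reduction, and the asserted conclusion is precisely that the supremum is attained (on a ray $R$ with $D\cdot R=0$).

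Finally I would invoke Proposition~\ref{prop:cone theorem} to obtain the attainment. Its hypotheses hold in our situation, the pseudo-effectivity of $K_{X/V}+\Delta$ being forced by $\lfloor\Delta\rfloor=X_{s}$; letting the auxiliary $\pi$-ample divisor tend to zero in the standard way extracts finitely many $(K_{X/V}+\Delta)$-negative extremal rays $R_{1},\dots,R_{m}$ of $\NE(X/U)$ such that every $(K_{X/V}+\Delta)$-negative class of $\NE(X/U)$ lies in $\sum_{i}\R_{\geq 0}R_{i}+\NE(X/U)_{K_{X/V}+\Delta\geq 0}$. One then checks that these $R_{i}$ realize all sufficiently large values of the ratio above, so that $\lambda_{H}=\max_{1\leq i\leq m}(-(K_{X/V}+\Delta)\cdot C_{i})/(H\cdot C_{i})$, attained by some $R:=R_{i_{0}}$; for that ray $-(K_{X/V}+\Delta)\cdot C_{i_{0}}=\lambda_{H}(H\cdot C_{i_{0}})$, i.e.\ $(K_{X/V}+\Delta+\lambda_{H}H)\cdot C_{i_{0}}=0$, as desired. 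The main obstacle is exactly this last finiteness/attainment step: one must rule out a sequence of distinct $(K_{X/V}+\Delta)$-negative extremal rays whose ratios converge to $\lambda_{H}$, and it is the boundedness built into Proposition~\ref{prop:cone theorem} that handles it; everything else is elementary.
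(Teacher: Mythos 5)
Your overall shape is right, but the step you yourself flag as the main obstacle is exactly where the proposal has a genuine gap, and the fix is the actual content of the paper's proof. Proposition \ref{prop:cone theorem} gives finitely many curves only for the perturbed divisor $K_{X/V}+B+A$ with a \emph{fixed} $\pi$-ample $A$; ``letting the auxiliary $\pi$-ample divisor tend to zero in the standard way'' does not extract finitely many $(K_{X/V}+\Delta)$-negative extremal rays spanning the negative part of $\NE(X/U)$, because the finite sets of curves depend on $A$ and their union over a sequence $A_n\to 0$ may be infinite, with rays a priori accumulating against the hyperplane $(K_{X/V}+\Delta)^{\perp}$ -- precisely the scenario you must rule out to get attainment of $\lambda_H$. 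Nothing in Proposition \ref{prop:cone theorem} as stated provides the ``boundedness'' you appeal to for the exact divisor $K_{X/V}+\Delta$; that is why your reformulation of $\lambda_H$ as a supremum over negative extremal rays, and its attainment, are not yet justified.

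The paper closes this gap by absorbing the ample divisor into $K_{X/V}+\Delta$ itself, using both hypotheses you did not really use: since $\pi$ is birational, $K_{X/V}+\Delta$ is $\pi$-big, so one writes $K_{X/V}+\Delta\sim_{\Q,\pi}A+E$ with $A$ $\pi$-ample and $E\geq 0$; since $\lfloor\Delta\rfloor=X_s$ and $X_s\equiv_{\pi}0$, one may choose $a>0$ with $\Delta-aX_s\geq 0$ and then $\varepsilon>0$ small so that $B:=\Delta-aX_s+\varepsilon E$ has coefficients in $[0,1]$. Because
\[
K_{X/V}+B+\varepsilon A\sim_{\R,\pi}(1+\varepsilon)(K_{X/V}+\Delta),
\]
applying Proposition \ref{prop:cone theorem} to $(X,B)$ with ample divisor $\varepsilon A$ yields a decomposition $\NE(X/U)=\NE(X/U)_{K_{X/V}+\Delta\geq 0}+\sum_{i}\R_{\geq 0}[C_i]$ with \emph{finitely many} curves and no ample error term, after which your computation of $\lambda_H$ as a maximum over the $C_i$ and the extraction of a ray with $(K_{X/V}+\Delta+\lambda_H H)\cdot R=0$ goes through. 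Note also that your remark that pseudo-effectivity of $K_{X/V}+\Delta$ is ``forced by $\lfloor\Delta\rfloor=X_s$'' is off: over $U$ it holds because $\pi$ is birational (so every divisor is $\pi$-big); the hypothesis $\lfloor\Delta\rfloor=X_s$ is instead what makes room in the boundary coefficients for the subtraction of $aX_s$ in the absorption trick above.
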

\begin{proof}
Take a rational number $a \in \Q_{>0}$ with $\Delta- a{X_{s}} \geq 0$.
Since $K_{X/V} + \Delta$ is $\pi$-big, we have
\[
K_{X/V} + \Delta \sim_{\Q, \pi} A + E 
\]
for a $\pi$-ample $\Q$-Cartier divisor $A$ and an effective $\Q$-Cartier divisor $E$.
Take a rational number $\varepsilon \in \Q$ with $0< \varepsilon << 1$ such that the coefficients of
$\Delta- a X_{s} + \varepsilon E$ are less than one.
% \Q-Cartier にする？ a も Q ｄとるか
Note that 
\[
K_{X/V} + \Delta - a X_{s} + \varepsilon E + \varepsilon A
\sim_{\R, \pi} (1+ \varepsilon) (K_{X/V} + \Delta).
\]
Therefore, by using Proposition \ref{prop:cone theorem} for $B = \Delta - a X_{s} + \varepsilon E$, it finishes the proof.
\end{proof}

On the other hand, if the base scheme is local, then we can prove the cone theorem in a more general situation by reducing the problem to the special fiber.
In relative setting, since relative curve is not necessarily contained in the special fiber (e.g.\,the fiber of $X := \A^{1}_{\Z_{p}} \times \P^{1}_{\Z_{p}} \rightarrow \A^{1}_{\Z_{p}}$ over the closed point corresponding to $(pX+1)\subset \Z_{p}[X]$), we have to give an additional argument.
%Combining with the lifting argument (\cite[Theorem 1.3]{kawamata94}), we have the following proposition.

\begin{prop}
\label{prop:special fiber cone thm}
Let $V$ be an excellent Dedekind scheme.
Let $\pi \colon X \rightarrow U$ be a projective $V$-morphism from a normal $\Q$-factorial quasi-projective flat $V$-variety $X$ of relative dimension two to a quasi-projective  $V$-variety $U$.
%We suppose that the generic fiber $X_{\eta}$ is non-empty.
Let $B$ be an effective $\R$-divisor on $X$ such that every coefficient $c$ of $B$ satisfies $0 \leq c \leq 1$.
Let $A$ be a $\pi$-ample $\R$-Cartier $\R$-divisor on $X$.
We suppose one of the following.
\begin{enumerate}
    \item
The scheme $V$ is the spectrum of a discrete valuation ring.
    \item
The scheme $X$ is smooth over every generic point $\eta$ of $V$, and $B$ has no horizontal components.
\end{enumerate}
Then there exist finitely many $\pi$-relative curves $C_{1}, \ldots, C_{r}$ on $X$ such that
\[
\NE (X/U) = \NE (X/U)_{K_{X/V}+B+A \geq 0} + \sum_{i=1}^{i=r} \R_{\geq 0} [C_{i}].
\]
\end{prop}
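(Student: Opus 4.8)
The plan is to split the $\pi$-relative curves according to the point of $V$ below them and to reduce each piece to the cone theorem for surfaces over a field. By Proposition~\ref{prop:finite generation of the Picard rank} the space $N^{1}(X/U)$, hence also $N_{1}(X/U)$, is finite dimensional, so it suffices to show that the $(K_{X/V}+B+A)$-negative part of $\NE(X/U)$ is spanned by finitely many curve classes, each represented by a $\pi$-relative curve. Every $\pi$-relative curve $C$ is contracted to a point of $U$ lying over a unique point $v\in V$, so either $C$ lies in the fiber $X_{\eta}$ over a generic point $\eta$ of $V$, or $C$ lies in a closed fiber $X_{v}$. In the first case $C$ is contracted by $\pi_{\eta}\colon X_{\eta}\to U_{\eta}$; by compatibility of the relative dualizing sheaf with base change one has $(K_{X/V}+B)|_{X_{\eta}}\sim_{\Q}K_{X_{\eta}}+B|_{X_{\eta}}$ with the coefficients of $B|_{X_{\eta}}$ still in $[0,1]$, while $A|_{X_{\eta}}$ is $\pi_{\eta}$-ample. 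In case (2) $X_{\eta}$ is a smooth surface with $B|_{X_{\eta}}=0$, and in case (1) $X_{\eta}$ is a normal $\Q$-factorial surface over a field; either way the cone theorem for surfaces (e.g.\,\cite[Theorem~1.1, Theorem~2.14]{tanaka18}, or the classical statement in the smooth case) produces finitely many extremal rays spanned by curves, and since $V$ has only finitely many generic points, the curves of this type contribute only finitely many classes.

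For a curve $C$ in a closed fiber $X_{v}$, flatness of $X$ over $V$ gives $\omega_{X/V}|_{X_{v}}\simeq\omega_{X_{v}/\kappa(v)}$, so $C$ is $(K_{X_{v}}+B|_{X_{v}}+A|_{X_{v}})$-negative inside the projective surface $X_{v}/\kappa(v)$, with $A|_{X_{v}}$ relatively ample. If $X_{v}$ is normal the surface cone theorem again yields finitely many rays. There are only finitely many closed points $v$ for which this does not apply directly: in case (1) only the unique closed point, and in case (2) only those $v$ where $X\to V$ fails to be smooth (a cofinite set, as $X_{\eta}$ is smooth) together with the finitely many $v$ met by the (finitely many, all vertical) components of $B$. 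At such a $v$ the fiber $X_{v}$ may be non-reduced and non-normal; following the method of \cite{kawamata94} one passes to the normalization $\nu\colon X_{v,\mathrm{red}}^{N}\to X_{v,\mathrm{red}}\hookrightarrow X$ of its reduction, applies the surface cone theorem to each normal component with boundary given by the different of $X_{v,\mathrm{red}}$ in $X$ together with the restriction of $B$, and pushes the resulting finitely many extremal curves back to $X$. Effectivity of the different is what keeps $(K_{X/V}+B+A)$-negativity intact after pullback, up to a correction supported on the different that one absorbs into the boundary.

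It remains, in case (2), to see that the infinitely many smooth closed fibers $X_{v}$ with $v$ in the smooth locus $V^{\circ}$ of $X\to V$ together contribute only finitely many classes. A $(K_{X_{v}}+A|_{X_{v}})$-negative extremal ray of the smooth projective surface $X_{v}$ is spanned by a rational curve $C$ with $0<-K_{X_{v}}\cdot C\le 3$, whence $A|_{X_{v}}\cdot C<-K_{X_{v}}\cdot C\le 3$; so all these curves are contracted by $\pi$ and of bounded degree with respect to a fixed $\pi$-ample polarization. Such curves are parametrized, after restricting to bounded Hilbert polynomials, by a scheme of finite type over $V^{\circ}$ inside the relative Hilbert scheme of the projective morphism $\pi$; it has finitely many irreducible components, and on each of them the numerical class in $N_{1}(X/U)$ is constant, so only finitely many classes occur. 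Collecting the finitely many classes from the generic fibers, from the finitely many bad closed fibers, and from $V^{\circ}$ gives the required finite set of $\pi$-relative curves. I expect the main obstacle to be the analysis of the bad closed fibers — extracting honest curves in $X$ from the surface cone theorem on the normalization of a non-reduced, non-normal $X_{v}$ while keeping the boundary coefficients (and hence the different correction) under control — together with the boundedness packaging that converts ``finitely many rays in each of infinitely many fibers'' into ``finitely many classes''.
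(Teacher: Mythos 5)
Your overall philosophy (fiberwise reduction to the surface cone theorem, adjunction with the different on the normalized components of bad closed fibers, a separate mechanism to control infinitely many good fibers in case (2)) is close to the paper's, and the closed-fiber part is essentially the paper's argument (which uses $D_i=\Diff_{S_i^N}(B-S_i+\alpha X_s)$ and \cite[Theorem 2.14]{tanaka18}). But there are two genuine gaps. The first concerns the $\pi$-relative curves contained in a generic fiber $X_\eta$, which is precisely the subtlety the paper flags with the example $\A^1_{\Z_p}\times\P^1_{\Z_p}\to\A^1_{\Z_p}$. You propose to handle them by the cone theorem for $\pi_\eta\colon X_\eta\to U_\eta$, but the extremal curves $\Gamma_j$ that this produces are contracted to closed points of $U_\eta$, and such points need not be closed in $U$ (e.g.\ the origin of $\A^1_{\Q_p}$ inside $\A^1_{\Z_p}$); then $\Gamma_j$ is not even a closed curve of $X$, hence not a $\pi$-relative curve, so it cannot appear among the $C_i$, and it is not clear that its class, or the ``non-negative part'' of your decomposition, lies in $\NE(X/U)$ at all, since that cone is generated by curves over closed points of $U$. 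Repairing this needs an additional argument: either a specialization step replacing a curve over a non-closed point of $U$ by a numerically equivalent effective $1$-cycle in a fiber over a closed point, or, as the paper does, a case analysis on the relative dimension of $U$ over $V$, where one sees directly that the $\pi$-relative curves inside $X_\eta$ are: nonexistent (rel.\ dim.\ $0$), whole fibers of $\pi_\eta$ — all numerically equivalent over a common open $U_0\subset U_\eta$, so one representative suffices — together with components of finitely many special fibers (rel.\ dim.\ $1$), or finitely many $\pi_\eta$-exceptional curves (rel.\ dim.\ $2$, $\pi$ birational). Your write-up skips this entirely.

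The second gap is in your treatment of case (2). The paper uses the decomposition $\NE(X/U)=\sum_{s\in|V|}\NE(X^{(s)}/U^{(s)})$ and Kawamata's deformation-theoretic lifting of extremal rays from closed fibers to the generic fiber (\cite[Theorem 1.3]{kawamata94}), which is where the hypotheses ``$X_\eta$ smooth and $B$ vertical'' enter. Your Hilbert-scheme/boundedness alternative is a genuinely different idea, but as written it rests on the bound $0<-K_{X_v}\cdot C\le 3$ for generators of extremal rays, which is the classical statement for smooth projective surfaces over an algebraically closed field. Here the fibers $X_v$ are only quasi-projective (only $X\to U$ is projective; $U$ is merely quasi-projective over $V$), the relevant cone is the relative cone $\NE(X_v/U_v)$, and the residue fields are arbitrary (possibly imperfect and non-closed), so a uniform degree bound across all good fibers has to be justified — for instance by passing to geometric fibers and pushing forward, or by an effective form of the cone theorem for excellent surfaces — before the finite-type relative Hilbert scheme and the ``numerical class constant on components'' argument can be run. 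Without such a bound the packaging step does not start, so as it stands the proposal does not prove the proposition, although both gaps look repairable along the lines indicated.
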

% flat over dvr　は必要かも?
\begin{proof}
We may assume that $V$ is connected.
Moreover, replacing $\pi$ by its Stein factorization, we may assume that $\pi_{\ast} (\sO_{X}) = \sO_{U}$.
%If $U$ maps to the generic point $\eta \in V$, then the theorem follows from the case where base scheme is a characteristic $0$ field (see \cite{}).
%%% よく考えるとここも・・・・ 一般には char 0 ではない。 H-W って、 generic fiber に乗ってる場合どう扱った？→ delta = 0 case になるけど。。。  ・・・ relative dimension 2 とかで書いたほうがいい？？？？
Therefore we may assume $U$ is normal and flat over $V$.
First, we prove the assertion in the case (1).
%In this case, $V$ is the spectrum of principal ideal domain.
%So we may assume $\lfloor B \rfloor$ contains $X_{s}$ as sets, for any closed points $s$.
 If $U  \rightarrow V$ is not surjective, then $X \rightarrow U$ is normal surface over a field, so the assertion follows from the cone theorem for surfaces (cf.\,\cite[Theorem 2.14]{tanaka18}). Therefore we may assume $U \rightarrow V$ is surjective.
We denote the closed point of $V$ by $s$.
Let $S_{1}, \ldots, S_{n}$ be irreducible components of $X_{s}$.
Let $\nu_{i} \colon S_{i}^{N} \rightarrow S_{i}$ be the normalization.
By \cite[Proposition 4.5]{kollar13}, there exists an effective $\R$-divisor $D_{i}$ such that 
\[
K_{S^N_{i}} +  D_{i} \sim_{\R} (K_{X/ V} + B) |_{S_{i}^{N}}.
\]
Here, we put $D_{i}$ as $\Diff_{S_{i}^{N}} (B-S_{i}+ \alpha X_{s})$ for suitable $\alpha \in \R \geq 0$.
By \cite[Theorem 2.14]{tanaka18}, there exists finitely many $\pi \circ \nu_{i}$-relative curves $\Gamma_{i,j}$ such that
\[
\NE (S^N_{i}/U) = \NE (S^N_{i}/U)_{K_{S^{N}_{i}}+ D_{i} + A|_{S_{i}^{N}} \geq 0} + \sum_{j} \R_{\geq 0} [\Gamma_{i,j}].
\]
Now we will divide the case by the dimension of $U$.
First, consider the case where $U$ is of relative dimension $0$ over $V$.
%If $U  \rightarrow V$ is not surjective, then $X \rightarrow U$ is normal surface over a field, so the assertion follows from the cone theorem for surfaces (cf\. \cite[Theorem 2.14]{tanaka18}). So we may assume $U\ rightarrow V$ is surjective.
In this case, a closed curve in $X$ maps to a closed point in $U$, which maps to the closed point in $V$. Therefore, any $\pi$-relative curves are contained in $X_{s}$.
Therefore we have 
\begin{eqnarray*}
\NE(X/U) &=& \sum_{i=1}^{n} \NE(S^N_{i}/U) \\
&=& \NE(X/U)_{K_{X/V}+B+A \geq 0} + \sum_{i,j} \R_{\geq 0} [\Gamma_{i,j}].
\end{eqnarray*}
Next, we consider the case where $U$ is of relative dimension $1$ over $V$.
Let $\pi_{\eta} : X_{\eta} \rightarrow U_{\eta}$
be the restriction of $\eta$ to the generic fiber.
Then the generic fiber of $\pi_{\eta}$ is geometrically irreducible (cf.\,\cite[Lemma 2.2]{tanaka18a}).
%Since $U_{\eta}$ is a normal curve over $\eta$, there exist finitely many $\pi_{\eta}$-relative curves $C_{1}, \ldots C_{m}$ such that any $\pi_{\eta}$-relative curve $C$ is numerically equivalent to some $C_{i}$.
%Let $C$ be a $\pi$-relative curve which are contained in $X_{\eta}$.
Take an open subset $U_{0} \subset U_{\eta}$ where $\pi_{\eta}$ have geometrically irreducible fibers over $U_{0}$.
Take a closed point $P_{1} \in U_{0}$ which is also closed in $U$ if exists.
Let $P_{2}, \ldots P_{l} \in U_{\eta} \setminus U_{0}$ be all the closed points which are also closed in $U$.
Let $C_{s,t}$ be the irreducible components of $\pi_{\eta}^{-1} (P_{s})$.
Then any $\pi$-relative curve which is contained in $X_{\eta}$ is generated by $[C_{s,t}] \subset \NE(X/U)$.
Therefore, we have
\[
\NE (X/U) = \NE (X/U) _{K_{X/V}+ B+ A \geq 0} + \sum_{i,j} \R_{\geq 0} [\Gamma_{i,j}] + \sum_{s,t} \R_{\geq 0} [C_{s,t}]. 
\]
Finally, we consider the case where $U$ is of relative dimension $2$ over $V$.
In this case, $\pi$ is birational morphism.
Let $C_{1}, \ldots C_{l}$ be all the exceptional divisors of $\pi$ which are contained in the generic fiber $X_{\eta}$. Then $C_{i}$ are $\pi$-relative curves on $X$.
Then we have
\begin{eqnarray*}
\NE (X/U) &=& \NE (X_{s}/U) + \sum_{s} \R_{\geq0} [C_{s}] \\
&=& \NE (X/U) _{K_{X/V}+ B+ A \geq 0} + \sum_{i,j} \R_{\geq 0} [\Gamma_{i,j}] + \sum_{s} \R_{\geq 0} [C_{s}].
\end{eqnarray*}
It finishes the proof of (1).
The assertion in the case (2) follows from the argument in (1) and the lifting method in the proof of \cite[Theorem 1.3]{kawamata94}.
Indeed, since
\[
\NE(X/U) = \sum_{s\in |V|} \NE(X^{(s)}/U^{(s)})
\]
(where $X^{(s)}$ and $U^{(s)}$ are localization of $X$ and $U$ at a closed point $s$),
by the argument in (1), it suffices to show that any extremal ray in $\NE(X_{s}/U_{s})$ can be lifted to the generic fiber.
This follows from the deformation theory as in the proof of \cite[Theorem 1.3]{kawamata94}.
\end{proof}

\begin{prop}\label{prop:special termination}\textup{(cf.\,\cite[Theorem 4.2.1]{fujino07})}
Let $V$ be an excellent Dedekind scheme.
Let $(X,B)$ be a $\Q$-factorial three-dimensional dlt pair over $V$.
Consider a sequence of log flips starting from $(X,B)=(X_0,B_0)$:
\[
\xymatrix{
(X_0,B_0) \ar@{-->}[r] & (X_1,B_1) \ar@{-->}[r] & (X_2,B_2) \ar@{-->}[r] & \cdots,
}
\]
where $\phi_i \colon X_i \to Z_i$ is a flipping contraction associated to an extremal ray and $\phi^+ \colon X_i^+=X_{i+1} \to Z_i$ is the log flip.
Then, after finitely many flips, the flipping locus is disjoint from $\lfloor B \rfloor$.
\end{prop}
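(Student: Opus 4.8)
The plan is to run the \emph{special termination} argument of \cite[Theorem 4.2.1]{fujino07}. Beyond that scheme of proof, the only inputs required are the negativity lemma (Proposition~\ref{prop:negativity lemma}), the adjunction formalism for dlt pairs (\cite[Section~4]{kollar13}), and the termination of the MMP for $\mathbb{Q}$-factorial dlt surface pairs, which holds over an excellent Dedekind scheme by \cite{tanaka18}. The backbone is the monotonicity of log discrepancies: for every prime divisor $E$ over $X$ one has $a_E(X_i,B_i)\le a_E(X_{i+1},B_{i+1})$, with strict inequality exactly when the center of $E$ on $X_i$ is contained in the flipping locus $\Exc(\phi_i)$ (equivalently, when its center on $X_{i+1}$ is contained in the flipped locus $\Exc(\phi_i^+)$). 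As over a field, this follows from Proposition~\ref{prop:negativity lemma} applied to a common resolution $p\colon W\to X_i$, $q\colon W\to X_{i+1}$: the divisor $q^*(K_{X_{i+1}/V}+B_{i+1})-p^*(K_{X_i/V}+B_i)$ is $q$-nef with vanishing $q$-pushforward, hence non-positive and supported on the exceptional loci, which gives both the inequality and its strict part.

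The first step is to arrange that $\Exc(\phi_i)$ is disjoint from every zero-dimensional stratum of $\lfloor B_i\rfloor$. Since $(X_i,B_i)$ is lc, strict monotonicity forces that no zero-dimensional lc center of $(X_{i+1},B_{i+1})$ lies in $\Exc(\phi_i^+)$; hence $\phi_i$ restricts to a local isomorphism around each such center, and the zero-dimensional lc centers of $(X_{i+1},B_{i+1})$ are in bijection with the zero-dimensional lc centers of $(X_i,B_i)$ lying outside $\Exc(\phi_i)$. Thus the number of zero-dimensional strata of $\lfloor B_i\rfloor$ is non-increasing, and it drops strictly whenever $\Exc(\phi_i)$ meets one of them. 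Being finite, after discarding finitely many initial flips we may assume $\Exc(\phi_i)\cup\Exc(\phi_i^+)$ is disjoint from every zero-dimensional stratum for all $i$, so each $\phi_i$ is an isomorphism near these finitely many points.

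Next, let $S^{(1)},\dots,S^{(m)}$ be the components of $\lfloor B\rfloor$, which are in natural bijection throughout the sequence because a flip preserves the coefficients of the boundary and contracts no divisor. For each $j$, passing to the normalization $\nu_i\colon S^{(j),N}_i\to S^{(j)}_i$ and using adjunction, we obtain a dlt surface pair $(S^{(j),N}_i,B^{(j)}_i)$ with $K_{S^{(j),N}_i}+B^{(j)}_i=\nu_i^*\big((K_{X_i/V}+B_i)|_{S^{(j)}_i}\big)$, and each $\phi_i$ induces a birational map $\psi_i\colon S^{(j),N}_i\dashrightarrow S^{(j),N}_{i+1}$, whose indeterminacy and exceptional loci lie over $\Exc(\phi_i)\cup\Exc(\phi_i^+)$, which does not decrease log discrepancies of the surface pair, and which fails to be a local isomorphism wherever $\Exc(\phi_i)$ meets $S^{(j)}_i$. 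One then argues, following \cite[Section~4.2]{fujino07}, that for $i\gg 0$ the map $\psi_i$ is a local isomorphism near $S^{(j)}_i\cap\Exc(\phi_i)$: the operations that $\psi_i$ induces on the surface are divisorial contractions and extractions, each contraction strictly drops the Picard number and the extracted divisors range over a fixed finite set, so only finitely many of them occur in total; thereafter a nontrivial $\psi_i$ would have to come from a flipping contraction of the surface pair, but a normal surface admits no small birational contraction. Combining this with the previous sentence forces $\Exc(\phi_i)\cap S^{(j)}_i=\emptyset$ for all $j$ and $i\gg 0$, that is, $\Exc(\phi_i)\cap\lfloor B_i\rfloor=\emptyset$ eventually.

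The main obstacle is the last step: while surface termination is immediate from \cite{tanaka18}, the maps $\psi_i$ induced on the strata need not be genuine surface MMP steps — they may also \emph{extract} curves — so one must bound the number of such extractions. This is exactly the bookkeeping performed in \cite[Section~4.2]{fujino07}, which relates the extracted divisors on the strata to divisorial valuations of bounded complexity over $X$ and uses the reduction of the second step together with the same reasoning applied to the one-dimensional strata; it transfers unchanged to the present setting once Proposition~\ref{prop:negativity lemma}, dlt adjunction, and surface termination are available.
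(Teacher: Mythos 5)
Your proposal follows the same route as the paper, which itself proves this proposition simply by invoking the special termination argument of \cite[Theorem 4.2.1]{fujino07}: you reproduce its skeleton (monotonicity of log discrepancies via Proposition~\ref{prop:negativity lemma}, counting lc centers to clear the zero-dimensional strata, then dlt adjunction to the normalized strata) and, like the paper, defer the genuinely delicate part to Fujino. Just be aware that your phrase ``the extracted divisors range over a fixed finite set'' is not literally true for a dlt surface pair (there can be infinitely many exceptional valuations of log discrepancy $<1$ with centers on $\lfloor \Theta\rfloor$), which is precisely why Fujino's difficulty function with its coefficient set $D(b)$ and center conditions is needed at that step.
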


\begin{proof}
It follows from a similar argument to the argument in the proof of \cite[Theorem 4.2.1]{fujino07}.
\end{proof}

\begin{thm}\label{thm:birational relative mmp'}\textup{(Theorem \ref{thm:birational relative mmp}, cf.\,\cite[Theorem 1.1]{hacon-witaszek19})}
Let $V$ be an excellent Dedekind scheme.
Let $(X,\Delta)$ be a three-dimensional $\Q$-factorial dlt pair over $V$.
Assume that there exists a projective birational morphism $\pi \colon X \to Z$ to a normal $\Q$-factorial variety $Z$ with $\Exc(\pi) \subset \lfloor \Delta \rfloor$.
Then we can run a $(K_{X/V}+\Delta)$-MMP over $Z$ which terminates with a minimal model.
\end{thm}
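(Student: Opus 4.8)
The plan is to run a $(K_{X/V}+\Delta)$-MMP over $Z$ directly, in the spirit of \cite[Theorem 1.1]{hacon-witaszek19}, feeding the flips to Corollary \ref{cor:existence of pl-flip threefold}. First I would record what the hypothesis $\Exc(\pi)\subset\lfloor\Delta\rfloor$ buys us. Since $X$ is $\Q$-factorial and $\pi$ is birational, $\Exc(\pi)$ is a union of prime divisors $E_1,\dots,E_N$, each a component of $\lfloor\Delta\rfloor$; applying the negativity lemma (Proposition \ref{prop:negativity lemma}) to a $\pi$-ample divisor and using that $Z$ is $\Q$-factorial, there is an effective $\pi$-exceptional divisor $G$ with $\Supp(G)=\Exc(\pi)$ and $-G$ $\pi$-ample. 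Consequently: (i) every curve contracted by $\pi$ lies in $\Exc(\pi)\subset\lfloor\Delta\rfloor$; (ii) for every $\pi$-contracted class $\Sigma$ some component $S$ of $\lfloor\Delta\rfloor$ satisfies $S\cdot\Sigma<0$ (write $G=\sum g_jE_j$ with $g_j>0$, so that $G\cdot\Sigma<0$ forces some $E_j\cdot\Sigma<0$); (iii) writing $K_{X/V}+\Delta\sim_{\R,Z}E$ with $E$ supported on $\Exc(\pi)$ and combining with the $\pi$-ampleness of $-G$, one sees that $K_{X/V}+\Delta$ is pseudo-effective over $Z$; and (iv) no extremal contraction over $Z$ is of Mori fiber type, since its target would be birational to $Z$ yet of strictly smaller dimension.

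With these in hand I would run the MMP over $Z$ with scaling of a $\pi$-ample $\R$-divisor $A$. At a given stage, which is again a $\Q$-factorial dlt pair $(X,\Delta)$ over $Z$ with exceptional locus over $Z$ inside $\lfloor\Delta\rfloor$, if $K_{X/V}+\Delta$ is not $\pi$-nef the cone theorem (Proposition \ref{prop:cone theorem}, applicable by (iii)) yields a $(K_{X/V}+\Delta)$-negative extremal ray $\Sigma$ lying on the face of $\NE(X/Z)$ cut out by $K_{X/V}+\Delta+\lambda A$ at the nef threshold $\lambda$. By (ii) some component $S$ of $\lfloor\Delta\rfloor$ has $S\cdot\Sigma<0$; it is a prime $\Q$-Cartier divisor, locally irreducible by Lemma \ref{lem:topologically normal}. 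Choosing a $\Sigma$-supporting $\pi$-nef Cartier divisor $L$, Proposition \ref{prop:contraction anti-ample case} shows $L$ is semiample over $Z$, so $\Sigma$ is contracted by a projective morphism $\phi\colon X\to W$ over $Z$ with $\rho(X/W)=1$. If $\phi$ is divisorial it contracts one of the $E_i$ and we replace $(X,\Delta)$ by its pushforward, which is again $\Q$-factorial dlt over $Z$ with exceptional locus over $Z$ in the floor and with one fewer exceptional divisor. If $\phi$ is small (a flipping contraction), then $-S$ is ample over $W$ by (ii); picking a general effective ample $\Q$-Cartier Weil divisor $A'\sim_{\Q,W}-S$ so that $(X,\Delta+\varepsilon A')$ is still dlt for small $\varepsilon>0$ (a Bertini-type argument as in Lemma \ref{lem:bertini for dlt surface}) and rewriting this pair as $(X,S+\varepsilon A'+(\Delta-S))$, Corollary \ref{cor:existence of pl-flip threefold} produces the flip of $\phi$ (the flip depends only on the contraction since $\rho(X/W)=1$); we replace $(X,\Delta)$ by the flip, which is again $\Q$-factorial dlt over $Z$ with exceptional locus over $Z$ in the floor.

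It remains to prove termination. Divisorial contractions occur at most $N$ times, since each strictly decreases the number of prime divisors contracted by the structure map to $Z$. Between two consecutive divisorial contractions, and after the last one, the MMP is a sequence of flips; for such a sequence special termination (Proposition \ref{prop:special termination}) gives that after finitely many flips the flipping locus is disjoint from $\lfloor\Delta\rfloor$. But by (i) every flipping locus is a nonempty set of curves contracted over $Z$, hence contained in $\Exc(\pi)\subset\lfloor\Delta\rfloor$, so it can never be disjoint from $\lfloor\Delta\rfloor$; therefore there are no further flips. Hence the whole MMP terminates in finitely many steps, and since no Mori fiber contraction occurs by (iv) it ends with a $(K_{X/V}+\Delta)$-minimal model over $Z$.

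The genuinely hard input is the existence of the flips, which is exactly Corollary \ref{cor:existence of pl-flip threefold}, resting in turn on Theorem \ref{thm:pl-filp in ample divisor} and the global $T$-regularity theory of Section \ref{section:pl-flip}; granting that, the remaining points are routine: checking that each $(K_{X/V}+\Delta)$-negative extremal ray over $Z$ meets some component of $\lfloor\Delta\rfloor$ negatively so that the hypotheses of Proposition \ref{prop:contraction anti-ample case} and Corollary \ref{cor:existence of pl-flip threefold} are satisfied, confirming the applicability of the cone theorem over $Z$, and verifying that dlt-ness, $\Q$-factoriality, and the condition $\Exc\subset\lfloor\Delta\rfloor$ persist along the program.
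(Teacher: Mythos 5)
Your overall skeleton (cone theorem, contraction via a component $S$ of $\lfloor\Delta\rfloor$ with $S\cdot\Sigma<0$, flips via Corollary \ref{cor:existence of pl-flip threefold}, and termination by combining Proposition \ref{prop:special termination} with the observation that every flipping locus lies in $\Exc(\cdot/Z)\subset\lfloor\Delta\rfloor$, plus counting divisorial contractions) is exactly the route the paper takes, and those parts, including the negativity-lemma argument producing $G$ and the exclusion of Mori fiber contractions, are fine. The gap is in the flip step. Corollary \ref{cor:existence of pl-flip threefold} requires $A$ to be a locally irreducible \emph{Weil} divisor appearing with coefficient one in the dlt boundary $S+A+B$, and this is not cosmetic: in Lemma \ref{lem:pl-flip good condition} and Proposition \ref{prop:dlt adjunction to plt} the divisor $A$ restricts to the coefficient-one part $A|_{S^N}$ of the adjoint plt surface pair, in Lemma \ref{lem:bcm regular and globally $T$-regular surface} it is the curve $C=A|_{S^N}$ (ample, hence affine) along which pure global $T$-regularity is produced, and the restriction theorem (Proposition \ref{prop:extension result}) uses that $(X,S+A+B)$ is snc along $S\cap A$ and manipulates the term $\tfrac{k-1}{k}A_S$ coming from the coefficient-one $A$. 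Your pair $(X,S+\varepsilon A'+(\Delta-S))$ has the ample part with coefficient $\varepsilon<1$, so the hypotheses of the corollary are simply not met, and none of the underlying Section \ref{section:pl-flip} statements apply to it as written. The obvious repair --- putting $A'$ in at coefficient one --- fails for the other hypothesis: with $A'\sim_{\Q,W}-S$ one gets $(K_{X/V}+S+A'+B)\cdot\Sigma=(K_{X/V}+\Delta)\cdot\Sigma-S\cdot\Sigma$, which need not be negative, so $f$ is in general no longer a flipping contraction for the modified pair; your $\varepsilon$-scaling was presumably introduced precisely to preserve negativity, but it then destroys the coefficient-one/Weil condition. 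Resolving this tension --- i.e., exhibiting, for each flipping contraction of the MMP over $Z$, a dlt pair of the exact shape demanded by Corollary \ref{cor:existence of pl-flip threefold} --- is the substantive content that the paper's one-line proof delegates to the argument of \cite[Theorem 1.1]{hacon-witaszek19} (this is what the introduction means by ``all flips appearing in the proof of Theorem \ref{thm:birational relative mmp} are of this type''), and your substitute for that reduction does not supply it.

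Two smaller points. First, your appeal to ``a Bertini-type argument as in Lemma \ref{lem:bertini for dlt surface}'' is for a three-dimensional pair over a Dedekind base, whereas the paper only proves that lemma for surfaces; a general member through the one-dimensional strata of $\lfloor\Delta\rfloor$ needs a genuine argument in mixed/positive characteristic, so even the dlt-ness of $(X,\Delta+\varepsilon A')$ is not free. Second, your justification of the pseudo-effectivity hypothesis of Proposition \ref{prop:cone theorem} via $-G$ is roundabout; since $\pi$ (and every subsequent structure map) is birational, $K_{X/V}+\Delta$ is automatically big, hence pseudo-effective, over $Z$, which is all the cone theorem needs here and is how the paper uses it.
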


\begin{proof}
It follows from the same argument as in the proof of \cite[Theorem 1.1]{hacon-witaszek19} using the cone theorem (Proposition \ref{prop:cone theorem}), the contraction theorem (Proposition \ref{prop:contraction anti-ample case}), the existence of flips (Corollary \ref{cor:existence of pl-flip threefold}), and the termination of flips (Proposition \ref{prop:special termination}). 
\end{proof}

\begin{lem}\label{lem:resolution}
Let $f \colon Y \to X$ be a projective birational morphism of three dimensional separated excellent integral schemes.
Let $W$ be a closed subscheme of $Y$.
If the singular locus of $X$ is contained in some affine open subset of $X$, then there exists a projective birational morphism $ \nu \colon Y' \to Y$ such that $Y'$ is regular and $\mathrm{Exc}(\nu) \cup \nu^{-1}(W)$ has simple normal crossing support.
\end{lem}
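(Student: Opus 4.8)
The plan is to invoke embedded resolution of singularities for reduced separated excellent schemes of dimension at most three: by Cossart--Piltant \cite{cossart-piltant} (the two-dimensional case going back to Cossart--Jannsen--Saito \cite{cjs}), given such a scheme $T$ that is quasi-projective over a Noetherian affine scheme and a closed subscheme $Z\subseteq T$, there is a \emph{projective} birational morphism $\psi\colon T'\to T$, obtained as a finite composition of blow-ups along regular centers all lying over $\mathrm{Sing}(T)\cup\Supp(Z)$, such that $T'$ is regular, $\Exc(\psi)\cup\psi^{-1}(Z)$ has simple normal crossing support, and $\psi$ is an isomorphism over the open locus where $T$ is regular and $(T,Z)$ is simple normal crossing. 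The content of the lemma is to upgrade this to a single projective birational morphism over $Y$, which a priori need not be quasi-projective over any affine scheme; the hypothesis that $\mathrm{Sing}(X)$ lies in an affine open of $X$ is precisely what makes this upgrade possible.

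First I would reduce to the case where $X$ is regular. Since $X$ is excellent, $\mathrm{Sing}(X)$ is closed in $X$; choose an affine open $U\subseteq X$ with $\mathrm{Sing}(X)\subseteq U$ and apply the input to $U$ with $Z=\varnothing$. Every blow-up center occurring in the resulting resolution of $U$ lies over $\mathrm{Sing}(U)=\mathrm{Sing}(X)$, a subset closed in $X$ and contained in $U$; hence each center is supported on a closed subset of the corresponding modification of $X$, and therefore extends to a closed subscheme of it (equal to the structure sheaf away from its support). Performing the same blow-ups over all of $X$ yields a projective birational morphism $g\colon\widetilde X\to X$ with $\widetilde X$ regular. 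Letting $\widetilde Y$ be the unique irreducible component of $Y\times_X\widetilde X$ dominating $X$, the morphism $\widetilde Y\to Y$ is projective birational, being a closed subscheme of the base change of the projective morphism $g$, while $\widetilde Y\to\widetilde X$ is projective birational onto the regular scheme $\widetilde X$. Replacing $(Y,W)$ by $(\widetilde Y,\widetilde W)$, with $\widetilde W$ the union of the scheme-theoretic preimage of $W$ and the exceptional locus of $\widetilde Y\to Y$, I may assume that $X$ is regular.

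With $X$ regular, $f$ is an isomorphism over its dense open flat locus, so by Raynaud--Gruson flattening there is a blow-up $X_1\to X$ along a center contained in the (nowhere dense, closed) non-flat locus of $f$ such that the strict transform of $Y$ is flat --- hence finite --- and birational over $X_1$; normalizing, one identifies a projective birational modification of $Y$ with the normalization of $X_1=\mathrm{Bl}(X)$, whose singular locus lies over the blow-up center. Iterating this, together with the input applied to affine charts containing the successive centers and extending the resulting blow-up sequences globally as in the previous paragraph, one arrives at a projective birational morphism $\nu\colon Y'\to Y$ with $Y'$ regular; a final embedded-resolution step (again confined to an affine chart and extended) arranges that $\Exc(\nu)\cup\nu^{-1}(W)$ has simple normal crossing support.

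The main obstacle is exactly this passage from proper to projective. The three-dimensional resolution of Cossart--Piltant is not known to be functorial enough to be glued from an arbitrary affine cover, so the successive modifications resolving $Y$ genuinely have to be carried out inside affine charts and then extended as sequences of blow-ups along globally defined centers. The hypothesis that $\mathrm{Sing}(X)$ lies in an affine open of $X$ makes the very first such confinement possible, and the delicate point in the argument is to propagate this --- ensuring both the termination of the process and the affine-containment of all the centers appearing --- through the reductions above.
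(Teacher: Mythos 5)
The input you invoke at the outset is not available at the strength you need. For three\--dimensional excellent schemes, \cite{cossart-piltant} provides resolution of singularities (a birational morphism that is an isomorphism over the regular locus, with the preimage of the singular locus a normal crossings divisor), but it does \emph{not} provide embedded/log resolution of an arbitrary pair $(T,Z)$ by a finite sequence of blow-ups in regular centers lying over $\mathrm{Sing}(T)\cup \Supp(Z)$; that statement is essentially \cite[Conjecture 5.4]{hacon-witaszek20}, which is open in this generality and is exactly why the lemma requires a nontrivial proof and the hypothesis on the singular locus. What is actually known, and what the paper uses, is the analog of that conjecture for closed subschemes of a \emph{regular} three-dimensional separated excellent scheme, via \cite[Theorem 4.4]{cossart-piltant} and \cite{cjs}. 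Your first reduction (to $X$ regular) can be repaired using only the weaker non-embedded statement: a projective resolution of the affine chart $U$, being an isomorphism over $U\setminus \mathrm{Sing}(X)$, is the blow-up of an ideal co-supported in $\mathrm{Sing}(X)$, which extends to $X$. But your later invocations of the full input --- applied to the singular normalizations of flattening blow-ups and with nonempty $Z$ --- are precisely the unknown statement.

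The decisive gap is in the second half. Once $X$ is regular, the centers your strategy produces (the non-flat locus of $f$, the singular locus of the normalized blow-up $X_1^{N}$, the non-snc loci appearing afterwards) are closed subsets over which the hypothesis gives no control: only $\mathrm{Sing}(X)$ is assumed to lie in an affine open, and these new loci need not be contained in any affine chart of $X$ or of its modifications. You flag this as ``the delicate point'' but give no argument, and none is available --- the plan of ``apply the input on an affine chart containing the center and extend'' fails at its first genuine use, and the termination of the iteration is likewise unaddressed. The paper's proof is structured to avoid exactly this: the affine hypothesis is used once, to obtain a projective resolution $X_0\to X$ via \cite[Theorem 1.1]{cossart-piltant}; from then on one works over the regular scheme $X_0$, where the needed embedded resolution (the analog of \cite[Conjecture 5.4]{hacon-witaszek20}) holds globally with no affineness or quasi-projectivity assumption, and the argument of \cite[Proposition 5.5]{hacon-witaszek20} applied to $X_0\dashrightarrow Y$, with an elimination of indeterminacy projective over both $X_0$ and $Y$, gives the conclusion. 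With that in place the Raynaud--Gruson detour is unnecessary; without it, your argument does not close.
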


\begin{proof}
By \cite[Theorem 1.1]{cossart-piltant}, the scheme $X$ admits a projective resolution $X_0$.
By \cite[Theorem 4.4]{cossart-piltant} and \cite{cjs}, an analog of \cite[Conjecture 5.4]{hacon-witaszek20} holds for regular three dimensional separated excellent integral schemes and its closed subschemes.
By an analogous argument of the proof of \cite[Proposition 5.5]{hacon-witaszek20} for $X_0 \dashrightarrow Y$, we obtain the assertion.
We note that we can take an elimination of $X_0 \dashrightarrow Y$ which is projective over $X_0$ and $Y$.
\end{proof}

\begin{rmk}\label{rem:resolution}
Let $U$ be a regular open subset of $Y$ such that $f|_{U}$ is an isomorphism and $\mathrm{Exc}(\nu) \cup \nu^{-1}(W)$ has simple normal crossing support on $U$.
We can take a morphism $\nu$ in Lemma \ref{lem:resolution} as a morphism whose isomorphic locus contains $U$.
Indeed, there exists an elimination of $X_0 \dashrightarrow Y$ whose isomorphic locus contains $U$. 
\end{rmk}

\begin{cor}\label{cor:dlt modification}\textup{(\cite[Corollary 1.4]{hacon-witaszek19})}
Let $V$ be an excellent Dedekind separated scheme.
Let $(X,\Delta)$ be a $\Q$-factorial three-dimensional log pair over $V$ such that any coefficient of $\Delta$ is at most one.
Assume that $X$ is projective birational over an affine $V$-variety.
Then there exits a projective birational morphism $\pi \colon Y \to X$ such that the pair $(Y,\Delta_Y:=\pi^{-1}_*\Delta+\Exc(\pi))$ satisfies the following conditions.
\begin{enumerate}
    \item $(Y,\Delta_Y)$ is a $\Q$-factorial dlt pair over $V$, and
    \item $K_{Y/V}+\Delta_Y$ is nef over $X$.
%    \item $\pi$ is an isomorphism over the dlt locus of $(X,\Delta)$.
\end{enumerate}
\end{cor}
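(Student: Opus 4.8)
The plan is to follow the classical construction of a dlt modification: resolve $(X,\Delta)$, add the reduced exceptional divisor to the boundary, and run a relative MMP over $X$ to make the log canonical divisor nef. First I would produce a log resolution. Write $h\colon X\to T$ for the given projective birational morphism to an affine $V$-variety $T$, and apply Lemma \ref{lem:resolution} with $h$ in the role of ``$f\colon Y\to X$'' and $\Supp\Delta\subseteq X$ in the role of ``$W$''; since $T$ is affine, its singular locus lies in an affine open of $T$ automatically, so the lemma together with Remark \ref{rem:resolution} gives a projective birational morphism $g\colon W\to X$ with $W$ regular and $\Exc(g)\cup g^{-1}(\Supp\Delta)$ of simple normal crossing support. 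Set $\Delta_W:=g^{-1}_*\Delta+\Exc(g)$, with $\Exc(g)$ the reduced exceptional divisor. Because every coefficient of $\Delta$ is at most $1$ and $g^{-1}_*\Delta$ and $\Exc(g)$ share no component, every coefficient of $\Delta_W$ is at most $1$; as $W$ is regular, $(W,\Delta_W)$ is log smooth, hence a $\Q$-factorial dlt pair over $V$; and $\Exc(g)\subseteq\lfloor\Delta_W\rfloor$ by construction.

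Next I would run the relative MMP. Apply Theorem \ref{thm:birational relative mmp'} to the three-dimensional $\Q$-factorial dlt pair $(W,\Delta_W)$ and the projective birational morphism $g\colon W\to X$ onto the normal $\Q$-factorial variety $X$ (the hypothesis $\Exc(g)\subseteq\lfloor\Delta_W\rfloor$ being in place). This produces a $(K_{W/V}+\Delta_W)$-MMP over $X$ terminating with a minimal model $\pi\colon Y\to X$; let $\phi\colon W\dashrightarrow Y$ be the corresponding birational map and put $\Delta_Y:=\phi_*\Delta_W$. By the construction of this MMP — the contraction theorem (Proposition \ref{prop:contraction anti-ample case}), the existence of flips (Corollary \ref{cor:existence of pl-flip threefold}), and termination, all subsumed in Theorem \ref{thm:birational relative mmp'} — the pair $(Y,\Delta_Y)$ is again $\Q$-factorial dlt over $V$ and $K_{Y/V}+\Delta_Y$ is nef over $X$, which gives condition (2) and the $\Q$-factorial dlt part of (1).

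Finally I would identify $\Delta_Y$ with $\pi^{-1}_*\Delta+\Exc(\pi)$. The key point is that $\phi$ contracts only $g$-exceptional divisors: its inverse contracts no divisor (each step of an MMP is a morphism or an isomorphism in codimension one), so every prime divisor of $Y$ is the strict transform under $\phi$ of a unique prime divisor of $W$; and if $D\subseteq W$ is a prime divisor not contracted by $g$, then $g(D)$ is a prime divisor of $X$ whose strict transform under the projective birational morphism $\pi$ is a prime divisor, which forces $\phi_*D\neq 0$. Hence $\phi_*(g^{-1}_*\Delta)=\pi^{-1}_*\Delta$, and a prime divisor $F$ of $Y$ is $\pi$-exceptional exactly when $F=\phi_*E$ for a component $E$ of $\Exc(g)$ surviving $\phi$ (since then $\pi(F)=g(E)$ has dimension strictly less than $\dim F$, while non-$g$-exceptional prime divisors of $W$ map onto prime divisors of $X$). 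Therefore $\Delta_Y=\phi_*\Delta_W=\pi^{-1}_*\Delta+\sum_E\phi_*E=\pi^{-1}_*\Delta+\Exc(\pi)$, the sum ranging over the components $E$ of $\Exc(g)$ not contracted by $\phi$. The substantive inputs — resolution of three-folds in mixed characteristic and the existence and termination of the relative threefold MMP — are already furnished by Lemma \ref{lem:resolution} and Theorem \ref{thm:birational relative mmp'}, so I anticipate no real obstacle; the only care needed is the (trivial) verification of the hypothesis of Lemma \ref{lem:resolution} via the affine base $T$ and, as the main point, the bookkeeping just described showing that the MMP contracts nothing over $X$ beyond $g$-exceptional divisors.
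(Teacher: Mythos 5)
Your proposal is correct and follows essentially the same route as the paper: take a log resolution via Lemma \ref{lem:resolution} (using the affine base to verify its hypothesis), set $\Delta_W=g^{-1}_*\Delta+\Exc(g)$, and run the $(K_{W/V}+\Delta_W)$-MMP over $X$ furnished by Theorem \ref{thm:birational relative mmp'} to reach a minimal model. The only difference is that you spell out the (correct, standard) bookkeeping identifying the pushed-forward boundary with $\pi^{-1}_*\Delta+\Exc(\pi)$, which the paper leaves implicit.
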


\begin{proof}
By Lemma \ref{lem:resolution},
we have a log resolution $f \colon W \to X$ of $(X,\Delta)$.
%which is an isomorphism over the simple normal crossing locus of $(X,\Delta)$.
We write $\Delta_W:=\pi^{-1}_*\Delta+\Exc(\pi)$, then $(W,\Delta_W)$ is dlt.
By Theorem \ref{thm:birational relative mmp}, we can run a $(K_{W/V}+\Delta_W)$-MMP over $X$, and we get a minimal model $\pi \colon Y \to X$.
Then $(Y,\Delta_Y:=\pi^{-1}_*\Delta +\Exc(\pi))$ is dlt and $K_{Y/V}+\Delta_Y$ is nef over $X$.
%Let $E$ be an exceptional prime divisor on $Y$.
%The dlt locus of $(X,\Delta)$ is denoted by $U$.
%By the negativity lemma (Proposition \ref{prop:negativity lemma}), we have
%\[
%K_{Y/V}+\Delta_Y-\pi^*(K_X+\Delta) \leq 0,
%\]
%so the log discrepancy $a_E(X,\Delta) $ is non-positive.
%If the center $c_X(E)$ intersects $U$, then $(X,\Delta)$ is a simple normal crossing pair around the generic point $x$ of $c_X(E)$ by the definition of dlt.
%Since $f$ is an isomorphism at $x$, so is $\pi$.
%Thus, $\pi$ is small over $U$, and in particular $\pi$ is an isomorphism because $X$ is $\Q$-factorial.
\end{proof}

\begin{cor}\label{cor:inversion of adjunction}\textup{(cf.\, \cite[Corollary 1.5]{hacon-witaszek19})}
Let $V$ be an excellent Dedekind separated scheme.
Let $(X,S+B)$ be a $\Q$-factorial three-dimensional log pair over $V$ such that $S$ is a locally irreducible Weil divisor.
Assume that $X$ is projective birational over an affine $V$-variety.
Then $(X,S+B)$ is plt on a neighborhood of $S$ if and only if $(S^N,B_S)$ is klt, where $S^N$ is the normalization of $S$ and $B_S:=\mathrm{Diff}_{S^N}(B)$ is the different.
\end{cor}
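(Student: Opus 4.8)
The plan is to deduce the inversion of adjunction for klt pairs from the corresponding statement for dlt pairs, which in turn follows from Theorem \ref{thm:birational relative mmp'} via the existence of dlt modifications. First I would treat the "only if" direction, which is the easy and essentially formal one: if $(X,S+B)$ is plt in a neighborhood of $S$, then $S$ is normal there by the standard connectedness/normality results for plt centers (see \cite[Section 4]{kollar13}), so $S^N\to S$ is an isomorphism near $S$, and the usual adjunction formula $K_{S^N/V}+B_S=(K_{X/V}+S+B)|_{S^N}$ together with \cite[Proposition 4.5]{kollar13} shows $(S^N,B_S)$ is klt.

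For the "if" direction, the strategy is to run a dlt modification and compare discrepancies. Suppose $(S^N,B_S)$ is klt. Applying Corollary \ref{cor:dlt modification} to the pair $(X,S+B)$ (after shrinking so that $X$ is projective birational over an affine $V$-variety, which is permissible since plt-ness is local), we obtain a projective birational $\pi\colon Y\to X$ with $(Y,\Delta_Y=\pi^{-1}_*(S+B)+\Exc(\pi))$ a $\Q$-factorial dlt pair such that $K_{Y/V}+\Delta_Y$ is nef over $X$. Write $S_Y$ for the strict transform of $S$; it is a component of $\lfloor\Delta_Y\rfloor$, hence normal up to universal homeomorphism by Lemma \ref{lem:topologically normal}, and since it is $\Q$-Cartier ($Y$ being $\Q$-factorial) we get $S_Y^N\to S_Y$ a universal homeomorphism. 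By adjunction on the dlt pair $Y$, the different $\Delta_{S_Y}:=\mathrm{Diff}_{S_Y^N}(\Delta_Y-S_Y)$ makes $(S_Y^N,\Delta_{S_Y})$ a dlt surface pair, and the induced morphism $S_Y^N\to S^N$ is crepant for $(S_Y^N,\Delta_{S_Y})$ over $(S^N,B_S)$ in the sense that pulling back $K_{S^N/V}+B_S$ gives $K_{S_Y^N/V}+\Delta_{S_Y}$ minus the effective $\pi$-exceptional contribution coming from $\Exc(\pi)\cap S_Y$. The key point is then to show that this exceptional contribution vanishes near $S$, which forces $\pi$ to be an isomorphism over a neighborhood of $S$ and hence $(X,S+B)$ to be plt there.

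The main obstacle is precisely this last negativity argument: one must show that if $(S^N,B_S)$ is klt, then no $\pi$-exceptional divisor of $Y$ meets $S_Y$, equivalently that all log discrepancies $a_E(X,S+B)>0$ for $E$ centered on $S$. Here I would argue as in \cite[Corollary 1.5]{hacon-witaszek19}: restrict the relation "$K_{Y/V}+\Delta_Y$ is nef over $X$" to $S_Y^N$, so that $K_{S_Y^N/V}+\Delta_{S_Y}$ is nef over $S^N$; compare with the crepant pullback of $K_{S^N/V}+B_S$, whose difference is a $\pi|_{S_Y^N}$-exceptional divisor; then apply the negativity lemma (Proposition \ref{prop:negativity lemma}) on the surface morphism $S_Y^N\to S^N$ to conclude that this difference is anti-effective, while klt-ness of $(S^N,B_S)$ together with the dlt structure forces it to be effective, hence zero. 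Once the exceptional locus over $S$ is empty we are done. Two technical points need care along the way: the passage to the affine-birational-over-a-base situation so that Corollary \ref{cor:dlt modification} applies, and the compatibility of canonical sheaves and differents with the universal homeomorphisms $S_Y^N\to S_Y$ and $S^N\to S$, which is handled by the same kind of argument as in Lemma \ref{lem:base change} using \cite[Lemma 2.5]{tanaka18}. With these in hand the proof follows the template of \cite[Corollary 1.5]{hacon-witaszek19} verbatim.
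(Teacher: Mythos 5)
Your overall route is the same as the paper's: the paper's proof of this corollary is literally ``run the argument of \cite[Corollary 1.5]{hacon-witaszek19}, with Corollary \ref{cor:dlt modification} in place of their dlt modification,'' and your plan (easy direction by adjunction to $S^N$; hard direction by dlt modification, adjunction to the strict transform, negativity, coefficient comparison against klt-ness) is exactly that template. Two points in your write-up, however, would not survive as stated. First, in the key step you propose to apply the negativity lemma on the surface morphism $g\colon S_Y^N\to S^N$, asserting that the difference $K_{S_Y^N/V}+\Delta_{S_Y}-g^*(K_{S^N/V}+B_S)$ is $g$-exceptional. That is not automatic: a $\pi$-exceptional divisor $E$ of the dlt modification can have $\pi(E)$ equal to a curve $C\subset S$ and meet $S_Y$ along a curve dominating $C$, in which case the difference has components that are \emph{not} exceptional over $S^N$ and its pushforward need not be $\leq 0$, so Proposition \ref{prop:negativity lemma} cannot be invoked on the surface. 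The argument of \cite{hacon-witaszek19} instead applies the negativity lemma upstairs on $\pi\colon Y\to X$: since $K_{Y/V}+\Delta_Y$ is nef over $X$, one gets $\pi^*(K_{X/V}+S+B)=K_{Y/V}+\Delta_Y+F$ with $F\geq 0$ exceptional; restricting to $S_Y^N$ and using that every curve of $\Exc(\pi)\cap S_Y$ appears in $\mathrm{Diff}_{S_Y^N}(\Delta_Y-S_Y)$ with coefficient $\geq 1$, one contradicts klt-ness of $(S^N,B_S)$ (all coefficients of $g^*(K_{S^N/V}+B_S)-K_{S_Y^N/V}$ are $<1$) unless $\Exc(\pi)\cap S_Y=\emptyset$; connectedness of the fibers then forces $\pi$ to be an isomorphism over a neighborhood of $S$, and a Proposition \ref{prop:dlt adjunction to plt}-type coefficient argument upgrades dlt to plt there. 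So the negativity is a threefold statement and the surface input is a coefficient, not an exceptionality, comparison.

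Second, a minor but characteristic-sensitive slip: in the ``only if'' direction you should not claim that $S$ is normal near $S$ by ``standard connectedness/normality results for plt centers.'' In mixed and positive characteristic, normality of plt centers is precisely what is not automatic (this is why the paper only proves normality up to universal homeomorphism in Lemma \ref{lem:topologically normal}, and obtains genuine normality only via global $T$-regularity in special situations). Fortunately you do not need it: adjunction is formulated on the normalization, and \cite[Section 4]{kollar13} gives directly that plt-ness of $(X,S+B)$ near $S$ implies $(S^N,\mathrm{Diff}_{S^N}(B))$ is klt, with no normality claim about $S$ itself.
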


\begin{proof}
It follows from the same argument as in the proof of \cite[Corollary 1.5]{hacon-witaszek19} replacing \cite[Corollary 1.4]{hacon-witaszek19} into Corollary \ref{cor:dlt modification}.
\end{proof}

\begin{lem}\label{lem:trivial case}
Let $V$ be an excellent Dedekind scheme.
Let $(X,\Delta)$ be a three-dimensional dlt pair over $V$ with $\lfloor \Delta \rfloor=S+S'$, where $S$ and $S'$ are locally irreducible $\Q$-Cartier divisors.
Let $f \colon X \to Z$ be a projective morphism over $V$ such that $f(S)$ is two-dimensional.
Let $C$ be an irreducible component of $S \cap S'$.
If $f(C)$ is a point, then $(S' \cdot C)$ is negative.
\end{lem}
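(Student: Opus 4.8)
The plan is to use adjunction along $S$ to transfer the problem to the normal surface $S^N$, and then to derive the negativity of the intersection number from the fact that $(K_{X/V}+\Delta)|_{S^N} = K_{S^N/V} + \operatorname{Diff}_{S^N}(\Delta - S)$ and that $f|_S$ contracts $C$. First I would pass to the normalization $\nu \colon S^N \to S$; since $S$ is locally irreducible and $C$ is a prime divisor on $S$ (being a component of $S \cap S'$, which has codimension one in $S$), $C$ has a well-defined strict transform $C_{S^N}$ on $S^N$, and $\nu$ being finite means $f(C)$ is a point if and only if $(f|_S \circ \nu)(C_{S^N})$ is a point. Because $S'$ is $\Q$-Cartier and $S$ and $S'$ share no common component, the restriction $S'|_{S^N}$ is a well-defined effective $\Q$-Cartier $\Q$-divisor on $S^N$ containing $C_{S^N}$ in its support with positive coefficient, and $(S' \cdot C) = (\nu^*S'|_{S^N} \cdot C_{S^N})$ computed on $S^N$.

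Next I would invoke adjunction in the form of \cite[Section 4]{kollar13}: writing $\Delta = S + S' + B''$ with $\lfloor B'' \rfloor = 0$, adjunction gives $(K_{X/V} + \Delta)|_{S^N} \sim_{\Q} K_{S^N/V} + \operatorname{Diff}_{S^N}(S' + B'')$, where $\operatorname{Diff}_{S^N}(S'+B'') = S'|_{S^N} + (\text{other nonnegative terms}) \geq S'|_{S^N}$ as $\Q$-divisors. The key geometric input is that $f|_S$ contracts $C$ to a point while $f(S)$ is two-dimensional, so $S^N \to f(S)^N$ is a generically finite (in fact birational onto its image after Stein factorization) projective morphism that contracts $C_{S^N}$. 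Applying the negativity lemma (Proposition \ref{prop:negativity lemma}) to this contraction — or, equivalently, using the standard fact that an irreducible curve contracted by a birational projective morphism of normal surfaces has negative self-intersection with any effective divisor whose pushforward avoids it in the appropriate sense — I would obtain that any exceptional effective $\Q$-divisor, such as the part of $\operatorname{Diff}_{S^N}(S'+B'')$ supported on the $f$-exceptional locus, restricts negatively to $C_{S^N}$. More precisely, I expect the cleanest route is: since $C_{S^N}$ is contracted and $K_{S^N/V} + \operatorname{Diff}$ restricts to something controlled (using that $C$ lies in a fiber of $f$), isolate $S'|_{S^N}$ and show $(\nu^* S'|_{S^N} \cdot C_{S^N}) < 0$ by the negativity lemma applied to the pair consisting of the contracted curve and the effective divisor $S'|_{S^N}$, which is nef-negative on that fiber because it is $f$-exceptional in the relevant sense or because $f(S)$ two-dimensional forces $S'$ to meet the fiber over $f(C)$ non-trivially.

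The main obstacle I anticipate is bookkeeping the difference divisor and ensuring the negativity lemma applies in exactly the right form: one must check that $S'|_{S^N}$ (or rather its relevant component along the fiber through $C$) has the sign needed, which amounts to showing $C_{S^N}$ is genuinely $(f|_S\circ\nu)$-exceptional — this uses that $f(S)$ is two-dimensional so that the fiber through $C$ is one-dimensional and $C$ is a component of it, hence contracted, and then $S'$ meets this fiber, contributing the strictly negative term. A secondary technical point is that $S$ need not be normal, so all the adjunction and intersection computations must be done after the universal homeomorphism $\nu \colon S^N \to S$ (which exists and is well-behaved by Lemma \ref{lem:topologically normal} when $S$ is $\Q$-Cartier), and one must check $C$ pulls back to a single prime divisor and that intersection numbers are preserved; this is routine since $\nu$ is finite birational. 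Once these are in place, the strict negativity $(S' \cdot C) < 0$ follows, because the contracted curve $C_{S^N}$ must have negative intersection with the effective divisor $S'|_{S^N}$ which meets it.
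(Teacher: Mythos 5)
Your reductions (passing to $S^N$, the projection formula $(S'\cdot C)=(S'|_{S^N}\cdot C|_{S^N})$, and the observation that $C|_{S^N}$ is contracted by the generically finite morphism $S^N \to f(S)$ because $f(S)$ is two-dimensional) are fine, but the step that is supposed to produce the strict negativity does not work as written. You claim that the negativity lemma, or the exceptionality of the relevant divisor, forces the effective divisor $S'|_{S^N}$ to restrict negatively to the contracted curve $C|_{S^N}$. That is not a correct general principle: an effective divisor whose support contains a contracted curve need not meet it negatively, since other components passing through points of $C|_{S^N}$ contribute nonnegatively (e.g.\ for two contracted curves $C_1,C_2$ meeting in a point, $(C_1+2C_2)\cdot C_1$ can be $\geq 0$ even though the intersection matrix is negative definite); the negativity lemma only produces \emph{some} component on which an exceptional effective divisor is negative, not a prescribed one. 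Moreover $S'|_{S^N}$ is not exceptional for $f|_{S^N}$ in general, and the remark that ``$f(S)$ two-dimensional forces $S'$ to meet the fiber over $f(C)$ non-trivially'' points, if anything, toward a positive contribution, not a negative one. So the heart of the lemma — why no other piece of $S'|_{S^N}$ meeting $C|_{S^N}$ can spoil the sign — is exactly what is left unproved.

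The paper closes this gap by a local-irreducibility argument that your outline never uses. By Proposition \ref{prop:dlt adjunction to plt} (with $S$, $S'$ locally irreducible) the pair $(S^N,D)$ with $D=\mathrm{Diff}_{S^N}(\Delta-S)$ is plt; since $S^N$ is a surface, $\lfloor D\rfloor$ is normal, hence locally irreducible, and by snc-ness of $(X,\Delta)$ at the generic points of $S\cap S'$ one has $\lfloor D\rfloor=S'|_{S^N}$. Together with the local irreducibility of $S\cap S'$ (via the universal homeomorphism $S^N\to S$ of Lemma \ref{lem:topologically normal}), this means the connected component of $S'|_{S^N}$ through $C|_{S^N}$ is $C|_{S^N}$ itself, so
\[
(C\cdot S')=(C|_{S^N}\cdot \lfloor D\rfloor)=(C|_{S^N}^2)<0,
\]
the last inequality because $C|_{S^N}$ is an irreducible curve contracted by the generically finite morphism $f|_{S^N}\colon S^N\to f(S)$. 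In short, the intersection must be converted into a self-intersection before negativity of contracted curves can be invoked; without the pltness/local-irreducibility input your argument cannot rule out positive contributions from other components of $S\cap S'$.
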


\begin{proof}
We set $D:=\mathrm{Diff}_{S^N}(\Delta-S)$, then $(S^N,D)$ is plt by Proposition \ref{prop:dlt adjunction to plt}.
As $S^N$ is two-dimensional, $\lfloor D \rfloor$ is normal, and in particular, $\lfloor D \rfloor$ is locally irreducible.
Since $(X,\Delta)$ is a simple normal crossing pair on the generic point of $S \cap S'$, we have $\lfloor D \rfloor=S'|_{S^N}$.
By Lemma \ref{lem:topologically normal}, $S^N \to S$ is a universal homeomorphism, thus $S \cap S'$ is also locally irreducible.
In particular, $C|_S$ is $\Q$-Cartier and $(C|_{S^N} \cdot \lfloor D\rfloor )=(C|_{S^N}^2)$.
Thus, we have
\[
(C \cdot S')=(C|_S \cdot S'|_S) = (C|_{S^N} \cdot \lfloor D \rfloor)=(C|_{S^N}^2) < 0,
\]
because $C|_{S^N}$ is a contracted curve of $S^N$ via generically finite morphism $f|_{S^N} \colon S^N \to f(S)$.
\end{proof}

\begin{prop}\label{prop:existence of flip trivial case}\textup{(cf.\,\cite[Proposition 4.1]{hacon-witaszek19})}
Let $V$ be the spectrum of an excellent discrete valuation ring.
Let $X$ be a flat $V$-variety of relative dimension two.
Let $(X,\Delta)$ be a dlt $\Q$-factorial log pair over $V$ .
Let $f \colon X \to Z$ be a $(K_{X/V}+\Delta)$-flipping contraction with $\rho(X/Z)=1$.
Suppose that $X_s$ is contained in $\lfloor \Delta \rfloor$ as sets and every irreducible component of $X_s$ is numerically trivial over $Z$.
Then the flip of $f$ exists.

\end{prop}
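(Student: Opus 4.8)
The plan is to follow the argument of \cite[Proposition 4.1]{hacon-witaszek19}. Since the existence of a flip is local on $Z$, we may assume $Z$ is affine and, after localizing, that $Z$ has a unique closed point $z$, which we may take to lie in $f(\Exc(f))$; as in Corollary \ref{cor:existence of pl-flip threefold} one then reduces, via Lemma \ref{lem:base change}, to the case where $V$ is a complete discrete valuation ring with infinite residue field, because what has to be established is the finite generation of a section algebra, which is insensitive to this base change. Since $X$ has relative dimension two over $V$, the generic fibre $X_\eta$ is a surface, so the small birational contraction $f$ is an isomorphism over $Z_\eta$; hence $\Exc(f)\subset X_s$, $z\in Z_s$, and the flipping locus lies in $X_s\subset\lfloor\Delta\rfloor$. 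By the numerical triviality hypothesis, $S\cdot C=0$ for every component $S$ of $X_{s,\mathrm{red}}$ and every flipping curve $C$.

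Fix a prime component $S$ of $X_{s,\mathrm{red}}$ meeting a flipping curve; since $X_s\subset\lfloor\Delta\rfloor$ as sets and $\Delta$ is a boundary, $S$ occurs in $\Delta$ with coefficient $1$. As $f$ is small, $S$ is not $f$-exceptional, so $f|_S\colon S\to T:=f(S)$ is birational; writing $g\colon S^N\to T^N$ for the induced morphism of normalizations and $\Delta_S:=\mathrm{Diff}_{S^N}(\Delta-S)$, adjunction (\cite[\S4]{kollar13}) gives that $(S^N,\Delta_S)$ is dlt, $(K_{X/V}+\Delta)|_{S^N}\sim_{\Q}K_{S^N/V}+\Delta_S$, and $g$ is a $(K_{S^N/V}+\Delta_S)$-negative birational contraction of excellent surfaces contracting the strict transform of $C$. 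By the MMP for excellent surfaces (\cite[Theorem 1.1, Corollary 4.11]{tanaka18}), together with Lemma \ref{lem:topologically normal} to transfer local irreducibility from $S$ to $S^N$, the morphism $g$ is a step of a surface MMP, $T^N$ is again a $\Q$-factorial dlt surface, and the log canonical algebra $\bigoplus_{m\ge 0}g_*\sO_{S^N}(\lfloor m(K_{S^N/V}+\Delta_S)\rfloor)$ over $T^N$ is a finitely generated $\sO_{T^N}$-algebra.

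The key step is to show that, after localizing at $z$, the restriction maps
\[
H^0\bigl(X,\sO_X(\lfloor m(K_{X/V}+\Delta)\rfloor)\bigr)\longrightarrow H^0\bigl(S^N,\sO_{S^N}(\lfloor m(K_{S^N/V}+\Delta_S)\rfloor)\bigr)
\]
are surjective for all sufficiently divisible $m$. I would prove this by the descending induction on the twist used in the proof of Proposition \ref{prop:extension result}: the $f$-ampleness of $-(K_{X/V}+\Delta)$ furnishes the ample divisor needed at each step, Lemma \ref{lem:pertubation lemma} lifts a chosen section of $\sO_{S^N}(\lfloor m(K_{S^N/V}+\Delta_S)\rfloor)$ into the relevant submodule $T^0$, and Corollary \ref{cor:zero map} provides the vanishing up to alterations that allows it to be extended from $S^N$ to $X$; the numerical triviality of the components of $X_s$ over $Z$ is exactly what makes the bookkeeping with $S$ removed from the boundary go through, in place of the ample boundary component available in the pl-flip case. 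Combined with the finite generation obtained above, this shows that $\bigoplus_{m\ge 0}f_*\sO_X(\lfloor m(K_{X/V}+\Delta)\rfloor)$ is a finitely generated $\sO_Z$-algebra near $z$; its relative $\operatorname{Proj}$ over $Z$ is a small birational modification $f^+\colon X^+\to Z$ with $K_{X^+/V}+\Delta^+$ ample over $Z$, and one checks as usual, using $\rho(X^+/Z)=1$, that $X^+$ is $\Q$-factorial, so $f^+$ is the flip of $f$.

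The main obstacle is precisely this restriction step. Because $S\cdot C=0$, the flipping contraction is \emph{not} of pl type, so neither Shokurov's reduction to pl-flips (\cite[Lemma 2.3.6]{corti}) nor Corollary \ref{cor:existence of pl-flip threefold} can be invoked directly, and the finite generation of the restricted algebra cannot be propagated back to $X$ by the standard pl-flip argument; one must instead rerun the extension machinery of Section \ref{section:pl-flip} by hand, relying on the $f$-ampleness of $-(K_{X/V}+\Delta)$ and on the numerical triviality of the special fibre over $Z$. A secondary technical point is that $X_s$, and hence the surfaces $S$, need not be geometrically normal, so one works throughout with normalizations, transferring local irreducibility by Lemma \ref{lem:topologically normal}.
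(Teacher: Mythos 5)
Your plan founders at exactly the step you flag as the ``main obstacle'', and no substitute is offered for the hypothesis that the Section \ref{section:pl-flip} machinery genuinely needs. Proposition \ref{prop:extension result} and its engine (Lemma \ref{lem:pertubation lemma} together with Proposition \ref{prop:more inversion}) lift sections from the boundary surface only when the restricted pair $(S,(1-\varepsilon)A_S+B_S)$ is globally $T$-regular over $z$ for an \emph{ample} boundary component $A$; in the pl case this input is produced by Lemma \ref{lem:bcm regular and globally $T$-regular surface}, whose hypotheses ($C$ and $-(K_{S/V}+C+B)$ ample) are precisely what fail here, since every component of $X_s$ is numerically trivial over $Z$. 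Worse, global $T$-regularity forces the pair to be klt (Proposition \ref{prop:klt and plt by globally $T$-regular}), whereas $(S^N,\mathrm{Diff}_{S^N}(\Delta-S))$ will in general have nonzero reduced boundary coming from the other components of $\lfloor\Delta\rfloor$, and Lemma \ref{lem:pertubation lemma} cannot even be invoked without a globally $T$-regular pair on $S^N$ to perturb. Asserting that ``numerical triviality \dots makes the bookkeeping go through'' is not an argument: numerical triviality is what destroys the pl structure. Finally, even granting surjectivity of the restriction maps, finite generation of the restricted algebra yields finite generation of $R(K_{X/V}+\Delta)$ by Shokurov's trick only when $S\sim_{\Q,Z}-\lambda(K_{X/V}+\Delta)$ with $\lambda>0$, so that the kernel of restriction is again a twist of the same algebra; here $S\equiv_Z 0$ and that induction collapses as well.

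The paper's proof avoids section extension altogether. Using Lemma \ref{lem:trivial case} it first shows that $f^{-1}(z)$ can meet only one irreducible component of $X_s$ (two components would force a flipping curve inside both, contradicting that lemma), so after shrinking $Z$ one may assume $X_s$ is irreducible and hence $(X,\Delta)$ is plt. Then, following the proof of \cite[Theorem 1.2]{hacon-witaszek20}, one chooses a reduced numerically trivial $\Q$-Cartier divisor $H$ as in \cite[Lemma 5.3]{hacon-witaszek20}, takes a dlt modification $Y\to X$ of $(X,\Delta+H)$ via Corollary \ref{cor:dlt modification} (i.e.\ Theorem \ref{thm:birational relative mmp}), and runs a $(K_{Y/V}+\Delta_Y+H_Y)$-MMP and then a $(K_{Y/V}+\Delta_Y)$-MMP with scaling of $H_Y$ over $Z$ (Corollary \ref{cor:mmp with scaling}); every flip occurring there is a pl-flip with an ample divisor in the boundary, supplied by Corollary \ref{cor:existence of pl-flip threefold}. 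The resulting minimal model is small over $Z$ by the negativity lemma (Proposition \ref{prop:negativity lemma}) because $(X,\Delta)$ is plt, and it is the flip because $\rho(X/Z)=1$. To salvage your route you would need either a new source of global $T$-regularity in the absence of any ample boundary component or an extension theorem not relying on it; neither exists in the paper, so you should switch to the dlt-modification argument above.
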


\begin{proof}
We may assume that $\lfloor \Delta \rfloor=X_{s,\mathrm{red}}$.
We take a point $z \in f(\Exc(f))$.
By shrinking $Z$, we may assume that $Z$ is affine.
First, we prove that $f^{-1}(z)$ intersects with only one irreducible component of $X_s$.
Otherwise, there exist two irreducible components $S$ and $S'$ intersecting $f^{-1}(z)$.
By the connectedness of $f^{-1}(z)$, there exists a flipping curve $C$ intersecting with $S$ and $S'$.
By the assumption, $S$ and $S'$ are numerically trivial over $Z$.
Since $(S \cdot C)=0$ and $(S' \cdot C)=0$, $C$ is contained in $S$ and $S'$.
It contradicts Lemma \ref{lem:trivial case}.

Thus, we may assume that $X_s$ is irreducible by shrinking $Z$ around $z$, and in particular, $(X,\Delta)$ is plt.
We take a reduced $\Q$-Cartier divisor $H$ on $X$ as in \cite[Lemma 5.3]{hacon-witaszek20}, then $H \equiv_Z 0$ and it satisfies the conditions in the proof of \cite[Theorem 1.2]{hacon-witaszek20}.
We note that as $X_\eta$ is a surface and the quotient field of $V$ is infinite,  a general hyperplane preserves dlt singularities on the generic fiber.
We take a dlt modification $Y \to X$ of $(X,\Delta+H)$ by Corollary \ref{cor:dlt modification}.
We note that $f \colon X \to Z$ is an isomorphism over the generic point of $V$, we may assume that $Y \to X$ is an isomorphism over the generic point by Remark \ref{rem:resolution}.
We run a $(K_{Y/V}+\Delta_Y+H_Y)$-MMP by the same argument as in \cite[Theorem 1.2]{hacon-witaszek20}.
Replacing $(Y,\Delta_Y+H_Y)$ into a minimal model, we may assume that $K_{Y/V}+\Delta_Y+H_Y$ is nef over $Z$.
By Corollary \ref{cor:mmp with scaling} and the same argument as in the proof of \cite[Theorem 1.2]{hacon-witaszek20}, we can run a $(K_{Y/V}+\Delta_Y)$-MMP over $Z$ with scaling of $H_Y$.
Replacing $(Y,\Delta_Y)$ into a minimal model, we may assume that $K_{Y/V}+\Delta_Y$ is nef over $Z$.
We denote the map $Y \to Z$ by $h$.
Since $(X,\Delta)$ is plt, $h$ is small by the negativity lemma (Proposition \ref{prop:negativity lemma}).
Since the relative Picard rank of $X$ over $Z$ is one, $Y$ is the flip of $f$.
\end{proof}

\section{Proof of Theorem \ref{thm:semi-stable mmp} and its applications}\label{section:dedekind scheme}
Our goal of this section is to prove Theorem \ref{thm:semi-stable mmp}, which is a generalization of the result of Kawamata (\cite{kawamata94}, \cite{kawamata99}).
In this section, we deal with schemes satisfying the following conditions (Assumption \ref{assump:semi-stable}), which are preserved under MMP-steps (cf.\,Proposition \ref{prop:assumption under divisorial contraction}, \ref{prop:assumption under flip}).
Kawamata proved this fact by the construction of flips, but it does not follow from our construction.
Therefore, to prove the preservation, we precisely observe extremal ray contractions (cf.\,Proposition \ref{prop:contraction theorem semi-stable}).

\begin{assump}\label{assump:semi-stable}
Let $V$ be an excellent Dedekind scheme.
$X$ is a $V$-variety satisfying the following conditions.
\begin{enumerate}
    \item $X$ is flat over $V$ of relative dimension two.
    \item Every generic fiber $X_\eta$ is smooth.
    \item The fibers $X_s$ for the closed points $s \in V$ are geometrically reduced and satisfy the condition $(S_2)$.
    \item Each irreducible component $S$ of every fiber $X_s$ is geometrically irreducible, geometrically normal and a $\Q$-Cartier divisor on $X$.
    \item $(X,X_s)$ is dlt for all closed points $s \in V$.
    \item For each closed point $s \in V$ and dominant morphism $\iota \colon V'=\mathrm{Spec} A \to V$ with reduced fiber such that $A$ is a complete discrete valuation ring with algebraically closed residue field $k$ and $s$ is contained in the image of $\iota$, the base change $X' := X \times_{V} V'$ satisfies the condition  $(5)$.
\end{enumerate}
\end{assump}

\begin{rmk}
\begin{itemize}
\item
A strictly semi-stable scheme over an excellent Dedekind scheme of relative dimension $2$ satisfies Assumption \ref{assump:semi-stable}.
    \item The existence of an extension as in Assumption \ref{assump:semi-stable} $(6)$ follows from \cite[Theorem 29.1]{matsumura}.
    \item Assumption \ref{assump:semi-stable} is preserved by taking a base change as in Assumption     \ref{assump:semi-stable} $(6)$.
\end{itemize}
\end{rmk}
 
\begin{rmk}
In \cite{kawamata94}, it is additionally assumed that $\sO_X(mK_{X/V})$ is maximal Cohen-Macaulay in order to prove the existence of flips.
Kawamata proved that the condition is preserved under MMP-steps if each residue characteristic is larger than $3$.
However, in this paper, we do not need this assumption.
We note that if each residue characteristic is larger than $5$, then such a condition is induced by Assumption \ref{assump:semi-stable}.
Indeed, we take a closed point $x \in X$ contained in an irreducible component $S$ of some closed fiber $X_s$.
Then by Assumption \ref{assump:semi-stable} $(4)$ and $(5)$, $(X,S)$ is plt.
Thus, by the adjunction, $S$ is a klt surface.
Since the characteristic of $S$ is larger than $5$, $S$ is strongly $F$-regular.
By Proposition \ref{cor:inversion of adjunction}, $X$ is $T$-regular at $x$.
In conclusion, $X$ is $T$-regular.
By Proposition \ref{prop:vanishing for T-regular}, $\sO_X(mK_{X/V})$ is maximal Cohen-Macaulay.
\end{rmk}

\begin{lem}\label{lem:reducedness}
Let $V$ be an excellent Dedekind scheme and $X$ is a $V$-variety satisfying Assumption \ref{assump:semi-stable}.
Let $s \in V$ be a closed point.
Let $S_1, \ldots S_r$ be the irreducible components of $X_s$.
We set $X_i:=S_1 \cup \cdots \cup S_i$ with reduced structure and the scheme-theoretic intersection $C_i := X_{i-1} \cap S_i$ for $1 \leq i \leq r$.
Then $X_i$ is reduced and satisfies the condition $(S_2)$ and $C_i$ is reduced and pure one-dimensional for every $i$.
\end{lem}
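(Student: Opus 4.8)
The plan is to induct on $i$, reducing the Cohen--Macaulayness of the $X_i$ to a Cohen--Macaulayness statement for the divisorial sheaves $\mathcal{O}_X(-(S_1+\cdots+S_i))$ and then reading off the assertions about $C_i$. As preliminary reductions I would localize at $s$, so that $V=\Spec R$ for a discrete valuation ring $R$ with uniformizer $\varpi$, and record that $X$ is Cohen--Macaulay: it is normal (being the underlying scheme of a dlt pair), along $X_s$ the element $\varpi$ is a nonzerodivisor by flatness while $\mathcal{O}_{X_s}$ is Cohen--Macaulay by Assumption~\ref{assump:semi-stable}(3) (it is reduced, $(S_2)$ and two-dimensional), so $\mathcal{O}_{X,p}$ has depth three for $p\in X_s$; off $X_s$ the scheme $X$ is smooth over a field by Assumption~\ref{assump:semi-stable}(2). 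Since $\dim X=3$, each $X_i$ has pure dimension two and each $C_i$ dimension at most one; recalling that a reduced scheme of dimension $\le 2$ is Cohen--Macaulay at a closed point exactly when it is $(S_2)$ there, and that a reduced scheme of pure dimension one is automatically Cohen--Macaulay, it suffices to prove that each $X_i$ is Cohen--Macaulay and each $C_i$ is reduced of pure dimension one (allowing $C_i=\emptyset$).

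Next I would reformulate the first assertion. Writing $X_i$ also for the Weil divisor $S_1+\cdots+S_i$ (which is $\mathbb Q$-Cartier by Assumption~\ref{assump:semi-stable}(4)), the reflexive ideal sheaf $\mathcal{O}_X(-X_i)$ and the radical ideal sheaf $\bigcap_{j\le i}\mathscr{I}_{S_j}$ of the reduced scheme $X_i$ agree in codimension one; as $\mathcal{O}_{X_i}$ is $(S_1)$ and $\mathcal{O}_X$ is Cohen--Macaulay, the sequence $0\to\bigcap_{j\le i}\mathscr{I}_{S_j}\to\mathcal{O}_X\to\mathcal{O}_{X_i}\to 0$ forces the left-hand term to be $(S_2)$, hence reflexive, hence equal to $\mathcal{O}_X(-X_i)$; feeding this back in, the depth lemma shows that $X_i$ is $(S_2)$ if and only if $\mathcal{O}_X(-X_i)$ is maximal Cohen--Macaulay. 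Now I run the induction. For $i=1$, $X_1=S_1$ is a normal surface, hence Cohen--Macaulay, and $C_1=\emptyset$. For the step, assume $X_{i-1}$ Cohen--Macaulay. Set-theoretically $C_i=\bigcup_{j<i}(S_i\cap S_j)$, and each $S_i\cap S_j$ is the support of the restriction to $S_j$ of the $\mathbb Q$-Cartier divisor $S_i$, so it is empty or pure one-dimensional. At the generic point $\eta$ of a component $Z$ of $C_i$, $Z$ is an irreducible component of $S_i\cap S_{j_0}$ for some $j_0<i$, hence a log canonical centre of the dlt pair $(X,X_s)$; since a dlt pair is snc away from a closed set containing no log canonical centre (\cite[\S4]{kollar13}), $(X,X_s)$ is snc at $\eta$, and as $\eta$ has codimension two only $S_i$ and $S_{j_0}$ pass through it, so locally $X$ is regular, $X_s=\{uv=0\}$ with $S_i=\{u=0\}$, $S_{j_0}=\{v=0\}$, $X_{i-1}=S_{j_0}$, and $C_i=\{u=v=0\}$ is reduced of dimension one there. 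Thus $C_i$ is $(R_0)$, and the Mayer--Vietoris sequence $0\to\mathcal{O}_{X_i}\to\mathcal{O}_{X_{i-1}}\oplus\mathcal{O}_{S_i}\to\mathcal{O}_{C_i}\to 0$, with $X_{i-1}$ Cohen--Macaulay and $S_i$ normal, shows by the depth lemma that $C_i$ is also $(S_1)$ as soon as $X_i$ is $(S_2)$; so once $X_i$ is $(S_2)$, $C_i$ is reduced of pure dimension one and the induction closes.

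The main obstacle is therefore the single remaining point: $\mathcal{O}_X(-X_i)$ is maximal Cohen--Macaulay, equivalently $X_i$ is $(S_2)$. Pure homological bookkeeping with the sequences above (or with $0\to\varpi\mathcal{O}_X\to\mathcal{O}_X(-X_i)\to\mathscr{I}_{X_i/X_s}\to 0$) only reduces this statement to itself, so a genuine geometric input is needed here. The pair $(X,S_1+\cdots+S_i)$ is again dlt, being a sub-boundary of the dlt pair $(X,X_s)$, and $X_i=\lfloor S_1+\cdots+S_i\rfloor$; I would deduce the Cohen--Macaulayness of this reduced boundary from the Kodaira-type vanishing up to alterations (Corollary~\ref{cor:zero map}) applied to a log resolution supplied by Theorem~\ref{thm:semistable alteration}, together with a Leray spectral sequence argument --- this is the same mechanism underlying the Cohen--Macaulayness of $X_s$ itself (Assumption~\ref{assump:semi-stable}(3)), now carried out for the partial union $X_i$. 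This step, where the dlt hypothesis and the mixed-characteristic vanishing really enter, is the crux; everything else in the argument is formal.
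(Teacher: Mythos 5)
Your reduction and bookkeeping are fine as far as they go, but the proof has a genuine gap exactly where you flag it: the assertion that $X_i$ is $(S_2)$ (equivalently, that $\sO_X(-X_i)$ is maximal Cohen--Macaulay, equivalently that $C_i$ is $(S_1)$) is never proved. The appeal to Corollary \ref{cor:zero map} plus a Leray spectral sequence is only a gesture --- that corollary concerns higher direct images of twisted dualizing sheaves for semiample and big divisors under projective morphisms to affine flat bases, and no argument is given for how it would yield depth bounds for the partial unions $X_i$; moreover, the claim that this is ``the same mechanism underlying the Cohen--Macaulayness of $X_s$'' misreads the setup, since the $(S_2)$-property of $X_s$ is an assumption (Assumption \ref{assump:semi-stable}(3)), not something derived from vanishing. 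So the crux of the lemma is left open, and your own diagnosis that ``pure homological bookkeeping only reduces this statement to itself'' is where the proposal goes astray.

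In fact no vanishing input is needed: the bookkeeping does close, provided one runs it in the opposite order and brings in the full fiber $X_s$. The paper first proves that $C_i$ is reduced, by a pointwise analysis at closed points $P\in C_i$: if $P$ lies on three or more components of $X_s$, then $(X,X_s)$ is snc at $P$ (dltness), so $C_i$ is reduced there; if $P$ lies on exactly two components $S_i$ and $S_j$ with $j<i$, then locally at $P$ one has $X_s=S_i\cup S_j$ and $C_i=S_i\cap S_j$, and the Mayer--Vietoris sequence $0\to\sO_{X_s}\to\sO_{S_i}\oplus\sO_{S_j}\to\sO_{C_i}\to 0$ together with the assumed $(S_2)$-property of $X_s$ and the normality of the components gives $\operatorname{depth}\sO_{C_i,P}\geq 1$ by the depth lemma. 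Combined with $(R_0)$ at the generic points (which you did establish), this makes $C_i$ reduced \emph{before} anything is known about $X_i$. Only then does the second sequence $0\to\sO_{X_i}\to\sO_{X_{i-1}}\oplus\sO_{S_i}\to\sO_{C_i}\to 0$ enter, now in the other direction: with $X_{i-1}$ inductively $(S_2)$, $S_i$ normal, and $\sO_{C_i}$ of depth $\geq 1$, the depth lemma gives $\operatorname{depth}\sO_{X_i}\geq 2$, closing the induction. You had both exact sequences in hand; the missing idea is that the depth input for $C_i$ comes from $X_s$ itself (at points on exactly two components) rather than from the partial union $X_i$, and reversing the logical order removes the apparent need for a new geometric ingredient.
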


\begin{proof}
Since $X_s$ is reduced and $X_s=X_r$ as sets, we have $X_s=X_r$.
In particular, $X_r$ satisfies the condition $(S_2)$.
Since $X_{i-1}$ and $S_i$ are $\Q$-Cartier divisors on $X$, the scheme-theoretic intersection $C_i=X_{i-1} \cap S_i$ is pure one-dimensional.
In particular, each generic point of $C_i$ is codimension two point in $X$.
Since $(X,X_s)$ is a simple normal crossing pair at each generic point of $C_i$, the scheme $C_i$ satisfies the condition $(R_0)$.
Thus, in order to prove $C_i$ is reduced, it is enough to show that $C_i$ satisfies the condition $(S_1)$.

We take a closed point $P$ of $C_i$.
Then $P$ is contained in at least two components $S_i$ and $S_j$ for some $j <i$.
If $P$ is contained in three components, then $(X,X_s)$ is a simple normal crossing pair at $P$, and in particular, $C_i$ is reduced at $P$.
Thus, we may assume that $P$ is contained in only two components, so $X_s=S_i \cap S_j$ around $P$ and we obtain the exact sequence
\[
0 \to \sO_{X_s} \to \sO_{S_i} \oplus \sO_{S_j} \to \sO_{C_i} \to 0
\]
around $P$.
Since $X_s$, $S_i$ and $S_j$ satisfy the condition $(S_2)$, $C_i$ satisfies the condition $(S_1)$, so $C_i$ is reduced for all $i$.
The exact sequence
\[
0 \to \sO_{X_i}\to \sO_{X_{i-1}} \oplus \sO_{S_i} \to \sO_{C_i} \to 0
\]
implies that if $X_{i-1}$ satisfies the condition $(S_2)$, then so is $X_i$ by the reducedness of $C_i$.  
\end{proof}

\begin{prop}\label{prop:connectedness}
Let $\pi \colon S \to Z$ be a projective morphism from a surface to a variety over an algebraically closed field $k$.
Let $(S,D)$ be a dlt pair and $L$ be a $\pi$-nef Cartier divisor such that $L-(K_S+D)$ is $\pi$-ample and $C$ be a reduced Weil divisor with $C \leq D$.
Then the following hold.
\begin{itemize}
    \item[$(i)$] $\pi_*\sO_S(mL) \to \pi_*\sO_C(mL)$ is surjective for all $i$ and divisible enough $m$.
    \item[$(ii)$] $R^i\pi_*\sO_S(mL)=R^i\pi_*\sO_C(mL)=0$  for every $i > 0$ and divisible enough $m$.
    \item[$(iii)$] $L$ is semiample over $Z$.
\end{itemize}

\end{prop}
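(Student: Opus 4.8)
The plan is to deduce all three statements from the single relative vanishing
\[
R^i\pi_*\sO_S(mL-C')=0 \qquad(i>0,\ m\ge 1),
\]
for every reduced divisor $C'\le\lfloor D\rfloor$ (the case $C'=0$ being included, and $C'=C$ being the one we need). First I would record the routine reductions. Since $(S,D)$ is dlt, $S$ has only klt singularities, hence is $\Q$-factorial, so every Weil divisor occurring below is $\Q$-Cartier and has well-defined intersection numbers with curves. For every $m\ge 1$, $mL-(K_S+D)=(m-1)L+(L-(K_S+D))$ is a sum of a $\pi$-nef and a $\pi$-ample divisor, hence is $\pi$-ample. Because $C$ is reduced with $C\le D$ and all coefficients of $D$ lie in $[0,1]$, each component of $C$ occurs in $D$ with coefficient exactly $1$; thus $C\le\lfloor D\rfloor$, every component of $C$ is a normal (hence regular) curve, and $(S,D)$ is simple normal crossing --- in particular $S$ is regular --- at every point where two components of $\lfloor D\rfloor$ meet.

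Next I would prove the displayed vanishing by dévissage. List the components of $\lfloor D\rfloor$ not contained in $C'$ as $C_1,\dots,C_t$ and set $C'^{(k)}:=C'+C_1+\cdots+C_k$, so $C'^{(0)}=C'$ and $C'^{(t)}=\lfloor D\rfloor$. At every point of $C_k$ both $C'^{(k-1)}$ and $C'^{(k)}$ are Cartier (snc at the intersection points of $\lfloor D\rfloor$, and trivially elsewhere), so $\sO_S(mL-C'^{(k-1)})$ and $\sO_S(mL-C'^{(k)})$ are line bundles near $C_k$ and the exact sequences
\[
0\to\sO_S(mL-C'^{(k)})\to\sO_S(mL-C'^{(k-1)})\to\sO_S(mL-C'^{(k-1)})\big|_{C_k}\to 0
\]
have a line bundle on the regular curve $C_k$ as cokernel. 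By adjunction $(K_S+D)|_{C_k}=K_{C_k}+\mathrm{Diff}_{C_k}(D-C_k)$, and since $C'^{(k-1)}\le D-C_k$ the different dominates $C'^{(k-1)}|_{C_k}$; using $\pi$-ampleness of $mL-(K_S+D)$ on the $\pi$-relative curve $C_k$,
\[
\deg\!\left(\sO_S(mL-C'^{(k-1)})\big|_{C_k}\right)=mL\cdot C_k-C'^{(k-1)}\cdot C_k>(K_S+D)\cdot C_k-C'^{(k-1)}\cdot C_k\ge 2g(C_k)-2,
\]
so $R^{>0}\pi_*$ of this cokernel vanishes (immediately if $\pi|_{C_k}$ is finite, by Serre duality on $C_k$ if $C_k$ is $\pi$-contracted). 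It remains to treat $\sO_S(mL-\lfloor D\rfloor)$, for which I would apply the relative Kawamata--Viehweg vanishing theorem for the klt surface pair $(S,\{D\})$ (\cite{tanaka18}) to the $\Q$-Cartier divisor $M:=mL-\lfloor D\rfloor$: indeed $M-(K_S+\{D\})=mL-(K_S+D)$ is $\pi$-ample, whence $R^{>0}\pi_*\sO_S(M)=0$. This proves the displayed vanishing. Taking $C'=\emptyset$ gives $R^{>0}\pi_*\sO_S(mL)=0$, and the sequence $0\to\sO_S(mL-C)\to\sO_S(mL)\to\sO_C(mL)\to 0$ combined with $R^{>0}\pi_*\sO_S(mL-C)=0$ then yields $R^{>0}\pi_*\sO_C(mL)=0$ and the surjectivity of $\pi_*\sO_S(mL)\to\pi_*\sO_C(mL)$, proving (i) and (ii).

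For (iii): $L$ is $\pi$-nef and $L-(K_S+D)$ is $\pi$-ample, hence $\pi$-nef and $\pi$-big, so by the base-point-free theorem for surfaces (\cite{tanaka18}) --- equivalently, by contracting the $(K_S+D)$-negative extremal face $L^{\perp}\cap\NE(S/Z)$ and observing that $L$ descends to a relatively ample divisor on the contraction --- $L$ is $\pi$-semiample, i.e. $mL$ is $\pi$-globally generated for every sufficiently divisible $m$.

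The main obstacle is the relative vanishing of the second paragraph: one must check that the dévissage genuinely proceeds through line bundles on regular curves (resting on the structure of $\lfloor D\rfloor$ in a dlt pair and on $mL$ being Cartier), control the degree of each such line bundle on every fibre of $\pi|_{C_k}$ --- where the effectivity of the different and the strict positivity of $mL-(K_S+D)$ on $\pi$-relative curves are the crucial inputs --- and invoke the positive-characteristic Kawamata--Viehweg vanishing for klt surfaces. The semiampleness in (iii) is the other delicate point in positive characteristic, but it is subsumed in the surface MMP.
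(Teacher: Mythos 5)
The decisive step of your dévissage is the claim that $R^{>0}\pi_*\sO_S(mL-\lfloor D\rfloor)=0$ follows from ``relative Kawamata--Viehweg vanishing for the klt surface pair $(S,\{D\})$'' as in \cite{tanaka18}. This is where the argument breaks. The proposition includes the case where $\pi(S)$ is a point (the paper treats it explicitly), and there your vanishing statement is the \emph{absolute} Kawamata--Viehweg vanishing for a klt surface pair in positive characteristic, which is false in general: Kodaira/KV vanishing fails already for smooth surfaces (Raynaud), and there are smooth rational surfaces and klt del Pezzo surfaces violating KV vanishing in characteristic $2$ and $3$ --- precisely the characteristics this paper cannot exclude, since its whole point is to remove Kawamata's restriction $p\neq 2,3$. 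The relative vanishing theorem in \cite{tanaka18} is for birational (more generally fiber-dimension $\le 1$) morphisms and does not cover the case of a point; so your citation does not supply the needed statement, and no general vanishing theorem does. The vanishing you need over a point is true in this particular situation, but only because $-(K_S+D)$ is ample, and it has to be proved by hand: this is exactly what the paper does, using \cite[Lemma 2.2]{kawamata94} to get $H^i(\sO_S)=0$ for a log del Pezzo--type surface, the connectedness theorem \cite[Theorem 5.2]{tanaka18} to get surjectivity of $H^0(\sO_S)\to H^0(\sO_C)$ and hence $H^1(\sO_S(-C))=0$, and Serre duality plus bigness of $-(K_S+C)$ for $H^2(\sO_S(-C))=0$ (after first proving (iii) by perturbing the $\pi$-ample divisor $L-(K_S+D)$ into the boundary and invoking abundance \cite{tanaka20-imperfect}, which lets one replace $Z$ by the ample model and assume $L$ trivial). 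Your proposal contains no substitute for this special-case analysis, so it has a genuine gap; when $\dim\pi(S)\ge 1$ your curve-by-curve degree argument is essentially Kawamata's Lemma 2.1 and is fine in spirit.

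A secondary inaccuracy: you assert that at every point of $C_k$ the divisors $C'^{(k-1)}$ and $C'^{(k)}$ are Cartier, ``snc at the intersection points of $\lfloor D\rfloor$ and trivially elsewhere.'' At a point of $C_k$ lying on no other component of $\lfloor D\rfloor$ the pair is only plt, and $S$ may well be singular there (e.g.\ a cyclic quotient point through which $C_k$ passes), so $C'^{(k)}$ is merely $\Q$-Cartier; the cokernel of $\sO_S(mL-C'^{(k)})\to\sO_S(mL-C'^{(k-1)})$ is then a rank-one torsion-free (hence invertible, as $C_k$ is regular) sheaf on $C_k$ whose degree carries correction terms governed by the different, so the inequality $\deg>2g(C_k)-2$ needs the adjunction formalism rather than a naive restriction. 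This is repairable, but as written the step is not justified; the fatal issue remains the vanishing over a point discussed above.
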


\begin{proof}
We note that the semiampleness follows from the abundance, since $L-(K_S+D)$ is ample over $\pi$ and $k$ is infinite.
By \cite[Theorem 1.1]{tanaka20-imperfect}, $L$ is semiample over $Z$, thus we may assume that $L$ is a pullback of an ample Cartier divisor on $Z'$, where $\pi' \colon S \to Z'$ is the morphism defined by $L$ over $Z$.
In particular, we may assume that $L$ is trivial by replacing $\pi$ into $\pi'$.
First, we consider the case where the dimension of $\pi(S)$ is at least one.
By a perturbation of coefficients of $D$ and \cite[Lemma 2.1]{kawamata94}, we have 
\[
R^i\pi_*\sO_S=R^i\pi_*\sO_S(-C)=0
\]
for all $i>0$, thus we obtain the assertion.
Next, we assume that $\pi(S)$ is a point.
By \cite[Lemma 2.2]{kawamata94}, we have $H^i(\sO_S)=0$ for all $i>0$.
By \cite[Theorem 5.2]{tanaka18}, the scheme $C$ is connected, so we have $H^0(\sO_C) \simeq k$.
Since we have $H^0(\sO_S(-C))=0$ and $H^0(\sO_S) \simeq k$, the map $H^0(\sO_S) \to H^0(\sO_C)$ is surjective.
Thus we have $H^1(\sO_S(-C))=0$.
Since $-(K_S+C)$ is big, we have $H^2(\sO_S(-C))=0$, so we have $H^1(\sO_C)=0$,
\end{proof}

The following theorem is discussed in \cite[Theorem 2.3]{kawamata94}.
However, we need more detailed observation of contractions, thus we use a bit different method form the method of \cite[Theorem 2.3]{kawamata94}.

\begin{prop}\label{prop:contraction theorem semi-stable}\textup{(cf.\,\cite[Theorem 2.3]{kawamata94})}
Let $V$ be an excellent Dedekind scheme and $X$ is a $V$-variety satisfying Assumption \ref{assump:semi-stable}.
Let $\pi \colon X \to U$ be a projective morphism to a $V$-variety.
Let $s \in V$ be a closed point.
Let $S_1, \ldots S_r$ be the irreducible components of $X_s$.
Let $L$ be a $\rho$-nef Cartier divisor with $L-K_{X/V}$ is $\pi$-ample.
Then $L$ is semiample.
Furthermore, the map $f \colon X \to Z$ defined by $L$ satisfies the following conditions.
\begin{enumerate}
    \item  $R^jf_*\sO_X=0$ for all $j>0$.
    \item $\pi_*\sO_{S_i}=\sO_{f(S_i)}$, where the images are equipped with the reduced structure.
    \item $Z$ is Cohen-Macaulay.
\end{enumerate}
\end{prop}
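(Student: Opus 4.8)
The plan is to reduce the whole statement to the surface case by means of the filtration $X_1\subset X_2\subset\cdots\subset X_r=X_s$ of Lemma \ref{lem:reducedness}, applying Proposition \ref{prop:connectedness} componentwise. First I would make the usual reductions: localizing at a closed point we may assume $V=\Spec R$ with $R$ an excellent discrete valuation ring, uniformizer $\varpi$, closed point $s$; replacing $U$ by an affine open subset and $\pi$ by its Stein factorization we may assume $U$ is affine and normal with $\pi_*\sO_X=\sO_U$; if $U\to V$ is not dominant then $X$ is a normal surface over a field and the assertion follows from \cite[Theorem 2.14]{tanaka18} and Proposition \ref{prop:connectedness}, so we may assume $U\to V$, hence also $Z\to V$, is dominant; and since semiampleness and the three listed conclusions descend along the faithfully flat base change of Assumption \ref{assump:semi-stable}(6), I may further assume the residue field $k(s)$ is algebraically closed, so that Proposition \ref{prop:connectedness} applies verbatim to every $S_i$ and to every reduced curve inside $X_s$.

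The core input is the componentwise picture. By adjunction for the dlt pair $(X,X_s)$ and Assumption \ref{assump:semi-stable}(4) (so $S_i$ is normal), the pair $(S_i,D_i)$ with $D_i:=\mathrm{Diff}_{S_i}(X_s-S_i)$ is dlt, and since $X_s=\mathrm{div}(\varpi)$ is linearly trivial on $X$ we have $X_s|_{S_i}\sim 0$, whence
\[
L|_{S_i}-(K_{S_i}+D_i)\ \sim_{\Q}\ (L-K_{X/V})|_{S_i},
\]
which is $\pi$-ample; moreover the reduced curve $C_i=X_{i-1}\cap S_i$ satisfies $C_i\le\lfloor D_i\rfloor$ because $(X,X_s)$ is simple normal crossing at the generic points of $C_i$. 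Hence Proposition \ref{prop:connectedness} applies to $(S_i,D_i,L|_{S_i},C_i)$: for all sufficiently divisible $m$, $L|_{S_i}$ is semiample over $U$, $R^q\pi_*\sO_{S_i}(mL)=R^q\pi_*\sO_{C_i}(mL)=0$ for $q>0$, $\pi_*\sO_{S_i}(mL)\to\pi_*\sO_{C_i}(mL)$ is surjective, and, combining (i) and (ii) with the sequence $0\to\sO_{S_i}(mL-C)\to\sO_{S_i}(mL)\to\sO_C(mL)\to 0$, also $R^q\pi_*\sO_{S_i}(mL-C)=0$ for $q>0$ and every reduced $C\le\lfloor D_i\rfloor$. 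Now I would prove by induction on $i$, using
\[
0\to\sO_{X_i}(mL)\to\sO_{X_{i-1}}(mL)\oplus\sO_{S_i}(mL)\to\sO_{C_i}(mL)\to 0,
\]
that for sufficiently divisible $m$: $R^q\pi_*\sO_{X_i}(mL)=0$ for $q>0$; the restrictions $\pi_*\sO_{X_i}(mL)\to\pi_*\sO_{C_j}(mL)$, $\pi_*\sO_{X_i}(mL)\to\pi_*\sO_{S_j}(mL)$ and $\pi_*\sO_{X_i}(mL)\to\pi_*\sO_{X_j}(mL)$ are surjective for all $j\le i$; and $mL|_{X_i}$ is semiample over $U$. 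The decisive point, and the step I expect to be the main obstacle, is the surjectivity of $\pi_*\sO_{X_{i-1}}(mL)\to\pi_*\sO_{C_i}(mL)$, equivalently $R^1\pi_*\big(\sO_{X_{i-1}}(mL)\otimes\sO_{X_{i-1}}(-C_i)\big)=0$: since $C_i=S_i|_{X_{i-1}}$ restricts on each component $S_j$ ($j<i$) to the reduced curve $S_i\cap S_j\le\lfloor D_j\rfloor$, this vanishing propagates down the filtration to the vanishings $R^1\pi_*\sO_{S_j}(mL-(S_i\cap S_j))=0$ already obtained; but making this precise while $X_{i-1}$ is non-normal, and keeping the divisibility of $m$ uniform along the induction, is the delicate part.

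With the induction in hand I would lift from $X_s$ to $X$. Since $X_s=\mathrm{div}(\varpi)$ we have $\sO_X(mL-X_s)\simeq\sO_X(mL)$ and a short exact sequence $0\to\sO_X(mL)\xrightarrow{\ \varpi\ }\sO_X(mL)\to\sO_{X_s}(mL)\to 0$. On the smooth surface $X_\eta$, $mL_\eta-K_{X_\eta}$ is $\pi$-ample, so $R^q\pi_{\eta*}\sO_{X_\eta}(mL_\eta)=0$ for $q>0$ by Kawamata--Viehweg vanishing (or \cite[Theorem 1.1]{tanaka20-imperfect} in equal characteristic $p$) and $L_\eta$ is semiample over $U_\eta$; hence each $R^q\pi_*\sO_X(mL)$ ($q>0$) is a $\varpi$-power-torsion coherent $\sO_U$-module, and together with $R^q\pi_*\sO_{X_s}(mL)=0$ and the $\varpi$-sequence, Nakayama's lemma gives $R^q\pi_*\sO_X(mL)=0$ for $q>0$ and $\pi_*\sO_X(mL)\twoheadrightarrow\pi_*\sO_{X_s}(mL)$, for all sufficiently divisible $m$. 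Feeding ``$mL|_{X_s}$ semiample'', ``$mL_\eta$ semiample'' (with a $\varpi$-denominator-clearing argument at points of $X_\eta$), and this surjectivity into the pointwise base point freeness criterion shows $mL$ is base point free over $U$; thus $L$ is semiample, and $f\colon X\to Z:=\mathrm{Proj}_U\bigoplus_m\pi_*\sO_X(mL)$ is the induced contraction, with $f_*\sO_X=\sO_Z$ and $f^*\sO_Z(1)$ a positive multiple of $L$.

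Finally, $(1)$ $R^qf_*\sO_X=0$ for $q>0$ follows from $R^q\pi_*\sO_X(mnL)=0$ (sufficiently divisible $mn$) via the Leray spectral sequence for $\pi=\pi_Z\circ f$ and the ampleness of $\sO_Z(1)$. For $(2)$, the morphism $S_i\to Z$ factors through the morphism defined by the semiample divisor $L|_{S_i}$, whose Stein target is $\mathrm{Proj}_U\bigoplus_m\pi_*\sO_{S_i}(mL)$ with structure sheaf the pushforward of $\sO_{S_i}$; since $\pi_*\sO_X(mL)\twoheadrightarrow\pi_*\sO_{X_s}(mL)\twoheadrightarrow\pi_*\sO_{S_i}(mL)$ by the induction, this Stein target is a closed subscheme of $Z$, and being the pushforward of the structure sheaf of the reduced scheme $S_i$ it is reduced, hence equals $f(S_i)$ with its reduced structure, so $f_*\sO_{S_i}=\sO_{f(S_i)}$. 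For $(3)$, $X$ is Cohen--Macaulay (it is flat over the regular scheme $V$ with Cohen--Macaulay fibres, by Assumption \ref{assump:semi-stable}), so $\omega^{\bullet}_{X/V}\simeq\omega_{X/V}[2]$, and $Rf_*\sO_X=\sO_Z$ together with Grothendieck duality gives $\omega^{\bullet}_{Z/V}\simeq Rf_*\omega^{\bullet}_{X/V}\simeq Rf_*\omega_{X/V}[2]$; since $Z\to V$ is dominant the fibres of $f$ are at most two-dimensional, so a Grauert--Riemenschneider-type vanishing in relative dimension $\le 2$ shows the right-hand side is a single coherent sheaf placed in degree $-\dim_V Z$, i.e. $Z$ is Cohen--Macaulay, exactly as in \cite[Theorem 2.3]{kawamata94}.
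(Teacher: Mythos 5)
Your strategy for the semiampleness and for (1)--(2) is essentially the paper's: reduce via Assumption \ref{assump:semi-stable}(6) to an algebraically closed residue field, run the filtration $X_1\subset\cdots\subset X_r=X_s$ of Lemma \ref{lem:reducedness} with the Mayer--Vietoris sequences and Proposition \ref{prop:connectedness}, then lift to $X$ by the $\varpi$-sequence and Nakayama. However, the step you yourself flag as the ``main obstacle'' --- surjectivity of $\pi_*\sO_{X_{i-1}}(mL)\to\pi_*\sO_{C_i}(mL)$, i.e.\ lifting sections from a curve to the \emph{non-normal} surface $X_{i-1}$ --- is left unproved, and it does not follow from Proposition \ref{prop:connectedness}, which only applies to the normal components. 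The point is that this statement is never needed. Each Mayer--Vietoris step only requires surjectivity of $\pi_*\sO_{S_i}(L)\to\pi_*\sO_{C_i}(L)$ (Proposition \ref{prop:connectedness}(i) on the normal surface $S_i$, with $C_i\leq \lfloor D_i\rfloor$), which already gives the stepwise surjections $\pi_*\sO_{X_i}(L)\to\pi_*\sO_{X_{i-1}}(L)$ and the isomorphisms $R^j\pi_*\sO_{X_i}(L)\simeq R^j\pi_*\sO_{X_{i-1}}(L)$ for $j>0$; surjectivity onto an individual component $S_j$ is then obtained by \emph{reordering} the components so that $S_j$ comes first and composing the stepwise surjections (the paper's ``changing the order of $S_1,\ldots,S_r$''). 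Your worry about uniform divisibility of $m$ also evaporates: there are only finitely many components and finitely many curves of the form $S_i\cap(\text{union of other components})$, so one fixes a single multiple of $L$ at the outset, as the paper does. So this gap is fixable, but as written your induction hinges on a lifting statement you cannot prove and do not need.

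The more serious gap is your proof of (3). You deduce Cohen--Macaulayness of $Z$ from $Rf_*\sO_X\simeq\sO_Z$, Grothendieck duality, and ``a Grauert--Riemenschneider-type vanishing in relative dimension $\leq 2$'', i.e.\ the vanishing of the higher direct images of $\omega_{X/V}$ in the appropriate range. No such vanishing is available in the present generality: $f$ may contract a divisor to a point (two-dimensional fibres) over residue characteristic $2$ or $3$, exactly where Kodaira/Kawamata--Viehweg and GR-type vanishing theorems are the missing ingredient (and fail in closely related situations); avoiding such inputs is the whole point of this paper, and Kawamata's original argument is precisely what is being replaced. The paper's proof of (3) is characteristic-free and uses only what has already been established for structure sheaves: $f_*\sO_{X_i}=\sO_{f(X_i)}$, $f_*\sO_{C_i}=\sO_{f(C_i)}$ and $R^1f_*\sO_{X_i}=0$ give exact sequences
\[
0\to\sO_{f(X_i)}\to\sO_{f(S_i)}\oplus\sO_{f(X_{i-1})}\to\sO_{f(C_i)}\to 0,
\]
from which $f(X_i)$ satisfies $(S_2)$ by induction; since $R^1f_*\sO_X=0$ identifies $Z_s$ with $f(X_s)$ via $0\to\sO_Z\xrightarrow{\ \varpi\ }\sO_Z\to f_*\sO_{X_s}\to 0$, the fibre $Z_s$ satisfies $(S_2)$, hence is Cohen--Macaulay (it has dimension at most two), and $Z$, flat over $V$ with Cohen--Macaulay fibres, is Cohen--Macaulay. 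You should replace your duality argument by this one, or else supply a proof of the relative vanishing you invoke, which would be a substantial theorem in its own right.
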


\begin{proof}
Taking a base change via $\iota \colon V' \to V$ as in Assumption \ref{assump:semi-stable} $(6)$,
we may assume that $V$ is the spectrum of a complete discrete valuation ring with algebraically closed residue field.
We set $X_i:=S_1 \cup \cdots \cup S_i$ and $C_i := X_{i-1} \cap S_i$ for $1 \leq i \leq r$.
We may assume that the conditions $(i), (ii)$ in Proposition \ref{prop:connectedness} for $S_i$ are satisfied for $m=1$ and $R^j\pi_*\sO_{X_\eta}(L)=0$ by replacing $L$ into some power of $L$.
We note that $X_i$ is a pushout of $X_{i-1}$ and $S_i$ with $C_i$, and $C_i$ is reduced and $X_i$ satisfies $(S_2)$ for all $i$ by Lemma \ref{lem:reducedness}.
Thus, we have the surjection $\pi_*\sO_{X_i}(L) \to \pi_*\sO_{X_{i-1}}(L)$ and the isomorphism $R^j\pi_*\sO_{X_i}(L) \simeq R^j\pi_*\sO_{X_{i-1}}(L)$ by Proposition \ref{prop:connectedness}.
By the induction on $i$ and changing the order of $S_1 , \ldots, S_r$, we have  $R^j\pi_*\sO_{X_s}(L)=0$ for all $j>0$ and the surjection $\pi_*\sO_{X_i}(L) \to \pi_*\sO_{S_i}(L)$ for all $i$.
By the sujectivity of $\pi_*\sO_{X_i}(L) \to \pi_*\sO_{S_i}(L)$ and $\pi_*\sO_{X_i}(L) \to \pi_*\sO_{X_{i-1}}(L)$, if $L|_{S_i}$ and $L|_{X_{i-1}}$ is globally generated over $U$, then so is $L|_{X_i}$.
By the induction on $i$, we may assume that $L|_{X_s}$ is globally generated.

By the exact sequence
\[
0 \to \sO_X(L) \to \sO_X(L) \to \sO_{X_s}(L) \to 0,
\]
where the first map is the multiplication by a uniformizer $\varpi$, we have the surjection
\[
\xymatrix{
R^j\pi_*\sO_X(L) \ar[r]^{\cdot \varpi} & R^j\pi_*\sO_X(L)
}
\]
for all $j>0$.
Thus, this vanishes around the closed fiber and the generic fiber, we have $R^j\pi_*\sO_X(L)=0$ for all $j>0$.
Since $L|_{X_s}$ and $L|_{X_\eta}$ is semiample and we have the surjection $\pi_*\sO_X(L) \to \pi_*\sO_{X_s}(L)$, $L$ is semiample over $U$.

Next, we prove the assertions $(1)$ and $(2)$, so we may assume that $L$ is trivial.
By the above argument, we have $R^jf_*\sO_X=0$ for all $j>0$ and  $f_*\sO_X \to f_*\sO_{S_i}$ is surjective for all $i$.
By the commutative diagram
\[
\xymatrix{
f_*\sO_X \ar[r]^{\simeq} \ar@{->>}[d] & \sO_Z \ar@{->>}[d] \\
f_*\sO_{S_i} \ar@{^(->}[r] & \sO_{f(S_i)},
}
\]
we have $f_*\sO_{S_i} =\sO_{f(S_i)}$.
By the same argument as above, we have $f_*\sO_{X_i}=\sO_{f(X_i)}$ and $f_*\sO_{C_i}=\sO_{f(C_i)}$ for all $i$.
By the vanishing $R^1f_*\sO_{X_i}=0$ for all $i$, we have the exact sequence
\[
0 \to \sO_{f(X_i)} \to \sO_{f(S_i)} \oplus \sO_{f(X_{i-1})} \to \sO_{f(C_i)} \to 0.
\]
By the induction and the condition $(S_1)$ on $f(C_i)$, the scheme $f(X_i)$ satisfies the condition $(S_2)$.
In particular, $Z_s$ satisfies the condition $(S_2)$, thus $Z$ is Cohen-Macaulay.
\end{proof}

\begin{prop}\label{prop:assumption under divisorial contraction}
Let $V$ be an excellent Dedekind scheme and $X$ is a $\Q$-factorial $V$-variety satisfying Assumption \ref{assump:semi-stable}.
Let $f \colon X \to Z$ be a $K_X$-negative extremal ray contraction which is a divisorial contraction.
Then $Z$  also satisfies Assumption \ref{assump:semi-stable}.
\end{prop}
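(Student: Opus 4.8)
The plan is to realize $f$ through Proposition~\ref{prop:contraction theorem semi-stable} and then verify the six conditions of Assumption~\ref{assump:semi-stable} for $Z$ one at a time. First I would take a supporting Cartier divisor $L$ of the contracted ray, so that $L$ is $f$-nef, $L^{\perp}$ is the ray, and $L-K_{X/V}$ is ample over the base; by Proposition~\ref{prop:contraction theorem semi-stable}, $f$ is the morphism defined by $L$, we have $R^{j}f_{*}\sO_{X}=0$ for all $j>0$, $f_{*}\sO_{S_{i}}=\sO_{f(S_{i})}$ for every component $S_{i}$ of $X_{s}$, and $Z$ is Cohen--Macaulay. Since $f$ is birational, proper, with $f_{*}\sO_{X}=\sO_{Z}$ and $X$ normal, $Z$ is a normal integral $V$-variety of dimension three dominating the one-dimensional regular scheme $V$, hence torsion-free and so flat over $V$ of relative dimension two, which is condition~(1). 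Flatness of $Z\to V$ together with $Z$ Cohen--Macaulay forces each fibre $Z_{s}$ to be Cohen--Macaulay of pure dimension two, hence $(S_{2})$; applying $f_{*}$ to $0\to\sO_{X}\xrightarrow{\varpi}\sO_{X}\to\sO_{X_{s}}\to 0$ and using $R^{1}f_{*}\sO_{X}=0$ together with flatness of $Z$ over $V$ gives $f_{*}\sO_{X_{s}}=\sO_{Z_{s}}$, so $\sO_{Z_{s}}$ is reduced because $\sO_{X_{s}}$ is; since also $R^{1}f_{*}\sO_{X_{s}}=0$, this identification is compatible with base change to $\overline{k(s)}$, which yields geometric reducedness of $Z_{s}$. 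This is condition~(3).

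Next I would treat conditions (2) and (4). Let $E:=\Exc(f)$; as $\rho(X/Z)=1$ and $f$ is divisorial, $E$ is a single prime divisor. If $E$ lies in a closed fibre then $f_{\eta}$ is an isomorphism and $Z_{\eta}\cong X_{\eta}$ is smooth; otherwise $E$ dominates $V$, $E_{\eta}$ is an integral curve on the smooth surface $X_{\eta}$, and $f_{\eta}\colon X_{\eta}\to Z_{\eta}$ is a birational $K_{X_{\eta}}$-negative morphism with $\rho(X_{\eta}/Z_{\eta})=1$, hence the contraction of a $(-1)$-curve, so $Z_{\eta}$ is again smooth; this is condition~(2). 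For condition~(4), the components of the reduced scheme $Z_{s}$ are exactly the strict transforms $f(S_{i})=f_{*}S_{i}$ of the $S_{i}$ distinct from $E$. Each $f(S_{i})$ is normal because $\sO_{f(S_{i})}=f_{*}\sO_{S_{i}}$ is integrally closed in $K(f(S_{i}))=K(S_{i})$ ($f|_{S_{i}}$ being birational and $S_{i}$ normal), and its geometric irreducibility and geometric normality follow from those of $S_{i}$ by the same base-change arguments (using the vanishing $R^{1}(f|_{S_{i}})_{*}\sO_{S_{i}}=0$ coming from Proposition~\ref{prop:contraction theorem semi-stable}). Finally $Z$ is $\Q$-factorial: for a $\Q$-Cartier divisor $D$ on $X$ and a curve $C$ generating the ray with $E\cdot C<0$, the divisor $D-\tfrac{D\cdot C}{E\cdot C}E$ is $f$-numerically trivial, and it descends to a $\Q$-Cartier divisor on $Z$ by applying Proposition~\ref{prop:contraction theorem semi-stable} to $L\pm t(D-\tfrac{D\cdot C}{E\cdot C}E)$ for small $t>0$; in particular every $f_{*}S_{i}$ is $\Q$-Cartier, which gives condition~(4).

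The crux is condition~(5): that $(Z,Z_{s})$ is dlt. The key observation is that $X_{s}\cdot C=0$ for every curve $C$ contracted by $f$, because $X_{s}=\alpha^{*}[s]$ for $\alpha\colon X\to V$ and $C$ maps to a single point of $V$, so $\sO_{X}(X_{s})|_{C}$ is pulled back from a point of $V$ and hence trivial. Therefore $(K_{X/V}+X_{s})\cdot C=K_{X/V}\cdot C<0$, so $-(K_{X/V}+X_{s})$ is $f$-ample and $f$ is a $(K_{X/V}+X_{s})$-negative divisorial contraction of the dlt pair $(X,X_{s})$ with $\rho(X/Z)=1$. Since $Z_{s}$ is Cartier (the fibre over a point of the regular curve $V$) we have $f^{*}Z_{s}=X_{s}$ and $f_{*}X_{s}=Z_{s}$, so writing $K_{X/V}+X_{s}=f^{*}(K_{Z/V}+Z_{s})+aE$ the coefficient is $a=\tfrac{(K_{X/V}+X_{s})\cdot C}{E\cdot C}>0$; then $(X,X_{s}-aE)$ is crepant over $(Z,Z_{s})$, and since $a>0$ and the coefficients of $X_{s}$ lie in $\{0,1\}$, a discrepancy computation gives $a(G,Z,Z_{s})\geq a(G,X,X_{s})>-1$ for every divisor $G$ exceptional over $X$, while $a(E,Z,Z_{s})$ equals $a-1$ or $a$, both $>-1$. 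Combined with the existence of log resolutions in dimension three (by \cite{cjs}, \cite{cossart-piltant}) this shows $(Z,Z_{s})$ is dlt. For condition~(6), given a base change $\iota\colon V'\to V$ as in Assumption~\ref{assump:semi-stable}(6) one has $(Z,Z_{s})\times_{V}V'=(Z',Z'_{s'})$, and dlt-ness is preserved under $\iota$ by the argument of Lemma~\ref{lem:base change} (existence of log resolutions in dimension three and compatibility of the relative canonical sheaf with $\iota$), so $(Z',Z'_{s'})$ is dlt. The main obstacle is condition~(5): one must notice that the contraction, although run only with respect to $K_{X/V}$, is automatically $(K_{X/V}+X_{s})$-negative, then control discrepancies through a non-crepant contraction, and separately ensure that the verification descends along the base changes of Assumption~\ref{assump:semi-stable}(6).
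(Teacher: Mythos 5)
Your overall strategy is the one the paper intends: Proposition \ref{prop:contraction theorem semi-stable} supplies $R^jf_*\sO_X=0$, $f_*\sO_{S_i}=\sO_{f(S_i)}$ and the Cohen--Macaulayness of $Z$ (whence condition (3)), and the remaining conditions are checked by standard MMP arguments, the key observation for (5) being that $X_s\cdot C=0$ makes $f$ a $(K_{X/V}+X_s)$-negative contraction of the dlt pair $(X,X_s)$. However, your verification of condition (6) has a genuine gap: you claim that dlt-ness of $(Z,Z_s)$ is preserved under a base change $\iota\colon V'\to V$ as in Assumption \ref{assump:semi-stable}(6) ``by the argument of Lemma \ref{lem:base change}''. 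That lemma only treats the completion of the strict henselization, i.e.\ a composition of a formally \'etale morphism and a completion, which is a (geometrically) regular base change; a general $\iota$ of Assumption \ref{assump:semi-stable}(6) may involve an inseparable or transcendental residue field extension (the residue field of $V'$ is algebraically closed), and dlt-ness is \emph{not} preserved under such base changes in general -- this is exactly why condition (6) is imposed as a separate hypothesis rather than deduced from (5). The correct route is to base change the whole contraction to $f'\colon X'\to Z'$ over $V'$, note that $(X',X'_{s'})$ is dlt by Assumption \ref{assump:semi-stable}(6) for $X$, that the crepant equation $K_{X/V}+X_s=f^*(K_{Z/V}+Z_s)+aE$ pulls back compatibly (relative canonical sheaves commute with this flat base change), and then rerun your condition-(5) discrepancy argument for $f'$; as written, your step begs the question.

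A second, smaller slip occurs in the dlt verification itself: the inequality ``$a(G,Z,Z_s)\geq a(G,X,X_s)>-1$ for every divisor $G$ exceptional over $X$'' is false as stated, since for the dlt pair $(X,X_s)$ (with $X_s$ typically reducible) divisors obtained by blowing up strata of the snc locus of $X_s$ have discrepancy exactly $-1$. The standard argument only needs, and only asserts, $a(G,Z,Z_s)>-1$ for divisors $G$ whose center on $Z$ lies in the complement of the locus where $(Z,Z_s)$ is snc: such a center on $X$ lies either in the non-snc locus of $(X,X_s)$, where $a(G,X,X_s)>-1$ and $a(G,Z,Z_s)\geq a(G,X,X_s)$, or inside $E$, where the discrepancy strictly increases by a positive multiple of $a$ because the coefficient of $E$ drops in $X_s-aE$. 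With these two repairs your write-up matches the ``standard argument'' the paper appeals to.
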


\begin{proof}
By Proposition \ref{prop:contraction theorem semi-stable}, $Z$ is Cohen-Macaulay, thus $Z$ satisfies Assumption \ref{assump:semi-stable} $(3)$.
The other conditions follow from the standard argument.
\end{proof}

\begin{prop}\label{prop:assumption under pl-flip}
Let $V$ be the spectrum of an excellent discrete valuation ring and $X$ is a $\Q$-factorial $V$-variety satisfying Assumption \ref{assump:semi-stable}.
Let 
\[
\xymatrix{
X \ar[r]^-\phi & Z & \ar[l]_-{\phi^+} Y
}
\]
be a $K_X$-flip with $\rho(X/Z)=1$.
Assume that there exists irreducible components $S$ and $A$ of the closed fiber $X_s$ such that $-S$ and $A$ are ample.
Then the strict transforms $S'$ and $A'$ of $S$ and $A$ on $Y$, respectively, are geometrically normal.
\end{prop}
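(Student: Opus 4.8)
The strategy is to realize $S'$ and $A'$ as the outputs of a two‑dimensional minimal model program over their images in $Z$, combining the contraction theorem for semi‑stable schemes (Proposition~\ref{prop:contraction theorem semi-stable}) with the restriction theorem (Proposition~\ref{prop:extension result}) and the inversion of adjunction for global $T$‑regularity (Proposition~\ref{prop:inversion of adjunction globally $T$-regular}).

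\emph{Reductions.} Normality is local on $Z$, so I would localize at a point $z$ in the image of the flipping locus and assume $Z$ affine. Since $X_{s}$ is principal we have $K_{X/V}\sim_{\Q}K_{X/V}+X_{s}$, so the flip is the relative canonical model $Y=\mathrm{Proj}_{Z}\bigoplus_{m}\phi_{*}\sO_{X}(mK_{X/V})$; this is compatible with the flat base change $\iota\colon V'\to V$ of Assumption~\ref{assump:semi-stable}(6) (compatibility of $\omega_{X/V}$ with flat base change, of $\phi_{*}$ with flat base change, and of $\mathrm{Proj}$ with base change), and strict transforms are preserved by $\iota$; hence we may assume the residue field $k$ algebraically closed, so that geometric normality coincides with normality. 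Write $X_{s}=S+A+B_{0}$ with $B_{0}$ the sum of the remaining components. Since $X_{s}\cdot C=0$ for every $\phi$‑contracted curve $C$, the morphism $\phi$ is also a $(K_{X/V}+S+A+B_{0})$‑flipping contraction, and the hypotheses give $S\cdot C<0$, $A\cdot C>0$; thus every flipping curve lies in $S$ and in no curve of $A$, while on $Y$ the flipped curve $C^{+}$ satisfies $A'\cdot C^{+}<0$ (so $C^{+}\subset A'$) and $S'\cdot C^{+}>0$.

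\emph{The images, and the divisor $S'$.} Applying Proposition~\ref{prop:contraction theorem semi-stable} to $\phi$ (legitimate since $-K_{X/V}$ is $\phi$‑ample) with $L=\phi^{*}H$ for $H$ ample on $Z$, so that the associated morphism is $\phi$ itself, gives $\phi_{*}\sO_{S}=\sO_{\bar S}$ and $\phi_{*}\sO_{A}=\sO_{\bar A}$ for the reduced images $\bar S=\phi(S)$, $\bar A=\phi(A)$. As $S$, $A$ are normal (Assumption~\ref{assump:semi-stable}(4)) and $\phi|_{S}$, $\phi|_{A}$ dominant, these maps factor through the normalizations of $\bar S$, $\bar A$, forcing $\bar S$, $\bar A$ to be normal; moreover $\phi$ contracts no curve contained in $A$, so $\phi|_{A}\colon A\to\bar A$ is finite and birational onto a normal surface, hence an isomorphism. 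For $S'$: Lemma~\ref{lem:pl-flip good condition}, applied to the dlt pair $(X,S+A+B_{0})$ near $z$ as in the proof of Theorem~\ref{thm:pl-filp in ample divisor} (the extra reduced components of $B_{0}$ being carried along in the boundary as there), shows $S=S^{N}$ is normal near $z$ and $(S,(1-\varepsilon)A_{S}+B_{S})$ is globally $T$‑regular near $z$ for all $0<\varepsilon<1$; with $\Delta_{S}:=\mathrm{Diff}_{S}(A+B_{0})$, adjunction for $(X,X_{s})$ gives $K_{X/V}|_{S}\sim_{\Q}K_{S/V}+\Delta_{S}$. The restriction theorem (Proposition~\ref{prop:extension result}, via Proposition~\ref{prop:more inversion}) then shows the image of
\[
\bigoplus_{m}\phi_{*}\sO_{X}(mkK_{X/V})\longrightarrow\bigoplus_{m}(\phi|_{S})_{*}\sO_{S}\bigl(mk(K_{S/V}+\Delta_{S})\bigr)
\]
is the full target for divisible $k$, so the strict transform $S'$ (the closure in $Y$ of the image of the generic point of $S$) is $\mathrm{Proj}_{\bar S}$ of this algebra, i.e.\ the relative canonical model of the dlt surface $(S,\Delta_{S})$ over $\bar S$; running a $(K_{S/V}+\Delta_{S})$‑MMP over $\bar S$ and passing to the canonical model (surface MMP, \cite{tanaka18}) exhibits $S'$ as a normal surface.

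\emph{The divisor $A'$, and the main obstacle.} Here $C^{+}\subset A'$ and $\phi^{+}|_{A'}\colon A'\to\bar A$ contracts $C^{+}$, so the flip does not ``concentrate'' on $A$ and the argument for $S'$ does not transcribe directly. I would proceed as follows: since $Y$ is $\Q$‑factorial and $(Y,Y_{s})$ dlt (flips preserve these), $A'$ is a $\Q$‑Cartier prime divisor on the normal scheme $Y$, it is $(S_{2})$ by the argument of Lemma~\ref{lem:reducedness}, and it is $(R_{1})$ at every codimension‑one point lying off $C^{+}$ (there the flip is an isomorphism and $A'$ agrees with the normal surface $A$). By Serre's criterion it remains to prove $A'$ normal along $C^{+}$, which I would obtain by running the restriction/surface‑MMP analysis from the side of $Y$: perturbing the coefficients of the other components of $Y_{s}$ one looks for a boundary $\Gamma$ with $-(K_{Y/V}+A'+\Gamma)$ semiample and big over $Z$, so that the inversion of adjunction for global $T$‑regularity (Proposition~\ref{prop:inversion of adjunction globally $T$-regular}) applies to $(Y,A'+\Gamma)$ over $z$ — the required global $T$‑regularity of the restriction to $A'^{N}$ coming from Lemma~\ref{lem:pl-flip good condition} on $Y$ — and yields both that $(Y,A'+\Gamma)$ is purely globally $T$‑regular over $z$ and that $A'$ is normal. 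The step I expect to be genuinely delicate is precisely this control of $Y$ along the flipped curve $C^{+}$ via the inversion of adjunction: it is the point at which Kawamata invoked the classification of semi‑stable threefold singularities, and here one must instead verify that a boundary $\Gamma$ with the required positivity and $T$‑regularity properties exists in all cases, using only the global $T$‑regularity machinery.
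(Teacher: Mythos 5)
Your reduction steps and your treatment of $S'$ are essentially sound, though the $S'$ argument takes a different route from the paper: you identify $S'$ with $\mathrm{Proj}$ of the restricted algebra, i.e.\ the relative log canonical model of $(S,\mathrm{Diff}_S(A+B_0))$ over $\phi(S)$, via Lemma \ref{lem:pl-flip good condition} and Proposition \ref{prop:extension result} (modulo the unproved but standard identification of the strict transform with $\mathrm{Proj}$ of the restricted algebra, and a perturbation to arrange $\lfloor B_0\rfloor=0$). The paper instead perturbs to a dlt pair $(X,\Delta)$ with $\lfloor\Delta\rfloor=S+A$ and $-(K_{X/V}+\Delta)$ ample over $Z$, shows the image pair on $T=\phi(S)$ is (purely) globally $T$-regular, and transfers global $T$-regularity up the birational morphism $S'^N\to T$ by the negativity lemma together with Proposition \ref{prop:crepant globally $T$-regular}, concluding normality via Corollary \ref{cor:normality of globally $T$-regular}.

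The genuine gap is the normality of $A'$, and you flag it yourself without closing it. Two concrete problems. First, your claim that $A'$ is $(S_2)$ ``by the argument of Lemma \ref{lem:reducedness}'' is circular: that lemma presupposes Assumption \ref{assump:semi-stable} for the ambient scheme (in particular the $(S_2)$ property and normality of the fiber components), which for $Y$ is exactly what Proposition \ref{prop:assumption under flip} is in the course of establishing; Cohen--Macaulayness of $Y$ is only obtained afterwards via rational singularities. Second, and more seriously, the strategy of finding a boundary $\Gamma$ on $Y$ with $-(K_{Y/V}+A'+\Gamma)$ semiample and big over $Z$, so as to apply Proposition \ref{prop:inversion of adjunction globally $T$-regular} on the $Y$-side, is obstructed by the very geometry of the flip: $K_{Y/V}$ is $\phi^+$-ample, so with $\rho(Y/Z)=1$ one needs $(K_{Y/V}+A'+\Gamma)\cdot C^+<0$ while keeping $A'+\Gamma$ a boundary; the only prime divisors negative on $C^+$ are strict transforms of divisors positive on $\Sigma$, their coefficients are capped at $1$, and the numerics (e.g.\ when $(K_{X/V}+A)\cdot\Sigma<0$) can make such a $\Gamma$ nonexistent. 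The paper's proof of the $A'$ statement avoids this entirely and needs no new delicate input: since flipping curves lie in $S$ and $C=A\cap S$ is irreducible and not contracted, $\phi|_A\colon A\to B:=\phi(A)$ is finite with $\phi_*\sO_A=\sO_B$, hence an isomorphism; so $(B,D_B)$ inherits the global $T$-regularity of the pair on $A$, and then, because $K_{Y/V}+\Delta_Y$ is ample over $Z$, the negativity lemma produces an effective $\Gamma$ on $A'^N$ with $(\phi^+|_{A'^N})^*(K_B+D_B)=K_{A'^N}+(\Delta_Y-A')|_{A'^N}+\Gamma$, and Proposition \ref{prop:crepant globally $T$-regular} transfers global $T$-regularity to $A'^N$, after which Corollary \ref{cor:normality of globally $T$-regular} gives normality of $A'$. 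In other words, the relative ampleness of the flipped pair is exactly what drives the paper's argument, whereas in your setup it appears as the obstruction; this crepant-transfer mechanism is the missing idea in your proposal.
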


\begin{proof}
Taking a base change via $V' \to V$ in Assumption \ref{assump:semi-stable}, we may assume that $V$ is the spectrum of a complete discrete valuation ring with algebraically closed residue field.
Since $A$ and $S$ are geometrically irreducible, the irreducibility is preserved.
We take a point $z \in \phi(\Exc(\phi))$, by shrinking $Z$ around $z$, we may assume that $\phi(\Exc(\phi))=\{z\}$ and $Z$ is affine.
We denote the image of $S$ and $A$ via $\phi$ by $T$ and $B$, respectively.
By Proposition \ref{prop:contraction theorem semi-stable}, $\phi_*\sO_S=\sO_T$ and $\phi_*\sO_A=\sO_B$, so $T$ and $B$ are normal.
By the proof of Lemma \ref{lem:bcm regular and globally $T$-regular surface}, we may assume that the intersection $C:=A \cap S$ is irreducible and not contracted by shrinking $Z$ around $z$.
By the perturbation of coefficients, there exists a dlt pair $(X,\Delta)$ such that $\lfloor \Delta \rfloor =A+S$ and $-(K_{X/V}+\Delta)$ is ample.
The strict transform of $\Delta$ on $Y$ is denoted by $\Delta_Y$.
Since $X$ and $Y$ have isolated singularities, the different coincides with $(\Delta-S)|_S$ and $(\Delta_Y-S')|_{S'^N}$.
In particular, $(\phi|_S)_*(\Delta-S)|_S=(\phi^+|_{S'^N})_*(\Delta_Y-S')|_{S'^N}$, it is denoted by $D_T$.
Furthermore, the divisor
\[
K_{S'^N}+(\Delta_Y-S')|_{S'^N}=(K_{Y/V}+\Delta_Y)|_{S'^N}
\]
is ample.
%By Lemma \ref{lem:pl-flip good condition}, $(S,\Delta|_S)$ is purely globally $T$-regular.
We denote $A|_S$ by $C$ and the image of $C$ by $C'$ via $\phi|_S$.
Since $C \to C'$ is an isomorphism by the proof of Lemma \ref{lem:bcm regular and globally $T$-regular surface}, $(C',\mathrm{Diff}(D_T))$ is globally $T$-regular.
Therefore, by Corollary \ref{cor:inversion of adjunction}, $(T,D_T)$ is purely globally $T$-regular.
%Then by the proof of Lemma \ref{lem:bcm regular and globally $T$-regular surface}, $(T,\Delta_T)$ is globally $T$-regular.
Since $K_{S'^N}+(\Delta_Y-S')|_{S'^N}$ is ample,
there exists an effective divisor $\Gamma$ of $S'^N$ such that
\[
(\phi^+|_{S'^N})^*(K_T+D_T)=K_{S'^N}+(\Delta_Y-S')|_{S'^N}+\Gamma.
\]
Since Proposition \ref{prop:crepant globally $T$-regular}, the pair $(S'^N,K_{S'^N}+(\Delta_Y-S')|_{S'^N}+\Gamma)$ is globally $T$-regular, thus $(S'^N,K_{S'^N}+(\Delta_Y-S')|_{S'^N})$ is also globally $T$-regular.
Therefore, $S'$ is normal.

Next, we consider the normality of $A'$.
By the above argument, $S|_A$ is not exceptional, irreducible and anti-ample over $B$.
Thus, $\phi|_A$ is finite, and in particular, $\phi|_A$ is an isomorphism because of $\phi_*\sO_A=\sO_B$.
By the same argument as above, $(A'^N,(\Delta_Y-S')|_{A'^N})$ is globally $T$-regular, thus $A'$ is normal.
\end{proof}

\begin{prop}\label{prop:assumption under flip}
Let $V$ be an excellent Dedekind scheme and $X$ is a $\Q$-factorial $V$-variety satisfying Assumption \ref{assump:semi-stable}.
Let 
\[
\xymatrix{
X \ar[r]^-\phi & Z & \ar[l]_-{\phi^+} Y
}
\]
be a $K_X$-flip with $\rho(X/Z)=1$.
Then $Y$ is a scheme satisfying Assumption \ref{assump:semi-stable}.
\end{prop}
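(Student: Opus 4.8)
The plan is to verify, one clause at a time, the six conditions of Assumption \ref{assump:semi-stable} for the flip $Y$. Each condition is tested at a closed point of $V$, so we may first localize and then, invoking Assumption \ref{assump:semi-stable} $(6)$, base change so that $V$ is the spectrum of a complete discrete valuation ring with algebraically closed residue field $k$ (under such a base change $\phi^+\colon Y\to Z$ stays the relative canonical model, i.e.\ the flip, its closed fibre base changes accordingly, and the prefix ``geometrically'' disappears; condition $(6)$ for $Y$ will then follow because all the base changes occurring here preserve the properties established below, using Lemma \ref{lem:base change} for the dlt property). Conditions $(1)$ and $(2)$ are immediate: $\phi$ and $\phi^+$ are small, hence isomorphisms in codimension one, and a small birational contraction of a normal surface is an isomorphism, so $\phi$ and $\phi^+$ are isomorphisms over the generic point $\eta$ of $V$; thus $Y$ is normal and $\Q$-factorial (being a flip of a $\Q$-factorial threefold), flat over $V$ of relative dimension two, and $Y_\eta\cong X_\eta$ is smooth. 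Moreover the irreducible components of $Y_s$ are exactly the strict transforms $S_i'$ of the components $S_i$ of $X_s$ (no new divisor is extracted), they are $\Q$-Cartier since $Y$ is $\Q$-factorial, and they are geometrically irreducible because $S_i'\dashrightarrow S_i$ is an isomorphism in codimension one; this settles two of the three clauses of $(4)$.

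Next I would prove that $Y$ is Cohen-Macaulay. The scheme $X$ has rational singularities (this holds for strictly semi-stable schemes and is maintained along the MMP, the flip case being the one under discussion), and the flipping contraction satisfies $R\phi_*\sO_X\simeq\sO_Z$ by relative vanishing, so Proposition \ref{prop:rational singularities 1} shows that $Z$ has rational singularities. Since $\phi^+$ is small, $Y$ coincides with $Z$ outside the flipped curves, and at the generic points of those curves $Y$ has at worst a two-dimensional klt singularity, which is rational; hence $Y$ has rational singularities away from finitely many closed points, and Proposition \ref{prop:rational singularities 2} (whose hypotheses $\dim\Exc(\phi^+)\le 1$ and $Y$ klt are met) shows that $Y$ has rational singularities, in particular $Y$ is Cohen-Macaulay. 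Then $Y_s=\mathrm{div}(\varpi)$, being a Cartier divisor on a Cohen-Macaulay scheme, is Cohen-Macaulay, hence $(S_2)$; and $Y_s$ is generically reduced because $X_s$ is, so $Y_s$ is reduced and $Y_s=Y_{s,\mathrm{red}}=\sum_i S_i'$. This gives $(3)$. For $(5)$, note that the given $K_X$-flip coincides with the $(K_{X/V}+X_s)$-flip since $X_s\sim 0$ over $V$; as $(X,X_s)$ is dlt and flips preserve the dlt property, $(Y,Y_{s,\mathrm{red}})=(Y,Y_s)$ is dlt.

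It remains to prove the normality of each $S_i'$ in $(4)$. On the complement of $(\phi^+)^{-1}(\phi(\Exc(\phi)))$ this is clear, since $Y\cong X$ there and $S_i'$ corresponds to the normal divisor $S_i$. Near a point $z\in\phi(\Exc(\phi))$, shrink $Z$ around $z$ and let $C$ generate the flipping ray; since $X_s=\phi^*Z_s$ is numerically $\phi$-trivial, $\sum_i(S_i\cdot C)=0$. If some $S_i\cdot C<0$, then, as $\rho(X/Z)=1$, $-S_i$ is $\phi$-ample while some $S_j$ with $S_j\cdot C>0$ is $\phi$-ample, so Proposition \ref{prop:assumption under pl-flip} applies directly and gives the normality of $S_i'$ and $S_j'$; a component $S_k$ with $C\not\subset S_k$ and $S_k\cdot C=0$ is disjoint from $C$, hence untouched, and a component with $C\subset S_k$ and $S_k\cdot C=0$ is handled by the same adjunction argument relative to the $\phi$-anti-ample component. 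If instead every $S_i\cdot C=0$, then $(X,X_{s,\mathrm{red}})$ is dlt with $X_{s,\mathrm{red}}\sim_\Q 0$ and every component numerically $\phi$-trivial, so by the argument in the proof of Proposition \ref{prop:existence of flip trivial case} the fibre $\phi^{-1}(z)$ meets a single component $S$ and $(X,X_{s,\mathrm{red}})$ is plt near $z$; then $(Y,Y_s)$ is plt near $z$, and the normality of $S'$ follows from inversion of adjunction for normality (Proposition \ref{prop:inversion of adjunction globally $T$-regular}), once one knows that the different on $S'^N$ is globally $T$-regular over the relevant points, which one obtains, as in the proof of Proposition \ref{prop:assumption under pl-flip}, by relating it to a one-dimensional simple normal crossing pair via Corollary \ref{cor:T-regularity of simple normal crossing pair} and Corollary \ref{cor:inversion of adjunction}. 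Assembling these cases yields $(4)$ and hence the proposition.

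The hard part will be this last step together with the Cohen-Macaulayness: one must understand the local geometry of the flip at the flipping locus well enough to run the inversion-of-adjunction argument for the normality of every fibre component in every configuration — notably the purely numerically $\phi$-trivial case, which is not literally covered by Proposition \ref{prop:assumption under pl-flip} — and one must secure that $X$, and therefore $Z$, has rational singularities together with the relative vanishing $R\phi_*\sO_X\simeq\sO_Z$, so that Propositions \ref{prop:rational singularities 1} and \ref{prop:rational singularities 2} apply; both points lean on a careful analysis of extremal contractions in the spirit of Proposition \ref{prop:contraction theorem semi-stable}.
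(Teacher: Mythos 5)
Your argument breaks down precisely in the case you call ``every $S_i\cdot C=0$'', which is one of the two essential cases. There, following Proposition \ref{prop:existence of flip trivial case}, you correctly reduce to the situation where $\phi^{-1}(z)$ meets a single component $S$ and $(X,X_s)$ is plt near $z$, but your route to the normality of the strict transform $S'$ on $Y$ — inversion of adjunction for global $T$-regularity (Proposition \ref{prop:inversion of adjunction globally $T$-regular}), granting that the different on $S'^N$ is globally $T$-regular, ``as in the proof of Proposition \ref{prop:assumption under pl-flip}'' — is not available here. The mechanism in Proposition \ref{prop:assumption under pl-flip} crucially uses a second, ample component $A$: the curve $C=A\cap S$ and its isomorphic image supply the one-dimensional pair to which Corollary \ref{cor:T-regularity of simple normal crossing pair} is applied, and the $T$-regularity is then transported to $S'^N$ by the crepant comparison with the image surface on $Z$. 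In the numerically trivial case there is no such component near $z$, so no boundary curve to adjoin to; moreover $-(K_{Y/V}+Y_s)$ is anti-ample over $Z$ (as $K_{Y/V}$ is $\phi^+$-ample and $Y_s\sim 0$), so Proposition \ref{prop:inversion of adjunction globally $T$-regular} cannot be applied to $\phi^+$, and $T$-regularity of the klt surface pair $(S'^N,\mathrm{Diff})$ is not automatic in small residue characteristic — avoiding exactly such a claim is the point of this section. The paper treats this case by a completely different and much simpler argument: after shrinking $Z$ around $z$, $X_s$ and hence $Y_s$ are (geometrically) irreducible; after the base change of Assumption \ref{assump:semi-stable}~$(6)$ the pair $(Y,Y_s)$ is dlt with $Y_s$ irreducible, so $Y_s$ is regular in codimension one, while Cohen--Macaulayness of $Y$ makes the Cartier divisor $Y_s$ satisfy $(S_2)$; hence $Y_s$ is normal, and therefore geometrically normal. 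You need to replace your $T$-regularity detour by this (or an equivalent) argument.

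Two smaller points. First, in the mixed case your leftover sub-case ``$C\subset S_k$ with $S_k\cdot C=0$'' cannot occur: by Lemma \ref{lem:trivial case} a flipping curve lying on two components has negative intersection with both (all components map to surfaces since $\phi$ is small), so no ``same adjunction argument'' is needed — and the vague appeal you make would not obviously work anyway; the paper organizes this by showing that every component whose image contains $z$ is either ample or anti-ample over $Z$ and then quoting Proposition \ref{prop:assumption under pl-flip}, which is what you do for the components with nonzero intersection. Second, performing the base change to a complete discrete valuation ring with algebraically closed residue field at the very outset is delicate, since $\Q$-factoriality and $\rho(X/Z)=1$ need not persist; the deductions ``$S_i\cdot C<0$ implies $-S_i$ is $\phi$-ample'' should be made before base change (as in the paper, where the base change happens only inside Proposition \ref{prop:assumption under pl-flip} and in the irreducible case), or justified by descent of ampleness. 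Your Cohen--Macaulayness argument is essentially the paper's (Proposition \ref{prop:contraction theorem semi-stable} together with Propositions \ref{prop:rational singularities 1} and \ref{prop:rational singularities 2}), modulo making explicit that $X$ has rational singularities and that $Y$ is terminal with isolated singular points.
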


\begin{proof}
It is obvious that $Y$ satisfies Assumption \ref{assump:semi-stable} except for the condition $(S_2)$ for closed fibers in $(3)$ and the geometric normality of irreducible components of closed fibers in $(4)$.
First, we prove that the $(S_2)$ condition of closed fibers, it is enough to show that $Y$ is Cohen-Macaulay.
We take a point $z \in \phi(\Exc(\phi))$, by shrinking $Z$ around $z$, we may assume that $\phi(\Exc(\phi))=\{z\}$ and $Z$ is affine.
In particular, we may assume that $V$ is a discrete valuation ring by localizing at the image $s$ of $z$.
By Proposition \ref{prop:contraction theorem semi-stable}, we have $R^1\phi_*\sO_X=0$  and $Z$ is Cohen-Macaulay.
Thus $Z$ has rational singularities by Proposition \ref{prop:rational singularities 1}.
Since $X$ is terminal, so is $Y$, and in particular, $Y$ has isolated singularities.
Since $\phi^+$ is small, $Y$ has also rational singularities by Proposition \ref{prop:rational singularities 2}.
Thus, $Y$ is Cohen-Macaulay.

Next, we prove the geometric normality of irreducible components of $X_s$.
First, we consider the case where $f^{-1}(z)$ is contained in only one irreducible component of $X_s$.
%Let $C$ be a flipping curve, that is, $C$ is a curve in $X$ such that $\phi(C)$ is a point.
%First, we consider the case where there exists an irreducible component $S$ of $X_s$ containing $C$ with $(C \cdot S)=0$.
%Since the relative Picard rank one, it is enough to show that $C$ does not intersect any other component.
%Otherwise, $C$ is contained in an irreducible component $S'$ of $X_s$.
%Since $(X,S+S')$ is dlt, $(S,D)$ is plt pair, where $D=S'|_S$.
%Then $C$ is a component of $D$ and contracted by a birational morphism $\phi_S \colon S \to T$, where $T=\phi(S)$.
%By \cite[Theorem 5.2]{tanaka18} and the locally irreducibility of $D$, we may assume $D=C$ by shrinking $Z$ around $z$.
%In particular, we have 
%\[
%S \cdot C = S|_{S'} \cdot C|_{S'} = (C|_{S'}^2) < 0,
%\]
%it contradicts to $(S \cdot C)=0$.
Then, we may assume that $X_s$ is irreducible.
Thus, $X_s$ is geometrically irreducible, and so is $Y_s$.
After taking base change $V' \to V$ as in Assumption \ref{assump:semi-stable} $(6)$, the pair $(Y,Y_s)$ is dlt and $Y_s$ is irreducible. 
Thus, $Y_s$ is normal in codimension one.
Since $Y$ is Cohen-Macaulay, so is $Y_s$, thus $Y_s$ is normal.
In conclusion, $Y_s$ is geometrically normal.

Next, we consider the other case.
The extremal ray contracted by $\phi$ is denoted by $\Sigma$.
We take an irreducible component $R$ of $Y_s$ such that $\phi^+(R)$ contains $z$.
We write $T:=\phi^+_*R$ and $S:=\phi^{-1}_*T$.
Suppose $(S \cdot \Sigma)=0$, then $f^{-1}(z)$ is contained in $S$.
By Lemma \ref{lem:trivial case}, every flipping curve is not contained in any other components.
By the assumption, $f^{-1}(z)$ intersects another component $S'$ of $X_s$.
If $(S' \cdot \Sigma) \leq 0$, then $S'$ contains flipping curves, so we have a contradiction.
If $(S' \cdot \Sigma)$ is positive, then replacing $S'$, we have $(S' \cdot \Sigma)$ is negative because of $X_s \sim 0$.
In conclusion, we obtain $(S \cdot \Sigma) \neq 0$.
First, we consider the case where $(S \cdot C)$ is positive.
Then there exists an irreducible component $S'$ of $X_s$ such that $(S' \cdot C)$ is negative.
In particular, the divisors $S$ and $-S'$ are ample.
By Proposition \ref{prop:assumption under pl-flip}, the scheme $R$ is geometrically normal.
On the other hand, if $(S \cdot C)$ is negative, then there exists an irreducible component which is ample.
Thus, by Proposition \ref{prop:assumption under pl-flip}, the scheme $R$ is geometrically normal.
\end{proof}

\begin{thm}\label{thm:semi-stable mmp'}\textup{(Theorem \ref{thm:semi-stable mmp}, cf.\,\cite{kawamata94})}
Let $V$ be an excellent Dedekind scheme.
Let $X$ be a $V$-variety satisfying Assumption \ref{assump:semi-stable}.
Then we can run a $K_{X/V}$-MMP over $Z$ preserving Assumption \ref{assump:semi-stable} which terminates with a minimal model or a Mori fiber space.
\end{thm}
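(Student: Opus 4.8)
The plan is to run a $K_{X/V}$-MMP over $Z$ in the usual way, feeding in the cone theorem, the contraction theorem, and the existence and termination of flips established above, and invoking Assumption~\ref{assump:semi-stable} through Propositions~\ref{prop:assumption under divisorial contraction} and~\ref{prop:assumption under flip} so that it is preserved at every step. Throughout I use that $X$ is $\Q$-factorial (it is regular in the strictly semi-stable case, and $\Q$-factoriality is preserved by every MMP-step) and that the questions which are local on $V$ — the existence of each flip and the verification of Assumption~\ref{assump:semi-stable} — reduce to the case where $V$ is the spectrum of a discrete valuation ring; there $X_s=\mathrm{div}(\varpi)\sim 0$, so running the $K_{X/V}$-MMP over $Z$ is the same as running the $(K_{X/V}+X_s)$-MMP over $Z$, and $(X,X_s)$ is a $\Q$-factorial dlt pair with $\lfloor X_s\rfloor=X_s$; this is what makes the dlt-pair machinery applicable.

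For one step: the cone theorem (Proposition~\ref{prop:special fiber cone thm}, over $V$, with $B=0$, using that each $X_\eta$ is smooth) produces a $K_{X/V}$-negative extremal ray $\Sigma\subset\NE(X/Z)$. Choosing a nef Cartier divisor $L$ with $L^{\perp}\cap\NE(X/Z)=\R[\Sigma]$ and $L-K_{X/V}$ ample, Proposition~\ref{prop:contraction theorem semi-stable} — or Proposition~\ref{prop:contraction anti-ample case} when some component of $X_s$ is $\Sigma$-negative — shows that $L$ is semiample and gives the associated contraction $f\colon X\to Z'$ over $Z$ with $\rho(X/Z')=1$, $R^jf_*\sO_X=0$ for $j>0$ and $Z'$ Cohen--Macaulay. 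If $\dim Z'<\dim X$ the program stops with a Mori fiber space. If $f$ is divisorial, then $Z'$ is $\Q$-factorial and again satisfies Assumption~\ref{assump:semi-stable} by Proposition~\ref{prop:assumption under divisorial contraction}, and I replace $X$ by $Z'$. If $f$ is small, I construct the flip $X\dashrightarrow X^{+}$ over $Z'$: after localizing $V$, if every component of $X_s$ is numerically trivial over $Z'$ this is Proposition~\ref{prop:existence of flip trivial case}, and otherwise — using $X_s=\sum_i S_i\sim 0$ to choose a component $S$ with $S\cdot\Sigma<0$ and a component $A$ with $A\cdot\Sigma>0$, both $\Q$-Cartier and geometrically (hence locally) irreducible and, since $\rho(X/Z')=1$, with $-S$ and $A$ relatively ample — it is Corollary~\ref{cor:existence of pl-flip threefold} applied to the dlt pair $(X,\,S+A+(X_s-S-A))$. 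By Proposition~\ref{prop:assumption under flip} the flip $X^{+}$ is again $\Q$-factorial and satisfies Assumption~\ref{assump:semi-stable}, and I replace $X$ by $X^{+}$.

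It remains to prove termination. Divisorial contractions strictly decrease $\rank_{\R}N^{1}(X/Z)$, which is finite by Proposition~\ref{prop:finite generation of the Picard rank}, so only finitely many of them occur. For the flips I invoke special termination (Proposition~\ref{prop:special termination}): after finitely many flips every flipping locus is disjoint from $\lfloor X_s\rfloor=X_s$ for all closed points $s\in V$, hence is contained in a generic fiber $X_\eta$; but then every further step is a step of the minimal model program of the smooth surface $X_\eta$ over a field of characteristic zero, which terminates classically. Hence the whole program terminates, ending with $K_{X/V}$ nef over $Z$ (a minimal model) or with a Mori fiber space, and Assumption~\ref{assump:semi-stable} is maintained throughout.

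The hard work is in the results quoted above rather than in this assembly. Conceptually the real obstacle — resolved earlier — is that Kawamata's proof that Assumption~\ref{assump:semi-stable} is preserved used an explicit classification of the flips, valid only when the residue characteristics exceed $3$, whereas here the flips are produced abstractly (Corollary~\ref{cor:existence of pl-flip threefold}, Proposition~\ref{prop:existence of flip trivial case}) and their good behaviour, notably the geometric normality of the components of the special fibers, must instead be extracted from the global $T$-regularity arguments of Proposition~\ref{prop:assumption under flip}. Within the present proof the most delicate point is the termination argument, since general termination of threefold flips in mixed characteristic is not available and one must fall back on special termination together with the reduction to a surface MMP on the generic fiber once the flipping loci avoid the special fibers.
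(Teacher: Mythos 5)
Your assembly of the MMP steps is essentially the paper's: the same cone theorem (Proposition~\ref{prop:special fiber cone thm}), the same contraction theorem (Proposition~\ref{prop:contraction theorem semi-stable}), the same dichotomy for flips (Proposition~\ref{prop:existence of flip trivial case} when all components of $X_s$ are $\Sigma$-trivial, Corollary~\ref{cor:existence of pl-flip threefold} otherwise, using $X_s\sim 0$ to produce $S$ with $S\cdot\Sigma<0$ and $A$ with $A\cdot\Sigma>0$), and the same preservation statements (Propositions~\ref{prop:assumption under divisorial contraction} and~\ref{prop:assumption under flip}).

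Where you diverge is termination, and there your argument has a genuine gap. After invoking special termination (Proposition~\ref{prop:special termination}) to make the flipping loci disjoint from $\lfloor\Delta\rfloor=X_s$, you assert that ``every further step is a step of the minimal model program of the smooth surface $X_\eta$ over a field of characteristic zero, which terminates classically.'' This is not what such a step would be: a threefold flip whose flipping curve $C$ lies in the generic fiber restricts on $X_\eta$ to \emph{contracting} $C$ and then \emph{extracting} the flipped curve(s) (the flip $X^+\to Z'$ is small but not an isomorphism, and its exceptional curves lie over the image point $z'\in Z'_\eta$, hence inside $X^+_\eta$). So the induced map $X_\eta\dashrightarrow X^+_\eta$ is not a $K$-negative divisorial contraction of the surface, and termination of the surface MMP cannot be cited; moreover $V$ is only an excellent Dedekind scheme, so $X_\eta$ need not live in characteristic zero. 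Two repairs are available. The paper's route is simpler and does not use special termination at all: $X$ is terminal, and a sequence of terminal threefold flips terminates by the difficulty argument of \cite[Theorem 6.17]{kollar-mori}. Alternatively, your route can be completed by showing that a flipping contraction with $\Exc(f)$ disjoint from all closed fibers cannot exist: since the flipping curves lie in the smooth generic fiber and $\sO_{Z',z'}=\sO_{Z'_\eta,z'}$ at each image point $z'$, the restriction $X_\eta\to Z'_\eta$ contracts $K_{X_\eta}$-negative curves with negative definite intersection matrix, i.e.\ disjoint $(-1)$-curves, so $Z'_\eta$ (hence $Z'$) is regular, in particular $K_{Z'}$ is $\Q$-Cartier along $f(\Exc(f))$; pulling back gives $K_X\equiv_{Z'}0$, contradicting that $f$ is a flipping contraction. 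Either of these closes the argument; as written, your termination step does not.
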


\begin{proof}
We note that $(K_{X/V}+X_s)$-MMP is also $K_{X/V}$-MMP because $X_s$ is linearly trivial.
By the cone theorem (Proposition \ref{prop:special fiber cone thm}) and the contraction theorem (Proposition \ref{prop:contraction theorem semi-stable}), we can contract any $K_{X/V}$-negative extremal ray.
Let $f \colon X \to Z$ be a $K_X$-negative extremal ray contraction.
If $f$ is divisorial contraction, $Z$ also satisfies Assumption \ref{assump:semi-stable} by Proposition \ref{prop:assumption under divisorial contraction}.
If $f$ is flipping contraction, the extremal ray contracted by $f$ is denoted by $\Sigma$.
In order to prove the existence of the flip, we may assume that $V$ has the unique closed point $s$.
If $(S \cdot \Sigma)=0$ for every irreducible component $S$ of $X_s$, then the flip of $f$ exists by Proposition \ref{prop:existence of flip trivial case}.
Otherwise, as $X_s$ is linearly trivial, there exists irreducible components $S$ and $A$ of $X_s$ such that $(S \cdot \Sigma)< 0$ and $(A \cdot \Sigma)>0$.
By Corollary \ref{cor:existence of pl-flip threefold}, the flip of $f$ exists.

Then the flip $X \dashrightarrow Y$ of $f$ exists and $Y$ satisfies Assumption \ref{assump:semi-stable} by Proposition \ref{prop:assumption under flip}.
Since $X$ has terminal singularities, a sequence of flip terminates by the argument in \cite[Theorem 6.17]{kollar-mori}.
Thus, $K_{X/V}$-MMP terminates with a minimal model or a Mori fiber space.
\end{proof}

In the following, we review applications of Theorem \ref{thm:semi-stable mmp} which are discussed in \cite{chiarellotto-lazda}.
\begin{defn}[{cf.\, \cite[Definition 5.1]{chiarellotto-lazda}}]
\label{defn: minimalstrictsemist}
Let $\sO_{K}$ be an excellent Henselian discrete valuation ring with perfect residue field $k$.
Let $K$ be the fraction field of $\sO_{K}$.
Let $X$ be a K3 surface over $K$ or an abelian surface over $K$.
Here, we note that an abelian surface does not necessarily admit a section.
Then a \emph{minimal strictly semi-stable model} of $X$ is a proper algebraic space $\mathcal{X}$ over $\sO_{K}$ satisfying the following.
\begin{enumerate}
    \item The generic fiber $\mathcal{X}_{K}$ is isomorphic to $X$.
    \item The special fiber $\mathcal{X}_{s}$ is a scheme whose irreducible components are smooth over $k$.  
    \item There exists an \'{e}tale surjection $U \rightarrow \mathcal{X}$ such that $U$ is a strictly semi-stable scheme in the sense of Definition \ref{defn:semi-stable}.
    \item 
    The relative dualizing sheaf $\omega_{\mathcal{X}/\sO_{K}}$ is trivial. Here, see \cite[Section 5]{chiarellotto-lazda} for the definition of the relative dualizing sheaf. 
\end{enumerate}
\end{defn}

\begin{thm}
\label{thm: potentiallyminimalsemistable}
Let $\sO_{K}$, $K$, $k$ and $X$ be as in Definition \ref{defn: minimalstrictsemist}.
Suppose one of the following.
\begin{enumerate}
    \item 
The scheme $X$ is an abelian surface over $K$.
    \item
The scheme $X$ is a K3 surface over $K$ satisfying that $X$ admits a projective strictly semi-stable scheme model over $\sO_{K}$.
\end{enumerate}
Then there exists a finite separable extension $K' / K$ such that there exists a minimal strictly semi-stable model over $\sO_{K'}$ of $X_{K'}$. Moreover, in the case (1), we can take a finite separable extension $K'/K$ such that there exists a minimal strictly semi-stable scheme model over $\sO_{K'}$.
\end{thm}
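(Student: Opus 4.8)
The plan is, after a finite separable extension of $K$, to produce a strictly semi-stable model of $X$, to run the relative minimal model program of Theorem~\ref{thm:semi-stable mmp'} over the base, and then to check that the output meets the conditions of Definition~\ref{defn: minimalstrictsemist}. \emph{Step 1: a strictly semi-stable model after a separable extension.} In case (2) the hypothesis already provides a projective strictly semi-stable scheme model of $X$ over $\sO_K$. In case (1), by the semi-stable reduction theorem for abelian varieties there is a finite separable extension $K_1/K$ — for instance the one generated by the $\ell$-torsion for a prime $\ell$ invertible in $k$ — over which $X_{K_1}$ acquires semi-abelian reduction; from this, Künnemann's construction of projective models of abelian varieties with semi-abelian reduction (see \cite{chiarellotto-lazda}), followed if necessary by normalization after a further finite separable ramified base change, yields a projective strictly semi-stable scheme model of $X_{K_1}$ over $\sO_{K_1}$. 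Note that de Jong's theorem (Theorem~\ref{thm:semistable alteration}) cannot be substituted here, since a semi-stable alteration replaces $X$ by a generically finite cover; this is exactly where the structure specific to K3 or abelian surfaces is used. Replacing $K$ by such an extension, we may assume that we have a projective strictly semi-stable scheme $\mathcal{X}_0$ over $\sO_K$ with $(\mathcal{X}_0)_\eta\cong X$; in particular $\mathcal{X}_0$ is regular, $\Q$-factorial, and satisfies Assumption~\ref{assump:semi-stable}.

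\emph{Step 2: running the MMP.} Apply Theorem~\ref{thm:semi-stable mmp'} with $Z=\Spec\sO_K$ and run a $K_{\mathcal{X}_0/\sO_K}$-MMP over $\sO_K$ preserving Assumption~\ref{assump:semi-stable}. Since $X$ is a K3 or abelian surface, $K_X$ is numerically trivial; hence any divisor contracted by a $K_{\mathcal{X}_0/\sO_K}$-negative divisorial contraction must be contained in a closed fibre (a horizontal divisor would, by negativity along the extremal ray, force a $K_X$-negative curve on the generic fibre), and flipping contractions are isomorphisms in codimension one. Consequently every step of the program is an isomorphism over the generic point, the generic fibre remains equal to $X$, and $K_{\mathcal{X}_0/\sO_K}$ is relatively pseudo-effective since it restricts to $0$ on the generic fibre; a Mori fibre space output is therefore impossible, as it would again produce a $K_X$-negative curve on the generic fibre. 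We obtain $\pi\colon\mathcal{Y}\to\Spec\sO_K$ with $\mathcal{Y}_\eta\cong X$, satisfying Assumption~\ref{assump:semi-stable}, with $K_{\mathcal{Y}/\sO_K}$ nef over $\sO_K$.

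\emph{Step 3: triviality of the relative dualizing sheaf.} The reflexive rank-one sheaf $\omega_{\mathcal{Y}/\sO_K}$ restricts to $\sO_{\mathcal{Y}_\eta}$, so $\omega_{\mathcal{Y}/\sO_K}\cong\sO_{\mathcal{Y}}(D)$ for a vertical Weil divisor $D$, and $K_{\mathcal{Y}/\sO_K}=D$ is nef over $\sO_K$. Since the closed fibre is linearly trivial, the negativity lemma (Proposition~\ref{prop:negativity lemma}) applied to $\pi$ forces $D$ to be a rational multiple of the closed fibre, hence $K_{\mathcal{Y}/\sO_K}\sim_{\Q}0$; a standard descent over the local ring $\sO_K$ using $h^0(\omega_X)=1$ then upgrades this to $\omega_{\mathcal{Y}/\sO_K}\cong\sO_{\mathcal{Y}}$ (compare \cite[Section~5]{chiarellotto-lazda}).

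\emph{Step 4: matching Definition~\ref{defn: minimalstrictsemist}, and the main obstacle.} Condition (4) of Definition~\ref{defn: minimalstrictsemist} is Step 3. It remains to see that $\mathcal{Y}$, or a controlled modification of it, is étale-locally strictly semi-stable with smooth fibre components. By Assumption~\ref{assump:semi-stable} the components of $\mathcal{Y}_s$ are geometrically normal and $\Q$-Cartier and $(\mathcal{Y},\mathcal{Y}_s)$ is dlt; together with $\mathcal{Y}$ Gorenstein (Step 3) and terminal, the singular locus of $\mathcal{Y}$ consists of finitely many compound Du Val points inside the closed fibre. The main work — and the step I expect to be the main obstacle — is to remove these, possibly after a further finite separable base change, by normalization and a crepant (in general small) resolution, while keeping the closed fibre a reduced normal crossing divisor with smooth components and keeping $\omega$ trivial; this produces in general a proper algebraic space $\mathcal{X}$ as in Definition~\ref{defn: minimalstrictsemist}. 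In case (1) the remaining charts are toroidal and the translation action of the abelian scheme over $\sO_{K'}$ (obtained from its Néron model) is transitive on them, so the resolution can be carried out scheme-theoretically and compatibly, yielding the ``Moreover'' clause. The analysis of these last reductions follows \cite{chiarellotto-lazda}.
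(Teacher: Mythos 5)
Your Steps 1--3 and the algebraic-space part of Step 4 follow essentially the same route as the paper: produce a (projective) strictly semi-stable scheme model (via K\"unnemann in case (1), by hypothesis in case (2)), run the $K_{X/V}$-MMP of Theorem \ref{thm:semi-stable mmp'}, observe that all steps are isomorphisms on the generic fibre and that the output has $\omega$ trivial, and then defer the remaining reductions (further base change, small resolutions as algebraic spaces, smoothness of the fibre components) to the arguments of \cite{chiarellotto-lazda} and \cite{liedtke-matsumoto} --- which is exactly what the paper does for the first half of the statement. (A minor quibble: Proposition \ref{prop:negativity lemma} is not the right tool to show a vertical nef divisor is a multiple of the fibre; the standard connectedness-of-the-fibre argument, or the triviality argument of \cite[Lemma 4.7]{Maulik} that the paper invokes, is what is actually needed. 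This does not affect the conclusion.)

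The genuine gap is in your treatment of the ``Moreover'' clause, i.e.\ obtaining a \emph{scheme} model in case (1). You let the MMP produce a possibly singular terminal output and then propose to remove its compound Du Val points by a crepant small resolution carried out ``scheme-theoretically and compatibly'' using toroidal charts and ``transitivity of the translation action''. This is not justified: after running the MMP there is no argument that the output carries an action of the N\'eron model or a toroidal structure near its singular points (the MMP steps involve choices of extremal rays, so equivariance is not automatic), and, more seriously, projective small resolutions of cDV (e.g.\ ordinary double) points generally do not exist, so ``the resolution can be carried out scheme-theoretically'' needs a real argument that you do not supply. The paper avoids this entirely by exploiting the specific feature of K\"unnemann's model $\mathcal{Y}$ that its smooth locus is the smooth locus of the N\'eron model of $X_{K'}$: it then proves that the MMP performs \emph{no} steps at all --- no flips, because a flipped curve would have to lie in the intersection of two components of the special fibre, producing a divisor $E$ with $a_E(Y_r,Y_{r,s})=0$ whose discrepancy must strictly increase under the flip, contradicting that $(Y_r,Y_{r,s})$ is geometrically dlt; and no divisorial contractions, because the N\'eron mapping property forces the number of irreducible components of the special fibre to be unchanged. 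Hence the (regular, strictly semi-stable) K\"unnemann model itself already has $K_{\mathcal{Y}/\sO_{K'}}$ nef, and it is trivial by the argument of \cite[Lemma 4.7]{Maulik}; no resolution step is needed. This ``the MMP does nothing'' argument, resting on the N\'eron-model property of the starting model, is the missing idea in your proposal.
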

\begin{proof}
By Theorem \ref{thm:semi-stable mmp}, the first half of this theorem follows from the proof of \cite[Theorem 10.3]{chiarellotto-lazda} and the proof of \cite[Proposition 2.1]{liedtke-matsumoto}.
Now we will consider the case (1).
By \cite{Kunnemann}, there exists a finite separable extension $K'/K$ such that 
there exists a strictly semi-stable scheme $\mathcal{Y}$ over $\sO_{K'}$ whose smooth locus is isomorphic to the smooth locus of the N\'{e}ron model of $X_{K'}$.
Then we can run a $K_{Y/\sO_{K'}}$-MMP by Theorem \ref{thm:semi-stable mmp}.
We will see that flips do not occur in this MMP.
Let $Y=Y_{0} \rightarrow \cdots \rightarrow Y_{r} \dashrightarrow Y_{r+1}$
be a part of the MMP, where $Y_{0} \rightarrow \cdots \rightarrow Y_{r}$ is the composition of divisorial contractions.
Let $C \subset Y_{r+1}$ be a flipped curve of $Y_{r} \dashrightarrow Y_{r+1}$.
Since the morphism $(Y_{r+1})_{K'} \rightarrow (Y_{r})_{K'}$ extends over the smooth locus $Y_{r+1,\mathrm{sm}}$,
the generic point of $y \in C$ does not contained in $Y_{r+1,\mathrm{sm}}$.
Since $(Y_{r+1}. Y_{r+1,s})$ is geometrically dlt, $y$ is contained in the intersection of two irreducible components of $Y_{r,s}$.
Therefore, there exists an exceptional prime divisor $E$ over $Y_{r+1}$ such that $a_{E}(Y_{r},Y_{r,s}) = 0$ and the center of $E$ on $Y$ is $C$.
Since $C$ is a flipped curve, 
we have $a_{E}(Y_{r},Y_{r,s}) < a_{E}(Y_{r+1},Y_{r+1,s})$.
This is the contradiction since $(Y_{r}, Y_{r,s})$ is geometrically dlt.
Let $Y'$ be the output variety of this MMP.
By the N\'{e}ron mapping property, we can show that
\[
\#\{\textup{irreducible components of } Y_{s}\} = \#\{\textup{irreducible components of }Y'_{s}\}.
\]
Therefore, we have $Y=Y'$ and $K_{Y/ \sO_{K'}}$ is nef.
By the argument in \cite[Lemma 4.7]{Maulik}, $K_{Y/\sO_{K'}}$ is trivial.
\end{proof}

The dual graph of the special fiber of a minimal strictly semi-stable model is classified in  \cite{chiarellotto-lazda} in the case where $\chara k \neq 2$. %Since they treat only the case where $\chara k \neq 2$, %we include the argument which works even in $\chara k = 2$.
We will verify that their result holds even in $\chara k = 2$.

\begin{thm}
\label{thm:combinatorial}
Let $\sO_{K}$, $K$, $k$, and $X$ be as in Definition \ref{defn: minimalstrictsemist}.
Let $\mathcal{X}$ be a minimal strictly semi-stable model over $\sO_{K}$.
Then the special fiber $\mathcal{X}_{k}$ is combinatorial in the sense of \cite[Definition 5.4, Definition 5.6]{chiarellotto-lazda}.
%\begin{enumerate}
%    \item 
%    If $X$ is a K3 surface, then one of the following holds.
%    \begin{enumerate}
%        \item 
%        The special fiber $X_{s}$ is a smooth K3 surface.
%        \item
%        The $X_{s} = Y_{1} \cup \ldots \cup Y_{N}$ is a chain with $Y_{1}, Y_{N}$ smooth rational 
%     \   
%    \end{enumerate}
%    \item
%    If $X$ is an abelian surface, then one of the following holds.
%    \begin{enumerate}
%        \item 
%        
%    \end{enumerate}
%\end{enumerate}
\end{thm}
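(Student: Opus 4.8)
The plan is to reduce to the situation treated in \cite{chiarellotto-lazda} and to pin down the single place where the hypothesis $\chara k\neq 2$ enters there. First I would base change along an unramified extension of $\sO_{K}$ so that $k$ becomes algebraically closed; this is harmless, since \'etale-locally $\mathcal{X}$ stays strictly semi-stable, the irreducible components of $\mathcal{X}_{k}$ remain smooth, and the dual complex is unchanged, so $\mathcal{X}_{k}$ is combinatorial before the base change if and only if it is afterwards. Next, from $\omega_{\mathcal{X}/\sO_{K}}\cong\sO_{\mathcal{X}}$ (Definition \ref{defn: minimalstrictsemist}(4)) and $\mathcal{X}_{k}=\mathrm{div}_{\mathcal{X}}(\varpi)\sim 0$ I obtain $K_{\mathcal{X}/\sO_{K}}+\mathcal{X}_{k}\sim 0$; since $(\mathcal{X},\mathcal{X}_{k})$ is \'etale-locally a simple normal crossing pair, adjunction to each irreducible component $S$ of $\mathcal{X}_{k}$ gives $K_{S}+D_{S}\sim 0$, where $D_{S}:=\sum_{S'\neq S}(S\cap S')$ is the reduced simple normal crossing double locus on the smooth surface $S$, and a second adjunction shows $\omega_{D_{S}}\cong\sO_{D_{S}}$, so $D_{S}$ is empty, a smooth elliptic curve, or a cycle of $\P^{1}$'s.

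I would then classify the pairs $(S,D_{S})$ using the Enriques--Kodaira classification of surfaces, valid in every characteristic, together with the log surface MMP of \cite{tanaka18} already employed in this paper. If $D_{S}=0$ for some $S$ then $\mathcal{X}_{k}=S$, $\mathcal{X}$ has good reduction, and the dual complex is a point. If $D_{S}\neq 0$ then $-K_{S}=D_{S}$ is nonzero and effective, so $K_{S}\cdot H<0$ for an ample divisor $H$; hence $K_{S}$ is not pseudo-effective, $\kappa(S)=-\infty$, and $S$ is rational or birationally ruled over a smooth curve $B$, in which case $D_{S}$ surjects onto $B$ so that either $B\cong\P^{1}$ or $B$ is elliptic and $D_{S}$ maps isomorphically onto it. These are exactly the building blocks occurring in \cite{chiarellotto-lazda}.

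With the components understood, I would run the global case analysis of \cite{chiarellotto-lazda} unchanged. If some $D_{S}$ contains a curve of positive arithmetic genus --- necessarily a smooth elliptic curve --- the degeneration is of Type II: the components form a chain for a K3 surface and a cycle for an abelian surface, the double curves are elliptic, and the dual complex is a segment, respectively a circle $\R/\Z$. Otherwise every $D_{S}$ is a nonempty cycle of rational curves, every component is rational, the degeneration is of Type III, and the dual complex is a triangulation of $S^{2}$ for a K3 surface and of the torus $\R^{2}/\Z^{2}$ for an abelian surface. The only inputs of \cite{chiarellotto-lazda} that genuinely require $\chara k\neq 2$ are the appeals to the semi-stable minimal model program of \cite{kawamata94}, \cite{kawamata99}, available there only for residue characteristic $\notin\{2,3\}$; each such appeal I would replace by Theorem \ref{thm:semi-stable mmp} (and, where the existence of a minimal strictly semi-stable model is needed, by Theorem \ref{thm: potentiallyminimalsemistable}), neither of which imposes a condition on $\chara k$. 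The identification of the topology of the dual complex is carried out in \cite{chiarellotto-lazda} through the weight spectral sequence computing $H^{\ast}_{\textup{\'{e}t}}(X_{\bar K},\Q_{\ell})$ for $\ell\neq p$ and the known $\ell$-adic cohomology of a K3, respectively abelian, surface, which is insensitive to the characteristic.

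The main obstacle is the audit itself: confirming that no step of \cite{chiarellotto-lazda} tacitly uses a characteristic-zero vanishing theorem (Kodaira vanishing fails in positive characteristic) and that each surface-theoretic statement about the admissible components and boundaries --- in particular that a smooth surface carrying a nonzero effective anticanonical cycle of rational curves is rational --- remains valid in characteristic $2$. The first point is handled because only rational and ruled surfaces occur, for which the vanishing and connectedness statements used follow from Tanaka's surface MMP and \cite[Theorem 5.2]{tanaka18}; the second follows from the Enriques--Kodaira classification and the elementary structure theory of ruled surfaces, both of which are characteristic-free.
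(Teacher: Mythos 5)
There is a genuine gap, and it lies precisely at the point this theorem was written to address. You assert that the only inputs of \cite{chiarellotto-lazda} requiring $\chara k\neq 2$ are appeals to the semi-stable MMP of \cite{kawamata94}, \cite{kawamata99}, and that the identification of the topology of the dual complex is already carried out there by a characteristic-insensitive $\ell$-adic weight spectral sequence argument. Neither claim is right. For the present statement the model $\mathcal{X}$ is already given, so the MMP is not the issue (it enters in Theorem \ref{thm: potentiallyminimalsemistable}, not here). The place where Chiarellotto--Lazda actually use $\chara k\neq 2$ is in the case (2)(b) of their component analysis for an abelian surface: there the dual graph $\Gamma$ is shown to be a triangulation of a closed real surface $M$, and the coherent cohomology spectral sequence computes $\dim_{k}H^{i}_{\mathrm{sing}}(\Gamma,k)=1,2,1$. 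When $\chara k\neq 2$ this forces $M$ to be a torus, but when $\chara k=2$ it does not: the Klein bottle has exactly the same mod-$2$ Betti numbers, so the computation with $k$-coefficients cannot rule out the non-orientable case. Your sketch simply declares the dual complex to be $\R^{2}/\Z^{2}$ in the Type III abelian case without any orientability argument, which is exactly the step that fails in characteristic $2$.

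The paper closes this gap by a new argument you do not supply: it redoes the count with $\Q_{\ell}$-coefficients ($\ell$ invertible in $\sO_K$) via the weight spectral sequence, using the isomorphism $E_{2}^{-1,2}\simeq E_{2}^{1,0}$, the vanishing $E_{1}^{0,1}=0$ from \cite[Lemma 4.2]{chiarellotto-lazda}, and $E_{2}$-degeneration to get $\dim_{\Q_{\ell}}H^{1}_{\mathrm{sing}}(\Gamma,\Q_{\ell})=2$, which distinguishes the torus from the Klein bottle and settles the case $\chara k=2$. Your surface-by-surface classification of the pairs $(S,D_S)$ and the reduction to Chiarellotto--Lazda's global case analysis are fine as far as they go (and agree with the paper's strategy of citing their Proposition 5.3 and Theorems 6.1, 8.1), but without the $\ell$-adic replacement for the mod-$p$ Betti number computation the proof of the abelian surface case in residue characteristic $2$ is incomplete.
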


\begin{proof}
If $X$ is a K3 surface, it follows from the same argument as in the proof of \cite[Proposition 5.3, Theorem 6.1]{chiarellotto-lazda}.
Therefore, we will treat the case where $X$ is an abelian surface.
We note that the case where $\chara k \neq 2$ follows from the proof of \cite[Proposition 5.3, Theorem 8.1]{chiarellotto-lazda}.
%By the same argument as in the proof of \cite[Proposition 5.3, Theorem 8.1]{chiarellotto-lazda},
%we have the $k$-betti numbers $\dim_{k} H^{i}_{\mathrm{sing}} (\Gamma, )$
In Chiarellotto and Lazda's argument, the assumption $\chara k \neq 2$ is used only in the case where (2) (b) in \cite[p.2253]{chiarellotto-lazda} holds for some irreducible component of the special fiber $\mathcal{X}_{k}$.
We will review their arguments in this case.
They show that the dual graph $\Gamma$ of the special fiber $\mathcal{X}_{k}$ is a triangulation of a compact real surface $M$ without border.
The spectral sequence of coherent cohomologies
shows that
\[
\dim_{k}
H^{i}_{\mathrm{sing}}(\Gamma, k) = 
\left\{
\begin{aligned}
1 \ \  &\textup{if } i =0,2, \\
2 \ \  &\textup{otherwise.} 
\end{aligned}
\right.
\]
It implies that $M$ is a torus if $\chara k \neq 2$ since  the left hand side is equal to $\C$-Betti number by the classification of real surfaces.
On the other hand, in the weight spectral sequence as in the proof of \cite[Theorem 8.3]{chiarellotto-lazda}, we have an isomorphism $E_{2}^{-1,2} \simeq E_{2}^{1,0}$. %is 2-dimensional since $H^{1}_{\'{e}t}(X_{\bar{K}},\Q_{\ell})$ is 2-dimensional.
%Since $H^{1}_{\'{e}t}(X_{\bar{K}},\Q_{\ell})$ is 2-dimensional, 
Here, we take a prime number $\ell \in R^{\times}$.
Moreover, we have $E_{1}^{0,1} = 0$ by \cite[Lemma 4.2]{chiarellotto-lazda}.
Since this spectral sequence degenerates at $E_{2}$, we have $\dim_{\Q_{\ell}} E_{2}^{1,0} = 2$.
Since $E_{2}^{1,0} = H^{1}_{\mathrm{sing}}(\Gamma, \Q_{\ell})$ as in the proof of \cite[Theorem 8.3]{chiarellotto-lazda}, we have $\dim_{\Q_{\ell}} H^{1}_{\mathrm{sing}}(\Gamma, \Q_{\ell}) = 2$.
Therefore, the surface $M$ is a torus even in $\chara k =2$.
\end{proof}
%In the case where $X$ is abelian surface, the minimal semi-stable model gives the compactification of the N\'{e}ron model.

In the case where $X$ is an abelian surface, %it is well-known that there exists a N\'{e}ron model of $X$ (cf.\, \cite{Bosch1990}), which is a smooth model satisfying a useful extension property (see \cite[Section 1.2, Definition 1]{Bosch1990} for the precise definition).
the strictly semi-stable scheme model which we obtained in the proof of Theorem \ref{thm: potentiallyminimalsemistable} gives a compactification of a N\'{e}ron model of $X$.
Conversely, the following proposition, which is proved in \cite[Theorem 1.4]{Jordan-Morrison} in the case where $\mathcal{X}$ is a scheme, shows that a minimal strictly semi-stable model gives a compactification of a N\'{e}ron model.
\begin{prop}
\label{prop:compactneron} 
Let $\sO_{K}$, $K$, and $k$ be as in Definition \ref{defn: minimalstrictsemist}.
Let $X$ be an abelian surface over $K$.
Let $\mathcal{X}$ be a minimal strictly semi-stable model of $X$ over $\sO_{K}$, and $\mathcal{X}^{\mathrm{sm}}$ the smooth locus of $\mathcal{X}$.
%Let $\mathcal{Y}$ be a N\'{e}ron model of $X$ over $\sO_{K}$.
%Then the natural map $f: \mathcal{X}^{\mathrm{sm}} \rightarrow  \mathcal{Y}$ is an isomorphism.
Then $\mathcal{X}^{\mathrm{sm}}$ is a scheme and a N\'{e}ron model of $X$ over $\sO_{K}$.
\end{prop}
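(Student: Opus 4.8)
The plan is to prove that $\mathcal{X}^{\mathrm{sm}}$ is isomorphic, as an $\sO_{K}$-algebraic space, to the N\'eron model $\mathcal{N}$ of $X$, which is known to be representable by a smooth separated $\sO_{K}$-scheme of finite type: writing $A$ for the abelian surface under which $X$ is a torsor, $A$ has a N\'eron model $\mathcal{N}_{A}$, and $X$ has a N\'eron model $\mathcal{N}$ which is an \'etale torsor under the identity component $\mathcal{N}_{A}^{0}$; since $\mathcal{N}_{A}^{0}$ is quasi-projective over $\sO_{K}$ and $\Spec \sO_{K}$ is affine and henselian, such a torsor is a scheme (see the book \emph{N\'eron Models} of Bosch--L\"utkebohmert--Raynaud). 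Granting this identification, the scheme case \cite[Theorem 1.4]{Jordan-Morrison} says precisely that $\mathcal{N}$ is a N\'eron model, so the proposition follows. Thus the whole task is to produce an isomorphism $\mathcal{X}^{\mathrm{sm}}\xrightarrow{\sim}\mathcal{N}$ over $\sO_{K}$ restricting to $\mathrm{id}_{X}$ on generic fibres.

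First I would record two structural facts. Since $\mathcal{X}$ is \'etale-locally a strictly semi-stable scheme over $\sO_{K}$ (Definition \ref{defn: minimalstrictsemist}$(3)$), it is regular: the local model $\Spec\sO_{K}[x_{0},\dots,x_{n}]/(x_{0}\cdots x_{m}-\varpi)$ is regular, because along the codimension-two locus of its special fibre the maximal ideal is generated by $x_{0},\dots,x_{n}$ while $\varpi=x_{0}\cdots x_{m}$ lies in its square when $m\ge 1$ (and $m=0$ is the affine space). As $X$ is smooth over $K$, the generic fibre of $\mathcal{X}$ is contained in $\mathcal{X}^{\mathrm{sm}}$, so $\mathcal{X}^{\mathrm{sm}}$ is an open algebraic subspace which is smooth, separated and of finite type over $\sO_{K}$ with $(\mathcal{X}^{\mathrm{sm}})_{K}=X$. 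Secondly, any section of $\mathcal{X}$ over a discrete valuation ring $\sO$ unramified over $\sO_{K}$ automatically factors through $\mathcal{X}^{\mathrm{sm}}$: such a section meets the regular locus of $\mathcal{X}_{\sO}$, and a section of a flat finite-type morphism through a regular point of the total space lies in the smooth locus. Combined with the valuative criterion of properness for $\mathcal{X}$, this shows $\mathcal{X}^{\mathrm{sm}}$ is a weak N\'eron model of $X$: for every discrete valuation ring $\sO$ unramified over $\sO_{K}$ with fraction field $K'$, the map $\mathcal{X}^{\mathrm{sm}}(\sO)\to X(K')$ is bijective (surjectivity just discussed, injectivity from separatedness).

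Next I would build the comparison. Since $\mathcal{X}^{\mathrm{sm}}$ is smooth over $\sO_{K}$, applying the N\'eron mapping property of $\mathcal{N}$ on an \'etale chart and gluing by separatedness of $\mathcal{N}$ produces a unique $\sO_{K}$-morphism $\varphi\colon\mathcal{X}^{\mathrm{sm}}\to\mathcal{N}$ with $\varphi_{K}=\mathrm{id}_{X}$; by the maximality of $\mathcal{N}$ among smooth separated models, together with the weak N\'eron property just established, $\varphi$ is an open immersion, so in particular $\mathcal{X}^{\mathrm{sm}}$ is a scheme. It remains to see that $\varphi$ is surjective. Given a closed point $x$ of $\mathcal{N}_{s}$, its residue field is finite separable over the perfect field $k$, hence is the residue field of a discrete valuation ring $\sO'$ finite \'etale over the henselian $\sO_{K}$; by smoothness of $\mathcal{N}$ and henselianity, $x$ lifts to a section $\sigma\in\mathcal{N}(\sO')$, and $\sigma_{K}\in X(K')$ extends, by properness of $\mathcal{X}$, to a section of $\mathcal{X}_{\sO'}$, which lies in $\mathcal{X}^{\mathrm{sm}}$ by the regularity observation. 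Composing with $\varphi$ and using uniqueness in the N\'eron property, this section maps to $\sigma$, so $x$ lies in the image of $\varphi$. Since the image of the open immersion $\varphi$ contains $\mathcal{N}_{\eta}$ and all closed points of the Jacobson scheme $\mathcal{N}_{s}$, it is all of $\mathcal{N}$; hence $\varphi$ is an isomorphism and $\mathcal{X}^{\mathrm{sm}}\cong\mathcal{N}$ is a scheme and a N\'eron model of $X$.

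The main obstacle is the representability of $\mathcal{X}^{\mathrm{sm}}$: a priori $\mathcal{X}$ is only a proper algebraic space, so the comparison with the (scheme) N\'eron model $\mathcal{N}$ must be set up carefully in the algebraic-space setting, and one must invoke the existence of N\'eron models of torsors under abelian surfaces over an arbitrary excellent henselian discrete valuation ring with perfect residue field. A secondary technical point is the precise form of the maximality statement ``a smooth separated algebraic-space model with the weak N\'eron property is an open subspace of $\mathcal{N}$''; if one prefers to avoid it, one can instead verify the full N\'eron mapping property for $\mathcal{X}^{\mathrm{sm}}$ directly (using the action of $\mathcal{N}_{A}$ to reduce, as usual, to the weak N\'eron property already checked) and then appeal to the uniqueness of N\'eron models to conclude $\mathcal{X}^{\mathrm{sm}}\cong\mathcal{N}$.
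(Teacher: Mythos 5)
Your overall skeleton (compare $\mathcal{X}^{\mathrm{sm}}$ with the N\'eron model via the mapping property, show the comparison map is an open immersion, then prove surjectivity by lifting closed points over the henselian base and pushing sections into the smooth locus via regularity) is the same as the paper's, and your surjectivity step is essentially identical to the one in the paper (cf.\ the proof of \cite[Theorem 1.4]{Jordan-Morrison}). But there is a genuine gap at the decisive step: the claim that ``maximality of $\mathcal{N}$ among smooth separated models, together with the weak N\'eron property just established'' makes $\varphi\colon \mathcal{X}^{\mathrm{sm}}\to\mathcal{N}$ an open immersion. A weak N\'eron model does \emph{not} in general embed into the N\'eron model. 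Concretely, blow up a minimal strictly semi-stable model at a closed point lying in the smooth locus of its special fiber: the total space is still regular and strictly semi-stable with reduced SNC special fiber, so its smooth locus is again a weak N\'eron model by exactly your argument, yet the canonical map to the N\'eron model contracts (the smooth part of) the exceptional $\P^2$ and is not an open immersion. What excludes this for a \emph{minimal} model is precisely condition $(4)$ of Definition \ref{defn: minimalstrictsemist} --- the triviality of $\omega_{\mathcal{X}/\sO_{K}}$ --- which your proof never invokes. Your proposed fallback (deducing the full N\'eron mapping property from the weak one ``using the action of $\mathcal{N}_{A}$'') suffers from the same defect: in the Bosch--L\"utkebohmert--Raynaud construction that reduction only works on the $\omega$-minimal components of a weak N\'eron model, and identifying all components as $\omega$-minimal is exactly where minimality of the model must enter. (A smaller inaccuracy: the N\'eron model of the torsor $X$ is not an \'etale torsor under $\mathcal{N}_{A}^{0}$ in general; the paper instead notes $\mathcal{X}(\sO_{K}^{\mathrm{sh}})\neq\emptyset$ and reduces to the case of a section via \cite[Section 7.2, Theorem 1]{Bosch1990}.)

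The paper closes this gap by a differential-form argument: after reducing to the case where $X$ has a section, it considers the canonical map $f\colon \mathcal{X}^{\mathrm{sm}}\to\mathcal{Y}$ to the N\'eron model and proves that $f$ is \'etale by showing that $f^{\ast}\Omega^{2}_{\mathcal{Y}/\sO_{K}}\to\Omega^{2}_{\mathcal{X}^{\mathrm{sm}}/\sO_{K}}$ is an isomorphism; here the triviality of $\omega_{\mathcal{X}/\sO_{K}}$, together with \cite[Section 2.2, Corollary 10; Section 4.3, Lemma 1]{Bosch1990} and the fact that $\mathcal{X}^{\mathrm{sm}}$ is a scheme in codimension one, shows the map of invertible sheaves is an isomorphism in codimension one and hence everywhere. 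Zariski's main theorem then upgrades the \'etale injective-on-generic-fiber map to an open immersion, and the henselian lifting argument finishes. So to repair your proof you must insert an argument of this kind (or any other argument that genuinely uses $\omega_{\mathcal{X}/\sO_{K}}\simeq\sO_{\mathcal{X}}$, e.g.\ checking that every component of $\mathcal{X}^{\mathrm{sm}}_{s}$ is $\omega$-minimal in the sense of the weak-N\'eron-model construction); without it the statement being proved is simply false for non-minimal strictly semi-stable models.
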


\begin{proof}
Let $\mathcal{Y}$ be a N\'{e}ron model of $X$ over $\sO_{K}$.
By the descent argument, one can show that the N\'{e}ron mapping property holds for algebraic spaces.
Therefore, we have a morphism $f: \mathcal{X}^{\mathrm{sm}} \rightarrow \mathcal{Y}$ which extends an identity on $X$. It is enough to show that $f$ is an isomorphism.
We note that we have $\mathcal{X}(\sO_{K}^{\mathrm{sh}}) \neq \emptyset$ by Hensel's Lemma, where $\sO_{K}^{\mathrm{sh}}$ is the strict henselization of $\sO_{K}$. 
%By \cite[Section 6.5, 3]{Bosch1990},
By \cite[Section 7.2, Theorem 1]{Bosch1990}, we may assume that $X$ admits a section.
%Now the desired statement follos from \cite[]{Bosch1990}
First, we will show that $f$ is an \'{e}tale morphism.
%For any \'{e}tale morphism $u : U \rightarrow \mathcal{X}^{\mathrm{sm}}$, 
It suffices to show that $f \circ u$ is \'{e}tale for any \'{e}tale morphism $u : U \rightarrow \mathcal{X}^{\mathrm{sm}}$.
By \cite[Section 2.2, Corollary 10]{Bosch1990}, we want to show that
$\Phi:(f \circ u)^{\ast} \Omega^{2}_{\mathcal{Y}/ \Spec \sO_{K}} \rightarrow \Omega^{2}_{U/ \Spec \sO_{K}}$ is an isomorphism.
Since $\mathcal{X}^{\mathrm{sm}}$ is a scheme in codimension $1$, by using \cite[Section 4.3, Lemma 1]{Bosch1990}, we have $\Phi$ is an isomorphism in codimension $1$, so $\Phi$ is an isomorphism. Now we have $f$ is \'{e}tale.
By Zariski's main theorem, the morphism $f$ is an open immersion.
Let $Z$ be a complement $\mathcal{Y} \setminus \mathcal{X}^{\mathrm{sm}}$.
Suppose that $Z\neq \emptyset$.
Take a valued point $z \in Z(\overline{k})$.
By Hensel's lemma, the valued point $z$ lifts to $\widetilde{z} \in \mathcal{Y} (\sO_{K}^{\mathrm{sh}}) = X (K^{\mathrm{sh}}) = \mathcal{X} (K^{\mathrm{sh}}) = \mathcal{X} (\sO_{K}^{\mathrm{sh}})$, but it contradicts to the choice of $z$ (cf.\, the proof of \cite[Theorem 1.4]{Jordan-Morrison}).
Therefore, $f$ is an isomorphism.
\end{proof}

\section{More general relative MMP}
The goal of this section is to prove the relative MMP in more general setting (Theorem \ref{thm:relative mmp over dvr'}) and the finite generation of relative canonical ring (Theorem \ref{prop:abundance over perfect fields}).
The first one is an analog of \cite[Theorem 1.6]{hacon-witaszek19} in mixed characteristic.
In this section, \cite[Theorem 1.8]{witaszek20} plays an essential role to prove such theorems.
We note that \cite[Theorem 1.8]{witaszek20} will be proved in the upcoming paper \cite{witaszek21}.

\begin{prop}\label{prop:contraction trivial case}
Let $V$ be the spectrum of an excellent discrete valuation ring.
Let $X$ be a flat $V$-variety of relative dimension two.
Let $(X,\Delta)$ be a $\Q$-factorial dlt log pair with $X_s \subset \lfloor \Delta \rfloor$ as sets, where $X$ is flat $V$-variety of relative dimension two.
Let $\rho \colon X \to U$ be a projective morphism to a quasi-projective $V$-variety $U$.
Let $L$ be a $\rho$-nef Cartier divisor on $X$ with $L^{\perp}=\R[\Sigma]$, where 
 $\Sigma$ is a $(K_X+\Delta)$-negative extremal ray over $\rho$.
Then $L$ is semiample.
\end{prop}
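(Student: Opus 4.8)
The plan is to imitate the proof of Proposition~\ref{prop:contraction anti-ample case}, restricting $L$ to a surface on which semiampleness is already available and then lifting; the new feature is that the relevant surface is now the whole closed fibre $X_{s}=\mathrm{div}(\varpi)$ --- which is $\rho$-numerically trivial, so its support is compatible with $L^{\perp}=\R[\Sigma]$ --- rather than a single anti-ample divisor, and accordingly we invoke the relative semiampleness theorem \cite[Theorem~1.8]{witaszek20} in place of \cite[Theorem~1.2]{witaszek20}. First I would reduce to the case where $U$ is affine with $\rho_{\ast}\sO_{X}=\sO_{U}$, which changes neither $\Sigma$ nor the semiampleness of $L$. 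Since $L$ is $\rho$-nef with $L^{\perp}\cap\NE(X/U)=\R_{\geq 0}[\Sigma]$, and $(K_{X/V}+\Delta)\cdot\Sigma<0$ while $L\cdot\Sigma=0$, Kleiman's criterion (applied inside the finite-dimensional space $N_{1}(X/U)$, dual to Proposition~\ref{prop:finite generation of the Picard rank}) shows that $L'-(K_{X/V}+\Delta)$ is $\rho$-ample for $L':=mL$ with $m\gg 0$. It then suffices to prove that $|kL'|$ is $\rho$-free near the closed fibre for $k$ sufficiently divisible, since over the open set $X_{\eta}$ this is the classical base point free theorem on the surface $X_{\eta}$ over $\mathrm{Frac}(R)$.

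Next I would restrict to $X_{s,\mathrm{red}}$. As $X_{s}\subset\lfloor\Delta\rfloor$ as sets, $X_{s,\mathrm{red}}$ is a reduced divisor with $X_{s,\mathrm{red}}\leq\lfloor\Delta\rfloor\leq\Delta$, so by Lemma~\ref{lem:topologically normal} every irreducible component of $X_{s,\mathrm{red}}$ is normal up to universal homeomorphism, and adjunction along the normalization of each component (cf.\,\cite[Section~4]{kollar13}) produces dlt surface pairs for which the restriction of $L'-(K_{X/V}+\Delta)$ is ample over $U$; this is exactly the input required by \cite[Theorem~1.8]{witaszek20}. Applying it component by component and gluing along the reduced, one-dimensional schematic intersections --- using connectedness of the fibres of the induced surface contractions as in \cite[Theorem~5.2]{tanaka18} and the proof of Proposition~\ref{prop:connectedness}, and running the filtration $X_{i}:=S_{1}\cup\dots\cup S_{i}$ exactly as in the proof of Proposition~\ref{prop:contraction theorem semi-stable} --- one concludes that $L'|_{X_{s,\mathrm{red}}}$ is semiample over $U$.

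Finally I would lift from $X_{s,\mathrm{red}}$ to $X$. Filtering $\sO_{X}(kL')$ by powers of $\varpi$, and (when $X_{s}$ is non-reduced) by the nilradical of $\sO_{X_{s}}$, the successive graded pieces are coherent sheaves supported on $X_{s,\mathrm{red}}$ whose first direct image under $\rho$ vanishes near the points of $U$ over $s$; this vanishing is an instance of Proposition~\ref{prop:connectedness}\,(ii), available here because $L'-(K_{X/V}+\Delta)$ is $\rho$-ample and $(X,\Delta)$ is dlt. Hence $\rho_{\ast}\sO_{X}(kL')\to\rho_{\ast}\sO_{X_{s,\mathrm{red}}}(kL')$ is surjective near the closed fibre, a set of sections generating $\rho_{\ast}\sO_{X_{s,\mathrm{red}}}(kL')$ over $U$ lifts, and Nakayama's lemma shows that $|kL'|$ is $\rho$-free near the closed fibre. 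Together with the generic-fibre case this gives that $L'$, and hence $L$, is semiample over $U$.

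The step I expect to be the main obstacle is the closed-fibre analysis: $X_{s}$ may be reducible and non-reduced, so one must glue semiample line bundles along possibly singular crossing curves and, more delicately, establish the cohomological vanishing needed to lift sections through the infinitesimal neighbourhoods of $X_{s,\mathrm{red}}$. This is precisely where the connectedness statements and the \emph{relative} form of the semiampleness (Keel-type) theorem \cite[Theorem~1.8]{witaszek20}, valid over possibly imperfect residue fields and for non-normal surfaces, are indispensable; in the special case where $\Sigma$ is represented by a curve in the generic fibre $X_{\eta}$ rather than in $X_{s}$, the entire content sits on $X_{\eta}$ and the closed-fibre analysis is vacuous.
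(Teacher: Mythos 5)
Your overall architecture (rescale $L$ so that $L-(K_{X/V}+\Delta)$ is $\rho$-ample, get semiampleness on the components of the closed fibre by adjunction, glue, handle $X_\eta$ by the base point free theorem, then pass from the fibres to $X$) matches the paper up to the last step, but the last step is where there is a genuine gap. The paper does \emph{not} lift sections by a vanishing-plus-Nakayama argument: after establishing that $L|_{X_s}$ and $L|_{X_\eta}$ are semiample, it concludes directly by the relative semiampleness theorem \cite[Theorem 1.8]{witaszek20}. Your replacement of that citation by a filtration of $\sO_X(kL')$ by powers of $\varpi$ and by the nilradical of $\sO_{X_s}$ does not go through as stated: the claimed vanishing of $R^1\rho_*$ of the graded pieces is not ``an instance of Proposition \ref{prop:connectedness}(ii)''. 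That proposition is stated for a dlt surface pair over an \emph{algebraically closed} field and only for sheaves of the form $\sO_S(mL)$ and $\sO_C(mL)$ with $C\leq D$ reduced; in the present generality the residue field of $V$ may be imperfect and non-closed (and, unlike Assumption \ref{assump:semi-stable}(6), there is no dlt-preserving base change to an algebraically closed residue field available for an arbitrary dlt pair), and the graded pieces of the nilradical filtration of a possibly non-reduced $X_s$ are twisted ideal-type sheaves on non-normal components, not of the covered form. The analogous lifting argument does appear in the paper, but only in Proposition \ref{prop:contraction theorem semi-stable}, where it relies on the full strength of Assumption \ref{assump:semi-stable} (reduced $(S_2)$ closed fibres, geometrically normal components, reduced pairwise intersections via Lemma \ref{lem:reducedness}); none of this is available for a general $\Q$-factorial dlt pair with $X_s\subset\lfloor\Delta\rfloor$, which is precisely why the paper invokes Witaszek's theorem instead.

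Two smaller points. First, your citation for the component-wise semiampleness is off: on each normalized component $S_i^N$ the paper uses abundance for dlt surfaces over excellent/imperfect bases, \cite[Theorem 1.1]{tanaka20-imperfect}, combined with the proof of \cite[Lemma 1.4]{keel} to descend through the universal homeomorphism $S_i^N\to S_i$ (Lemma \ref{lem:topologically normal}), and the gluing over $X_s$ is done with \cite[Corollary 2.9]{keel} together with connectedness from \cite[Theorem 5.2]{tanaka18}; \cite[Theorem 1.8]{witaszek20} is not a surface statement and plays no role there. Second, your closing remark that the closed-fibre analysis is vacuous when $\Sigma$ lies in $X_\eta$ is not justified: $L^{\perp}=\R[\Sigma]$ only constrains numerical classes, so curves in $X_s$ may still be numerically proportional to $\Sigma$, and $L|_{X_s}$ must still be shown semiample.
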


\begin{proof}
Replacing $L$ with $mL$ for large enough $m$, we may assume that $L-(K_{X/V}+\Delta)$ is ample over $U$.
Let $S_1, \ldots, S_r$ be the irreducible components of $X_s$.
We denote the different $\mathrm{Diff}_{S_i^N}(\Delta-S_i)$ by $D_i$, then $(S^N_i,D_i)$ is dlt and $L-(K_{S_i^N}+D_i)$ is ample over $U$.
We note that the normalization $S_i^N \to S_i$ is a universal homeomorphism by Lemma \ref{lem:topologically normal}.
By \cite[Theorem 1.1]{tanaka20-imperfect} and the proof of \cite[Lemma 1.4]{keel}, $L|_{S_i}$ is semiample for all $i$.
We denote the map induced by $L|_{S_i}$ by $\phi_i \colon S_i \to T_i$.
Since $-(K_{S_i^N}+D_i)$ is ample over $T_i$, the morphism $\phi_i|_{S_1 \cup \cdots \cup S_{i-1}}$ has connected fibers by \cite[Theorem 5.2]{tanaka18}.
By \cite[Corollary 2.9]{keel} and the induction on $i$,  $L|_{X_s}$ is semiample.
Furthermore, $L|_{X_\eta}$ is semiample by the base point free theorem.
Thus, by \cite[Theorem 1.8]{witaszek20}, $L$ is also semiample.
\end{proof}

\begin{thm}\label{thm:relative mmp over dvr'}\textup{(cf.\,\cite[Theorem 1.6]{hacon-witaszek19})}
Let $V$ be the spectrum of an excellent discrete valuation ring.
Let $(X,\Delta)$ be a $\Q$-factorial dlt pair over $V$, where $X$ is a flat $V$-variety of relative dimension two.
Let $X \to Z$ be a projective morphism over $V$ to a quasi-projective $V$-variety $Z$.
Assume that $\lfloor \Delta \rfloor$ contains the closed fiber $X_s$ as sets.
%Assume that \cite[Theorem 1.8]{witaszek20} holds.
Then we can run a $(K_{X/V}+\Delta)$-MMP over $Z$ which terminates with a minimal model or a Mori fiber space.
\end{thm}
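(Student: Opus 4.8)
The plan is to run the $(K_{X/V}+\Delta)$-MMP over $Z$ following the proof of \cite[Theorem 1.6]{hacon-witaszek19}, feeding in the results established above. At each stage, if $K_{X/V}+\Delta$ is nef over $Z$ we stop with a minimal model; otherwise we produce a $(K_{X/V}+\Delta)$-negative extremal ray, contract it, and, if the contraction is of flipping type, replace $X$ by its flip. Three points need attention: that each step can be carried out, that the hypotheses are preserved, and that the procedure terminates.

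\emph{Cone and contraction.} Since $(X,\Delta)$ is dlt the coefficients of $\Delta$ lie in $[0,1]$, and $V$ is the spectrum of a discrete valuation ring, so Proposition \ref{prop:special fiber cone thm}(1) applies to $(X,\Delta)$ over $Z$. Combined with the standard rationality argument, this produces, whenever $K_{X/V}+\Delta$ is not nef over $Z$, a $(K_{X/V}+\Delta)$-negative extremal ray $\Sigma\subseteq\NE(X/Z)$ together with a supporting $\pi$-nef Cartier divisor $L$ satisfying $L^{\perp}=\R[\Sigma]$, where $\pi\colon X\to Z$. By Proposition \ref{prop:contraction trivial case} — whose hypothesis $X_s\subseteq\lfloor\Delta\rfloor$ is assumed here — the divisor $L$ is semiample over $Z$, hence defines the contraction $f\colon X\to Z'$ of $\Sigma$ over $Z$. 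If $\dim Z'<\dim X$ we have reached a Mori fibre space. If $f$ is divisorial, I would check in the standard way that $Z'$ is again a $\Q$-factorial dlt $V$-variety, flat over $V$ of relative dimension two, with $\lfloor f_*\Delta\rfloor\supseteq Z'_s$ as sets; the only slightly delicate point uses $X_s\sim 0$, hence $X_s\cdot\Sigma=0$: if the contracted divisor is a component $S$ of $X_s$, then some other component $S'$ of $X_s$ has $S'\cdot\Sigma>0$, the curves spanning $\Sigma$ lie in $S$ and meet $S'$, and one deduces $f(S)\subseteq f(S')$. We then replace $(X,\Delta)$ by $(Z',f_*\Delta)$ and continue.

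\emph{Existence of flips.} If $f\colon X\to Z'$ is a flipping contraction, so $\rho(X/Z')=1$, then $X_s\sim 0$ gives $X_s\cdot\Sigma=0$; writing $X_s=\sum_i m_i S_i$ with $m_i>0$, either every $S_i\cdot\Sigma=0$, or there are distinct components $S$ with $S\cdot\Sigma<0$ and $A$ with $A\cdot\Sigma>0$. In the first case the flip exists by Proposition \ref{prop:existence of flip trivial case}. In the second case $-S$ and $A$ are $f$-ample (because $\rho(X/Z')=1$) and $\Q$-Cartier (because $X$ is $\Q$-factorial), and $S$, $A$ are locally irreducible: as $\Q$-Cartier components of the reduced divisor $\lfloor\Delta\rfloor$ of the dlt pair $(X,\Delta)$ they are normal up to universal homeomorphism by Lemma \ref{lem:topologically normal}. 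Writing $\Delta=S+A+B$ with $B:=\Delta-S-A\geq 0$, Corollary \ref{cor:existence of pl-flip threefold} produces the flip $X\dashrightarrow X^{+}$. Since the flip is small, $(X^{+},\Delta^{+})$ is again a $\Q$-factorial dlt pair, flat over $V$ of relative dimension two, with $\lfloor\Delta^{+}\rfloor\supseteq X^{+}_s$ as sets.

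\emph{Termination.} The hard part will be termination, and this is exactly where the hypothesis $X_s\subseteq\lfloor\Delta\rfloor$ is essential. The key observation I would use is that the exceptional locus of every flipping contraction occurring above is vertical: the generic fibre $X_\eta$ is a normal $\Q$-factorial surface, a projective birational morphism of normal surfaces that is not an isomorphism contracts a curve — a prime divisor there — so a \emph{small} such morphism is an isomorphism; applying this to $f_\eta$ shows $\Exc(f)\subseteq X_s$. Consequently, in any sequence of log flips of our MMP the flipping locus lies in $X_s\subseteq\lfloor\Delta\rfloor$ and is non-empty, so Proposition \ref{prop:special termination} forces such a sequence to be finite. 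Divisorial contractions strictly decrease the relative Picard number $\rho(X/Z)$, which is finite by Proposition \ref{prop:finite generation of the Picard rank}, so only finitely many of them occur; hence the MMP terminates, ending with a minimal model (once $K_{X/V}+\Delta$ is nef over $Z$) or a Mori fibre space.
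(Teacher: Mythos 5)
Your proposal is correct and follows essentially the same route as the paper's proof: the cone theorem (Proposition \ref{prop:special fiber cone thm}), the contraction statement (Proposition \ref{prop:contraction trivial case}), the identical dichotomy for flips (Proposition \ref{prop:existence of flip trivial case} when every component of $X_s$ is numerically trivial on the contracted ray, Corollary \ref{cor:existence of pl-flip threefold} when some component is negative and another positive), and termination via Proposition \ref{prop:special termination} combined with the finiteness of the relative Picard rank. Your additional observation that flipping loci are vertical is a correct detail that the paper leaves implicit; the cleanest justification that $f_\eta$ is small is that $X_\eta$ is open in $X$, so a contracted curve lying in $X_\eta$ would already be a codimension-one point of $X$, contradicting the smallness of $f$.
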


\begin{proof}
By the cone theorem (Proposition \ref{prop:special fiber cone thm}) and the contraction theorem (Proposition \ref{prop:contraction trivial case}), we can contract any $(K_X+\Delta)$-negative extremal ray.
If the contraction is a flipping contraction, then the flip exists by the argument in the proof of Theorem \ref{thm:semi-stable mmp'}.
The termination of flips follows from the special termination (Proposition \ref{prop:special termination}).
\end{proof}

\begin{thm}\label{prop:abundance over perfect fields}
Let $V$ be an excellent Dedekind scheme whose each residue field is perfect.
Let $X$ be a flat projective $V$-variety of relative dimension two.
Let $(X,\Delta)$ be a dlt pair over $V$ such that for each closed point $s \in V$, the pair $(X,\Delta+X_s)$ is dlt or $X_s$ is contained in $\lfloor \Delta \rfloor$ as sets.
%Let $\rho \colon X \to U$ be a projective morphism to a projective $V$-variety.
Then 
\[
R(K_{X/V}+\Delta) := \bigoplus_m H^0(X,\sO_X(m(K_{X/V}+\Delta)))
\]
is finitely generated.
\end{thm}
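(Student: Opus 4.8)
The plan is to reduce the finite generation of $R(K_{X/V}+\Delta)$ to the two-dimensional (surface over a field) case, where it is already known by \cite[Theorem 1.1, Theorem 4.2, Corollary 4.11]{tanaka18}, together with the relative MMP machinery developed in the preceding sections. Since finite generation of a section ring is local on the base and stable under passing to connected components, I would first reduce to the case where $V=\Spec R$ for an excellent discrete valuation ring with perfect residue field, or $V$ is a point; the latter case is \cite{tanaka18} directly. So assume $V$ is the spectrum of a DVR.

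\emph{Step 1: Run a relative MMP to make $K_{X/V}+\Delta$ nef or find a Mori fiber space.} First I would replace $X$ by a $\Q$-factorial dlt model: by Corollary \ref{cor:dlt modification} (applied after reducing to the situation where $X$ is projective birational over an affine $V$-variety, or more precisely by a standard dlt-modification argument for projective $X$) there is a small $\Q$-factorial modification, or at worst a crepant dlt modification $\pi\colon X'\to X$; since $R(K_{X'/V}+\Delta')=R(K_{X/V}+\Delta)$ for $\Delta'=\pi^{-1}_*\Delta+\Exc(\pi)$ with the crepancy normalization, it suffices to prove finite generation for $(X',\Delta')$. Now I distinguish two cases matching the hypothesis. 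If $X_s\subset\lfloor\Delta\rfloor$ as sets, Theorem \ref{thm:relative mmp over dvr'} lets us run a $(K_{X/V}+\Delta)$-MMP over $V$ which terminates with a minimal model or a Mori fiber space. If instead $(X,\Delta+X_s)$ is dlt, then since $X_s\sim_V 0$ the $(K_{X/V}+\Delta)$-MMP coincides with the $(K_{X/V}+\Delta+X_s)$-MMP, and the latter falls under the hypothesis $X_s\subset\lfloor\Delta+X_s\rfloor$, so again Theorem \ref{thm:relative mmp over dvr'} applies. An MMP step changes neither $R(K_{X/V}+\Delta)$ (divisorial contractions and flips preserve the section ring of the log canonical divisor) nor the hypotheses, so we may assume $K_{X/V}+\Delta$ is nef over $V$, or that there is a Mori fiber space structure $g\colon X\to T$ with $-(K_{X/V}+\Delta)$ $g$-ample.

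\emph{Step 2: Dispose of the Mori fiber space case and handle the nef case via restriction to the special fiber.} In the Mori fiber space case $K_{X/V}+\Delta$ is not pseudo-effective over $V$ fiberwise in the relevant direction; more precisely one checks that $H^0(X,\sO_X(m(K_{X/V}+\Delta)))$ stabilizes (is eventually the sections pulled back from the base), so the ring is finitely generated — this is the easy case. In the nef case, the key is semiampleness: I would invoke \cite[Theorem 1.8]{witaszek20} (as used in Proposition \ref{prop:contraction trivial case}) together with the abundance/semiampleness statement for the closed fiber. On the generic fiber $X_\eta$, which is a surface over a field of characteristic zero, $K_{X_\eta/\eta}+\Delta_\eta$ being nef is semiample by the base point free theorem and classical abundance for surfaces. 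On each irreducible component $S_i$ of $X_s$, adjunction gives a dlt pair $(S_i^N, D_i)$ with $(K_{X/V}+\Delta)|_{S_i^N}\sim_\R K_{S_i^N}+D_i$ nef, hence semiample over the perfect residue field by \cite[Theorem 1.1]{tanaka20-imperfect} and the gluing argument of \cite[Corollary 2.9]{keel} as in Proposition \ref{prop:contraction trivial case} (using \cite[Theorem 5.2]{tanaka18} for connectedness of fibers); so $(K_{X/V}+\Delta)|_{X_s}$ is semiample. Then \cite[Theorem 1.8]{witaszek20} promotes this to semiampleness of $K_{X/V}+\Delta$ over $V$, and a semiample divisor always has a finitely generated section ring.

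\emph{Main obstacle.} The crux is establishing semiampleness in the nef case — equivalently, abundance for $(X,\Delta)$ over $V$. The fiberwise semiampleness on $X_\eta$ and on $X_s$ is available (characteristic zero surfaces, resp. \cite{tanaka20-imperfect} plus Keel's gluing), but lifting from the two fibers to a semiample divisor on the total space genuinely requires \cite[Theorem 1.8]{witaszek20} (whose proof is deferred to \cite{witaszek21}); without it one would need an independent argument, e.g. a direct analysis of $\bigoplus_m \pi_*\sO_X(m(K_{X/V}+\Delta))$ using that its restriction to each fiber is a finitely generated algebra together with a flatness/base-change argument. A secondary technical point is checking carefully that the dlt-modification and MMP steps do not disturb the dichotomy in the hypothesis on $(X,\Delta+X_s)$ versus $X_s\subset\lfloor\Delta\rfloor$; this is routine since $X_s$ is linearly trivial over $V$, but should be stated explicitly.
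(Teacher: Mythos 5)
Your overall skeleton (reduce to a DVR, absorb $X_s$ into the boundary using $X_s \sim 0$, run the MMP of Theorem \ref{thm:relative mmp over dvr'} to make $L:=K_{X/V}+\Delta$ nef, then get finite generation from semiampleness via \cite[Theorem 1.8]{witaszek20}) is indeed the paper's argument. But there is a genuine gap at the decisive step, the semiampleness of $L|_{X_s}$. You propose to prove it by establishing semiampleness on each normalized component $(S_i^N,D_i)$ via \cite[Theorem 1.1]{tanaka20-imperfect} and then gluing with \cite[Corollary 2.9]{keel} ``as in Proposition \ref{prop:contraction trivial case}''. In that proposition, however, the gluing is legitimate only because $L-(K_{X/V}+\Delta)$ is relatively ample there, so $-(K_{S_i^N}+D_i)$ is ample over the target $T_i$ and the connectedness theorem \cite[Theorem 5.2]{tanaka18} gives connected fibers of $\phi_i$ along the double curves --- exactly the compatibility input the Keel-type gluing requires. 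In the abundance situation $L=K_{X/V}+\Delta$, the restriction $L|_{S_i^N}\sim_{\R} K_{S_i^N}+D_i$ is numerically trivial over the target of its own semiample fibration, so no such negativity/connectedness is available, and component-wise semiampleness does not glue in general: a cycle of two $\P^1$'s carrying a non-torsion degree-zero line bundle has trivial (hence semiample) restriction to each component but is not semiample. Descending from the components to the non-normal special fiber is precisely the hard part of abundance for reducible surfaces, and your argument does not supply it.

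The paper closes this gap by a different device: it passes to the $S_2$-fication $\pi\colon W\to X_s$ (a universal homeomorphism), notes, following the argument of \cite[Theorem 1.6]{hacon-witaszek19}, that $(W,\Delta_W)$ is an sdlt surface pair with $K_W+\Delta_W=\pi^*\bigl((K_{X/V}+\Delta)|_{X_s}\bigr)$, and then invokes the abundance theorem for sdlt surfaces \cite[Theorem 0.1]{tanaka16}; it is this slc/sdlt abundance, not Keel gluing, that handles the descent along the double curves. Two smaller points: in the Mori fiber space case the sections of $m(K_{X/V}+\Delta)$ for $m>0$ vanish rather than being ``pulled back from the base'' (the paper sidesteps this case entirely by assuming $K_{X/V}+\Delta$ pseudo-effective, the contrary case being trivial); and your concern about $\Q$-factoriality before invoking Theorem \ref{thm:relative mmp over dvr'} is reasonable, but Corollary \ref{cor:dlt modification} as stated requires $X$ to be projective birational over an affine $V$-variety, so it cannot be quoted verbatim in this projective-over-$V$ setting.
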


\begin{proof}
We may assume that $V$ is the spectrum of an excellent discrete valuation ring and $\lfloor \Delta \rfloor$ contains the closed fiber $X_s$ as sets.
%We may assume that $U$ is affine.
%By Lemma \ref{lem:q-factorial compactification},
%we may assume that $X$ and $U$ are projective over $V$ and $X$ is $\Q$-factorial.
If $K_{X/V}+\Delta$ is not pseudo-effective, the theorem is trivial, thus, we may assume that $K_{X/V}+\Delta$ is pseudo-effective.
By Theorem \ref{thm:relative mmp over dvr'}, we can run a $(K_{X/V}+\Delta)$-MMP over $V$, thus, we may assume that $L:=K_{X/V}+\Delta$ is nef over $V$.
By \cite[Theorem 1.8]{witaszek20}, it is enough to show that $L|_{X_s}$ is semiample over $V$.
%We take an ample divisor $H$ on $U$ such that $\rho^*H+L$ is nef over $V$.
By the argument of the proof of \cite[Theorem 1.6]{hacon-witaszek19}, the pair $(W,\Delta_W)$ is a sdlt surface and $\pi$ is a universal homeomorphism, where $\pi \colon W \to X_s$ is the $S_2$-fication and $L_W:=K_W+\Delta_W=\pi^*((K_X+\Delta)|_{X_{s}})$, thus it is enough to show that $L_W$ is semiample over $U$.
%We set $H_W:=\pi^*\rho^*H$.
%Then $L_W+H_W$ is nef and it is enough to show that $L_W+H_W$ is semiample.
%By the proof of \cite[Theorem 1]{tanaka17}, there exists an effective $\Q$-Weil divisor $D$ which is $\Q$-linearly equivalent to $H_W$ such that $(W,\Delta_W+D)$ is also sdlt.
By \cite[Theorem 0.1]{tanaka16}, it is semiample.
\end{proof}

\begin{rmk}\label{rmk:relative remark}
%If $X$ is projective over $U$, running a minimal model program for $(X,\Delta)$, we obtain a
Theorem \ref{prop:abundance over perfect fields} holds in the relative setting if $p\neq 2,3,5$. 
More precisely, if $(X,\Delta), V$ are the same as in Theorem \ref{prop:abundance over perfect fields} and $\rho \colon X \to U$ is a projective morphism to a projective $V$-variety $U$, then
\[
R(K_{X/V}+\Delta/U) := \bigoplus_m \rho_*\sO_X(m(K_{X/V}+\Delta))
\]
is finitely generated $\sO_U$-algebra.
Indeed, by the same argument,we may assume that $L:=K_{X/V}+\Delta$ is nef over $U$.
By \cite[Theorem H]{bmpstww}, there exists an ample Cartier divisor $H$ on $U$ such that $L+\rho^*H$ is nef over $V$.
By the  proof of Theorem \ref{prop:abundance over perfect fields} and  \cite[Theorem 1]{tanaka17},  we obtain the semiampleness of $L+\rho^*H$.
\end{rmk}

\begin{comment}
\begin{rmk}
In our proof, we need the projectivity of $V$ to use \cite[Theorem 0.1]{tanaka16}.
If we assume the existence of projective log resolutions, then we generalize the existence of dlt modifications to the general setting.
Then, we can take a $\Q$-factorial dlt compactification for a pair $(X,\Delta)$ with $X_s \subset \lfloor \Delta \rfloor$ as sets and generalize Theorem \ref{prop:abundance over perfect fields} to the quasi-projective case. 
\end{rmk}
\end{comment}

\begin{cor}\label{thm:abundance}
Let $V$ be an excellent Dedekind scheme.
Let $X$ be a projective $V$-variety satisfying Assumption \ref{assump:semi-stable}.
%Let $\rho \colon X \to U$ be a projective morphism to a projective $V$-variety $U$.
Then 
\[
R(K_{X/V}) := \bigoplus_m H^0(X,\sO_X(mK_{X/V}))
\]
is finitely generated.
\end{cor}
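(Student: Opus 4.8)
The plan is to deduce the statement from Theorem~\ref{prop:abundance over perfect fields} applied with $\Delta=0$, after removing its hypothesis that the residue fields of $V$ be perfect by a base change. First I would check that $(X,0)$ is a dlt pair over $V$: the scheme $X$ is normal because $(X,X_s)$ is a dlt pair; the divisor $K_{X/V}$ is $\Q$-Cartier because $K_{X/V}+X_s$ is $\Q$-Cartier and $X_s=\sum_i S_i$ is $\Q$-Cartier by Assumption~\ref{assump:semi-stable}~$(4)$; and $(X,0)$ is klt---hence dlt---since $(X,X_s)$ is dlt: over the open locus where $(X,X_s)$ is a simple normal crossing pair the scheme $X$ is regular, while for a divisor $E$ over $X$ with center outside that locus one has $a_E(X,0)\ge a_E(X,X_s)>-1$. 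Together with Assumption~\ref{assump:semi-stable}~$(1),(5)$ and the hypothesis that $X$ is projective over $V$, this shows that $(X,0)$ satisfies all hypotheses of Theorem~\ref{prop:abundance over perfect fields} except possibly that each residue field of $V$ is perfect.

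To remove that last hypothesis I would reduce, exactly as at the beginning of the proof of Theorem~\ref{prop:abundance over perfect fields}, to the case $V=\Spec R$ with $R$ an excellent discrete valuation ring; let $s$ be its closed point. By \cite[Theorem~29.1]{matsumura} there is a dominant local ring map $R\to R'$ with $R'$ a complete discrete valuation ring with algebraically closed residue field and $\mathfrak{m}_R R'=\mathfrak{m}_{R'}$; this map is faithfully flat, and $\iota\colon V'=\Spec R'\to V$ is of the type appearing in Assumption~\ref{assump:semi-stable}~$(6)$. By the remark following Assumption~\ref{assump:semi-stable}, the base change $X':=X\times_V V'$ again satisfies Assumption~\ref{assump:semi-stable}; moreover $X'$ is normal (Lemma~\ref{lem:base change}), projective and flat of relative dimension two over $V'$, and the residue field of $V'$ is algebraically closed, in particular perfect. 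Since the formation of the relative dualizing sheaf is compatible with $\iota$ by \cite[Lemma~2.5]{tanaka18}, the sheaf $\sO_{X'}(mK_{X'/V'})$ is the reflexive pullback of $\sO_X(mK_{X/V})$, and flat base change for cohomology along $\iota$ then gives a graded isomorphism of $R'$-algebras
\[
R(K_{X'/V'})\;\cong\;R(K_{X/V})\otimes_R R'.
\]

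By the first paragraph (applied to $X'$), Theorem~\ref{prop:abundance over perfect fields} applies to $(X',0)$ over $V'$, so $R(K_{X'/V'})$ is a finitely generated $R'$-algebra. A routine faithfully flat descent then finishes the proof: choosing finitely many $R'$-algebra generators of $R(K_{X/V})\otimes_R R'$ and writing them in terms of a finite subset $\mathcal{G}\subset R(K_{X/V})$, the $R$-subalgebra $R_0\subseteq R(K_{X/V})$ generated by $\mathcal{G}$ satisfies $R_0\otimes_R R'=R(K_{X/V})\otimes_R R'$, whence $R_0=R(K_{X/V})$ by faithful flatness of $R\to R'$, so $R(K_{X/V})$ is finitely generated over $R$. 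I do not expect a genuine obstacle here: the mathematical content lies entirely in Theorem~\ref{prop:abundance over perfect fields}, and what remains is the reduction to a discrete valuation ring base, the preservation of Assumption~\ref{assump:semi-stable} under the residue-field extension (granted by the remark after it), the base-change compatibility of the relative dualizing sheaf (granted by \cite[Lemma~2.5]{tanaka18}), and the descent of finite generation of algebras along a faithfully flat base change.
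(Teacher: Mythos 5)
Your proposal is correct and follows essentially the same route as the paper: the paper's (very terse) proof likewise reduces via the base change of Assumption \ref{assump:semi-stable} $(6)$ to a discrete valuation ring with perfect (indeed algebraically closed) residue field and then invokes Theorem \ref{prop:abundance over perfect fields} with $\Delta=0$. Your additional details---checking that $(X,0)$ is dlt, base-change compatibility of $\omega_{X/V}$, and faithfully flat descent of finite generation---are exactly the routine steps the paper leaves implicit.
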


\begin{proof}
By Assumption \ref{assump:semi-stable} $(6)$, we may assume that $V$ is the spectrum of a discrete valuation ring with perfect residue field.
Thus, Theorem \ref{thm:abundance} follows from Theorem \ref{prop:abundance over perfect fields}.
\end{proof}

%\bibliography{bibliography}
%\bibliographystyle{amsalpha}
\printbibliography
\end{document}